\documentclass{amsart}
\usepackage[utf8]{inputenc}
\usepackage[T1]{fontenc}
\usepackage{lmodern}
\usepackage{amsmath}
\usepackage{amssymb}
\usepackage{mathtools}
\usepackage{latexsym}
\usepackage[lite]{amsrefs}
\usepackage{nicefrac}
\usepackage{microtype}
\usepackage{color}
\usepackage{xcolor}
\usepackage[normalem]{ulem}
\usepackage{tikz-cd}
\usepackage{MnSymbol}
\usepackage{enumitem} 
\setlist[enumerate,1]{label=\textup{(\arabic*)}}
\DeclareMathAlphabet{\mathpzc}{OT1}{pzc}{m}{it}
\usepackage[all]{xy}
\newdir{ >}{{}*!/-5pt/@{>}}

\usepackage[pdftitle={Analytic Hochschild-Kostant-Rosenberg Theorem},
pdfauthor={Devarshi Mukherjee},
pdfauthor={Jack Kelly},
pdfauthor={Kobi Kremnizer}
pdfsubject={Mathematics}
]{hyperref}

\usepackage{scalerel,stackengine}
\stackMath

\newcommand\reallywidehat[1]{%
\savestack{\tmpbox}{\stretchto{%
  \scaleto{%
    \scalerel*[\widthof{\ensuremath{#1}}]{\kern-.6pt\bigwedge\kern-.6pt}%
    {\rule[-\textheight/2]{1ex}{\textheight}}
  }{\textheight}%
}{0.5ex}}%
\stackon[1pt]{#1}{\tmpbox}%
}
\parskip 1ex

\BibSpec{book}{%
  +{}  {\PrintPrimary}                {transition}
  +{,} { \textit}                     {title}
  +{.} { }                            {part}
  +{:} { \textit}                     {subtitle}
  +{,} { \PrintEdition}               {edition}
  +{}  { \PrintEditorsB}              {editor}
  +{,} { \PrintTranslatorsC}          {translator}
  +{,} { \PrintContributions}         {contribution}
  +{,} { }                            {series}
  +{,} { \voltext}                    {volume}
  +{,} { }                            {publisher}
  +{,} { }                            {organization}
  +{,} { }                            {address}
  +{,} { \PrintDateB}                 {date}
  +{,} { }                            {status}
  +{}  { \parenthesize}               {language}
  +{}  { \PrintTranslation}           {translation}
  +{;} { \PrintReprint}               {reprint}
  +{.} { }                            {note}
  +{.} {}                             {transition}
  +{} { \PrintDOI}                   {doi}
  +{} { available at \url}            {eprint}
  +{}  {\SentenceSpace \PrintReviews} {review}
}

\renewcommand*{\PrintDOI}[1]{\href{http://dx.doi.org/\detokenize{#1}}{doi: \detokenize{#1}}}
\renewcommand*{\MR}[1]{ \href{http://www.ams.org/mathscinet-getitem?mr=#1}{MR \textbf{#1}}}
\newcommand{\adj}[4]{#1\negmedspace: #2\rightleftarrows #3:\negmedspace #4}

\newcommand{\comment}[1]{}  


\theoremstyle{plain}
\newtheorem{theorem}{Theorem}[section]
\newtheorem{lemma}[theorem]{Lemma}
\newtheorem{corollary}[theorem]{Corollary}
\newtheorem{proposition}[theorem]{Proposition}
\theoremstyle{remark}
\newtheorem{remark}[theorem]{Remark}
\theoremstyle{definition}
\newtheorem{definition}[theorem]{Definition}
\newtheorem{example}[theorem]{Example}

\numberwithin{theorem}{section}

\newcommand\C{\mathbb C}
\newcommand\N{\mathbb N}
\newcommand\Q{\mathbb Q}
\newcommand\R{\mathbb R}
\newcommand\Z{\mathbb Z}

\newcommand{\coma}{\widehat}



\newcommand*{\tens}{\mathsf{T}}




\newcommand{\defeq}{\mathrel{:=}} 
\newcommand{\heart}{\ensuremath\heartsuit} 
\newcommand{\stkout}[1]{\ifmmode\text{\sout{\ensuremath{#1}}}\else\sout{#1}\fi}

\newcommand*{\onto}{\twoheadrightarrow}



\DeclarePairedDelimiter{\abs}{\lvert}{\rvert}
\DeclarePairedDelimiter{\norm}{\lVert}{\rVert}
\DeclarePairedDelimiterX{\setgiven}[2]{\{}{\}}{#1\,{:}\,\mathopen{}#2}

\newcommand\haotimes{\mathbin{\coma{\otimes}}}

\newcommand\fC{\mathsf{C}} 

\DeclareMathOperator{\coker}{coker}

\DeclareMathOperator*{\colim}{colim}

\DeclareMathOperator{\Mod}{Mod}


\DeclareMathOperator{\Fun}{Fun}


\newcommand{\Gr}{\mathbf{Gr}}
\newcommand{\fil}{\mathrm{fil}}

\newcommand{\op}{\mathrm{op}}
\newcommand{\ev}{\mathrm{ev}}
\newcommand{\resf}{\mathbb F}


\DeclareMathOperator{\Hom}{Hom}
\DeclareMathOperator{\HH}{HH}


\begin{document}
\title{Analytic Hochschild-Kostant-Rosenberg Theorem}

\author{Jack Kelly}
\email{jack.kelly@tcd.ie}
\address{The Hamilton Mathematics Institute, School of Mathematics, Trinity College Dublin, Dublin 2, Ireland
}

\author{Kobi Kremnizer}
\email{kobi.kremnitzer@oriel.ox.ac.uk}
\address{Mathematical Institute\\
 University of Oxford\\ 
 Andrew Wiles Building\\ 
 Radcliffe Observatory
Quarter\\ Woodstock Road, Oxford, OX2 6GG\\ England
}

\author{Devarshi Mukherjee}
\email{devarshi.mukherjee@mathematik.uni-goettingen.de}

\address{Mathematisches Institut\\
  Georg-August Universit\"at Göttingen\\
  Bunsenstra\ss{}e 3--5\\
  37073 Göttingen\\
  Germany}
  
  \dedicatory{}
\subjclass{}%
\thanks{ The first named author acknowledges the support of the Simons Foundation through its `Targeted Grants to Institutes' program.}

\begin{abstract}
Let \(R\) be a Banach ring. We prove that the category of chain complexes of complete bornological \(R\)-modules (and several related categories) is a derived algebraic context in the sense of Raksit. We then use the framework of derived algebra to prove a version of the Hochschild-Kostant-Rosenberg Theorem, which relates the circle action on the Hochschild algebra to the de Rham-differential-enriched-de Rham algebra of a simplicial, commutative, complete bornological algebra. This has a geometric interpretation in the language of derived analytic geometry, namely, the derived loop stack of a derived analytic stack is equivalent to the shifted tangent stack. Using this geometric interpretation we extend our results to derived schemes.
\end{abstract}

\maketitle
\tableofcontents

\section{Introduction}

The classical Hochschild-Kostant-Rosenberg Theorem relates the Hochschild homology of a commutative algebra over a base ring with algebraic differential forms on the algebra. More precisely, if \(A\) is a smooth, commutative algebra over a commutative, unital ring \(R\), then there is an isomorphism of \(A\)-modules \(\Omega_{A/R}^* \cong \HH_*(A/R)\), between the \(A\)-module of K\"ahler differentials, and the Hochschild cohomology groups of \(A\) with coefficients in \(R\), for all \(* \geq 0\). There is, however, important information that is lost in this mere isomorphism of modules. The Hochschild complex \(\mathsf{HH}(A/R)\) is a simplicial commutative \(R\)-algebra, equipped with an \(S^1\)-action, while the de Rham complex \(DR(A/R)\) with its de Rham differential is a commutative differential graded \(R\)-algebra. Furthermore, the \(S^1\)-action enriched Hochschild algebra and the de Rham complex with its usual differential, are the initial objects with these properties; the former in the \(\infty\)-category \(S^1\text{-}\mathsf{sCAlg}_R\) of simplicial \(S^1\)-equivariant commutative \(R\)-algebras, and the latter in the \(\infty\)-category \(\epsilon\text{-}\mathsf{cdga}_R\) of mixed complexes. When \(R\) has characteristic zero, the agreement between the enriched Hochschild and de Rham algebras follows from a symmetric, monoidal equivalence of these two \(\infty\)-categories, due to To\"en-Vezzosi (see \cite{toen2011algebres}). In the integral setting, this \(\infty\)-categorical equivalence is no longer true and the analogous identification of Hochschild and de Rham theory is far more complicated. This is the subject of recent work by Raksit \cite{raksit2020hochschild}, who uses derived algebra to construct a completely formal proof over \(\Z\). A similar result that uses techniques from derived algebraic geometry in the mixed characteristic case is due to Moulinos-Rubalo-To\"en ~\cite{toen-robalo}. 
 
In this article, we propose a version of the Hochschild-Kostant-Rosenberg Theorem that identifies Hochschild and de Rham theory, when the base object is a \textit{Banach ring}. Our approach uses the framework of homotopical algebra in the context of exact categories \cite{kelly2018exact}, and the series of articles \cites{bambozzi2016dagger,BaBK,bambozzi2019analytic,ben2020fr} which build the foundations of derived analytic geometry. These articles show that for a Banach ring \(R\), the categories \(\mathsf{CBorn}_R\) and \(\mathsf{Ind}(\mathsf{Ban}_R)\) of complete bornological \(R\)-modules and inductive systems of Banach \(R\)-modules, are formally very similar to the category \(\mathsf{Mod}_\Z\) of abelian groups. They are closed, symmetric monoidal (with the completed projective tensor product) quasi-abelian categories. These properties enable us to do homotopy theory in functional analytic settings. In particular, we show that the derived \(\infty\)-category of \(\mathsf{CBorn}_R\) is a \textit{derived algebraic context} in the sense of Raksit, facilitating a purely formal HKR identification in the generality of derived commutative complete bornological \(R\)-algebras. 

The second theme that we investigate is the geometric meaning of our result in the language of derived analytic geometry. Let us first recall the analogous interpretation in derived algebraic geometry. Suppose \(A\) is a finite type \(R\)-algebra, and \(X = \mathsf{Spec}(A)\). Then the simplicial commutative \(R\)-algebra \(S^1 \otimes A\) is a model for the simplicial ring underlying the Hochschild complex \(\mathsf{HH}(A/R)\). This coincides with the \textit{derived loop space} \(\mathcal{L}(X)\), defined as the derived mapping space of functions \(S^1 \to X\). On the de Rham side, there is an identification between the shifted tangent space \(\mathsf{T}(X)[1]\) of \(X\) and the (derived) space of maps \(R \oplus R[-1] \to X\). Therefore, the HKR Theorem implies an equivalence  \[\mathcal{L}X = \underline{\mathsf{Map}}(S^1, X) \cong \underline{\mathsf{Map}}(R \oplus R[-1], X) = \tens(X)[1]\] between the derived loop space and the shifted tangent space. We prove the same equivalence holds when we replace a scheme by a sufficiently nice derived \textit{analytic stack}. Using a different formalism for derived analytic geometry (namely the formalism of derived analytic spaces of Porta and Yue Yu  \cites{porta2016higher,porta2015derived1, porta2015derived, porta2017derived,porta2018derived, porta2017representability,porta2018derivedhom}), Antonio, Petit, and Porta (\cite{Antoniothesis} Theorem 6.5.0.1) have also proven an analytic HKR theorem for derived analytic spaces. Our result should recover their theorem for derived analytic spaces which are $n$-localic for some $n$, and a minor modification of our result will recover their HKR theorem for all derived analytic spaces. We shall in fact prove a geometric HKR theorem for geometry relative to any derived algebraic context, and thereby also recover the algebraic HKR theorem of Ben-Zvi-Nadler (\cite{ben-zvinadler} Proposition 1.1).

This paper is organised as follows.

In Section \ref{sec:bornologies}, we recall preliminary and background material on \textit{bornological} structures arising from \textit{Banach rings}. A Banach ring is a complete, normed ring which includes fields such as \(\C\), \(\R\), and \(\Q_p\) with their Euclidean and \(p\)-adic topologies; the ring \(\Z\) with the Euclidean topology, the \(p\)-adic integers \(\Z_p\), and so on. For any such Banach ring \(R\), we can talk about the category \(\mathsf{Ban}_R\) of Banach \(R\)-modules. This is a closed symmetric monoidal category with the completed projective tensor product, which has finite limits and colimits. Taking the free cocompletion by filtered colimits, we obtain the category \(\mathsf{Ind}(\mathsf{Ban}_R)\) of inductive systems of Banach \(R\)-modules. This is a cocomplete and complete category. Furthermore, taking the full subcategory of inductive systems with monomorphic structure maps yields our second category of interest - the (concrete) category \(\mathsf{CBorn}_R\) of complete bornological \(R\)-modules. This category has a concrete description in terms of formally bounded subsets of an \(R\)-module whenever \(R\) is a non-trivially valued Banach field. In this case, it is a closed symmetric monoidal category with all limits and colimits.

Section \ref{sec:DACs} recalls some background material on higher category theory from \cites{raksit2020hochschild,L} that is relevant for our purposes. Since the HKR result we prove is in the generality of Banach rings (that are not necessarily fields), the Hochschild and de Rham algebras are most conveniently viewed as filtered and graded objects in the derived \(\infty\)-category of complexes over \(\mathsf{CBorn}_R\) (or \(\mathsf{Ind}(\mathsf{Ban}_R)\)), respectively. The canonical filtration obtained by applying the Postnikov filtration functor on the Hochschild complex is called the \textit{HKR-filtration}. To incorporate the \(S^1\)-action on the Hochschild complex, we use the notion of \textit{filtered circle} \(\mathbb{T}_{\mathrm{fil}}\), introduced in \cite{raksit2020hochschild}. This is the Postnikov filtration functor applied to the group algebra \(R[S^1]\). In order that the interaction between the filterations on \(\mathbb{T}\) and the Hochschild complex goes through conveniently, we work in the larger \(\infty\)-category, which includes non-connective objects. These are called derived commutative \(R\)-algebra objects in a derived commutative algebra context. Our main result in Section \ref{sec:exact} is that the derived \(\infty\)-category of the quasi-abelian category \(\mathsf{Ind}(\mathsf{Ban}_R)\) is a derived algebraic context. 

In Section \ref{sec:HKR}, we recall from \cite{raksit2020hochschild}, the definitions of the \(S^1\)-equivariant Hochschild complex and the Hodge-completed derived de Rham cohomology for derived commutative algebras. Specialising to the derived algebraic context \(\mathsf{Ind}(\mathsf{Ban}_R)\), we call the latter the \textit{bornological derived de Rham cohomology}, which will play the same role in derived analytic geometry, as the derived de Rham cohomology in derived algebraic geometry. The main result in \cite{raksit2020hochschild} applied to our setting then implies the following version of the HKR Theorem:

\begin{theorem}
Let \(A\) be a derived commutative algebra object in the derived algebraic context \(\mathsf{Ind}(\mathsf{Ban}_R)\) or \(\mathsf{CBorn}_R\). Then for any derived commutative \(A\)-algebra, the associated graded of the Hochschild algebra \(\mathsf{HH}(B/A)\) is equivalent to the bornological derived de Rham cohomology \(\mathsf{L}_{B/A}\). 
\end{theorem}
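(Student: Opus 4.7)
The strategy is essentially to package the theorem as a direct consequence of Raksit's main HKR theorem from \cite{raksit2020hochschild}, once one has verified that $\mathsf{Ind}(\mathsf{Ban}_R)$ (and hence $\mathsf{CBorn}_R$) carry the structure of a derived algebraic context. Concretely, Raksit's framework gives, for any derived algebraic context $\mathcal{C}$, any derived commutative algebra $A \in \mathcal{C}$, and any derived commutative $A$-algebra $B$, a filtered $S^1$-equivariant Hochschild object $\mathsf{HH}(B/A)$ whose HKR (Postnikov) filtration has associated graded equivalent to the Hodge-filtered derived de Rham object $\mathsf{L}_{B/A}$. The content of the theorem is that we may run this machine in the bornological setting.

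The plan is as follows. First, I would invoke the main result of Section~\ref{sec:exact}, which establishes that the derived $\infty$-category of $\mathsf{Ind}(\mathsf{Ban}_R)$ is a derived algebraic context in the sense of Raksit; the parallel statement for $\mathsf{CBorn}_R$ follows either by the same argument or by transport along the reflective embedding into $\mathsf{Ind}(\mathsf{Ban}_R)$. This means the axioms on a compatible t\nobreakdash-structure, a symmetric monoidal structure with unit compact projective, and a class of compact projective generators closed under tensor products are all verified, so Raksit's constructions of filtered Hochschild and filtered derived de Rham objects are available and functorial in $B$.

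Next, I would instantiate Raksit's definitions in our context: the filtered circle $\mathbb{T}_{\mathrm{fil}}$ is the Postnikov filtration applied to $R[S^1]$, and $\mathsf{HH}(B/A)$ is the tensor $\mathbb{T}_{\mathrm{fil}} \otimes_A B$ in derived commutative algebras, while $\mathsf{L}_{B/A}$ is defined as the Hodge-completed derived de Rham object with its natural grading coming from differential forms of a given weight. Because these are both built entirely out of the symmetric monoidal structure, t\nobreakdash-structure, and cotangent complex formalism intrinsic to the derived algebraic context, no bornology-specific manipulation is required at this stage. Raksit's theorem then provides an equivalence
\[
\mathrm{gr}\,\mathsf{HH}(B/A) \;\simeq\; \mathsf{L}_{B/A}
\]
of graded derived commutative algebras, which is precisely the claim.

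The main obstacle, and the only substantive work, is the input that $\mathsf{Ind}(\mathsf{Ban}_R)$ is a derived algebraic context. In particular one must identify a sufficiently rich class of compact projective generators stable under the completed projective tensor product, and check that the connective part of the natural t\nobreakdash-structure is closed under this tensor product; this is exactly where the subtleties of the completed projective tensor product for Banach and bornological modules over a general Banach ring enter. Once this is granted, the present theorem is a purely formal consequence, and no further analytic input is needed.
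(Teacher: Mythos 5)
Your proposal follows essentially the same route as the paper: Section \ref{sec:exact} does the substantive work of showing that \(\mathbf{Ch}(\mathsf{Ind}(\mathsf{Ban}_R))\) (and the equivalent context for \(\mathsf{CBorn}_R\)) is a derived algebraic context, and the theorem is then deduced in Section \ref{sec:HKR} as a direct corollary of Raksit's Theorem 6.2.6, exactly as you describe. The only cosmetic difference is that the paper characterises the HKR-filtered Hochschild functor by its universal property (as a left adjoint) rather than as a tensor with the filtered circle, but these are interchangeable formulations of the same input.
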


Sections \ref{sec:categorical_geometry}, \ref{sec:monoidal_geometry} and \ref{sec:obstruction_theories} provide the homotopical algebraic background to prove the geometric HKR Theorem in characteristic zero. It uses the To\"en-Vezzosi approach to derived algebraic geometry, and is also the approach used in \cite{BKK} to lay the foundations of derived analytic geometry. Roughly, starting with an \(\infty\)-category \(\mathbf{M}\) with a Grothendieck topology, one can define the categories of \textit{prestacks}, \textit{stacks} and \textit{geometric stacks} as suitable functor categories, taking values in \(\infty\)-groupoids. This abstract formalism already suffices to define the geometrisation of the \(S^1\)-equivariant Hochschild algebra - the \textit{derived loop stack} \(\mathcal{L}(\mathcal{X})\) of a stack \(\mathcal{X}\) over \(\mathbf{M}\). However, in order that the shifted tangent stack \(\mathsf{T}(X)[-1]\) is tractable, we need more structure on the underlying \(\infty\)-category \(\mathbf{M}\). The ideal situation in this context is when \(\mathbf{M}\) is a \textit{relative derived algebraic context}, which roughly is a derived algebraic context with a geometrically interesting subcategory of affines (see \ref{subsubsec:RDAG}).  These axioms are satisfied by our categories of interest, namely, \(\mathbf{Ch}(\mathsf{CBorn}_R)\) and \(\mathbf{Ch}(\mathsf{Ind}(\mathsf{Ban}_R))\), with suitable subcategories of \textit{dagger Stein algebras} (see \ref{subsec:analytic_geometry}). Finally, given such a relative derived algebraic context, we study obstruction theories as a method of producing geometrically well-behaved contangent complexes and shifted tangent spaces associated to relative schemes. This leads to an appropriate geometrisation of our `affine' HKR result in Section \ref{sec:Geometric_HKR}:

\begin{theorem}
Let \(\mathcal{X}\) be a relative scheme in a \(\Q\)-enriched, relative derived algebraic geometry context. Then there is an equivalence 
\[\mathcal{L}(\mathcal{X}) \cong \mathsf{T}(\mathcal{X})[-1]\]
that is natural in \(\mathcal{X}\).
\end{theorem}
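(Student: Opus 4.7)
The plan is to reduce the geometric statement to the affine HKR theorem stated just above, use the $\Q$-enrichment to upgrade the associated-graded equivalence to an equivalence of $S^1$-equivariant derived commutative algebras, and then globalise via mapping-stack descent along the atlases defining a relative scheme.

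The first step is the affine case. Let $A$ be a derived commutative algebra object (thought of as the functions on an affine chart) and $B$ a derived commutative $A$-algebra. The affine HKR theorem of the previous section yields an equivalence $\mathrm{gr}\,\mathsf{HH}(B/A) \simeq \mathsf{L}_{B/A}$ of filtered/graded derived algebras. Because the ambient context is $\Q$-enriched, I will invoke the To\"en--Vezzosi style equivalence between $S^1$-equivariant simplicial commutative algebras and mixed commutative dg algebras. This forces the HKR filtration to split, and yields
\[ \mathsf{HH}(B/A) \;\simeq\; \mathrm{Sym}_B\bigl(\mathbb{L}_{B/A}[1]\bigr) \]
as derived commutative $B$-algebras equipped with the appropriate circle action. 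The technical content here lies in checking that Raksit's formal proof is compatible with the To\"en--Vezzosi decompletion, but both constructions live inside the derived algebraic contexts established in Sections~\ref{sec:exact} and~\ref{sec:DACs}, so no additional analytic input should be required.

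The second step geometrises this equivalence on affines. For $\mathcal{X}=\mathrm{Spec}(A)$, the derived loop stack is computed by the mapping stack $\underline{\mathrm{Map}}(S^1,\mathcal{X})$, which by adjunction is the spectrum of $S^1\otimes A \simeq \mathsf{HH}(A/R)$. On the other hand, the obstruction-theoretic machinery assembled in Sections~\ref{sec:categorical_geometry}--\ref{sec:obstruction_theories} identifies the shifted tangent stack $\mathsf{T}(\mathcal{X})[-1]$ with $\mathrm{Spec}\bigl(\mathrm{Sym}_A(\mathbb{L}_{A/R}[1])\bigr)$. Combining these two identifications with the affine equivalence of Step~1 produces a natural equivalence $\mathcal{L}(\mathrm{Spec}\,A)\simeq \mathsf{T}(\mathrm{Spec}\,A)[-1]$ of affine derived stacks.

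Finally, I would globalise. A relative scheme $\mathcal{X}$ admits a smooth (or \'etale, depending on the ambient geometry tuple) $n$-atlas by affines, presenting $\mathcal{X}$ as a geometric colimit of the associated \v{C}ech nerve. Both $\mathcal{L}(-)$ and $\mathsf{T}(-)[-1]$ are defined as mapping stacks out of the fixed affine objects $S^1$ and $R\oplus R[-1]$ respectively; since these sources are perfect/compact in the appropriate sense, the functors preserve the colimits used to glue $\mathcal{X}$ from its atlas. Naturality of the equivalence from Step~2 in the algebra variable then allows me to glue the affine equivalences into a global one. The main obstacle I anticipate is precisely this descent step: one must verify that the loop-stack construction stays within the class of relative schemes (so that the gluing happens in the correct $\infty$-category), and that the shifted tangent stack, built from the cotangent complex, is compatible with the same atlas. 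These reduce to verifying that the classes of smooth/\'etale morphisms used to define relative schemes are stable under the formation of mapping stacks out of $S^1$ and $R\oplus R[-1]$, which in turn relies on the good behaviour of the cotangent complex under base change that was established in the obstruction-theory sections.
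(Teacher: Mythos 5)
Your overall architecture (prove the affine case, then glue along an atlas) matches the paper's, and Steps 1 and 2 are essentially Proposition \ref{prop:ratequiv} together with the affinisation discussion. The gap is in Step 3. You justify the gluing by asserting that $\mathcal{L}(-)=\underline{\mathsf{Map}}(S^1,-)$ and $T(-)[-1]$ preserve the colimits presenting $\mathcal{X}$ from its atlas because the sources are ``perfect/compact in the appropriate sense''. This is not correct: a mapping stack out of a fixed object is a right adjoint in the target variable and preserves limits, not colimits. Since $S^1\simeq *\coprod_{*\coprod *}*$, the loop stack is the finite limit $\mathcal{X}\times_{\mathcal{X}\times\mathcal{X}}\mathcal{X}$, and finite limits do not commute with the geometric realisations of Segal groupoids used to assemble $\mathcal{X}$ from an atlas. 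Compactness of the source would at best yield preservation of filtered colimits, which is not the kind of colimit involved here.

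The property that actually makes the descent work --- and the reason the theorem is stated for schemes rather than arbitrary geometric stacks --- is that the atlas maps $U_i\to\mathcal{X}$ of a scheme are monomorphisms. This is what guarantees that $\mathcal{L}(U_i)\simeq U_i\times_{\mathcal{X}}\mathcal{L}(\mathcal{X})$, hence that $\{\mathcal{L}(U_i)\to\mathcal{L}(\mathcal{X})\}$ is again an atlas (Corollaries \ref{cor:epiloop} and \ref{cor:atlasloop}); for a smooth but non-monomorphic atlas a loop in $\mathcal{X}$ need not lift to a loop in any $U_i$, which is precisely why Ben-Zvi--Nadler must pass to formal loop spaces in that setting. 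On the tangent side the analogous statement, that $\{T^M U_i\to T^M\mathcal{X}\}$ is an atlas, is not a formal consequence of base change for the cotangent complex: it requires the obstruction theory of Section \ref{sec:obstruction_theories}, specifically that representable geometrically \'etale maps are inf-cartesian, so that $T^M U_i\simeq T^M\mathcal{X}\times_{\mathcal{X}}U_i$ (Lemma \ref{lem:geometpullback2} and Corollary \ref{cor:atlastangent}). With these two inputs in place the gluing is carried out by the inductive criterion of Proposition \ref{prop:geomequiv}; your proposal needs the compactness argument replaced by these monomorphism and obstruction-theoretic arguments.
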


Finally in Section \ref{sec:condensed} we explain how one can prove a HKR theorem in the context of Condensed Mathematics, in the sense of \cite{clausenscholze1}, \cite{clausenscholze2}.

\section{Preliminaries from bornological algebra}\label{sec:bornologies}

In this section, we recall some terminology from the series of articles (\cites{bambozzi2016dagger,BaBK,bambozzi2019analytic,ben2020fr}), which develops various forms of analytic geometry relative to closed, symmetric monoidal categories. The idea of relative algebraic geometry goes back to Deligne (\cite{deligne2007categories}), and can be summarised as follows: let \((\fC, \otimes, 1)\) be a closed, symmetric monoidal category. One can then abstractly define \textit{affine schemes} as the opposite category \(\mathsf{Aff}(\fC) \defeq \mathsf{CAlg}(\fC)^\op\) of commutative monoids in \(\fC\). The category of \textit{pre-sheaves} is defined as the functor category \(\mathsf{PSh}(\fC) \defeq \mathsf{Fun}(\mathsf{Aff}(\fC)^\op, \mathsf{Set})\). By equipping the category of affine schemes with a suitable Grothendieck topology, we can then single out the full subcategory \(\mathsf{Sh}(\fC)\) of \textit{sheaves}. These are presheaves satisfying descent for the chosen Grothendieck topology on \(\mathsf{Aff}(\fC)\). By replacing the category of sets with the category of simplicial sets, equipped with its usual Kan-Quillen model structure, we can define the category \[\mathsf{PStk}(\fC) \defeq \mathsf{Fun}(\mathsf{Aff}(\fC)^\op, \mathsf{sSet})\] of \textit{pre-stacks} in \(\fC\). This is itself a model category in an obvious way: a morphism \(F \to G\) in \(\mathsf{PStk}(\fC)\) is a weak equivalence (resp. fibration) if for all \(X \in \mathsf{Aff}(\fC)\), \(F(X) \to G(X)\) is a weak equivalence (resp. fibration) in \(\mathsf{sSet}\). The category \(\mathsf{Stk}(\fC)\) of \textit{stacks} is the full subcategory of \(\mathsf{PStk}(\fC)\) consisting of prestacks, satisfying descent for hypercovers relative to the Grothendieck topology on \(\mathsf{Aff}(\fC)\).   We will return to this general setup in more detail in Section \ref{sec:monoidal_geometry}.

In what follows, we specialise the abstract framework above to the context of analytic geometry. This uses the language of \textit{bornologies}, rather than more frequently studied world of \textit{topological} algebras. The advantages of using bornologies over topologies have previously been pointed out on several occasions in non-commutative geometry (\cites{Meyer:HLHA, CCMT, PhD_Muk}), and of course, analytic geometry. We recall here the relevant notions from bornological geometry.

\begin{definition}\label{def:Banach_ring}
A \textit{Banach ring} is a commutative, unital ring \(R\) with a function \(\abs{\cdot} \colon R \to \R_{\geq 0}\) satisfying the following properties:

\begin{itemize}
\item \(\abs{a} = 0\) if and only if \(a = 0\);
\item \(\abs{a+b} \leq \abs{a} + \abs{b}\) for all \(a\), \(b \in R\);
\item there exists a \(C>0\) such that \(\abs{ab} \leq C\abs{a}\abs{b}\) for all \(a\), \(b \in R\);
\item \(R\) is complete with respect to the metric induced by the function \(\abs{\cdot}\).
\end{itemize}

A Banach ring $R$ is said to be non-Archimedean if $|a+b|\le\mathrm{max}\{|a|,|b|\}$.

\end{definition}

There are several choices of symmetric monoidal categories that we can plug into the theory of relative algebraic geometry sketched above. For instance, starting with a Banach ring \(R\), we can define the following categories:

\begin{itemize}
\item \(\mathsf{NMod}_R^{1/2}\) - semi-normed \(R\)-modules;
\item \(\mathsf{NMod}_R\) - normed \(R\)-modules;
\item \(\mathsf{Ban_R}\)- Banach \(R\)-modules;
\item \(\mathsf{Ind}(\mathsf{NMod}_R^{1/2})\) - inductive systems of semi-normed \(R\)-modules;
\item \(\mathsf{Ind}(\mathsf{NMod}_R)\) - inductive systems of normed \(R\)-modules;
\item \(\mathsf{Ind}(\mathsf{Ban}_R)\) - inductive systems of Banach \(R\)-modules.
\end{itemize}

We now define the objects of the categories defined above more concretely.

\begin{definition}\label{def:Ban_module}
Let \((R,\abs{\cdot}_R)\) be a Banach ring. A \textit{semi-normed \(R\)-module} is an \(R\)-module \(M\), together with a function \(\abs{\cdot}_M \colon M \to \R_{\geq 0}\) satisfying 

\begin{itemize}
\item \(\abs{0}_M = 0\);
\item \(\abs{m+n}_M \leq \abs{m}_M + \abs{n}_M\);
\item \(\abs{a\cdot m}_M \leq C \abs{a}_R \abs{m}_M\) for some \(C>0\);
\end{itemize}

A \textit{normed \(R\)-module} is a semi-normed \(R\)-module where the norm is non-degenerate, that is, \(\abs{m}_M = 0\) if and only if \(m = 0\). We call a normed \(R\)-module a \textit{Banach \(R\)-module} if it is complete with respect to the metric induced by the norm. 
\end{definition}
If $R$ is non-Archimedean, then a semi-normed (/normed/ Banach) module is said to be \textit{non-Archimedean} if $\abs{m+n}_M\le\mathrm{max}\{\abs{m}_M ,\abs{n}_M\}$. When $R$ is non-Archimedean it will be assumed that all semi-normed $R$-modules are also non-Archimedean.

\begin{remark}\label{rem:absolute_homogeneity}
We prefer not to use the notion of absolute homogeneity (i.e., \(\norm{a \cdot m}_M = \norm{a}_R \norm{m}_M\)) as used in, for instance, \cite{CCMT}*{Example 2.4}. If one requires absolute homogeneity, then one must be mindful of torsion issues. For instance, if \(R = \Z_p\), then \(\resf_p\) with the trivial norm (\(\norm{m} = 1\) for all \(m \neq 0\)) is a normed \(R\)-module in the sense of Definition \ref{def:Ban_module}, but is \textit{not} a normed \(R\)-module if we additionally require absolute homogeneity. 

But a more fundamental reason comes from our future work in global analytic geometry, which is analytic geometry over Banach rings. In the category of Banach rings, the ring \(\Z\) with its Euclidean topology is the initial object. And, since any such global theory should contain ordinary algebraic geometry, we require that \(\Z\) with the trivial norm is a normed module over \(\Z\) with the Euclidean norm. This is of course only possible if we relax the condition of absolute homogeneity.  
\end{remark}

\begin{remark}[An important convention]
From now on we shall fix a universe $\mathbb{U}$ containing the underlying set of $R$, and assume that the underlying sets of all semi-normed/ normed/ Banach $R$-modules are $\mathbb{U}$-small. So as not to overload notation, $\mathsf{Ban}_{R}$ will denote the category of $\mathbb{U}$-small Banach $R$-modules. This will allow us to take the ind-completion without concern for set-theoretic problems. 
\end{remark}

The morphisms in the categories \(\mathsf{NMod}_R^{1/2}\), \(\mathsf{NMod}_R\), and \(\mathsf{Ban}_R\) are bounded linear maps: that is, \(R\)-linear maps \(f \colon M \to N\) satisfying \(\norm{f(m)}_N \leq C \norm{m}_M\) for some \(C>0\), and for all \(m \in M\). These categories are all closed symmetric monoidal when equipped with the \textit{projective tensor product}, which we now describe. For $R$ modules $M$ and $N$, denote by $M\otimes_{\pi,R}N$ the semi-normed $R$-module whose underlying $R$-module is the usual tensor product $M\otimes_{R}N$, and we equip this with the semi-norm
$$||v||_{\pi}\defeq\mathrm{inf}\{\sum ||m_{i}||_{M} ||n_{i}||_{N}: v=\sum m_{i}\otimes n_{i}\}$$
in the Archimedean case, and 
$$||v||_{\pi}\defeq\mathrm{inf}\{\mathrm{max}\{||m_{i}||_{M} ||n_{i}||_{N}: v=\sum m_{i}\otimes n_{i}\}$$
in the non-Archimedean case. The tensor product $\overline{\otimes}_{\pi,R}$ in $\mathsf{NMod}_R$ is the separation of the semi-normed $R$-module $M\otimes_{\pi,R}N$, and the tensor product $\haotimes_{\pi,R}$ in $\mathsf{Ban}_{R}$ is the separated completion. Since the projective tensor product is the only tensor product that features in what follows in this article, we drop the \(\pi\) from the subscript, and denote the completed tensor product by \(\haotimes_R\). For all the three tensor products, the internal hom $\underline{\mathsf{Hom}}(M,N)$ is the collection of bounded \(R\)-linear maps from $M$ to $N$, with semi-norm
$$||T||\defeq\mathrm{inf}\{C\ge 0:||Tm||_{N}\le ||m||_{M}\;\forall m\in M\}.$$

From the viewpoint of our applications, we are mainly interested in complete objects. This makes the category \(\mathsf{Ban}_R\) more relevant than \(\mathsf{NMod}_R\) and \(\mathsf{NMod}_R^{1/2}\). With the completed projective tensor product \(\haotimes_R\), \(\mathsf{Ban}_R\) is a symmetric, monoidal category that clearly has all finite limits and colimits. But the category \(\mathsf{Ban}_R\) is not closed under arbitrary products and coproducts. This is corrected by working in the category \(\mathsf{Ind}(\mathsf{Ban}_R)\) of all formal inductive systems \(\setgiven{X \colon I \to \mathsf{Ban}_R}{I \text{ directed set}}\). 

\subsection{Interlude: Indisation of categories}
Let $\mathsf{C}$ be a \(\mathbb{U}\)-small category. The \textit{indisation } of $\mathsf{C}$, denoted $\mathsf{Ind}(\mathsf{C})$ is the free cocompletion of $\mathsf{C}$ by filtered colimits. As in \cite{kashiwara-schapira}*{Chapter 6}, it can be explicitly realised as the full subcategory of the category of presheaves $\mathsf{Fun}(\mathsf{C}^{op},\mathsf{Set})$ spanned by those functors which are \textit{filtered} colimits of representable functors. We will write an object of $\mathsf{Ind}(\mathsf{C})$ as $``\colim_{\mathcal{I}}"X_{i}$ where $\mathcal{I}$ is a filtered category, and $X:\mathcal{I}\rightarrow\mathpzc{C}$ is a diagram, and we identify $\mathsf{C}$ as a full subcategory of $\mathsf{Ind}(\mathsf{C})$ via the Yoneda embedding.  If $``\colim_{\mathcal{J}}"Y_{j}$ is another object, then we have
$$\mathsf{Hom}_{\mathsf{Ind}(\mathsf{C})}(``\colim_{\mathcal{I}}"X_{i},``\colim_{\mathcal{J}}"Y_{j})\cong\lim_{\mathcal{I}^{op}}\colim_{\mathcal{J}}\mathsf{Hom}_{\mathsf{C}}(X_{i},Y_{j}).$$ 

\begin{definition}
If $\mathsf{C}$ is a category which has all filtered colimits then we call an object $c\in\mathsf{C}$ \textit{tiny} if $\mathrm{Hom}(c,-):\mathsf{C}\rightarrow\mathsf{sSet}$ commutes with filtered colimits.
\end{definition}

The following  fact will be crucial for us. 
\begin{proposition}\cite{kashiwara-schapira}*{Proposition 6.3.4}
An object of $\mathsf{Ind}(\mathsf{C})$ is tiny if and only if it is isomorphic to an object of $\mathsf{C}$. 
\end{proposition}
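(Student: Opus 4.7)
The plan is to handle the two implications separately, based on the explicit Hom formula stated just before the proposition and on the explicit description of filtered colimits in $\mathsf{Ind}(\mathsf{C})$: a filtered system of ind-objects $\{W_\alpha = ``\colim_{\mathcal{J}_\alpha}" Y_{\alpha,j}\}_{\alpha\in\mathcal{A}}$ has colimit represented by the combined diagram indexed over the Grothendieck construction of $\alpha\mapsto\mathcal{J}_\alpha$, which is again filtered. Both of these preparatory facts are standard consequences of the universal property of the free cocompletion by filtered colimits.

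For the (easy) ``if'' direction, given $c\in\mathsf{C}$ (viewed in $\mathsf{Ind}(\mathsf{C})$ via the Yoneda embedding), the Hom formula specialises to $\mathrm{Hom}_{\mathsf{Ind}(\mathsf{C})}(c,``\colim_{\mathcal{J}}" Y_j)\cong\colim_{\mathcal{J}}\mathrm{Hom}_{\mathsf{C}}(c,Y_j)$, since the left index category is trivial. Combining this with the description of filtered colimits in $\mathsf{Ind}(\mathsf{C})$ recalled above, together with the fact that filtered colimits commute with filtered colimits in $\mathsf{Set}$, the computation
$$\mathrm{Hom}_{\mathsf{Ind}(\mathsf{C})}(c,\colim_\alpha W_\alpha) \cong \colim_{(\alpha,j)}\mathrm{Hom}_{\mathsf{C}}(c,Y_{\alpha,j}) \cong \colim_\alpha\mathrm{Hom}_{\mathsf{Ind}(\mathsf{C})}(c,W_\alpha)$$
exhibits $c$ as tiny.

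For the converse, suppose $X = ``\colim_{\mathcal{I}}" X_i$ is tiny. Since $X$ is itself the canonical filtered colimit (in $\mathsf{Ind}(\mathsf{C})$) of the representables $X_i$, tininess applied to this presentation yields
$$\mathrm{Hom}_{\mathsf{Ind}(\mathsf{C})}(X,X) \cong \colim_{\mathcal{I}}\mathrm{Hom}_{\mathsf{Ind}(\mathsf{C})}(X,X_i).$$
Consequently the identity $\id_X$ factors as $X \xrightarrow{s} X_{i_0} \xrightarrow{p} X$ through some representable $X_{i_0}$, where $p$ is the structural map, exhibiting $X$ as a retract of $X_{i_0}\in\mathsf{C}$ inside $\mathsf{Ind}(\mathsf{C})$. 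The main obstacle, and the only non-formal step, is then promoting this retract presentation to an isomorphism $X\cong X_{i_0}$: the composite $sp\in\mathrm{End}_{\mathsf{C}}(X_{i_0})$ is an idempotent whose splitting is $X$, so this upgrade goes through provided $\mathsf{C}$ is Karoubian. Otherwise one either works within the implicit Karoubian hypothesis of Kashiwara-Schapira or replaces $\mathsf{C}$ by its idempotent completion (which leaves $\mathsf{Ind}(\mathsf{C})$ unchanged); for the categories of interest in this article idempotents split by taking images and kernels, so the conclusion holds in the stated form.
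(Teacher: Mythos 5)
Your proof is correct and is considerably more complete than what the paper offers: the paper merely cites Kashiwara--Schapira and remarks that the claim is ``essentially tautological'' given the explicit $\mathrm{Tot}(D)$ description of filtered colimits in $\mathsf{Ind}(\mathsf{C})$ --- which amounts exactly to your ``if'' direction --- and says nothing about the converse. Your converse argument (evaluate tininess on the canonical presentation $X\cong\colim_{\mathcal{I}}X_i$, factor $\id_X$ through some stage $X_{i_0}$, split the resulting idempotent) is the standard one, and you are right to flag the Karoubian hypothesis: as literally stated for an arbitrary $\mathsf{C}$ the proposition fails, since the tiny objects of $\mathsf{Ind}(\mathsf{C})$ are precisely the retracts of objects of $\mathsf{C}$ --- the splitting of a non-split idempotent $e\colon c\to c$, realised in $\mathsf{Ind}(\mathsf{C})$ as the sequential colimit of $c\xrightarrow{e}c\xrightarrow{e}\cdots$, is tiny but not representable. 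Your resolution is also the right one for this article: the categories to which the proposition is applied ($\mathsf{Ban}_R$, $\mathsf{NMod}_R$, and the other quasi-abelian categories of Section 4) are additive with kernels, so every idempotent splits via $\ker(1-e)$ and the retract is already an object of $\mathsf{C}$. The only point worth making explicit is that full faithfulness of the Yoneda embedding is what lets you regard $e=s\circ p\in\mathrm{End}_{\mathsf{Ind}(\mathsf{C})}(X_{i_0})$ as a morphism of $\mathsf{C}$ and transport its splitting back; you use this silently and it deserves a line.
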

In fact the proof of this proposition is essentially tautological once one has a concrete description of the colimit of a diagram of filtered objects in $\mathsf{Ind}(\mathsf{C})$. Let $\mathcal{I}$ be a filtered category, and $D:\mathcal{I}\rightarrow\mathsf{Ind}(\fC)$ a functor. For each $i\in\mathcal{I}$ write $D(i)=``\colim_{j_{I}\in\mathcal{J}_{i}}"D_{j_{i}}$. Consider the category $\mathrm{Tot}(D)$ whose set of objects is $\coprod_{i\in\mathcal{I}}\mathrm{Ob}(\mathcal{J})_{i}$. A map $j_{i}\rightarrow j'_{i'}$ is a pair $(\alpha,f)$ where $\alpha:i\rightarrow i'$ is a map in $\mathcal{I}$, and $f:D_{j_{i}}\rightarrow D_{j'_{i'}}$ is a map in $\mathsf{C}$ such that the diagram below commutes
\begin{displaymath}
\xymatrix{
D_{j_{i}}\ar[d]\ar[r]^{f} & D_{j'_{i'}}\ar[d]\\
D(i)\ar[r]^{D(\alpha)} & D(i').
}
\end{displaymath}

\begin{proposition}
$``\colim_{\mathrm{Tot}(D)}"D_{j_{i}}$ is the colimit of $D$. 
\end{proposition}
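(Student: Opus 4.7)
The plan is to verify that the putative colimit $E \defeq ``\colim_{\mathrm{Tot}(D)}"D_{j_{i}}$ is a well-defined object of $\mathsf{Ind}(\fC)$ (i.e.\ $\mathrm{Tot}(D)$ is filtered), then construct a canonical cocone, and finally check its universal property. Throughout the proof I will repeatedly use that each $D_{j_{i}}$, being an object of $\fC \subset \mathsf{Ind}(\fC)$, is tiny, so that any morphism $D_{j_{i}} \to ``\colim_{j'_{i'} \in \mathcal{J}_{i'}}" D_{j'_{i'}}$ in $\mathsf{Ind}(\fC)$ factors through some single $D_{j'_{i'}}$ (and any two such factorisations become equal after passing to a sufficiently large index).

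\emph{Step 1: $\mathrm{Tot}(D)$ is filtered.} Given two objects $j_{i}, j'_{i'}$, choose $i'' \in \mathcal{I}$ with maps $\alpha \colon i \to i''$ and $\alpha' \colon i' \to i''$ (using filteredness of $\mathcal{I}$). The composites $D_{j_{i}} \to D(i) \xrightarrow{D(\alpha)} D(i'')$ and $D_{j'_{i'}} \to D(i') \xrightarrow{D(\alpha')} D(i'')$ factor, by tinyness, through some $D_{k_{i''}}$ and $D_{k'_{i''}}$ respectively; then filteredness of $\mathcal{J}_{i''}$ produces a common target $D_{\tilde{\jmath}_{i''}}$ receiving both, yielding arrows $(\alpha,f) \colon j_{i} \to \tilde{\jmath}_{i''}$ and $(\alpha',f') \colon j'_{i'} \to \tilde{\jmath}_{i''}$ in $\mathrm{Tot}(D)$. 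Coequalising parallel arrows $(\alpha_{1},f_{1}), (\alpha_{2},f_{2}) \colon j_{i} \to j'_{i'}$ proceeds similarly: first use filteredness of $\mathcal{I}$ to equalise $\alpha_{1},\alpha_{2}$ up to some $\beta \colon i' \to i''$, then use tinyness to equalise $\beta \circ f_{1}$ and $\beta \circ f_{2}$ after enlarging $\mathcal{J}_{i''}$.

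\emph{Step 2: Canonical cocone.} For each $i \in \mathcal{I}$ the inclusion $\mathcal{J}_{i} \hookrightarrow \mathrm{Tot}(D)$ (sending $j_{i} \mapsto j_{i}$ with identity on $i$) induces, by the universal property of filtered colimits in $\mathsf{Ind}(\fC)$, a canonical morphism $\eta_{i} \colon D(i) = ``\colim_{\mathcal{J}_{i}}"D_{j_{i}} \to E$. Naturality in $i$ follows directly from the defining commutativity in $\mathrm{Tot}(D)$: every map $\alpha \colon i \to i'$ in $\mathcal{I}$ together with the tiny-ness factorisation of $D_{j_{i}} \to D(i) \to D(i')$ through some $D_{j'_{i'}}$ produces a morphism $(\alpha,f)$ in $\mathrm{Tot}(D)$, making the square $D(\alpha)\circ\eta_{i} = \eta_{i'}\circ D(\alpha)$ commute.

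\emph{Step 3: Universal property.} Suppose $Y = ``\colim_{\mathcal{K}}"Y_{k}$ carries a cocone $\phi_{i} \colon D(i) \to Y$ natural in $i$. For each object $j_{i}$ of $\mathrm{Tot}(D)$ set $\psi_{j_{i}} \defeq \phi_{i}|_{D_{j_{i}}} \colon D_{j_{i}} \to Y$. The compatibility condition for morphisms $(\alpha,f) \colon j_{i} \to j'_{i'}$ reduces, after applying the naturality of $\phi$, to the commutativity of
\[
\xymatrix{
D_{j_{i}} \ar[r]^{f} \ar[d] & D_{j'_{i'}} \ar[d] \\
D(i) \ar[r]^{D(\alpha)} & D(i'),
}
\]
which is exactly the defining condition of $\mathrm{Tot}(D)$. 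Hence $\{\psi_{j_{i}}\}$ is a cocone on the $\mathrm{Tot}(D)$-diagram, inducing a unique $\Psi \colon E \to Y$ with $\Psi \circ \eta_{i} = \phi_{i}$. Uniqueness follows since any map $E \to Y$ is determined by its restrictions $D_{j_{i}} \to Y$ via the Hom-formula recalled above.

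The only real subtlety is step 1, and more specifically the use of tiny-ness to promote morphisms into $\mathsf{Ind}(\fC)$-objects back down to morphisms in $\fC$; once this is in hand the remainder of the argument is essentially a bookkeeping exercise applying the Hom-formula for $\mathsf{Ind}(\fC)$.
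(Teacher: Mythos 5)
Your proof is correct, and it supplies exactly the verification the paper omits: the paper states this proposition without proof, treating it as a tautological consequence of the Hom-formula for $\mathsf{Ind}(\fC)$ (following Kashiwara--Schapira), and your three steps — filteredness of $\mathrm{Tot}(D)$ via tinyness of objects of $\fC$, the canonical cocone, and the universal property — are the standard way to make that precise. One small slip: in Step 2 the displayed identity $D(\alpha)\circ\eta_{i} = \eta_{i'}\circ D(\alpha)$ does not typecheck (the sources and targets do not match); the cocone condition you actually verify is $\eta_{i'}\circ D(\alpha) = \eta_{i}$, and the surrounding argument establishes precisely that.
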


Categories of inductive systems, such as \(\mathsf{Ind}(\mathsf{Ban}_R)\), are usually too large for explicit computations. However, in many categories \(\fC\) of interest to us, it is possible to replace it with a concrete category which is derived equivalent to $\mathsf{Ind}(\fC)$. We shall say that a diagram $D:\mathcal{I}\rightarrow\mathrm{C}$ is \textit{monomorphic} if for each map $\alpha:i\rightarrow j$ in $\mathcal{I}$, $D(\alpha):D(i)\rightarrow D(j)$ is a monomorphism in $\mathrm{C}$. A \textit{monomorphic colimit} is a colimit of a monomorphic diagram. An object $X$ of $\mathsf{Ind}(\fC)$ is said to be \textit{essentially monomorphic} if it has a presentation as a colimit $X\cong``\colim_{\mathcal{I}}"X_{i}$ of a monomorphic diagram in $D:\mathcal{I}\rightarrow\fC$. We have the following general result:

\begin{proposition}[Proposition 3.32, \cite{bambozzi2016dagger}]
Let $\fC$ be a category equipped with a faithful functor $U:\fC\rightarrow \mathsf{Sets}$ which preserves and reflects monomorphisms. Then the functor $U_{\mathsf{Ind}(\fC)}: \mathsf{Ind}^{m}(\fC)\rightarrow \mathsf{Sets}$, $``\colim_{\mathcal{I}}"X_{i}\mapsto \colim_{\mathcal{I}}U(X_{i})$ is faithful, that is, $\mathsf{Ind}^{m}(\fC)$ is a concrete category.
\end{proposition}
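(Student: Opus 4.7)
The plan is to work directly with the explicit Hom formula $\mathsf{Hom}_{\mathsf{Ind}(\fC)}(``\colim_{\mathcal{I}}"X_i,``\colim_{\mathcal{J}}"Y_j) \cong \lim_{\mathcal{I}^{op}}\colim_{\mathcal{J}}\mathsf{Hom}_\fC(X_i,Y_j)$ recalled in the interlude, and to exploit the fact that a filtered colimit of monomorphisms in $\mathsf{Sets}$ is a directed union whose canonical maps from the pieces are injective. Since $U$ preserves monomorphisms by hypothesis, applying $U$ pointwise to a monomorphic presentation of a target ind-object yields exactly such a filtered monomorphic diagram in $\mathsf{Sets}$, which is the only input needed to get injectivity on Hom-sets.

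Concretely, fix $X,Y\in\mathsf{Ind}^m(\fC)$ and choose monomorphic presentations $X\cong``\colim_{\mathcal{I}}"X_i$ and $Y\cong``\colim_{\mathcal{J}}"Y_j$ guaranteed by the definition of essentially monomorphic. Let $f,g\colon X\to Y$ be two morphisms with $U_{\mathsf{Ind}(\fC)}(f)=U_{\mathsf{Ind}(\fC)}(g)$. For each $i\in\mathcal{I}$, the Hom formula presents the restrictions of $f$ and $g$ to $X_i$ as elements of $\colim_{\mathcal{J}}\mathsf{Hom}_\fC(X_i,Y_j)$; by filteredness of $\mathcal{J}$ we may choose honest representatives $f_i,g_i\colon X_i\to Y_{j(i)}$ landing in a common $Y_{j(i)}$. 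Because $U$ preserves monomorphisms, the diagram $j\mapsto U(Y_j)$ is a filtered monomorphic diagram in $\mathsf{Sets}$, so the canonical map $U(Y_{j(i)})\hookrightarrow\colim_{\mathcal{J}}U(Y_j)=U_{\mathsf{Ind}(\fC)}(Y)$ is injective. The assumption $U_{\mathsf{Ind}(\fC)}(f)=U_{\mathsf{Ind}(\fC)}(g)$ says that the two composites $U(X_i)\to U(Y_{j(i)})\hookrightarrow U_{\mathsf{Ind}(\fC)}(Y)$ coincide, so by that injectivity we deduce $U(f_i)=U(g_i)$ in $\mathsf{Sets}$. Faithfulness of $U\colon\fC\to\mathsf{Sets}$ then forces $f_i=g_i$ in $\fC$, hence $f$ and $g$ give the same element of $\colim_{\mathcal{J}}\mathsf{Hom}_\fC(X_i,Y_j)$ for every $i$; by the limit formula $f=g$.

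The only genuine subtlety, and the reason the proposition is restricted to $\mathsf{Ind}^m(\fC)$ rather than stated for all of $\mathsf{Ind}(\fC)$, is the availability of a monomorphic presentation of the \emph{target}: without it, the map $U(Y_{j(i)})\to\colim U(Y_j)$ need not be injective, and the step "lift $U(f)=U(g)$ to equality $U(f_i)=U(g_i)$" would fail. Note that the argument uses only the \emph{preservation} of monomorphisms by $U$ and its faithfulness; the reflection of monomorphisms in the hypothesis is not invoked here and is presumably used elsewhere in \cite{bambozzi2016dagger} to compare the various notions of monomorphism between $\fC$, $\mathsf{Ind}^m(\fC)$ and $\mathsf{Sets}$.
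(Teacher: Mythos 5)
Your argument is correct. The paper itself does not prove this proposition --- it is quoted from Bambozzi--Ben-Bassat (Proposition 3.32 of \cite{bambozzi2016dagger}) without proof --- so there is no in-paper argument to compare against; your proof is the standard one and is complete: the Hom formula reduces faithfulness to injectivity on each $\colim_{\mathcal{J}}\mathsf{Hom}_{\fC}(X_i,Y_j)$, the monomorphic presentation of the \emph{target} together with preservation of monomorphisms gives injectivity of $U(Y_{j(i)})\to\colim_{\mathcal{J}}U(Y_j)$, and faithfulness of $U$ finishes. Your closing observations --- that only the target's monomorphic presentation is used, and that reflection of monomorphisms plays no role in this particular statement --- are also accurate.
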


The explicit description of colimits in $\mathsf{Ind}(\mathrm{C})$ above immediately gives the following. 

\begin{corollary}
The category $\mathsf{Ind}^{m}(\fC)$ is closed under monomorphic filtered colimits in $\mathsf{Ind}(\fC)$. In particular if $c$ is an object of $\fC$ then the functor
$$\mathsf{Hom}_{\mathsf{Ind}^{m}(\fC)}(c,-)$$
commutes with monomorphic filtered colimits.
\end{corollary}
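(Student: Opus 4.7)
The plan is to invoke the explicit description of filtered colimits in $\mathsf{Ind}(\fC)$ given by the preceding proposition. Given a filtered monomorphic diagram $D\colon \mathcal{I} \to \mathsf{Ind}^m(\fC)$, choose for each $i$ a monomorphic presentation $D(i) = ``\colim_{j_{i} \in \mathcal{J}_{i}}" D_{j_{i}}$, so that the colimit of $D$ in $\mathsf{Ind}(\fC)$ is realised as $``\colim_{\mathrm{Tot}(D)}" D_{j_{i}}$. The first claim then reduces to showing that the indexing diagram $\mathrm{Tot}(D) \to \fC$, $j_{i} \mapsto D_{j_{i}}$, is itself monomorphic, since this certifies the colimit as an essentially monomorphic ind-object, hence an object of $\mathsf{Ind}^m(\fC)$.

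For this monomorphicity check, a morphism $(\alpha, f)\colon j_{i} \to j'_{i'}$ in $\mathrm{Tot}(D)$ is by construction a map $f\colon D_{j_{i}} \to D_{j'_{i'}}$ in $\fC$ together with a commuting square in $\mathsf{Ind}(\fC)$ whose vertical legs are the colimit inclusions $\iota_{j_{i}}\colon D_{j_{i}} \to D(i)$ and $\iota_{j'_{i'}}\colon D_{j'_{i'}} \to D(i')$ and whose bottom edge is $D(\alpha)$. The colimit inclusions are monomorphisms in $\mathsf{Ind}(\fC)$ (this follows directly from the explicit Hom formula once one observes that the transition maps of a monomorphic presentation induce injective maps on Hom sets), and $D(\alpha)$ is a monomorphism by hypothesis; hence $D(\alpha) \circ \iota_{j_{i}} = \iota_{j'_{i'}} \circ f$ is a composition of monomorphisms, and therefore a monomorphism. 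Right-cancelling the monomorphism $\iota_{j'_{i'}}$ then forces $f$ to be a monomorphism in $\mathsf{Ind}(\fC)$, and since $\fC \hookrightarrow \mathsf{Ind}(\fC)$ is fully faithful, in $\fC$.

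For the Hom statement, the earlier proposition identifies $c \in \fC$ as a tiny object of $\mathsf{Ind}(\fC)$, so $\mathrm{Hom}_{\mathsf{Ind}(\fC)}(c, -)$ preserves all filtered colimits in $\mathsf{Ind}(\fC)$. By the first part, a monomorphic filtered colimit in $\mathsf{Ind}^m(\fC)$ is computed as the colimit in $\mathsf{Ind}(\fC)$, and the fully faithful inclusion $\mathsf{Ind}^m(\fC) \hookrightarrow \mathsf{Ind}(\fC)$ yields $\mathrm{Hom}_{\mathsf{Ind}^m(\fC)}(c, X) = \mathrm{Hom}_{\mathsf{Ind}(\fC)}(c, X)$ for every $X \in \mathsf{Ind}^m(\fC)$. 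Combining these observations gives the desired commutation. The main (and rather mild) obstacle throughout is the subclaim that colimit inclusions into a monomorphic ind-object are monomorphisms; everything else is bookkeeping with the $\mathrm{Tot}(D)$ formula and the full faithfulness of the relevant inclusions.
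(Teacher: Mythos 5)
Your proof is correct and follows exactly the route the paper intends: the paper declares the corollary ``immediate'' from the explicit $\mathrm{Tot}(D)$ description of filtered colimits in $\mathsf{Ind}(\fC)$, and you have simply filled in the details, including the one genuinely nontrivial point that the structure maps $D_{j_i}\to D(i)$ of a monomorphic presentation are monomorphisms in $\mathsf{Ind}(\fC)$ (correctly justified via the Hom formula), which lets you right-cancel and conclude that $\mathrm{Tot}(D)\to\fC$ is monomorphic.
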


\subsection{Bornological modules of convex type}

The following lemma summarises why \(\mathsf{Ind}(\mathsf{Ban}_R)\) is a desirable setting for relative geometry, in a sense that was motivated at the start of this section:

\begin{lemma}\label{lem:ind_ban_bicomplete}
The category \(\mathsf{Ind}(\mathsf{Ban}_R)\) of formal inductive systems is a bicomplete, closed, symmetric monoidal category. 
\end{lemma}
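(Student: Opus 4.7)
The plan is to deduce each of the three properties from the corresponding properties of $\mathsf{Ban}_R$, using the general machinery of ind-completions recalled in the previous subsection (cf.\ \cite{kashiwara-schapira}*{Chapter 6}). For bicompleteness, the starting observation is that $\mathsf{Ban}_R$ admits finite limits (computed on underlying modules with an appropriate seminorm) and finite colimits (algebraic colimit followed by separated completion). By the general theory of ind-completions, $\mathsf{Ind}(\mathsf{Ban}_R)$ is therefore cocomplete: every small colimit decomposes into filtered colimits (formal, and computed concretely via the $\mathrm{Tot}(D)$ description recalled above) and finite colimits (inherited from $\mathsf{Ban}_R$). Small limits are handled analogously, with small products expressed as filtered colimits of finite sub-products.

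For the symmetric monoidal structure, I would extend $\haotimes_R$ from $\mathsf{Ban}_R$ to $\mathsf{Ind}(\mathsf{Ban}_R)$ by the Day-convolution / left-Kan-extension formula
$$``\colim_{\mathcal{I}}" X_i \,\otimes\, ``\colim_{\mathcal{J}}" Y_j \;\defeq\; ``\colim_{\mathcal{I}\times\mathcal{J}}"\, X_i \haotimes_R Y_j,$$
with unit the constant ind-system at $R$. Since $\mathcal{I}\times\mathcal{J}$ is again filtered and the construction is functorial in both arguments, the associator, unitor, and symmetry of $\mathsf{Ban}_R$ propagate levelwise to the required coherences on $\mathsf{Ind}(\mathsf{Ban}_R)$.

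For closedness, the internal hom is defined in two stages. For a representable source $Y \in \mathsf{Ban}_R$, I set $\underline{\Hom}(Y,\, ``\colim_{\mathcal{K}}"Z_k) \defeq ``\colim_{\mathcal{K}}"\, \underline{\Hom}_{\mathsf{Ban}_R}(Y, Z_k)$, and verify the tensor-hom adjunction using that $Y$ is tiny in $\mathsf{Ind}(\mathsf{Ban}_R)$, so that $\mathsf{Hom}(Y,-)$ commutes with filtered colimits by the corollary recalled above. For a general source $Y = ``\colim_{\mathcal{J}}" Y_j$, I put $\underline{\Hom}(Y, Z) \defeq \lim_{\mathcal{J}^{\mathrm{op}}} \underline{\Hom}(Y_j, Z)$, which exists by bicompleteness; the adjunction for arbitrary $Y$ then reduces to the representable case via the universal property of colimits. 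The main obstacle, as I see it, is not conceptual but organisational: one must verify that these assignments are independent of the chosen presentation of an ind-object, and that the associator, symmetry, and unit/counit satisfy all coherence axioms. Both tasks are handled uniformly by phrasing the construction as the symmetric-monoidal left Kan extension of $\haotimes_R$ along the Yoneda embedding $\mathsf{Ban}_R \hookrightarrow \mathsf{Ind}(\mathsf{Ban}_R)$, whose universal property supplies the coherences for free; any set-theoretic worries are neutralised by the universe convention fixed earlier.
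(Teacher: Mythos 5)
Your construction of the monoidal structure and the internal hom is essentially the argument the paper implicitly relies on: the proof in the text simply cites Meyer's Propositions 1.135--1.136, and the explicit formulas you write down for the tensor product and the hom of ind-objects reappear verbatim in Lemma \ref{lem:indmon}. The cocompleteness argument is also fine. The genuine gap is in \emph{completeness}. The general theory of ind-completions gives small colimits in \(\mathsf{Ind}(\fC)\) from finite colimits in \(\fC\), but it only gives \emph{finite} limits in \(\mathsf{Ind}(\fC)\) from finite limits in \(\fC\); to obtain small limits by pure formal nonsense one would need small limits in \(\fC\) itself, which \(\mathsf{Ban}_R\) does not have (an infinite product of Banach modules is not Banach). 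Your reduction ``small products expressed as filtered colimits of finite sub-products'' is not valid: an infinite product is the \emph{cofiltered limit} of its finite sub-products along the projections, not a filtered colimit of them, and a cofiltered limit is precisely the kind of limit whose existence is in question.

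The missing ingredient is specific to Banach modules and is the content of the adaptation of Meyer's Proposition 1.135 that the paper invokes: for a family \((M_i)_{i\in I}\) and each family of scales \(r=(r_i)\in\R_{>0}^{I}\) one forms the contracting product \(\prod^{\le 1}_{i\in I}(M_i)_{r_i}\) of the rescaled modules --- a single Banach module, as in Lemma \ref{lem:contracting} --- and the product in \(\mathsf{Ind}(\mathsf{Ban}_R)\) is the formal filtered colimit of these over the directed set of scales. The rescaling is unavoidable because a family of bounded maps \(f_i\colon X\to M_i\) carries no uniform bound on the norms \(\norm{f_i}\), so it does not factor through any single contracting product. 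Note finally that your internal hom for a non-representable source is defined as the cofiltered limit \(\lim_{\mathcal{J}^{\mathrm{op}}}\underline{\Hom}(Y_j,Z)\), so the closedness claim inherits the same dependence on this construction; once small products are supplied as above, the rest of your argument goes through.
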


\begin{proof}
That \(\mathsf{Ind}(\mathsf{Ban}_R)\) is a closed, symmetric monoidal category follows from the more general result that if \(\fC\) is a closed, symmetric monoidal category, then so is the category \(\mathsf{Ind}(\mathsf{C})\) (see \cite{Meyer:HLHA}*{Proposition 1.136}, for instance). The claim about bicompleteness follows from adapting \cite{Meyer:HLHA}*{Proposition 1.135} to the more general context of Banach \(R\)-modules, and this goes through without any obstruction. \qedhere
\end{proof}

\begin{remark}
The category \(\mathsf{Ind}(\mathsf{Ban}_R)\) embeds fully faithfully into the larger categories \(\mathsf{Ind}(\mathsf{NMod}_R)\) and \(\mathsf{Ind}(\mathsf{NMod}_R^{1/2})\), of inductive systems of normed and semi-normed \(R\)-modules. Since the categories \(\mathsf{NMod}_R\) and \(\mathsf{NMod}_R^{1/2}\) are closed and have finite limits and colimits, we can again bootstrap the arguments in \cite{Meyer:HLHA}*{Proposition 1.135, Proposition 1.136} to conclude that \(\mathsf{Ind}(\mathsf{NMod}_R)\) and \(\mathsf{Ind}(\mathsf{NMod}_R^{1/2})\) are closed, symmetric monoidal bicomplete categories. 
\end{remark}

When \(R\) is a non-trivially valued Banach field, one can give an intrinsic description of the subcategory \(\mathsf{Ind}^m(\mathsf{Ban}_R)\subseteq \mathsf{Ind}(\mathsf{Ban}_R)\) of essentially monomorphic objects in terms of \textit{bornologies}, which we now discuss.

\begin{definition}\label{def:bornology}
A \textit{bornology} on a set \(X\) is a collection \(\mathfrak{B}\) of its subsets, satisfying the following:

\begin{itemize}
\item for every \(x \in X\), \(\{x\} \in \mathfrak{B}\);
\item if \(S\), \(T \in \mathfrak{B}\), then \(S \cup T \in \mathfrak{B}\);
\item if \(S \in \mathfrak{B}\), and \(T \subseteq S\), then \(T \in \mathfrak{B}\). 
\end{itemize}

\end{definition}

We call members of a bornology on a set its \textit{bounded subsets}. Let \((X,\mathfrak{B}_X)\) and \((Y,\mathfrak{B}_Y)\) be bornological sets. A function \(f \colon X \to Y\) is called \textit{bounded} if it maps bounded subsets of \(X\) to bounded subsets of \(Y\). Bornological sets and bounded maps form a category, $\mathsf{bSet}$.  

If \((X,\mathfrak{B}_X)\) and \((Y,\mathfrak{B}_Y)\) are bornological sets, then their product is the bornological set whose underlying set is $X\times Y$ such that a set $B\subset X\times Y$ is bounded if there are bounded subsets $B_{1}\in\mathfrak{B}_{X}, B_{2}\in\mathfrak{B}_{Y}$ such that $B\subset B_{1}\times B_{2}$.

If $R$ is a Banach ring and $M$ a semi-normed $R$-module, then $M$ can be equipped with a natural bornology called the \textit{von Neumann bornology}, whereby a subset $B\subset M$ is bounded if the restriction of the semi-norm on $M$ to $B$ is a bounded function. In this way, $R$ can itself be equipped with a bornology. 

\begin{definition}\label{def:bornological_module}
Let \(R\) be a Banach ring. A \textit{bornological \(R\)-module} \(M\) is an \(R\)-module with a bornology such that the scalar multiplication and addition maps
$$R\times M\rightarrow M$$
$$M\times M\rightarrow M$$
are bounded functions.
\end{definition}

We denote by $\mathsf{bMod}_{R}$ the subcategory of $\mathsf{bSet}$ consisting of bornological $R$-modules and bounded $R$-linear maps. The categories $\mathsf{Born}^{\frac{1}{2}}_{R}\defeq\mathsf{Ind}^{m}(\mathsf{NMod}_{R}^{\frac{1}{2}})$, $\mathsf{Born}_{R}\defeq\mathsf{Ind}^{m}(\mathsf{NMod}_{R})$, and $\mathsf{CBorn}_{R}\defeq\mathsf{Ind}^{m}(\mathsf{Ban}_{R})$ will be called the categories of bornological $R$-modules of convex type, separated bornological $R$-modules of convex type, and complete $R$-modules of convex type respectively. To relate inductive systems with bornologies, we first note that equipping a semi-normed $R$-module with the von Neumann bornology defines a canonical functor
$$b:\mathsf{NMod}_R^{1/2}\rightarrow\mathsf{bMod}_{R}.$$

\begin{proposition}
Let \(R\) be a Banach ring. There is a faithful functor $\mathsf{Ind}^m(\mathsf{NMod}_R)\rightarrow\mathsf{bMod}_{R}$. If $R$ is a Banach field then this functor is fully faithful.
\end{proposition}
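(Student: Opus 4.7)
The plan is to construct a candidate functor $F\colon\mathsf{Ind}^m(\mathsf{NMod}_R)\to\mathsf{bMod}_R$ explicitly on objects and morphisms, deduce faithfulness from the concreteness of $\mathsf{Ind}^m(\mathsf{NMod}_R)$ already established in the preceding proposition, and then---when $R$ is a Banach field---exhibit an inverse on each hom-set by lifting a bounded map of the underlying bornological modules through the individual normed components.

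On objects, for $X=``\colim_{\mathcal{I}}"X_{i}$ with monomorphic structure maps, set $F(X)$ to be the $R$-module $\colim_{\mathcal{I}}X_{i}=\bigcup_{i}X_{i}$ (well-defined as a union because the transition maps $X_{i}\to X_{j}$ are injective) equipped with the bornology whose bounded sets are those $B$ that are contained in some $X_{i}$ and von Neumann bounded there. Because $\mathcal{I}$ is filtered, any two bounded sets fit into a common $X_{i}$, so the addition and scalar multiplication maps, already bounded for each semi-norm, are bounded for the induced bornology. On morphisms, the hom-formula
\[\mathsf{Hom}_{\mathsf{Ind}^m(\mathsf{NMod}_R)}(X,Y)\cong\lim_{\mathcal{I}^{\op}}\colim_{\mathcal{J}}\mathsf{Hom}_{\mathsf{NMod}_R}(X_{i},Y_{j})\]
presents each morphism as a compatible system of equivalence classes of bounded maps $X_{i}\to Y_{j(i)}$, which assemble to a bounded $R$-linear map on the colimit.

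Faithfulness is then immediate: the forgetful functor $\mathsf{bMod}_R\to\mathsf{Sets}$ is faithful by construction, and its composite with $F$ is precisely the functor $U_{\mathsf{Ind}(\mathsf{NMod}_R)}$ from the preceding concreteness proposition applied to $\fC=\mathsf{NMod}_R$; the latter is faithful, so $F$ is too.

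For full faithfulness when $R$ is a Banach field, I will produce a preimage for every bounded $R$-linear map $f\colon F(X)\to F(Y)$. For each $i\in\mathcal{I}$, the unit ball $B_{i}\subset X_{i}$ is bounded in $F(X)$, so $f(B_{i})$ is bounded in $F(Y)$; using the injectivity of $Y_{j}\hookrightarrow F(Y)$ together with the definition of the bornology, this forces $f(B_{i})$ to lie inside some $Y_{j(i)}$ as a von Neumann bounded subset. This is the expected main obstacle, and it is where the field hypothesis enters: picking $\lambda\in R$ with $|\lambda|>1$, every $x\in X_{i}$ can be written as $\lambda^{n}b$ for some integer $n$ and $b\in B_{i}$, so $f(x)=\lambda^{n}f(b)\in Y_{j(i)}$, and hence $f(X_{i})\subseteq Y_{j(i)}$. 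The induced restriction $f_{i}\colon X_{i}\to Y_{j(i)}$ is $R$-linear and sends the unit ball into a bounded subset of a normed module over a non-trivially valued field, so the standard argument produces a finite operator norm; thus $f_{i}\in\mathsf{Hom}_{\mathsf{NMod}_R}(X_{i},Y_{j(i)})$. The family $\{f_{i}\}$ is visibly compatible under the transition maps of $X$ and, through the hom-formula above, defines a morphism $\tilde f\colon X\to Y$ in $\mathsf{Ind}^m(\mathsf{NMod}_R)$ whose image under $F$ is $f$. Combined with faithfulness, this shows $F$ is fully faithful.
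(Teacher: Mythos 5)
Your proof is correct and follows essentially the same route as the paper's: the same inductive-limit bornology on objects, and for fullness the same key step of using a scalar $\lambda$ with $\abs{\lambda}>1$ to show that a bounded map sending the unit ball of $X_i$ into some $Y_{j(i)}$ must send all of $X_i$ into $Y_{j(i)}$. The only cosmetic differences are that you derive faithfulness from the concreteness proposition for $\mathsf{Ind}^m$ rather than from injectivity of each $\mathsf{Hom}(N,M_i)\to\mathsf{Hom}(b(N),b(M))$, and you run the fullness argument for a general ind-object source instead of reducing to a representable one; both variants are fine.
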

\begin{proof}
Let $``\colim_{\mathcal{I}}"X_{i}$ be an essentially monomorphic inductive system in \(\mathsf{Ind}^m(\mathsf{NMod}^{\frac{1}{2}}_R)\), where \(I\) is a directed set. Then its inductive limit \(\varinjlim_i M_i = \bigcup_{i \in I} M_i\) with the \textit{inductive limit bornology} - that is, a subset is bounded if and only if it is contained in the image of \(M_i\) for some \(i \in I\) - is a bornological \(R\)-module. This construction is functorial, and we still denote this functor by $b$. To prove that this functor is faithful it suffices to prove that for any semi-normed $R$-module $N$ the map
$$\colim_{\mathcal{I}}\mathsf{Hom}_{\mathsf{NMod}^{\frac{1}{2}}_R}(N,M_{i})\rightarrow\mathsf{Hom}_{\mathsf{bMod}_{R}}(b(N),b(M))$$
is injective. Since each map $\mathsf{Hom}_{\mathsf{NMod}^{\frac{1}{2}}_R}(N,M_{i})\rightarrow\mathsf{Hom}_{\mathsf{bMod}_{R}}(b(N),b(M))$ is injective, so is the map from the colimit.

For the final claim, in the case that $R=k$ is a non-trivially valued Banach field, we need to show that the map $\colim_{\mathcal{I}}\mathsf{Hom}_{\mathsf{NMod}^{\frac{1}{2}}_R}(N,M_{i})\rightarrow\mathsf{Hom}_{\mathsf{bMod}_{R}}(b(N),b(M))$ is an epimorphism. Let $f:b(N)\rightarrow b(M)$ be a bounded map. The ball $B_{N}(0,1)$ of radius $1$ in $N$ is bounded. Thus $f(B_{N}(0,1))$ is contained in a bounded set in $b(M)$, i.e. it there is some $i$ and some $r_{i}>0$ such that $f(B_{N}(0,1))\subset B_{M_{i}}(0,r_{i})$. Since $f$ is $R$-linear, $f$ maps the span of $B_{N}(0,1)$, namely $N$, to the span of $B_{M_{i}}(0,r_{i})$, namely $M_{i}$. Thus $f$ factors through a bounded map $b(N)\rightarrow b(M_{i})$.
\end{proof}

If $R$ is a non-trivially valued Banach field, then an object $M\in\mathsf{bMod}_{R}$ is in the essential image of $b:\mathsf{Ind}^m(\mathsf{NMod}_R)\rightarrow\mathsf{bMod}_{R}$ precisely when the absolutely convex hull of any bounded set is bounded by \cite{bambozzi2014generalization}*{Chapter 1}.

\begin{remark}
Over a non-Archimedean ring $R$ it is sensible to require that bornological modules over $R$ satisfy the assumption that the $R$-span of any bounded subset is bounded. In this case, there is an equivalence of categories between bornological \(R\)-modules and essentially monomorphic inductive systems of \(R\)-modules. However, the semi-norm on the \(R\)-modules is additional structure. Bornological analysis is extensively developed in  \cites{CCMT,Cortinas-Meyer-Mukherjee:NAHA, Meyer-Mukherjee:HA}, where the authors in particular discuss the relationships between bornologies and inductive systems over the \(p\)-adic integers \(\Z_p\), or more generally, complete discrete valuation rings. The main reason for the difference between the two approaches to bornologies stems from Remark \ref{rem:absolute_homogeneity} - by allowing for absolute homogeneity, the finite field \(\mathbb{F}_p\) (viewed as a complete bornological \(\Z_p\)-module) cannot be expressed as an object of \(\mathsf{Ind}(\mathsf{Ban}_{\Z_p})\).
\end{remark}

The categories \(\mathsf{Born}_R\) and \(\mathsf{CBorn}_R\) still share the same appealing properties as \(\mathsf{Ind}(\mathsf{NMod}_R)\) and \(\mathsf{Ind}(\mathsf{Ban}_R)\), namely:

\begin{lemma}\label{lem:bornologies_bicomplete}
The categories \(\mathsf{Born}_R\) and \(\mathsf{CBorn}_R\) are closed, symmetric monoidal categories with the projective and completed projective tensor products, respectively. They are both bicomplete.
\end{lemma}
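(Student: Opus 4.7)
The plan is to deduce both claims from the corresponding facts for the ambient ind-categories $\mathsf{Ind}(\mathsf{NMod}_R)$ and $\mathsf{Ind}(\mathsf{Ban}_R)$, which are already known to be bicomplete and closed symmetric monoidal by Lemma \ref{lem:ind_ban_bicomplete} and the remark following it. The central technical point will be showing that the inclusion $\mathsf{Ind}^m(\fC)\hookrightarrow\mathsf{Ind}(\fC)$ is a reflective subcategory for $\fC\in\{\mathsf{NMod}_R,\mathsf{Ban}_R\}$. The reflector $(-)^m$ sends an ind-object $``\colim_{\mathcal{I}}"X_i$ to the inductive system obtained by replacing each transition map $X_i\to X_j$ by its image factorisation; concretely, one uses the directed poset of pairs $(i,j)$ with $i\leq j$ and takes the images $\im(X_i\to X_j)$ (with induced seminorm, or closure thereof in the Banach case) as the new terms. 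Once $(-)^m$ is established as left adjoint to the inclusion, all formal categorical properties follow.

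For bicompleteness, I would first invoke the Corollary above to handle monomorphic filtered colimits directly. For arbitrary limits, the key observation is that limits preserve monomorphisms, so a limit (computed in $\mathsf{Ind}(\fC)$) of a diagram in $\mathsf{Ind}^m(\fC)$ is automatically essentially monomorphic. For general colimits — coproducts and coequalizers in particular — one takes the colimit in the ambient $\mathsf{Ind}(\fC)$ and then applies the reflector $(-)^m$. Standard general nonsense (reflective subcategories of bicomplete categories are bicomplete) then yields bicompleteness of $\mathsf{Born}_R$ and $\mathsf{CBorn}_R$.

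For the closed symmetric monoidal structure, recall that the tensor product in $\mathsf{Ind}(\fC)$ is given on generators by $(``\colim_{\mathcal{I}}"X_i)\otimes(``\colim_{\mathcal{J}}"Y_j)=``\colim_{\mathcal{I}\times\mathcal{J}}"(X_i\otimes Y_j)$. Composing with the reflector yields the required tensor product on $\mathsf{Ind}^m(\fC)$, and the internal hom is inherited from the ambient $\mathsf{Ind}(\fC)$ via the adjunction $\Hom_{\mathsf{Ind}^m(\fC)}(X\otimes^m Y,Z)\cong\Hom_{\mathsf{Ind}(\fC)}(X\otimes Y,Z)$ for $Z\in\mathsf{Ind}^m(\fC)$, using that the target of any bounded map into an essentially monomorphic object factors through the reflector. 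Symmetry, associativity, and unitality all transport automatically.

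The main obstacle is the subtle behaviour of the (completed) projective tensor product with respect to monomorphisms, and in particular verifying that the reflector $(-)^m$ interacts correctly with $\haotimes_R$: one must check that applying $(-)^m$ before or after tensoring produces the same ind-object up to canonical isomorphism. In the Banach setting this requires knowing that the completion of the image of a bounded morphism gives the correct monomorphic replacement inside $\mathsf{Ban}_R$, which in the Archimedean case uses Hahn–Banach-type separation and has a standard non-Archimedean analogue under the assumption of non-trivial valuation. Once this compatibility is in hand, both the closed symmetric monoidal structure and bicompleteness assemble formally, proving the lemma.
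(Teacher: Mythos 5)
Your overall strategy --- exhibit $\mathsf{Ind}^m(\fC)$ as a reflective subcategory of $\mathsf{Ind}(\fC)$ and transport bicompleteness and the closed symmetric monoidal structure along the reflector --- is sound, and it is essentially the route the paper takes elsewhere (the proof of Lemma \ref{lem:indmon} obtains the monoidal structure on $\mathsf{Ind}^m(\mathcal{E})$ from reflectivity in exactly this way; for the present lemma the paper only cites \cite{ben2020fr}). The problem is that the one concrete step on which everything else rests fails as stated. Replacing each transition map by its image factorisation does not produce a monomorphic system: for $(i,j)\le(i',j')$ the induced map $\im(X_i\to X_j)\to\im(X_{i'}\to X_{j'})$ can still fail to be injective, because an element of $X_i$ that survives into $X_j$ may be killed at a later stage. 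For instance, take $X_0=X_1=R$ and $X_n=0$ for $n\ge2$, with the identity $X_0\to X_1$; then $\im(X_0\to X_1)=R$ maps by zero to $\im(X_0\to X_2)=0$. The correct replacement of $X_i$ is its quotient by $\bigcup_{j\ge i}\ker(X_i\to X_j)$ --- in $\mathsf{Ban}_R$, the completion of the quotient by the closure of this union --- and showing that this assembles into a left adjoint of the inclusion is precisely where the analytic content of the lemma lives (over a non-trivially valued Banach field it is the adjunction between the actual colimit functor and the dissection functor in the concrete bornological picture). Since you never carry this out, the reflector is not established, and everything downstream of it is unsupported.

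Two smaller points. Your argument for closure under limits (``limits preserve monomorphisms, so a limit of essentially monomorphic objects is essentially monomorphic'') conflates monomorphisms \emph{between} ind-objects with the structure maps \emph{inside} an ind-object and does not prove what you need; the correct statement is simply that a reflective subcategory is closed under limits, which is automatic once the reflector exists. And you rightly flag that compatibility of the reflector with $\haotimes_R$ requires an argument (the localisation must be monoidal), but you only gesture at it; the paper's Lemma \ref{lem:indmon} sidesteps this by showing instead that internal Homs into essentially monomorphic objects are again essentially monomorphic, which yields the induced closed monoidal structure by general nonsense without ever comparing ``reflect then tensor'' with ``tensor then reflect.''
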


\begin{proof}
See \cite{ben2020fr}*{Lemma 3.59}, for instance. 
\end{proof}

\begin{remark}
It might appear more natural to work in the category \(\mathsf{Fr}_R\) of Fr\`echet \(R\)-modules. Indeed, when \(R = \C\), this category contains \(\mathcal{C}^\infty(M)\) smooth functions on a manifold, and holomorphic functions \(\mathcal{O}(X)\) on a complex analytic space. But in sharp contrast with \(\mathsf{CBorn}_R\) or \(\mathsf{Ind}(\mathsf{Ban}_R)\), the category \(\mathsf{Fr}_R\) is neither closed nor bicomplete. A fundamental issue with categories such as \(\mathsf{Fr}_R\) or the category of complete, locally convex topological vector spaces is that there is no canonical topology on the set of continuous linear maps between such spaces. This is one of the foundational issues that have led to the development of bornological analysis, and more recently, \textit{condensed mathematics} due to Clausen and Scholze. 
\end{remark}

\section{Derived Algebraic Contexts}\label{sec:DACs}

\subsection{$(\infty,1)$-Category Preliminatires}
The results of this paper are most naturally formulated using the language of $(\infty,1)$-categories. Often these results can be stated in an abstract way, without referring to a specific model. However to be precise, we will fix the model category of simplicial categories with the Bergner model structure (\cite{bergner2007model}) as our model. For some proofs and computations we will need to work with presentations of $(\infty,1)$-categories by model categories, which will be assumed to be simplicial and combinatorial. 
\begin{itemize}
\item
We will use the mathrm font $\mathrm{A},\mathrm{B},\mathrm{C},\ldots$ to denote $1$-categories.
\item
We will use the mathpzc font $\mathpzc{A},\mathpzc{B},\mathpzc{C},\ldots$ to denote model categories
\item
We will use boldface $\mathbf{A},\textbf{B},\mathbf{C},\ldots$ to denote $(\infty,1)$-categories. 
\item
Our model category structures generally arise from exact categories which we will denote using the mathcal font, $\mathcal{C},\mathcal{D},\mathcal{E},\ldots$.
\item
For a relative category $\mathpzc{C}$, we will denote the model category it presents by $\mathrm{L}^{H}(\mathpzc{C})$. 
\end{itemize}
The notation $\mathrm{L}^{H}$ here is from \cite{dwyer1980calculating} and stands for `hammock localisation'. For simplicial model categories there is an equivalent $(\infty,1)$-category which is easy to describe. If $\mathpzc{C}$ is a simplicial model category, let $\mathpzc{C}^{cf}$ denote the full subcategory consisting of fibrant-cofibrant objects. This is a simplicial category by restricting the simplicial structure on $\mathpzc{C}$, and is equivalent to $\mathrm{L}^{H}(\mathpzc{C})$.

\subsection{Stable $(\infty,1)$-Categories and $t$-Structures}
Recall (\cite{luriestable}) that an $(\infty,1)$-category $\mathbf{C}$ is said to be \textit{stable} if the suspension-loop adjunction 
$$\adj{\Sigma}{\mathbf{C}}{\mathbf{C}}{\Omega}$$
is an equivalence. If $\mathbf{C}$ is a stable $(\infty,1)$-category, then its homotopy category $\mathrm{Ho}(\mathbf{C})$ has a canonical triangulated category structure. 

A \textit{\(t\)-structure} on a triangulated category (\cite{luriestable} Definition 6.1) \((\fC, \Sigma)\) is a pair \((\fC_{\leq 0}, \fC_{\geq 0})\) of full subcategories satisfying the following:

\begin{itemize}
\item for \(X \in \fC_{\geq 0}\) and \(Y \in \fC_{\leq 0}\), \(\Hom_\fC(X,Y[-1]) = 0\);
\item there are inclusions \(\Sigma \fC_{\geq 0} \subseteq \fC_{\geq 0}\) and \(\Sigma^{-1} \fC_{\leq 0} \subseteq \fC_{\leq 0}\);
\item for any \(X \in \fC\), there is a distinguished triangle \(X' \to X \to X''\), where \(X' \in \fC_{\geq 0}\) and \(X'' \in \Sigma^{-1}\fC_{\leq 0}\).
\end{itemize}

Let \(\fC_{\geq n}\) denote the subcategory \(\Sigma^n(\fC_{\geq 0})\), and \(\fC_{\leq n}\) the subcategory \(\Sigma^n(\fC_{\leq 0})\). By definition, a \textit{t-structure on a stable \(\infty\)-category} \(\mathbf{C}\) is a \(t\)-structure on the homotopy category $\mathrm{Ho}(\fC)$ of \(\mathbf{C}\). Likewise, the subcategories \(\mathbf{C}_{\geq n}\) and \(\mathbf{C}_{\leq n}\) are defined as the full subcategories on the objects of $\mathrm{Ho}(\fC)_{\geq n}$ and $\mathrm{Ho}(\fC)_{\leq n}$, respectively.

\begin{definition}
If $(\mathbf{C}_{\ge0},\mathbf{C}_{\le0})$ is a $t$-structure on a stable $(\infty,1)$-category $\mathbf{C}$, then the \textit{heart of the $t$-structure}, denoted $\mathbf{C}^{\heart}$, is $\mathbf{C}_{\ge0}\cap\mathbf{C}_{\le0}$. 
\end{definition}

The heart of a $t$-structure is an abelian category by \cite{L}*{Remark 1.2.1.12}.

\begin{definition}[\cite{luriestable}*{Section 7}]
A $t$-structure $(\mathcal{T}_{\ge n},\mathcal{T}_{\le n})$ on a triangulated category $\mathcal{T}$ is said to be \textit{right complete} if the subcategory $\cap_{n\in\mathbb{Z}}\mathcal{T}_{\le n}$ consist only of zero objects of $\mathcal{T}$. A $t$-structure on a stable $(\infty,1)$-category $\textbf{C}$ is said to be \textit{right complete} if the $t$-structure on $\mathsf{Ho}(\textbf{C})$ is right complete.
\end{definition}

\begin{definition}[\cite{luriestable}*{Section 7}]
Let $(\mathbf{C}_{\ge0},\mathbf{C}_{\le0})$ be a $t$-structure on a stable $(\infty,1)$-category $\mathbf{C}$. The \textit{right completion} of $\mathbf{C}$, denoted $\hat{\mathbf{C}}$ is the subcategory of $\textbf{Fun}(N({\mathbb{Z}}^{op}),\mathbf{C})$ consisting of functors $F$ such that
\begin{enumerate}
\item
For each $n\in\mathbb{Z},F(n)\in\mathbf{C}_{\ge n}$.
\item
For each $m\ge n$, the map $F(m)\rightarrow F(n)$ induces an equivalence $F(m)\rightarrow \tau_{\ge m}F(n)$. 
\end{enumerate}
\end{definition}

Lurie (\cite{luriestable}*{Section 7}) shows that a $t$-structure $(\mathbf{C}_{\ge0},\mathbf{C}_{\le0})$ on a stable $(\infty,1)$-category $\mathbf{C}$ is right-complete if and only if the map $\mathbf{C}\rightarrow\hat{\mathbf{C}}$ is an equivalence.

\begin{example}[\cite{raksit2020hochschild}, Construction 3.3.6]
A fundamental example of a \(t\)-structure is the \textit{Postnikov \(t\)-structure} on the derived \(\infty\)-category \(\mathbf{Ch}(\mathsf{Mod}_\Z)\), defined as 
\begin{multline*}
\mathbf{Ch}(\mathsf{Mod}_\Z)_{\geq 0}^P = \setgiven{X \in \mathbf{Ch}(\mathsf{Mod}_\Z)}{H_n(X) \cong 0, n<0}, \\
 \mathbf{Ch}(\mathsf{Mod}_\Z)_{\leq 0}^P = \setgiven{X \in \mathbf{Ch}(\mathsf{Mod}_\Z)}{H_n(X) \cong 0, n>0}.
\end{multline*}
The heart of this $t$-structure is equivalent to (the nerve of) $\mathsf{Mod}_\Z$.
\end{example}
Note that $\Z$ here can be replaced by any unital commutative ring $R$. We will later generalise this example to complexes of exact categories with enough projectives.

\subsection{$t$-Structures on Model Categories}
Let $\mathpzc{C}$ be a pointed, combinatorial, simplicial model category. Consider the Quillen adjunction
$$\adj{\Sigma}{\mathpzc{C}}{\mathpzc{C}}{\Omega}$$
where $\Sigma(c)\defeq  0\coprod_{c}0$ is the suspension functor, and $\Omega(c)\defeq 0\prod_{c}0$ is the loop functor. We call $\mathpzc{C}$ \textit{stable} if the suspension-loop adjunction is a Quillen equivalence. Note that  imits (resp. colimits) in $\mathrm{L}^{H}(\mathpzc{C})$ are presented by homotopy limits (resp. colimits) in $\mathpzc{C}$. It follows that $\mathrm{L}^{H}(\mathpzc{C})$ is a stable $(\infty,1)$-category if and only if $\mathpzc{C}$ is a stable model category.

\begin{definition}
Let $\mathpzc{C}$ be a combinatorial, stable, simplicial model category. A $t$-\textbf{structure} on $\mathpzc{C}$ is a pair of full simplicial subcategories stable by equivalences $(\mathpzc{C}_{\ge0},\mathpzc{C}_{\le0})$ such that $(\mathrm{Ho}(\mathpzc{C}_{\ge0}),\mathrm{Ho}(\mathpzc{C}_{\le0}))$ defines a $t$-structure on the triangulated category $\mathrm{Ho}(\mathpzc{C})$. The $t$-structure is said to be \textit{right complete} if the corresponding $t$-structure on $\mathrm{Ho}(\mathpzc{C})$ is right complete.
\end{definition}

 Using the equivalence $\mathrm{Ho}(\mathrm{L}^{H}(\mathpzc{C}))\cong\mathrm{Ho}(\mathpzc{C})$ and the fact that for $\mathpzc{D}\subset\mathpzc{C}$ a full simplicial subcategory stable by equivalences, $\mathrm{L}^{H}(\mathpzc{D})$ is a full subcategory of $\mathrm{L}^{H}(\mathpzc{C})$, the following is tautological. 

\begin{proposition}\label{prop:tmodelinf}
Let $\mathpzc{C}$ be a combinatorial, stable, simplicial model category and $(\mathpzc{C}_{\ge0},\mathpzc{C}_{\le0})$ a $t$-structure on $\mathpzc{C}$. Then $(\mathrm{L}^{H}(\mathpzc{C}_{\ge0}),\mathrm{L}^{H}(\mathpzc{C}_{\le0}))$ is a $t$-structure on the stable $(\infty,1)$-category $\mathrm{L}^{H}(\mathpzc{C})$, which is right complete provided the $t$-structure on $\mathpzc{C}$ is.  
\end{proposition}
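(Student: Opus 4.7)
The plan is to unpack the definition of a $t$-structure on a stable $(\infty,1)$-category and check that each axiom transfers directly across the equivalence $\mathrm{Ho}(\mathrm{L}^{H}(\mathpzc{C})) \cong \mathrm{Ho}(\mathpzc{C})$. By definition, a $t$-structure on $\mathrm{L}^{H}(\mathpzc{C})$ is the data of a $t$-structure on its homotopy category, so the proposition reduces to a statement about ordinary triangulated categories. The two ingredients highlighted by the authors, namely the identification of homotopy categories above and the fact that full simplicial subcategories of $\mathpzc{C}$ stable under weak equivalences embed as full subcategories of $\mathrm{L}^{H}(\mathpzc{C})$, together yield commutative diagrams
\begin{equation*}
\mathrm{Ho}(\mathrm{L}^{H}(\mathpzc{C}_{\ge 0})) \hookrightarrow \mathrm{Ho}(\mathrm{L}^{H}(\mathpzc{C})) \cong \mathrm{Ho}(\mathpzc{C}) \hookleftarrow \mathrm{Ho}(\mathpzc{C}_{\ge 0}),
\end{equation*}
and similarly for the $\le 0$ part, with the two embeddings having the same essential image.

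With this set-up in place, I would verify the three axioms in sequence. The $\Hom$-vanishing condition $\Hom(X, Y[-1]) = 0$ for $X \in \mathpzc{C}_{\ge 0}$ and $Y \in \mathpzc{C}_{\le 0}$ is stated in terms of morphisms in $\mathrm{Ho}(\mathpzc{C})$, which are the same morphisms as in $\mathrm{Ho}(\mathrm{L}^{H}(\mathpzc{C}))$ under the equivalence; hence it transfers unchanged. Closure under $\Sigma$ and $\Sigma^{-1}$ of the two subcategories is likewise preserved, since the suspension and loop functors on $\mathrm{L}^{H}(\mathpzc{C})$ are presented by the Quillen functors $\Sigma, \Omega$ on $\mathpzc{C}$ used to define stability, so they descend to the same functors on the common homotopy category. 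Finally, the existence of a distinguished triangle $X' \to X \to X''$ with $X' \in \mathpzc{C}_{\ge 0}$ and $X'' \in \Sigma^{-1}\mathpzc{C}_{\le 0}$ is by definition a statement in $\mathrm{Ho}(\mathpzc{C})$, and transports to $\mathrm{Ho}(\mathrm{L}^{H}(\mathpzc{C}))$ via the equivalence (remembering that distinguished triangles in the stable $(\infty,1)$-category by definition correspond to distinguished triangles of its homotopy category).

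For the right completeness claim, right completeness is by definition the vanishing $\bigcap_{n} \mathrm{Ho}(\mathpzc{C}_{\le n}) = 0$. Since the equivalence $\mathrm{Ho}(\mathpzc{C}) \cong \mathrm{Ho}(\mathrm{L}^{H}(\mathpzc{C}))$ identifies $\mathrm{Ho}(\mathpzc{C}_{\le n})$ with $\mathrm{Ho}(\mathrm{L}^{H}(\mathpzc{C}_{\le n}))$ for each $n$, the intersection vanishes on one side if and only if it does on the other. There is no substantive obstacle here; the entire argument is an exercise in bookkeeping, which is why the authors flag the result as tautological. The only mild care required is to confirm that the identification of homotopy categories is compatible with the full subcategory inclusions; once that is observed, every axiom is just restated verbatim.
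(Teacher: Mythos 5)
Your argument is correct and is exactly the paper's intended proof: the paper declares the proposition tautological given the equivalence $\mathrm{Ho}(\mathrm{L}^{H}(\mathpzc{C}))\cong\mathrm{Ho}(\mathpzc{C})$ and the fact that $\mathrm{L}^{H}$ of a full simplicial subcategory stable by equivalences is a full subcategory of $\mathrm{L}^{H}(\mathpzc{C})$, which are precisely the two ingredients you use to transfer each axiom and the right-completeness condition. No issues.
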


\subsection{$t$-Structures on Monoidal $(\infty,1)$-Categories}
In this section we let $\mathbf{C}$ be a presentable, closed monoidal $(\infty,1)$-category.

\begin{definition}[\cite{raksit2020hochschild}, Definition 3.3.1]
A $t$-structure $(\mathbf{C}_{\ge0},\mathbf{C}_{\le0})$  on $\mathbf{C}$ is said to be \textit{compatible} if
\begin{enumerate}
\item
$\mathbf{C}_{\le0}$ is closed under  filtered colimits in $\mathbf{C}$
\item
The unit object $k$ of $\mathbf{C}$ lies in $\mathbf{C}_{\ge0}$.
\item
If $X,Y\in\mathbf{C}_{\ge0}$ then $X\otimes Y\in\mathbf{C}_{\ge0}$. 
\end{enumerate}
\end{definition}

Let $\mathpzc{C}$ be a simplicial combinatorial monoidal category satisfying the monoid axiom. It follows from \cite{lurieDAGII}*{Section 1.6} that $\mathrm{L}^{H}(\mathpzc{C})$ has a natural structure as a monoidal $(\infty,1)$-category. 


\begin{definition}
Let $(\mathpzc{C}, \otimes)$ be a combinatorial, stable, simplicial monoidal model category. A $t$-structure $(\mathpzc{C}_{\ge0},\mathpzc{C}_{\le0})$  on $\mathpzc{C}$ is said to be \textit{compatible} if
\begin{enumerate}
\item
$\mathpzc{C}_{\le0}$ is closed under homotopy filtered colimits in $\mathpzc{C}$
\item
The unit object $k$ of $\mathpzc{C}$ lies in $\mathpzc{C}_{\ge0}$.
\item
If $X,Y\in\mathpzc{C}_{\ge0}$ then $X\otimes Y\in\mathpzc{C}_{\ge0}$. 
\end{enumerate}
\end{definition}

\begin{proposition}\label{prop:tcompatmodel}
Let $\mathpzc{C}$ be a combinatorial, stable, simplicial monoidal model category and $(\mathpzc{C}_{\ge0},\mathpzc{C}_{\le0})$ a compatible $t$-structure on $\mathpzc{C}$. Then $(\mathrm{L}^{H}(\mathpzc{C}_{\ge0}),\mathrm{L}^{H}(\mathpzc{C}_{\le0}))$ is a compatible $t$-structure on the stable $(\infty,1)$-category $\mathrm{L}^{H}(\mathpzc{C})$.  
\end{proposition}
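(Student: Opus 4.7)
The plan is to verify the three compatibility axioms for the $t$-structure $(\mathrm{L}^{H}(\mathpzc{C}_{\ge0}),\mathrm{L}^{H}(\mathpzc{C}_{\le0}))$ on $\mathrm{L}^{H}(\mathpzc{C})$ in turn, leveraging Proposition \ref{prop:tmodelinf}, which already establishes that this pair defines a $t$-structure on the stable $(\infty,1)$-category $\mathrm{L}^{H}(\mathpzc{C})$. Thus only the three clauses of compatibility remain to be checked, and each one should reduce directly to its model-categorical analogue.

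First I would handle axiom (2), which is the easiest: the unit object $k$ of the monoidal $(\infty,1)$-category $\mathrm{L}^{H}(\mathpzc{C})$ is presented by the unit of $\mathpzc{C}$ (after cofibrant replacement, if necessary, using the monoid axiom to preserve weak equivalences), and since $\mathpzc{C}_{\ge0}$ is stable under equivalences by definition, the unit lies in $\mathrm{L}^{H}(\mathpzc{C}_{\ge0})$. For axiom (1), I would invoke the standard fact that filtered colimits in $\mathrm{L}^{H}(\mathpzc{C})$ are computed by homotopy filtered colimits in $\mathpzc{C}$; combined with the hypothesis that $\mathpzc{C}_{\le0}$ is closed under homotopy filtered colimits and the fact that $\mathpzc{C}_{\le0}$ is stable under equivalences, this immediately yields closure of $\mathrm{L}^{H}(\mathpzc{C}_{\le0})$ under filtered colimits in $\mathrm{L}^{H}(\mathpzc{C})$.

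For axiom (3), the monoidal structure on $\mathrm{L}^{H}(\mathpzc{C})$ is presented by the derived tensor product on $\mathpzc{C}$, as follows from the results of \cite{lurieDAGII}*{Section 1.6} invoked earlier. Given $X, Y \in \mathrm{L}^{H}(\mathpzc{C}_{\ge0})$, I would choose cofibrant replacements $\tilde X \weq X$ and $\tilde Y \weq Y$ in $\mathpzc{C}$; since $\mathpzc{C}_{\ge0}$ is stable under equivalences, $\tilde X, \tilde Y \in \mathpzc{C}_{\ge0}$. Then $X \otimes Y$ in $\mathrm{L}^{H}(\mathpzc{C})$ is represented by $\tilde X \otimes \tilde Y$ in $\mathpzc{C}$, which lies in $\mathpzc{C}_{\ge0}$ by the compatibility assumption on the model-categorical $t$-structure, hence in $\mathrm{L}^{H}(\mathpzc{C}_{\ge0})$.

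The main point of caution, and the only real obstacle, is the identification of the monoidal structure on $\mathrm{L}^{H}(\mathpzc{C})$ with the derived tensor product: one needs the monoid axiom (which holds for the simplicial combinatorial monoidal model categories under consideration) to guarantee that the tensor product on cofibrant objects presents the monoidal structure on the hammock localisation. Once this presentation is in hand, all three verifications become essentially formal translations between the model category and its underlying $(\infty,1)$-category, as in the proof of Proposition \ref{prop:tmodelinf}.
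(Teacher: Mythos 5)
Your proposal is correct and follows essentially the same route as the paper's proof: filtered colimits in $\mathrm{L}^{H}(\mathpzc{C})$ are presented by homotopy filtered colimits, the unit is presented by a (fibrant-)cofibrant replacement of the unit of $\mathpzc{C}$, and the tensor product is computed on cofibrant representatives, with stability of $\mathpzc{C}_{\ge0}$ and $\mathpzc{C}_{\le0}$ under equivalences doing the bookkeeping. The paper's argument is merely a terser version of the same three verifications.
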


\begin{proof}
First note that filtered colimits in $\mathrm{L}^{H}(\mathpzc{C})$ are presented by homotopy filtered colimit in $\mathpzc{C}$. Thus $\mathrm{L}^{H}(\mathpzc{C}_{\le0})$ is closed under filtered colimits. 

If $X$ and $Y$ are objects of $\mathrm{L}^{H}(\mathpzc{C})$, then their tensor product is computed as $X\otimes Y$. The unit of $\mathrm{L}^{H}(\mathpzc{C})$ is the fibrant-cofibrant replacement of the unit of $\mathbf{C}$. It follows immediately that the monoidal structure on $\mathrm{L}^{H}(\mathpzc{C})$ is compatible with the $t$-structure. 
\end{proof}

\subsection{Derived algebraic contexts}

We are now ready to define derived algebraic contexts, and explain some of their important properties. 

\begin{definition}[\cite{raksit2020hochschild}, Definition 4.2.1]\label{def:DAC}
A \textit{derived algebraic context} is a stable, presentable symmetric monoidal \(\infty\)-category \(\mathbf{C}\) with a compatible \(t\)-structure \((\mathbf{C}_{\geq 0}, \mathbf{C}_{\leq 0})\) and a small full subcategory \(\mathbf{C}^0 \subseteq \mathbf{C}^\heartsuit\) satisfying the following properties:

\begin{itemize}
\item The \(t\)-structure is right complete;
\item The subcategory \(\mathbf{C}^0\) is a symmetric monoidal subcategory of \(\mathbf{C}\) and is closed under \(\mathbf{C}^\heartsuit\)-symmetric powers: for \(X \in \mathbf{C}^0\) and \(n \geq 0\), we have that \(\mathsf{Sym}_{\mathbf{C^\heartsuit}}^n(X) \in \mathbf{C}^0\);
\item The subcategory \(\mathbf{C}^0\) is closed under the formation of finite coproducts in \(\mathbf{C}\) and its objects form a set of tiny projective generators in \(\mathbf{C}_{\geq 0}\).  
\end{itemize} 

\end{definition}

As explained in \cite{raksit2020hochschild}, $\mathbf{C}_{\ge0}$ is the $(\infty,1)$-categorical free sifted cocompletion of $\mathbf{C}^{0}$, denoted $\mathcal{P}_{\Sigma}(\mathbf{C}^{0})$, and the right-completeness assumption then implies that $\mathbf{C}$ is the stabilisation of $\mathbf{C}_{\ge0}$. Moreover the functor
$$\mathrm{Stab}(\mathbf{C}_{\ge0})\rightarrow\mathbf{C}$$
is $t$-exact, where the left-hand side is equipped with the $t$-structure of \cite{luriestable}*{Proposition 16.4}. Thus a derived algebraic context is uniquely determined by the category $\mathbf{C}^{0}$.

Raksit uses this setup to construct categories of both non-connective and connective \textit{derived commutative rings}. Let us sketch the construction here.

Consider the functor
$$\mathsf{Sym}^{\heart}:\mathbf{C}^{0}\rightarrow\mathbf{C}_{\ge0}$$
which is the composition of the functor $\mathbf{C}^{\heart}\rightarrow\mathbf{C}_{\ge0}$ with the nerve of the $1$-categorical symmetric algebra functor on $\mathbf{C}^{\heart}$, restricted to $\mathbf{C}^{0}$. By sifted cocompletion this extends to a unique sifted colimit functor
$$\mathsf{L}\mathsf{Sym}_{\mathbf{C}_{\geq 0}}:\mathbf{C}_{\ge0}\rightarrow\mathbf{C}_{\ge0}.$$
By \cite{raksit2020hochschild}*{Section 4.2}, this functor is monadic, and in fact extends to a monad on all of $\mathbf{C}$, denoted $\mathsf{L}\mathsf{Sym}$. This monad is called  the \textit{derived symmetric algebra monad on \(\mathbf{C}\)}.  A module over this monad is called a \textit{derived commutative algebra object of \(\mathbf{C}\)}. Denote the \(\infty\)-category of derived commutative algebra objects of \(\mathbf{C}\) by \(\mathsf{DAlg}(\mathbf{C})\). Given a derived commutative algebra object \(A\), a derived commutative algebra object \(B\) of \(\mathbf{C}\) equipped with a morphism \(A \to B\) in \(\mathsf{DAlg}(\mathbf{C})\), is called a \textit{derived commutative \(A\)-algebra}. The \(\infty\)-category of derived commutative \(A\)-algebras is denoted by \(\mathsf{DAlg}_A(\mathbf{C})\). Finally, we denote by
$$\mathsf{DAlg}(\mathbf{C})^{cn}\defeq\mathsf{DAlg}(\mathbf{C})\times_{\mathbf{C}}\mathbf{C}_{\ge0}$$
the category of \textit{connective derived commutative algebras}.

On the other hand we have the usual symmetric algebra monad $\mathsf{Sym}_{\mathbf{C}}$ on $\mathbf{C}$, which is the monad associated to the commutative operad. The category of modules for this monad will be denoted
$$\mathsf{CAlg}(\mathbf{C}).$$ Again by \cite{raksit2020hochschild}, there is a map of monads
$$\mathsf{Sym}_{\mathbf{C}}\rightarrow\mathsf{L}\mathsf{Sym}_{\mathbf{C}}.$$ This is not in general an equivalence, but by \cite{raksit2020hochschild}*{Proposition 4.2.27}, the induced map of categories $$\Theta:\mathsf{DAlg}(\mathbf{C})\rightarrow\mathsf{CAlg}(\mathbf{C})$$
commutes with small limits and colimits.

\begin{example}
The \(\infty\)-category \(\mathbf{C}_{\mathrm{ab}} = \mathbf{Ch}(\mathsf{Mod}_\Z)\) with its usual algebraic tensor product and Postnikov \(t\)-structure is a derived algebraic context. Here \(\mathbf{C}_{\mathrm{ab}}^0\) is the full subcategory spanned by \(\setgiven{\Z^n}{n \in \N}\). Derived commutative algebra objects in \(\mathbf{Ch}(\mathsf{Mod}_\Z)\) are called \textit{derived commutative rings}. By \cite{raksit2020hochschild}*{Remark 4.3.2}, \(\mathbf{C}_{\mathrm{ab}}\) is the initial derived algebraic context: that is, if \(\mathbf{C}\) is any other derived algebraic context, there is a unique morphism of derived algebraic contexts \(\mathbf{C}_{\mathrm{ab}} \to \mathbf{C}\). 
Furthermore, the \textit{connective} part  
is equivalent to the \(\infty\)-category of simplicial commutative \(\Z\)-algebras.  It is in this sense that the \(\infty\)-category of derived commutative rings can be viewed as a non-connective analogue of the \(\infty\)-category of simplicial commutative rings. 
\end{example}

This works for any unital commutative ring $R$. We shall write $\mathbf{C}_{R}$ for the derived algebraic context $\mathbf{Ch}(\mathsf{Mod}_{R})$ equipped with the Postnikov $t$-structure, and with $\mathbf{C}^{0}$ being the set of free $R$-modules of finite rank.. The following is essentially tautological.

\begin{proposition}
Let $\mathbf{C}$ be a derived algebraic context. Then the unique map $i:\mathbf{C}_{\mathrm{ab}}\rightarrow\mathbf{C}$ factors through the base change
$$R\otimes_{\mathbb{Z}}(-):\mathbf{C}_{\mathrm{ab}}\rightarrow\mathbf{C}_{R}$$
if and only if $\mathsf{Ho}(\mathbf{C})$ is enriched over $R$. 
\end{proposition}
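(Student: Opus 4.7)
The plan is to rephrase both sides of the equivalence in terms of a ring homomorphism from $R$ into the endomorphism ring of the monoidal unit of $\mathbf{C}$, and then invoke the fact that a derived algebraic context is essentially determined by its small generating subcategory $\mathbf{C}^{0}$ together with its symmetric monoidal structure.

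For the forward direction, I would assume $i$ factors as $F \circ (R \otimes_{\Z} -)$ with $F : \mathbf{C}_{R} \to \mathbf{C}$ a symmetric monoidal morphism of derived algebraic contexts. Then $F$ preserves the unit, so $F(R) \simeq \mathbf{1}_{\mathbf{C}}$, and applying $F$ to the multiplication $R \otimes_{R} R \to R$ in $\mathbf{C}_{R}$ yields a ring homomorphism $\phi : R \to \mathrm{End}_{\mathsf{Ho}(\mathbf{C})}(\mathbf{1}_{\mathbf{C}})$. Since any symmetric monoidal additive category is naturally enriched over the endomorphism ring of its unit, via the canonical isomorphism $X \otimes \mathbf{1} \simeq X$ and the resulting action of $\mathrm{End}(\mathbf{1})$ on every hom set, composition with $\phi$ promotes this to an $R$-enrichment of $\mathsf{Ho}(\mathbf{C})$.

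For the reverse direction, an $R$-enrichment of $\mathsf{Ho}(\mathbf{C})$ compatible with the tensor structure is equivalent data to a ring homomorphism $\phi : R \to \mathrm{End}_{\mathsf{Ho}(\mathbf{C})}(\mathbf{1}_{\mathbf{C}})$. I would construct the desired factorisation $F : \mathbf{C}_{R} \to \mathbf{C}$ by first defining it on the generating subcategory $\mathbf{C}_{R}^{0}$ of finite free $R$-modules, sending $R^{\oplus n} \mapsto \mathbf{1}_{\mathbf{C}}^{\oplus n}$, with $R$-scalars acting on morphisms through $\phi$. The extension to all of $\mathbf{C}_{R}$ is then forced by the universal property recalled just after Definition \ref{def:DAC}: $(\mathbf{C}_{R})_{\geq 0}$ is the sifted cocompletion $\mathcal{P}_{\Sigma}(\mathbf{C}_{R}^{0})$, and $\mathbf{C}_{R}$ is its stabilisation. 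The equivalence $F \circ (R \otimes_{\Z} -) \simeq i$ then holds on $\mathbf{C}_{\mathrm{ab}}^{0}$ by construction (both sides send $\Z^{\oplus n}$ to $\mathbf{1}_{\mathbf{C}}^{\oplus n}$), and hence globally after sifted cocompletion and stabilisation.

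The main obstacle I anticipate lies in upgrading a $1$-categorical ring map $\phi$ to a coherently symmetric monoidal, $t$-exact functor of stable $(\infty,1)$-categories, rather than only an additive one. However, because $\mathbf{C}_{R}^{0}$ is freely generated as a symmetric monoidal additive category by the single object $R$ with its $R$-action, the coherence problem collapses onto the single datum $\phi$, and the $t$-exactness of the extension is automatic from the construction of $\mathbf{C}$ as the stabilisation of $\mathcal{P}_{\Sigma}(\mathbf{C}^{0})$. This is presumably the sense in which the authors describe the claim as essentially tautological.
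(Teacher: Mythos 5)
Your proposal is correct and follows essentially the same route as the paper: the forward direction extracts a ring map $R\to\mathrm{End}_{\mathrm{Ho}(\mathbf{C})}(k)$ from the factorisation and uses the canonical $\mathrm{End}(k)$-module structure on hom sets, and the reverse direction defines the factorisation on the generators $\mathbf{C}^{0}_{R}$ (sending $R^{m}\mapsto i(\mathbb{Z}^{m})$ with scalars acting through the enrichment) and extends by sifted cocompletion and stabilisation. Your explicit remark about the coherence issue in upgrading $\phi$ to a symmetric monoidal $\infty$-functor is a point the paper's proof passes over silently, so flagging it is a mild improvement rather than a divergence.
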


\begin{proof}
Suppose the unique map $\mathbf{C}_{\mathrm{ab}}\rightarrow\mathbf{C}$ factors through the base change
$R\otimes_{\mathbb{Z}}(-):\mathbf{C}_{\mathrm{ab}}\rightarrow\mathbf{C}_{R}$. Let $k$ be the monoidal unit of $\mathbf{C}$. Then for any objects $X,Y$ of $\mathbf{C}$, $\mathrm{Hom}_{\mathrm{Ho}(\mathbf{C})}(X,Y)$ is a module over $\mathrm{Hom}_{\mathrm{Ho}(\mathbf{C})}(k,k)$. But there is a map $R\cong\mathrm{Hom}_{\mathrm{Ho}(\mathbf{C}_{R})}(R,R)\rightarrow\mathrm{Hom}_{\mathrm{Ho}(\mathbf{C})}(k,k)$, so $\mathrm{Hom}_{\mathrm{Ho}(\mathbf{C})}(X,Y)$ is an $R$-module.

Conversely suppose $\mathrm{Ho}(\mathbf{C})$ is enriched over $R$. Let $\mathbf{C}^{0}_{R}$ denote the full subcategory of $\mathbf{C}_{R}^{\heart}$ consisting of free $R$-modules. The category $\mathbf{C}_{R}$ is equivalent to the stabilisation of $\mathcal{P}_{\Sigma}(\mathbf{C}^{0}_{R})$, the free cocompletion of $\mathbf{C}^{0}_{R}$ by sifted colimits. Thus it suffices to show that the map $i:\mathbf{C}^{0}_{\mathrm{ab}}\rightarrow\mathbf{C}$ extends to a map $i_{R}:\mathbf{C}^{0}_{R}\rightarrow\mathbf{C}$. Let $R^{\mathcal{I}}$ be an object of $\mathbf{C}^{0}_{R}$. We define $i_{R}(R^{m})\defeq i(\mathbb{Z}^{m})$. We need to construct a map $\mathrm{Map}_{\mathbf{C}^{0}_{R}}(R^{m},R^{b})\rightarrow\mathrm{Map}_{\mathbf{C}}(i(\mathbb{Z}^{m}),i(\mathbb{Z}^{n}))$. This is equivalent to a map $\mathrm{Hom}_{\Mod_{R}}(R^{m},R^{n})\rightarrow\mathrm{Hom}_{\mathrm{Ho}(\mathbf{C})}(i(\mathbb{Z}^{m}),i(\mathbb{Z}^{n}))$, i.e. a map $R^{mn}\rightarrow\mathrm{Hom}_{\mathrm{Ho}(\mathbf{C})}(i(\mathbb{Z}^{m}),i(\mathbb{Z}^{n}))$. Now there is a map $\mathbb{Z}^{mn}\rightarrow\mathrm{Hom}_{\mathrm{Ho}(\mathbf{C})}(i(\mathbb{Z}^{m}),i(\mathbb{Z}^{n}))$, and since $\mathrm{Ho}(\mathbf{C})$ is enriched over $R$, this uniquely extends to a map $R^{mn}\rightarrow\mathrm{Hom}_{\mathrm{Ho}(\mathbf{C})}(i(\mathbb{Z}^{m}),i(\mathbb{Z}^{n}))$, as required.
\end{proof}

Later in this paper we will be particularly interested in the case that $R=\mathbb{Q}$.

\begin{definition}
A derived algebraic context $\mathbf{C}$ is said to be \textit{rational} if $\mathsf{Ho}(\mathbf{C})$ is enriched over $\mathbb{Q}$. 
\end{definition}

The rational setting is much simpler.  One has for example that the functor $\Theta:\mathsf{DAlg}(\textbf{C})\rightarrow\mathsf{CAlg}(\textbf{C})$ is an equivalence.



\subsection{Derived algebraic contexts from model categories}

For computations and proofs, it is convenient to have derived algebraic contexts presented by some model category. In the next section we will in fact see that there is a canonical choice of model category presenting a given derived algebraic context.


\begin{definition}\label{defn:modelder}
A \textit{model derived algebraic context} is a stable, combinatorial, simplicial monoidal model category $\mathpzc{C}$ satisfying the monoid axiom, equipped with a compatible $t$-structure $(\mathpzc{C}_{\ge0},\mathpzc{C}_{\le0})$ and a small full subcategory $\mathpzc{C}^{0}\subset\mathpzc{C}^{\heart}$ such that
\begin{itemize}
\item
The $t$-structure is right complete;
\item
The subcategory $\mathpzc{C}^{0}$ consists of fibrant-cofibrant objects, and is a symmetric monoidal subcategory of $\mathpzc{C}$ which is closed under forming \(\mathpzc{C}^{\heartsuit}\)-symmetric powers and finite coproducts in \(\mathpzc{C}\);
\item The objects of \(\mathpzc{C}^0\) form a set of tiny projective generators in \(\mathpzc{C}_{\geq 0}\), i.e. 
\begin{itemize}
\item
they are compact objects in $\mathpzc{C}_{\ge0}$.
\item
for each $P\in\mathpzc{C}^{0}$ the functor $\mathbb{R}\underline{\mathsf{Hom}}(P,-):\mathpzc{C}_{\ge0}\rightarrow\mathpzc{sSet}$ commutes with homotopy geometric realisations.  
\item
every object of $\mathpzc{C}_{\ge0}$ can be written as a homotopy sifted colimit of objects of $\mathpzc{C}^{0}$.
\end{itemize}
\end{itemize}
\end{definition}


\begin{proposition}
Let $(\mathpzc{C},\mathpzc{C}_{\ge0},\mathpzc{C}_{\le0},\mathpzc{C}^{0})$  be a model derived algebraic context. Then $(\mathrm{L}^{H}(\mathpzc{C}),\mathrm{L}^{H}(\mathpzc{C}_{\ge0}),\mathrm{L}^{H}(\mathpzc{C}_{\le0}),\mathrm{L}^{H}(\mathpzc{C}^{0}))$ is a derived algebraic context.
\end{proposition}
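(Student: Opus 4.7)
The plan is to verify the four bullet points of Definition~\ref{def:DAC} for $(\mathrm{L}^H(\mathpzc{C}),\mathrm{L}^H(\mathpzc{C}^{0}))$ with the given $t$-structure, reducing each one to a model-categorical statement already supplied by Definition~\ref{defn:modelder} and the general results on localisation quoted in the previous subsections.

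First I would establish the ambient $(\infty,1)$-categorical structure. Since $\mathpzc{C}$ is stable, combinatorial and simplicial, $\mathrm{L}^H(\mathpzc{C})$ is a stable presentable $(\infty,1)$-category; presentability follows from the fact that combinatorial simplicial model categories present presentable $\infty$-categories, and stability follows from the discussion preceding Proposition~\ref{prop:tmodelinf} that $\mathrm{L}^H(\mathpzc{C})$ is stable iff $\mathpzc{C}$ is. The monoid axiom together with the simplicial combinatorial monoidal structure yields a symmetric monoidal structure on $\mathrm{L}^H(\mathpzc{C})$ by \cite{lurieDAGII}*{Section 1.6}. Proposition~\ref{prop:tmodelinf} then shows that $(\mathrm{L}^H(\mathpzc{C}_{\ge0}),\mathrm{L}^H(\mathpzc{C}_{\le0}))$ is a $t$-structure, right complete since the model-level $t$-structure is, and Proposition~\ref{prop:tcompatmodel} shows that this $t$-structure is compatible with the symmetric monoidal structure.

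Next I would address the conditions on $\mathrm{L}^H(\mathpzc{C}^{0})$. Because $\mathpzc{C}^{0}$ consists of fibrant-cofibrant objects of $\mathpzc{C}^{\heart}$, the simplicial subcategory $\mathrm{L}^H(\mathpzc{C}^{0})$ is a full subcategory of $\mathrm{L}^H(\mathpzc{C}^{\heart})\simeq\mathrm{L}^H(\mathpzc{C})^{\heart}$ in the sense of hammock localisation. The hypotheses that $\mathpzc{C}^{0}$ is a symmetric monoidal subcategory of $\mathpzc{C}$ closed under $\mathpzc{C}^{\heart}$-symmetric powers and finite coproducts then transfer directly, since finite coproducts and tensor products in $\mathrm{L}^H(\mathpzc{C})$ are computed by the corresponding homotopy (co)limits in $\mathpzc{C}$, and these coincide on the fibrant-cofibrant level with the strict $1$-categorical constructions.

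The main obstacle is the last bullet: showing that the objects of $\mathrm{L}^H(\mathpzc{C}^{0})$ form a set of tiny projective generators of $\mathrm{L}^H(\mathpzc{C}_{\ge0})$. Here I would unpack ``tiny projective'' as the condition that $\mathbb{R}\underline{\mathsf{Map}}(P,-)\colon\mathrm{L}^H(\mathpzc{C}_{\ge0})\to\mathbf{sSet}$ commutes with sifted colimits, which by a standard argument (e.g.\ the discussion in \cite{raksit2020hochschild}*{Section 4.2}) is equivalent to commuting separately with filtered colimits and with geometric realisations of simplicial objects. The first is exactly the compactness of $P$ in $\mathpzc{C}_{\ge0}$, while the second is the assumption that $\mathbb{R}\underline{\mathsf{Hom}}(P,-)$ commutes with homotopy geometric realisations. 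Generation in the $\infty$-categorical sense amounts to every object of $\mathrm{L}^H(\mathpzc{C}_{\ge0})$ being a sifted colimit of objects of $\mathrm{L}^H(\mathpzc{C}^{0})$, which is precisely the final model-level hypothesis since homotopy sifted colimits present sifted colimits in the localisation. Assembling these three ingredients yields that $\mathrm{L}^H(\mathpzc{C}^{0})$ provides the required set of tiny projective generators, completing the verification.
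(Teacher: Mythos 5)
Your proof is correct and follows essentially the same route as the paper, which simply observes that the claim "follows immediately from combining Definition \ref{defn:modelder} with Propositions \ref{prop:tmodelinf} and \ref{prop:tcompatmodel}." You have merely spelled out in more detail the verification that the paper leaves implicit, matching each clause of the model-level definition to the corresponding clause of the $\infty$-categorical one.
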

\begin{proof}
This follows immediately from combining Definition \ref{defn:modelder} with Propositions \ref{prop:tmodelinf} and \ref{prop:tcompatmodel}.
%
\end{proof}

\subsection{Graded and filtered objects}

In order to work with the de Rham and the Hochschild complex integrally and in positive characteristic, we will need \textit{graded} and \textit{filtered} derived categories. In this section, we recall (following \cite{raksit2020hochschild}) how one can equip the \(\infty\)-category \(\mathbf{Gr}(\mathbf{C})\) of graded objects of \(\mathbf{C}\) with the structure of a (model) derived algebraic context, when \(\mathbf{C}\) is a (model) derived algebraic context.

Given an \(\infty\)-category \(\mathbf{C}\), \(\mathbf{Gr}(\mathbf{C})\) is defined as the \(\infty\)-category of \(\infty\)-functors \(N(\Z) \to \mathbf{C}\), where \(\Z\) are the integers, viewed as a category with only identity morphisms. If \(\mathbf{C}\) is stable, presentable and symmetric monoidal, then so is \(\mathbf{Gr}(\mathbf{C})\), where the symmetric monoidal structure given by the Day convolution product \[(X \circledast Y)^n \defeq \underset{i+j = n}\coprod X^i \otimes Y^j,\] where \(X = (X^n)_{n \in \Z}\) and \(Y = (Y^n)_{n \in \Z}\) are graded objects of \(\mathbf{C}\).

Now suppose \(\mathbf{C}\) is additionally a derived algebraic context. Denoting by \((\mathbf{C}_{\geq 0}, \mathbf{C}_{\leq 0})\) the \(t\)-structure of \(\mathbf{C}\), one can define a canonical (\textit{neutral}) \(t\)-structure on \(\Gr(\mathbf{C})\), with \(\Gr(\mathbf{C})_{\geq 0}\) (respectively, \(\Gr(\mathbf{C})_{\leq 0}\)) defined as graded objects valued in \(\mathbf{C}_{\geq 0}\) (respectively, in \(\mathbf{C}_{\leq 0}\)). Additionally, there are two other \(t\)-structures that play a role in the theory to follow: the \textit{positive \(t\)-structure} defined as the \(t\)-structure \((\Gr(\mathbf{C})_{\geq 0^+}, \Gr(\mathbf{C})_{\leq 0^+})\), where \(\Gr(\mathbf{C})_{\geq 0^+}\) (respectively, \(\Gr(\mathbf{C})_{\leq 0^+}\))  consists of graded objects \(X = (X^n)_{n \in \Z}\) with \(X^n \in \mathbf{C}_{\geq n}\) (respectively, with \(X^n \in \mathbf{C}_{\leq n}\)). One can analogously define a \textit{negative \(t\)-structure}, \((\Gr(\mathbf{C})_{\geq 0^-}, \Gr(\mathbf{C})_{\leq 0^-})\) by replacing \(n\) with \(-n\). These three \(t\)-structures on \(\Gr(\mathbf{C})\), constructed in \cite{raksit2020hochschild}*{Construction 3.3.2}, all have the same heart, namely, \(\Gr(\mathbf{C}^\heartsuit) = \mathsf{Fun}(N(\Z), \mathbf{C}^\heartsuit)\). Finally, the Day symmetric monoidal structure on \(\Gr(\mathbf{C})\) is compatible with each of these \(t\)-structures, and restricts to a symmetric monoidal structure on their heart \( \Gr(\mathbf{C}^\heartsuit)\). We denote by \(\Gr(\fC^\heartsuit)^{\otimes \kappa}\) the induced symmetric monoidal structure on \(\Gr(\fC^\heartsuit)\), with the Koszul sign rule implemented in the data of the symmetry isomorphisms. 

Now let \(\mathbf{C}\) be a derived algebraic context. Then the \(\infty\)-category \(\mathbf{Gr}(\mathbf{C})\) of graded objects of \(\mathbf{C}\) can be regarded as a derived algebraic context: the Day convolution product defined above yields the symmetric monoidal structure, the \(t\)-structure is given by the neutral \(t\)-structure, and \(\mathbf{Gr}(\mathbf{C})^0\) is defined as the full subcategory spanned by finite coproducts of objects \(X \in \mathbf{C}^0\), viewed as objects of \(\mathbf{Gr}(\mathbf{C})\), via the canonical inclusion \(\mathbf{C} \to \mathbf{Gr}(\mathbf{C})\). The derived commutative algebra objects of \(\mathbf{Gr}(\mathbf{C})\) are called \textit{graded derived commutative algebra objects of \(\mathbf{C}\)}, and are denoted \(\mathsf{DAlg}(\mathbf{Gr}(\mathbf{C}))\). Similarly, if \(A\) is a derived commutative algebra object of \(\mathbf{C}\), we can analogously define the \(\infty\)-category \(\mathbf{Gr}(\mathsf{DAlg}_A(\mathbf{C}))\) of graded derived commutative \(A\)-algebra objects of \(\mathbf{C}\). 

If $\mathbf{C}\cong \mathrm{L}^{H}(\mathpzc{C})$ is presented be a model derived algebraic context, then so is $\mathbf{Gr}(\mathbf{C})$. There is a model structure on $\mathpzc{Gr}(\mathpzc{C})\defeq\coprod_{n\in\mathbb{Z}}\mathpzc{Gr}(\mathpzc{C})$ in which fibrations/ cofibrations/ equivalences are defined component-wise. Then $\mathbf{Gr}(\mathbf{C})\cong \mathrm{L}^{H}(\mathpzc{Gr}(\mathpzc{C}))$.

We now turn to filtrations. Let \(\Z\) be regarded as a category with its standard pre-order. That is, its objects are integers, and there is a unique morphism \(n \to m\) if and only if \(n\leq m\). If \(\mathbf{C}\) is a presentable, stable symmetric monoidal \(\infty\)-category, we call the \(\infty\)-category \[\mathbf{Fil}(\mathbf{C}) \defeq \mathsf{Fun}(N(\Z), \mathbf{C})\] the \textit{filtered \(\infty\)-category over \(\mathbf{C}\)}. As in the graded case, the filtered \(\infty\)-category is a stable, presentable symmetric monoidal \(\infty\)-category, where the symmetric monoidal structure is given by the Day convolution product 

\[ (X \circledast Y)_n \defeq \mathsf{colim}_{i+j \geq n} X_i \otimes Y_j,\] where \(X = (X_i)\) and \(Y = (Y_j)\) are filtered objects in \(\mathbf{C}\). If \(\mathbf{C}\) is additionally a derived algebraic context, then in a similar manner as with the graded case, one can equip \(\mathbf{Fil}(\mathbf{C})\) with the structure of a derived algebraic context. 

Again, as in the graded case, we can express filtered objects in an \(\infty\)-category in terms of a concrete presentation. This is done as follows in \cite{gwilliam2018enhancing}. Let $\mathpzc{C}$ be a combinatorial left proper model category. Consider the category $\mathpzc{Seq}(\mathpzc{C})\defeq\mathpzc{Fun}(\mathbb{Z},\mathpzc{C})$, where $\mathbb{Z}$ is regarded as a poset with the usual increasing ordering. With the projective model structure for functors, this is a combinatorial left proper model category. If $\mathpzc{C}$ is a symmetric monoidal model category satisfying the monoid axiom, then with Day convolution, so is $\mathpzc{Seq}(\mathpzc{C})$. We have
$$\mathrm{L}^{H}(\mathpzc{Seq}(\mathpzc{C}))\cong\mathbf{Fil}(\mathrm{L}^{H}(\mathpzc{C}))$$

We will need the following definition, as this is the right home for the filtered Hochschild and Hodge-complete derived de Rham cohomology. 

\begin{definition}
Let \(\mathbf{C}\) be a symmetric monoidal \(\infty\)-category. A filtered object \(X = (X_i) \in \mathbf{Fil}(\mathbf{C})\) is called \textit{complete} if \(\lim_{i \to \infty} X_i \cong 0\). Denote the full subcategory of complete, filtered objects by \(\mathbf{Fil}^{\wedge}(\mathbf{C})\).   
\end{definition}

\begin{remark}
Again by \cite{gwilliam2018enhancing}, if $\mathcal{C}$ is a combinatorial, left proper, stable model category then there is a left Bousfield localisation $\mathpzc{Fil}(\mathpzc{C})$ of $\mathpzc{Seq}(\mathpzc{C})$ which presents $\mathbf{Fil}^{\wedge}(\mathrm{L}^{H}(\mathpzc{C}))$. If $\mathpzc{C}$ is monoidal and satisfies the monoid axiom, then once again there is a monoidal model structure on $\mathpzc{Fil}(\mathpzc{C})$ satisfying the monoid axiom (given by completing the Day convolution tensor product).
. \end{remark}

There is an intimate relationship between filtered and graded objects, by way of the \textit{associated graded functor} \[\mathsf{gr} \colon \mathbf{Fil}(\mathbf{C}) \to \mathbf{Gr}(\mathbf{C}),\] assigning to \((X_n)_{n \in \Z}\), the  cofibre \(\mathsf{cofib}(X_{n+1} \to X_n)\) at each degree \(n\). This restricts to an equivalence of symmetric monoidal \(\infty\)-categories (see \cite{raksit2020hochschild}*{Theorem 3.2.14}) 

 \[\mathbf{Fil}^{\wedge}(\mathbf{C}) \cong \mathsf{Mod}_{\mathbb{D}_-}(\mathbf{Gr}(\mathbf{C}),\] where \(\mathbb{D}_-\) denotes the graded object \(k \oplus k[-1]\), \(k\) the symmetric monoidal unit of \(\mathbf{C}\). We will actually need the split square-zero algebra \(k \oplus k[+1]\) to define a higher algebraic analogue of a positive degree shift as in a cochain complex, and this will be discussed in Section 5. 

There is another useful functor $\mathrm{split}:\mathbf{Gr}(\mathbf{C})\rightarrow\mathbf{Fil}(\mathbf{C})$ given by $\mathrm{split}(X)^{i}\defeq \coprod_{j\ge i}X^{j}$. This functor has a left adjoint $\mathrm{und}:\mathbf{Fil}(\mathbf{C})\rightarrow\mathbf{Gr}(\mathbf{C})$. Details of all of these constructions can be found in \cite{raksit2020hochschild}*{3.1.1}.

\section{Derived algebraic contexts from exact categories}\label{sec:exact}

In this section, we discuss the framework of exact categories, which we use in order to do homotopy theory and higher algebra. The objective of this rather technical part of this article is to create a roadmap that allows one to pass from the world of homological algebra in functional analytic categories, to the world of higher algebra. This is quite straightforward in the purely algebraic situation, where one can pass from chain complexes over a  Grothendieck abelian category \(\mathrm{Ch}(\mathcal{A})\) to a stable, presentable \(\infty\)-category \(\mathbf{Ch}(\mathcal{A})\) via the coherent nerve construction (see \cite{L}*{Proposition 1.3.5.15}). But categories that arise in functional analysis - including \(\mathsf{Ind}(\mathsf{Ban}_R)\) and \(\mathsf{CBorn}_R\) - are almost never abelian. Consequently, we need a more general framework of homological algebra, and this is provided by \textit{quasi-abelian} or \textit{exact} categories. In the end, however, it will turn out that \textit{all} derived algebraic contexts are equivalent to derived algebraic contexts arising from certain \textit{abelian} categories. The formalism of exact categories will still be useful for explicit computations. For example $\mathsf{CBorn}_{R}$ is a concrete category, but the corresponding abelian category is its left heart, which is not concrete.

\subsection{Exact categories}

Let \(\fC\) be an additive category with kernels and cokernels. A diagram of the form \begin{equation*}\label{conflation}
K \overset{i}\to E \overset{p}\to Q
\end{equation*} is called an \textit{extension} if \(i = \ker(p)\) and \(p=\coker(i)\).  An \textit{exact category} is an additive category \(\fC\) with a distinguished class of extensions \(\mathcal{E}\) called \textit{conflations}, satisfying certain properties. An arrow in \(\fC\) is called an \textit{inflation} (respectively, \textit{deflation}) if it is the arrow \(i\) (respectively, \(p\)) in a conflation. The conflations must satisfy the following axioms:

\begin{itemize}
\item[\(\bullet\)] the identity map on the zero object is a deflation;
\item[\(\bullet\)] if \(A \overset{f}\onto B\) and \(B \overset{g}\onto C\) are deflations, so is \(A \overset{g\circ f}\onto C\);
\item[\(\bullet\)] the pullback of a deflation along an arbitrary map exists and is again a deflation;
\item[\(\bullet\)] the pushout of an inflation along an arbitrary map exists and is again an inflation.
\end{itemize}

A \textit{quasi-abelian category} is a finitely complete and cocomplete additive category $\mathcal{E}$ such that the class of \textit{all} kernel-cokernel pairs defines an exact structure on $\mathcal{E}$. Note that an exact category is a category equipped with some structure, but being quasi-abelian is a \textit{property} of a category. From now on when we say `quasi-abelian category', we will be referring to a quasi-abelian category equipped with the exact structure consisting of all kernel-cokernel pairs. These are studied in great detail in \cite{qacs}.

\begin{example}
It is trivial to see that any abelian category, such as left (or right) modules \(\mathsf{Mod}_R\) over a ring \(R\), is a quasi-abelian category.
\end{example}

\subsection{Elementary Exact Categories and Projective Model Structures}

As motivated in the introduction to this section, we would like to do homotopical and higher algebra in settings where the underlying module category is not abelian. In the setting of exact categories, we define the \textit{derived category} of an exact category as the localisation of the homotopy category of complexes \(\mathsf{Ch}(\mathcal{E})\) at the quasi-isomorphisms. For an arbitrary exact category, in order that we can define a simplicial, combinatorial model structure on \(\mathsf{Ch}(\mathcal{E})\), whose homotopy category is the derived category, we need some extra structure.

\begin{definition}
A subcategory $\mathcal{P}\subset\mathcal{E}$ is said to \textit{generate} $\mathcal{E}$ if for every object $X\in\mathcal{E}$, there is an object $P\in\mathcal{P}$, and a deflation $P\onto X$. 
\end{definition}

\begin{definition}\label{def:enough_projectives}
An object \(P\) in an exact category \(\mathcal{E}\) is called \textit{projective} if the functor \(\Hom(P,-) \colon \mathcal{E} \to \mathsf{Mod}_\Z\) sends a deflation to a surjection. An exact category has \textit{enough projectives} if the full subcategory of projective objects generates $\mathcal{E}$. 
\end{definition}

Let \(\mathcal{E}\) be an exact category. In what follows, let $\mathcal{S}$ be a class of maps in \(\mathcal{E}\) and let $\mathcal{I}$ be a filtered category. Denote by \(\mathsf{Fun}_{\mathcal{S}}(\mathcal{I}, \mathcal{E})\) the category of functors \(\mathcal{I} \to \mathcal{E}\) such that for \(i \leq j\), \(F(i) \to F(j) \in \mathcal{S}\). In this category, we say a diagram \[0 \to F \to G \to H \to 0\] is \textit{exact} if for each \(i \in \mathcal{I}\), the induced diagram \[0 \to F(i) \to G(i) \to H(i) \to 0\] is a conflation in \(\mathcal{E}\). Note that we do not require this to yield an exact category structure on \(\mathsf{Fun}_{\mathcal{S}}(\mathcal{I},\mathcal{E})\). We only need this notion of exactness for the following:

\begin{definition}
An exact category $\mathcal{E}$ is said to be \textit{weakly $(\mathcal{I},\mathcal{S})$-elementary} if the functor
$$\colim:\mathsf{Fun}_{\mathcal{S}}(\mathcal{I},\mathcal{E})\rightarrow\mathcal{E}$$
exists and is exact.
\end{definition}

We will typically be interested in the case that $\mathcal{I}$ is some ordinal. In fact this ordinal will usually be $\aleph_{0}$, the poset of natural numbers.

In what follows $\mathcal{S}$ will typically be one of the following classes
\begin{itemize}
\item
the class of all morphisms in $\mathcal{E}$;
\item
the class $\mathbf{AdMon}$ of inflations in $\mathcal{E}$;
\item
the class $\mathbf{SplitMon}$ of split monomorphisms in $\mathcal{E}$;
\end{itemize}
Note that $\mathcal{E}$ is weakly $(\aleph_{0},\mathbf{SplitMon})$-elementary precisely if countable coproducts are exact. 

\begin{definition}
\begin{enumerate}
\item
Let $\mathcal{S}$ be a class of morphisms in an exact category $\mathcal{E}$. 
\begin{enumerate}
\item
An object $X$ of $\mathcal{E}$ is said to be $\mathcal{S}$-\textit{tiny} if the functor $\mathrm{Hom}(X,-):\mathcal{A}\rightarrow\mathrm{Ab}$ commutes with colimits of diagrams in $\mathsf{Fun}_{\mathcal{S}}(\mathcal{I}, \mathcal{E})$ for any small filtered category $\mathcal{I}$;
\item
An exact category $\mathcal{E}$ is said to be $\mathcal{S}$-\textit{elementary} if it has a small generating subcategory $\mathcal{P}$ consisting of $\mathcal{S}$-tiny projectives;
\end{enumerate}
\item
An exact category $\mathcal{E}$ is said to be \textit{quasi-elementary} if it has a small generating subcategory $\mathcal{P}$ consisting of projectives $P$ such that for each $P\in\mathcal{P}$, $\mathrm{Hom}(P,-):\mathcal{E}\rightarrow\mathrm{Ab}$ commutes with coproducts.
\end{enumerate}
\end{definition}

If $\mathcal{E}$ is $\mathcal{S}$-elementary for $\mathcal{S}$ being the class of all morphisms then we just say that $\mathcal{E}$ is \textit{elementary}.  Note that being $\mathbf{SplitMon}$-elementary is generally stronger than being quasi-elementary.

\begin{theorem}[ \cite{kelly2016homotopy} Theorem 4.3.58, Theorem 4.3.64]
\begin{enumerate}
\item
Let $\mathcal{E}$ be an exact category with enough projectives and which has kernels. Then the \textit{projective model structure} exists on $\mathrm{Ch}_{\ge0}(\mathcal{E})$. The weak equivalences are the quasi-isomorphisms, and the cofibrations are the maps which are degreewise inflations with projective cokernel.
\item
Let $\mathcal{E}$ be a weakly $(\aleph_{0},\mathbf{SplitMon})$-elementary exact category which has kernels and enough projectives. Then the \textit{projective model structure} exists on $\mathrm{Ch}(\mathcal{E})$. The weak equivalences are the quasi-isomorphisms, and the fibrations are the maps which are degreewise conflations.
\end{enumerate}
If $\mathcal{E}$ is $\mathbf{AdMon}$-elementary and is locally presentable as a category then both of the model structures are combinatorial. 
\end{theorem}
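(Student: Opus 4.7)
The plan is to construct both model structures by the standard Quillen--Hovey machinery: exhibit explicit generating sets $I$ of cofibrations and $J$ of trivial cofibrations, run the small object argument on each to produce the two functorial factorisations, and verify the two acyclicity conditions relating $I$, $J$, and the quasi-isomorphisms. Throughout I would denote by $\mathcal{P}$ a generating subcategory of projectives and write $S^{n}P$ for the complex $P$ concentrated in degree $n$ (with $S^{-1}P = 0$) and $D^{n}P$ for the contractible complex with $P$ in degrees $n$ and $n-1$ and identity differential.

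For part (1), I would take $I = \{\, S^{n-1}P \hookrightarrow D^{n}P : P \in \mathcal{P},\ n \geq 0 \,\}$ and $J = \{\, 0 \hookrightarrow D^{n}P : P \in \mathcal{P},\ n \geq 1 \,\}$. In $\mathrm{Ch}_{\geq 0}(\mathcal{E})$ the small object argument terminates in countably many steps for both sets, because the relevant pushouts produce levelwise inflations with projective cokernels, and these are constructed using kernels in $\mathcal{E}$ together with the enough-projectives hypothesis. A direct computation identifies the $J$-injectives as the maps whose components in positive degrees are deflations, and identifies the $I$-injectives as those $J$-injectives whose $H_{0}$-map is an isomorphism and which are deflations with acyclic kernel; this pins down fibrations and trivial fibrations and, via Quillen's retract argument, gives the cofibrations as claimed. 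The acyclicity of $J$-cofibrations is immediate since each $0 \to D^{n}P$ is a chain-homotopy equivalence and chain-homotopy equivalences in the non-negative range are closed under pushouts and sequential compositions of split monomorphisms.

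For part (2) the same generators are used but indexed over all $n \in \mathbb{Z}$. The subtle point is that in the unbounded setting one must show that transfinite compositions of pushouts of maps in $J$ remain quasi-isomorphisms; this is exactly where the weak $(\aleph_{0}, \mathbf{SplitMon})$-elementarity enters, as it guarantees that countable sequential colimits along split monomorphisms commute with the formation of conflations and hence with homology. With this input, the same chain-homotopy argument as in the bounded case carries through after the usual reindexing trick. The fibrations are identified as degreewise deflations with levelwise projective kernel, and Quillen's criterion then yields the model structure. I expect this to be the main obstacle: separating out the ``size'' issues from the exactness issues and checking that the elementarity condition is precisely what is needed to make filtered colimits of quasi-isomorphisms be quasi-isomorphisms.

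For the final sentence, when $\mathcal{E}$ is $\mathbf{AdMon}$-elementary and locally presentable, the generating projectives are presentable (by $\mathbf{AdMon}$-tinyness the internal Hom out of them commutes with inflation-filtered colimits, which upgrades to presentability after combining with local presentability), hence the domains and codomains of all generators in $I$ and $J$ are presentable. Local presentability is inherited by $\mathrm{Ch}(\mathcal{E})$ and $\mathrm{Ch}_{\geq 0}(\mathcal{E})$ from $\mathcal{E}$, and so Jeff Smith's recognition theorem upgrades both model structures to combinatorial ones, completing the argument.
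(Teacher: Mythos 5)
Your overall strategy---identify the generating (trivial) cofibrations $S^{n-1}P\to D^{n}P$ and $0\to D^{n}P$, characterise the (trivial) fibrations by lifting properties, prove acyclicity of the $J$-cell maps via contractibility of disks, and isolate the weak $(\aleph_{0},\mathbf{SplitMon})$-elementarity as the input needed to pass from bounded-below to unbounded complexes---is the standard route and is in the spirit of the cited reference and of Christensen--Hovey. The identification of $\mathcal{P}$-epimorphisms with deflations, the role of elementarity, and the combinatoriality argument (local presentability plus tininess of the generators) are all correctly placed.

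There is, however, a genuine gap in how you produce the factorisations. Parts (1) and (2) assume only ``enough projectives'': there is no hypothesis that the projectives form a \emph{set}, nor that they are small (tiny) relative to anything. The small object argument therefore cannot be invoked, and your stated reason for its termination --- ``the relevant pushouts produce levelwise inflations with projective cokernels'' --- is not a termination argument; termination requires the \emph{domains} $S^{n-1}P$ to be small with respect to transfinite composites of the cell maps, which is exactly the $\mathbf{AdMon}$-tininess that only appears in the hypotheses of the final sentence. Indeed, the fact that cofibrant generation (combinatoriality) is asserted only under the additional $\mathbf{AdMon}$-elementary and locally presentable hypotheses is itself a signal that the general existence proof cannot go through the small object argument. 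The repair is classical: for the (trivial cofibration, fibration) factorisation a single step suffices, since the domains of $J$ are zero and the complex $\bigoplus_{n}D^{n}P_{n}$ has only two summands in each degree, so it exists using only finite biproducts; for the (cofibration, trivial fibration) factorisation in $\mathrm{Ch}_{\ge 0}(\mathcal{E})$ one builds $X\to X\oplus C\to Y$ degree by degree, using kernels and enough projectives to correct surjectivity and homology at each stage; and in the unbounded case one exhausts a complex by bounded-below pieces and takes the countable colimit along degreewise split monomorphisms, which is where weak $(\aleph_{0},\mathbf{SplitMon})$-elementarity guarantees the colimit is still a quasi-isomorphism. With the factorisations produced this way, the rest of your argument (retract characterisation of cofibrations, acyclicity via chain homotopy equivalences, and the upgrade to a combinatorial structure under the extra hypotheses) goes through.
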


These model structures are moreover both simplicial (in fact $\mathrm{sAb}$-enriched). Indeed $\mathrm{Ch}(\mathcal{E})$ is naturally enriched over $\mathrm{Ch(Ab)}$ by
$$\mathrm{Hom}_{n}(X_{\bullet},Y_{\bullet})\defeq\prod_{i\in\mathbb{Z}}\mathrm{Hom}(X_{i},Y_{i+n})$$
with the differential being defined on $\mathrm{Hom}(X_{i},Y_{i+n})$ by $df= d^{Y}_{i+n}\circ f - (-1)^{n}f\circ d_{i}^{X}$. Then one defines
$$\mathrm{Hom}_{\Delta^{op}}(X_{\bullet},Y_{\bullet})\defeq N^{\mathrm{Ab}}(\mathrm{Hom}_{\bullet}(X_{\bullet},Y_{\bullet}))$$
where $N^{\mathrm{Ab}}$ is the normalised Moore complex functor from the Dold-Kan correspondence. It is shown in \cite{kelly2016homotopy}*{Corollary 4.4.91} that this defines a simplicial model structure on $\mathrm{Ch}(\mathcal{E})$ as long as $\mathcal{E}$ is (countably) complete and cocomplete. The restriction to $\mathrm{Ch}_{\ge0}(\mathcal{E})$ also defines a simplicial model structure. 

\begin{remark}[Important Remark]\label{rem:DoldKan}
For any finitely complete additive category there is a Dold-Kan equivalence
$$\adj{N^{\mathcal{E}}}{\mathrm{Ch}_{\ge0}(\mathcal{E})}{\mathrm{s}\mathcal{E}}{\Gamma^{\mathcal{E}}}$$
When $\mathcal{E}$ is an exact category with enough projectives then \cite{christensen2002quillen} Theorem 6.3 implies that there is a model category structure on $\mathrm{s}\mathcal{E}$ and moreover that the Dold-Kan equivalence is in fact a Quillen equivalence of model structures. This is also explained in detail in \cite{kelly2016homotopy}*{Section 4.4}. Furthermore, $\mathrm{s}\mathcal{E}$ is enriched over $\mathrm{sAb}$ in an obvious way. If we denote the enrichment functor $\mathrm{Hom}_{\Delta^{op}}^{\mathrm{s}\mathcal{E}}(-,-)$ then
$$\mathrm{Hom}_{\Delta^{op}}(X_{\bullet},Y_{\bullet})\cong\mathrm{Hom}_{\Delta^{op}}^{\mathrm{s}\mathcal{E}}(\Gamma^{\mathcal{E}}(X_{\bullet}),\Gamma^{\mathcal{E}}(Y_{\bullet})).$$
\end{remark}

\subsubsection{Indisation of exact categories}

One pool of examples of elementary exact categories comes from indisation of small exact categories. Let $\mathcal{E}$ be a small exact category with kernels and with enough projectives, and consider the formal completion of $\mathcal{E}$ by small filtered colimits $\mathsf{Ind}(\mathcal{E})$.

\begin{proposition}\label{prop:ind(E)_exact}
$\mathsf{Ind}(\mathcal{E})$ is an elementary exact category with kernels and $\mathsf{Ind}^{m}(\mathcal{E})$ is an $\mathbf{AdMon}$-elementary exact category with kernels. If $\mathcal{E}$ is quasi-abelian, then so are $\mathsf{Ind}(\mathcal{E})$ and $\mathsf{Ind}^{m}(\mathcal{E})$.
\end{proposition}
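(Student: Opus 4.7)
The plan is to construct conflations in $\mathsf{Ind}(\mathcal{E})$ using the $\mathrm{Tot}$ presentation of filtered colimits discussed earlier, then transfer all required structural features (kernels, projectives, tiny generators) levelwise from $\mathcal{E}$, and finally verify the more delicate quasi-abelian case via a reduction to filtered colimits of conflations in $\mathcal{E}$.

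\emph{Step 1: The exact structure and kernels on $\mathsf{Ind}(\mathcal{E})$.} Declare a sequence $X' \to X \to X''$ in $\mathsf{Ind}(\mathcal{E})$ to be a conflation if it can be written as a monomorphic (or merely arbitrary) filtered colimit, indexed through the $\mathrm{Tot}$ construction, of a diagram of conflations in $\mathcal{E}$. The axioms of an exact category are verified by descending to $\mathcal{E}$: pullbacks and pushouts of conflations in $\mathsf{Ind}(\mathcal{E})$ are computed, up to reindexing via $\mathrm{Tot}$, as filtered colimits of the corresponding pullbacks/pushouts of conflations in $\mathcal{E}$, which exist and remain conflations by assumption on $\mathcal{E}$. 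Kernels exist in $\mathsf{Ind}(\mathcal{E})$ because any morphism can be written via $\mathrm{Tot}$ as a filtered colimit of morphisms $f_i\colon X_i \to Y_i$ in $\mathcal{E}$ (whose kernels $K_i$ exist), and one checks that $``\colim"K_i$ is the kernel of the colimit morphism using that filtered colimits are exact in $\mathsf{Ind}(\mathcal{E})$ for the defining exactness properties.

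\emph{Step 2: Projectives and tiny generators.} Let $\mathcal{P} \subseteq \mathcal{E}$ be a small generating subcategory of projectives. For $P \in \mathcal{P}$, the image of $P$ in $\mathsf{Ind}(\mathcal{E})$ is tiny by the Kashiwara--Schapira fact recalled above, so $\mathrm{Hom}_{\mathsf{Ind}(\mathcal{E})}(P,-)$ commutes with all filtered colimits. Since every deflation in $\mathsf{Ind}(\mathcal{E})$ arises as a filtered colimit (via $\mathrm{Tot}$) of deflations in $\mathcal{E}$, and each such level is hit surjectively after applying $\mathrm{Hom}(P,-)$, the filtered colimit of surjections is a surjection, so $P$ is projective. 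Generation: given $X = ``\colim_{\mathcal{I}}"X_i$, choose a deflation $P_i \onto X_i$ from a projective of $\mathcal{P}$ for each $i$; the induced map from $``\colim"P_i$ to $X$ is a filtered colimit of deflations, hence a deflation. This shows $\mathsf{Ind}(\mathcal{E})$ is elementary.

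\emph{Step 3: The monomorphic variant.} For $\mathsf{Ind}^m(\mathcal{E})$, equip it with the restricted exact structure. Its closure under monomorphic filtered colimits inside $\mathsf{Ind}(\mathcal{E})$, noted in the corollary above, ensures that colimits of diagrams in $\mathsf{Fun}_{\mathbf{AdMon}}(\mathcal{I},\mathsf{Ind}^m(\mathcal{E}))$ remain in $\mathsf{Ind}^m(\mathcal{E})$, because inflations in the exact structure restricted from $\mathsf{Ind}(\mathcal{E})$ are in particular monomorphisms. The same argument as Step 2 shows that objects of $\mathcal{P}$ are $\mathbf{AdMon}$-tiny and generate, but tininess for arbitrary filtered diagrams may fail since general filtered colimits in $\mathsf{Ind}^m(\mathcal{E})$ are computed after reflecting back along the inclusion and need not agree with those in $\mathsf{Ind}(\mathcal{E})$. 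Hence $\mathsf{Ind}^m(\mathcal{E})$ is $\mathbf{AdMon}$-elementary.

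\emph{Step 4: The quasi-abelian case, and the main obstacle.} Suppose $\mathcal{E}$ is quasi-abelian. I first need $\mathsf{Ind}(\mathcal{E})$ (and $\mathsf{Ind}^m(\mathcal{E})$) to be finitely bicomplete; filtered colimits are built in, finite colimits are inherited from $\mathcal{E}$ through $\mathrm{Tot}$, and finite limits are built from kernels and products (products of ind-objects existing because products distribute over filtered colimits in the defining formulas). The crucial point is then that every kernel-cokernel pair in $\mathsf{Ind}(\mathcal{E})$ is already a conflation in the sense of Step 1. Given such a pair $K \to X \to Q$, express the middle map $X \to Q$ via $\mathrm{Tot}$ as $``\colim"f_i$ with $f_i \in \mathcal{E}$. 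Since $\mathcal{E}$ is quasi-abelian each $\ker(f_i) \to X_i \to \mathrm{coker}(f_i)$ is a conflation; passing to the filtered colimit (using exactness of filtered colimits, which follows from Step 1) yields a conflation whose terms agree with $K \to X \to Q$ by uniqueness of kernels and cokernels. This is the step I expect to be most technically delicate, especially in the monomorphic variant, where one must ensure that forming the cokernel inside $\mathsf{Ind}^m(\mathcal{E})$, which involves a reflection out of $\mathsf{Ind}(\mathcal{E})$, is compatible with the levelwise computation; the argument will use that the reflection preserves cokernels and that conflations of essentially monomorphic objects remain essentially monomorphic. Once this compatibility is established, all kernel-cokernel pairs are conflations, giving the quasi-abelian structure.
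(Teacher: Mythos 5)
Your overall strategy (levelwise conflations, tininess of objects of $\mathcal{E}$ via Kashiwara--Schapira, and the levelwise comparison of kernel--cokernel pairs for the quasi-abelian case) matches the paper's, but there is a genuine gap in your generation argument in Step~2. You ``choose a deflation $P_i \onto X_i$ from a projective of $\mathcal{P}$ for each $i$'' and then speak of ``the induced map from $``\colim"P_i$ to $X$''. The assignment $i \mapsto P_i$ is not functorial: given a structure map $X_i \to X_j$ of the diagram there is in general no morphism $P_i \to P_j$ making the square commute (projective covers cannot be chosen naturally), so $``\colim_{\mathcal I}"P_i$ is not a well-defined object of $\mathsf{Ind}(\mathcal{E})$ and the claimed deflation does not exist as stated. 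The paper avoids this by replacing the colimit with the coproduct: $\bigoplus_{i\in\mathcal I}P_i \onto \bigoplus_{i\in\mathcal I}X_i \onto ``\colim_{\mathcal I}"X_i$ is a composite of admissible epimorphisms with projective source, and the coproduct requires no compatibility between the $P_i$. (Note also that this coproduct lands in $\mathsf{Ind}^m(\mathcal{E})$, which is what makes the same argument work for the monomorphic variant.) Your proof needs this repair to establish that the tiny projectives generate.

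A secondary point: in Step~1 you assert that the exact-category axioms for the levelwise conflations can be ``verified by descending to $\mathcal{E}$ \dots up to reindexing via $\mathrm{Tot}$''. The delicate axiom is base change of a deflation along an \emph{arbitrary} map: such a map need not be presentable over the same index category as the given deflation, and arranging a common reindexing compatible with the conflation structure is exactly the nontrivial content. The paper sidesteps this by invoking that $\mathsf{Ind}(\mathcal{E})$ and $\mathsf{Ind}^m(\mathcal{E})$ are extension-closed subcategories of the abelian category $\mathpzc{Lex}(\mathcal{E}^{op},\mathrm{Ab})$ (citing Braunling--Groechenig--Wolfson), which yields the exact structure for free together with the levelwise description of its conflations. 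Your Steps~3 and~4 are in the right spirit and essentially reproduce the paper's comparison of conflations with the Kashiwara--Schapira description of kernels and cokernels, modulo the typo $\mathrm{coker}(f_i)$ where you mean $\mathrm{coim}(f_i)=\mathrm{coker}(\ker f_i)$.
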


\begin{proof}
As explained in \cite{kelly2021note}, it follows from \cite{braunling2016tate}*{Section 3} that $\mathsf{Ind}^{m}(\mathcal{E})$ and $\mathsf{Ind}(\mathcal{E})$ are extension closed subcategories of the abelian category $\mathpzc{Lex}(\mathcal{E}^{op},\mathrm{Ab})$ of left-exact functors from $\mathcal{E}^{op}$ to the category of abelian groups. In particular they are exact categories. Moreover, a sequence
\begin{displaymath}
\xymatrix{
0\ar[r] & X\ar[r]^{f} & Y\ar[r]^{g} & Z\ar[r]&0
}
\end{displaymath}
is exact precisely if there is a filtered category $\mathcal{I}$, functors $\alpha,\beta,\gamma:\mathcal{I}\rightarrow\mathpzc{C}$ (which are monomorphic systems in the case of $\mathsf{Ind}^{m}(\mathcal{E})$), and natural transformations $\tilde{f}:\alpha\rightarrow\beta,\tilde{g}:\beta\rightarrow\gamma$, such that
\begin{displaymath}
\xymatrix{
0\ar[r] & \alpha\ar[r]^{\tilde{f}} & \beta\ar[r]^{\tilde{g}} & \gamma\ar[r] &0
}
\end{displaymath}
is an exact sequence of functors, $f\cong``\colim"\tilde{f}$, and $g\cong``\colim"\tilde{g}$. It follows immediately that projective objects in $\mathcal{E}$ are tiny projectives in both $\mathsf{Ind}(\mathcal{E})$ and $\mathsf{Ind}^{m}(\mathcal{E})$. Let $``\colim_{\mathcal{I}}"E_{i}$ be an object of $\mathsf{Ind}(\mathcal{E})$. For each $i$ let $P_{i}\rightarrow E_{i}$ be an admissible epimorphism in $\mathcal{E}$ with $P_{i}$ projective. Then $\bigoplus_{i\in\mathcal{I}}P_{i}\rightarrow\bigoplus_{i\in\mathcal{I}}E_{i}\rightarrow``\colim_{\mathcal{I}}"E_{i}$ is an admissible epimorphism in $\mathsf{Ind}(\mathcal{E})$ with $\bigoplus_{i\in\mathcal{I}}P_{i}$ projective. Note that this direct sum is in fact an object of $\mathsf{Ind}^{m}(\mathcal{E})$, so this process also works there. 

For the claim that $\mathsf{Ind}(\mathcal{E})$ is quasi-abelian when $\mathcal{E}$ is, just compare the description of short exact sequences in the induced exact structure with the description of kernels and cokernels in the ind-category from \cite{kashiwara-schapira}*{Section 6.4}. 
\end{proof}

\subsection{Monoidal Exact Categories}

We now study the interaction between monoidal structure on the exact categories we are interested in with the set of projective objects from the previous subsection. While the monoidal structure allows us to do (higher) algebra, the existence of tiny projectives provides for a convenient presentation for the derived \(\infty\)-category of complexes. For brevity, we call an exact category $\mathcal{E}$ equipped with a closed symmetric monoidal structure, with monoidal functor $\otimes$, monoidal unit $k$, and internal hom $\underline{\mathsf{Hom}}$ a \textit{monoidal exact category}. 

\begin{definition}
An object $F$ of a monoidal exact category $\mathcal{E}$ is said to be \textit{flat} if the functor
$$F\otimes(-):\mathcal{E}\rightarrow\mathcal{E}$$
sends conflations to conflations. A flat object is said to be \textit{strongly flat} if in addition $F\otimes(-)$ commutes with kernels.
\end{definition}

Note that in a closed monoidal abelian category an object is flat if and only if it is strongly flat.

\begin{definition}
A monoidal exact category $\mathcal{E}$ is said to be \textit{quasi-projectively monoidal} if 
\begin{enumerate}
\item
It has enough projectives.
\item
The tensor product of two projectives is projective.
\end{enumerate}
If in addition projectives are (strongly) flat then $\mathcal{E}$ is said to be \textit{(strongly) projectively monoidal}.
\end{definition}

We call an exact category $\mathcal{E}$ \textit{(quasi-/ strongly) monoidal elementary} if it is both elementary and (quasi-/strongly) projectively monoidal. 

\begin{lemma}\label{lem:indmon}
Let $\mathcal{E}$ be a small finitely complete and cocomplete projectively monoidal exact category. Suppose that $\mathsf{Ind}(\mathcal{E})$ has projective limits. Then $\mathsf{Ind}(\mathcal{E})$ is a projectively monoidal elementary exact category. Moreover, the tensor product of tiny objects is tiny. If $\mathsf{Ind}^{m}(\mathcal{E})$ is cocomplete and projectives in $\mathcal{E}$ are strongly flat, then $\mathsf{Ind}(\mathcal{E})$ is  projectively monoidal $\mathbf{AdMon}$-elementary and the tensor product of $\mathbf{AdMon}$-tiny objects is $\mathbf{AdMon}$-tiny.
\end{lemma}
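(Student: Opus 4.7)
The plan is to bootstrap everything from the monoidal structure on $\mathcal{E}$ together with the explicit descriptions of conflations and colimits in $\mathsf{Ind}(\mathcal{E})$ provided by Proposition \ref{prop:ind(E)_exact}. First I would extend $\otimes_\mathcal{E}$ to $\mathsf{Ind}(\mathcal{E})$ via the formula $``\colim_{\mathcal{I}}"X_{i}\otimes``\colim_{\mathcal{J}}"Y_{j}\defeq``\colim_{\mathcal{I}\times\mathcal{J}}"X_{i}\otimes Y_{j}$. Since $\mathsf{Ind}(\mathcal{E})$ has all small limits by hypothesis and this functor preserves filtered colimits in each variable, the adjoint functor theorem produces an internal hom, making $\mathsf{Ind}(\mathcal{E})$ closed symmetric monoidal. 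By Proposition \ref{prop:ind(E)_exact} the projectives of $\mathcal{E}$ form a generating class of tiny projectives in $\mathsf{Ind}(\mathcal{E})$. For $P,Q$ projective in $\mathcal{E}$, the product $P\otimes Q$ lies in $\mathcal{E}$ and is projective there by the projectively monoidal hypothesis, and hence is tiny projective in $\mathsf{Ind}(\mathcal{E})$. For flatness, recall from Proposition \ref{prop:ind(E)_exact} that conflations in $\mathsf{Ind}(\mathcal{E})$ are filtered colimits of conflations in $\mathcal{E}$; since $P\otimes(-)$ preserves $\mathcal{E}$-conflations (by flatness of $P$ in $\mathcal{E}$) and commutes with filtered colimits (being a left adjoint), it preserves $\mathsf{Ind}(\mathcal{E})$-conflations.

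For the claim that the tensor of two tiny objects is tiny, observe that by the Kashiwara--Schapira characterisation cited earlier in Section \ref{sec:bornologies}, the tiny objects of $\mathsf{Ind}(\mathcal{E})$ are precisely (isomorphic to) the objects of $\mathcal{E}$. The extended tensor restricts to $\otimes_\mathcal{E}$ on $\mathcal{E}$, so the tensor of two tiny objects lies again in $\mathcal{E}$ and is tiny.

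For the $\mathsf{Ind}^{m}$ portion, the key additional point is that strong flatness of projectives in $\mathcal{E}$, combined with enough projectives, can be promoted via a resolution argument so that tensor products of monomorphic systems remain monomorphic; this shows the tensor restricts to $\mathsf{Ind}^{m}(\mathcal{E})$ (using that this category is cocomplete by hypothesis, so any reindexing required stays within it). The projectively monoidal structure on the resulting category is then obtained as in the first paragraph, and the generators from $\mathcal{E}$ remain $\mathbf{AdMon}$-tiny since tiny implies $\mathbf{AdMon}$-tiny. The final assertion that the tensor of $\mathbf{AdMon}$-tiny objects is $\mathbf{AdMon}$-tiny follows from the tensor-hom adjunction $\mathrm{Hom}(X\otimes Y,-)\cong\mathrm{Hom}(X,\underline{\mathsf{Hom}}(Y,-))$: strong flatness ensures that $\underline{\mathsf{Hom}}(Y,-)$ takes monomorphic filtered colimits to monomorphic filtered colimits when $Y$ is $\mathbf{AdMon}$-tiny, at which point the adjunction transfers the $\mathbf{AdMon}$-tiny property from $X$ to $X\otimes Y$.

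I expect the main obstacle to be the $\mathsf{Ind}^{m}$ portion, specifically the verification that strong flatness of projectives propagates through projective resolutions to control tensor products of general essentially monomorphic systems, and that this interacts cleanly with the internal hom to give the $\mathbf{AdMon}$-tiny closure. The remaining verifications---extending the symmetric monoidal structure, identifying the generating projectives, checking flatness for projectives, and closure of the tiny generators under tensor---are routine given Proposition \ref{prop:ind(E)_exact}.
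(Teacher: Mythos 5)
The first part of your argument---the closed symmetric monoidal structure on $\mathsf{Ind}(\mathcal{E})$ via the formula on formal colimits, the identification of the projectives of $\mathcal{E}$ as tiny projective generators, projectivity and flatness of tensor products of projectives, and closure of tiny objects under tensor---matches the paper's proof (the paper simply writes down the internal hom explicitly as $\lim_{\mathcal{I}^{op}}\colim_{\mathcal{J}}$ rather than invoking the adjoint functor theorem).

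The genuine gap is in the $\mathsf{Ind}^{m}$ portion, exactly where you flagged difficulty. You propose to show that the tensor product computed in $\mathsf{Ind}(\mathcal{E})$ of two essentially monomorphic systems is again essentially monomorphic, by promoting strong flatness of projectives through projective resolutions. This cannot work: strong flatness says $P\otimes(-)$ preserves kernels, hence monomorphisms, when $P$ is \emph{projective}, but for a general object $E$ of $\mathcal{E}$ presented as a cokernel $P_{1}\to P_{0}\to E$ of projectives one has $E\otimes M\cong\coker(P_{1}\otimes M\to P_{0}\otimes M)$, and passing to cokernels destroys any control over monomorphisms. In the motivating example $\mathcal{E}=\mathsf{Ban}_{R}$ the projective tensor product of two injective bounded maps need not be injective, so $X\otimes Y$ for $X,Y\in\mathsf{Ind}^{m}(\mathcal{E})$ genuinely need not lie in $\mathsf{Ind}^{m}(\mathcal{E})$. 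The paper's proof avoids this issue entirely: cocompleteness of $\mathsf{Ind}^{m}(\mathcal{E})$ makes it a \emph{reflective} subcategory of $\mathsf{Ind}(\mathcal{E})$, and the observation that $\underline{\mathrm{Hom}}(M,N)$ is essentially monomorphic whenever $N$ is lets general monoidal-localisation nonsense produce a closed monoidal structure $\otimes^{m}$ on $\mathsf{Ind}^{m}(\mathcal{E})$ by reflecting the ambient tensor product. One then only needs the ambient tensor to already land in $\mathsf{Ind}^{m}(\mathcal{E})$ when one factor is a projective $P$ of $\mathcal{E}$ (this is precisely where strong flatness enters: $P\otimes(-)$ preserves monomorphisms, so it sends monomorphic systems to monomorphic systems), which gives $P\otimes F\cong P\otimes^{m}F$ and thereby transfers projectivity, flatness and $\mathbf{AdMon}$-tininess from $\mathsf{Ind}(\mathcal{E})$. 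Your closing tensor-hom adjunction argument for closure of $\mathbf{AdMon}$-tiny objects is in the right spirit, but it must be run against $\otimes^{m}$ rather than against a (nonexistent) restriction of $\otimes$ to $\mathsf{Ind}^{m}(\mathcal{E})$.
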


\begin{proof}
Define $\otimes:\mathsf{Ind}(\mathcal{E})\times \mathsf{Ind}(\mathcal{E})\rightarrow \mathsf{Ind}(\mathcal{E})$ by
$$``\colim_{\mathcal{I}}"E_{i}\otimes``\colim"_{\mathcal{F}_{j}}\defeq``\colim_{\mathcal{I}\times\mathcal{J}}"E_{i}\otimes F_{j}$$
and 
$$\underline{\mathrm{Hom}}(``\colim_{\mathcal{I}}"E_{i},``\colim"_{\mathcal{F}_{j}})\defeq\lim_{\mathcal{I}^{op}}\colim_{\mathcal{J}}(E_{i},F_{j})$$
It is straightforward to check that this defines a closed monoidal structure on $\mathsf{Ind}(\mathcal{E})$. 

For the case of $\mathsf{Ind}^{m}(\mathcal{E})$, if it is cocomplete then it is a reflective subcategory of  $\mathsf{Ind}(\mathcal{E})$. In particular it is closed under limits. Moreover for any $E\in\mathcal{E}$, $\underline{\mathrm{Hom}}(E,-)$ sends monomorphic systems to monomorphic systems. It follows that for $M\in\mathsf{Ind}(\mathcal{E})$ and $N\in\mathsf{Ind}^{m}(\mathcal{E})$, $\underline{\mathrm{Hom}}(M,N)$ is in $\mathrm{Ind}^{m}(\mathcal{E})$. General nonsense implies that there is a unique closed monoidal structure $\otimes^{m}$ on $\mathsf{Ind}^{m}(\mathcal{E})$ such that the functor $\mathsf{Ind}(\mathcal{E})\rightarrow\mathsf{Ind}^{m}(\mathcal{E})$ is strong monoidal. Since all projectives in $\mathsf{Ind}(\mathcal{E})$ are objects of $\mathsf{Ind}^{m}(\mathcal{E})$, the tensor product of two projectives in $\mathsf{Ind}^{m}(\mathcal{E})$ is isomorphic to the tensor product in $\mathsf{Ind}(\mathcal{E})$, and is therefore projective. To see that projectives are flat, just note that if $P\in\mathcal{E}$ is projective and $F\in\mathsf{Ind}^{m}(\mathcal{E})$ then $P\otimes F\cong P\otimes^{m}F$. 
\end{proof}

\begin{remark}
Lemma \ref{lem:indmon} is established in the case of $\mathsf{Ind}(\mathcal{E})$ for $\mathcal{E}$ quasi-abelian in \cite{qacs} Section 2.1.4.
\end{remark}

As motivated already, the hypothesis of projectively monoidal elementary exact category implies the following:

\begin{theorem}[\cite{kelly2016homotopy} Theorem 4.3.68]\label{thm:monoidlmodel}
Let $\mathcal{E}$ be a quasi-monoidal elementary exact category. Then with the projective model structures, $\mathsf{Ch}_{\ge0}(\mathcal{E})$ and $\mathsf{Ch}(\mathcal{E})$ are monoidal model categories satisfying the monoid axiom. Moreover they are simplicial and combinatorial. In particular, they present locally presentable monoidal $(\infty,1)$-categories $\mathbf{Ch}_{\ge0}(\mathcal{E})$ and $\mathbf{Ch}(\mathcal{E})$ respectively.
\end{theorem}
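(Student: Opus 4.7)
The plan is to reduce the assertion to checking the standard axioms for a monoidal model category (the pushout-product axiom and the monoid axiom) on explicit generators, then separately verify the simplicial and combinatorial properties, and finally invoke a standard infinity-categorical dictionary.

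\textbf{Step 1: Pinning down the generators.} First I would make explicit the generating (acyclic) cofibrations for the projective model structures described in the earlier theorem of Kelly, using the small generating family $\mathcal{P}$ of tiny projectives. For $\mathsf{Ch}_{\ge 0}(\mathcal{E})$ the generating cofibrations are the inclusions $S^{n-1}P \hookrightarrow D^{n}P$ and the generating acyclic cofibrations are $0 \to D^{n}P$, for $P \in \mathcal{P}$ and $n\ge 0$; for unbounded $\mathsf{Ch}(\mathcal{E})$ we allow all $n \in \mathbb Z$. The assumption that $\mathcal{E}$ is elementary (resp. $(\aleph_{0},\mathbf{SplitMon})$-elementary with $\mathbf{AdMon}$-elementary in the unbounded case) is what makes these classes detect fibrations and acyclic fibrations with the small-object argument, and it is what provides local presentability.

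\textbf{Step 2: Pushout-product axiom.} A direct bicomplex computation identifies the pushout-product of $S^{m-1}P \hookrightarrow D^{m}P$ with $S^{n-1}Q \hookrightarrow D^{n}Q$ as a map whose cokernel is a finite direct sum of shifts of $P \otimes Q$. Because $\mathcal{E}$ is quasi-projectively monoidal, $P\otimes Q$ is again projective, so the pushout-product is again a cofibration. For the acyclic case one computes that the pushout-product of $0 \to D^{m}P$ with either of the above is of the form $0 \to D^{N}(P\otimes Q)$ (up to a direct sum of similar pieces), which is a generating acyclic cofibration. Saturating under cobase change and transfinite composition extends this from generators to all (acyclic) cofibrations.

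\textbf{Step 3: Monoid axiom.} The key observation, which circumvents the lack of flatness, is that for any chain complex $X$ the object $X\otimes D^{n}P$ is the mapping cone of the identity on $X\otimes P$ (shifted), hence contractible irrespective of flatness. Therefore $X \to X \oplus (X\otimes D^{n}P) = X\otimes(k \oplus D^{n}P)$ types of maps, and more precisely the maps of the form $X\otimes (0 \to D^{n}P)$, are split monomorphisms onto acyclic cokernels, and weak equivalences are preserved under the transfinite compositions and pushouts used in the monoid axiom (using the $(\aleph_{0},\mathbf{SplitMon})$-elementary hypothesis to ensure that such colimits are exact and hence preserve acyclicity). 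This gives the monoid axiom on generators, and one extends to all of $X\otimes\bigl(\mathrm{cof}(\mathrm{triv})\bigr)\text{-cell}$ by the usual cofibrant-generation argument.

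\textbf{Step 4: Simplicial, combinatorial, and $(\infty,1)$-categorical conclusion.} The simplicial enrichment is the Dold-Kan enrichment recalled in Remark \ref{rem:DoldKan}; compatibility with the model structure (the SM7 axiom) is an instance of the pushout-product axiom applied to tensoring with the standard cofibrations $\partial\Delta^{n}\hookrightarrow\Delta^{n}$ regarded through the normalised Moore complex. Combinatoriality follows because the set of generating (acyclic) cofibrations is small and every object is $\kappa$-accessible for some $\kappa$, which uses that $\mathcal{E}$ is an elementary exact category (hence locally presentable as a category in the relevant size). Finally, a combinatorial simplicial symmetric monoidal model category satisfying the monoid axiom presents a presentable symmetric monoidal $(\infty,1)$-category by \cite{lurieDAGII}*{Section 1.6}, giving the final clause.

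The main obstacle I expect is Step 3: one usually appeals to flatness of projectives to verify the monoid axiom, and the crux here is to avoid this by exploiting the contractibility of $D^{n}P\otimes X$ as a cone. Keeping careful track of which elementary hypothesis ($\mathbf{SplitMon}$ vs.\ $\mathbf{AdMon}$) is needed for the transfinite colimit arguments in the bounded versus unbounded setting is the one technical point that requires real care.
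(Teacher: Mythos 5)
Your proposal is correct and follows what is essentially the paper's (i.e.\ the cited reference's) route: verify the pushout--product axiom on the standard generators $S^{n-1}P\to D^nP$ and $0\to D^nP$ using closure of projectives under tensor product, and obtain the monoid axiom from the fact that $X\otimes D^nP$ is a cone on an identity and hence contractible without any flatness hypothesis --- exactly the point the paper records in the remark following the theorem, where it notes that flatness of projectives is irrelevant to the monoid axiom and only quasi-projective monoidality is needed. The only slips are cosmetic (the pushout--product of $0\to D^mP$ with $S^{n-1}Q\to D^nQ$ is not literally of the form $0\to D^N(P\otimes Q)$ but rather a degreewise split monomorphism with cokernel $D^{m+n}(P\otimes Q)$, which still yields the acyclic cofibration you need), and your care about which elementary hypothesis feeds which transfinite colimit argument is precisely the right concern.
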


\begin{remark}
The statement of Theorem 4.3.68 in \cite{kelly2016homotopy} requires that $\mathcal{E}$ is monoidal elementary, however the proof only requires it be quasi-monoidal elementary. Indeed flatness of projectives is only relevant for the proof of the second part of Proposition 4.2.55 in loc. cit. which is not relevant for the monoid axiom. The first part of Proposition 4.2.55 regarding the monoid axiom does not require this restriction. 
\end{remark}

\subsubsection{Symmetric Powers}

In this section we analyse symmetric powers of projectives in symmetric monoidal exact categories.

\begin{definition}
A projectively monoidal exact category \((\mathcal{E},\otimes)\) with countable coproducts is said to have \textit{symmetric projectives} if for any projective $P\in\mathcal{E}$ and any $n\in\mathbb{N}$, $\mathsf{Sym}_{\mathcal{E}}^{n}(P)$ is projective.
\end{definition}


\begin{remark}
If $\mathcal{E}$ is quasi-projectively monoidal and enriched over $\mathbb{Q}$ then it has symmetric projectives. Indeed in this case $\mathsf{Sym}_{\mathcal{E}}^{n}(P)$ is a summand of $P^{\otimes n}$, which is projective. 
\end{remark}

\begin{definition}
Call a projective generating set \(\mathcal{P}\) in a monoidal exact category \((\mathcal{E},\otimes)\) \textit{symmetrically closed} if the full subcategory on $\mathcal{P}$ is closed under both tensor and symmetric powers, with respect to the tensor product of \(\mathcal{E}\).
\end{definition}

%
%

For $G$ a finite group and $(\mathcal{E},\otimes,k)$ a finitely cocomplete monoidal category, denote by $k[G]$ the group algebra of $G$ in $\mathcal{E}$. It is the associative monoid $k[G]\defeq\coprod_{g\in G}k$ with multiplication determined by group multiplication in $G$. If $V$ is a $k[G]$-module, i.e. an object equipped with a map $G\rightarrow\mathrm{Aut}(V)$, then we write $G\otimes V\defeq k[G]\otimes V$. 

\begin{corollary}\label{cor:symproj}
A finitely cocomplete exact category $\mathcal{E}$ has symmetric projectives if and only if for any finite collection $P_{1},\ldots,P_{n}$ of projectives the coequalizer of the map $\Sigma_{n}\otimes P_{1}\otimes\ldots\otimes P_{n}\rightarrow P_{1}\otimes\ldots\otimes P_{n}$ is projective.
\end{corollary}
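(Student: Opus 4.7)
The statement is essentially a direct consequence of the definition of $\mathsf{Sym}_{\mathcal{E}}^n$ as the coequalizer in question, combined with the additive decomposition of symmetric powers over finite direct sums.

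For the backward direction, the plan is simply to specialise the hypothesis to the case $P_1 = P_2 = \cdots = P_n = P$ for each projective $P \in \mathcal{E}$. In that case the $\Sigma_n$-action on $P^{\otimes n}$ is the standard one coming from the symmetric monoidal braiding, and the coequalizer of the action and the augmentation map is, by construction, $\mathsf{Sym}_{\mathcal{E}}^n(P)$. Hence the hypothesis immediately forces $\mathsf{Sym}_{\mathcal{E}}^n(P)$ to be projective, which is the definition of having symmetric projectives.

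For the forward direction, the plan is to reduce the general ``mixed'' coequalizer to a single symmetric power of a coproduct. Given projectives $P_1, \ldots, P_n$, the finite coproduct $Q \defeq P_1 \oplus \cdots \oplus P_n$ is again projective, so by the symmetric projectives hypothesis $\mathsf{Sym}_{\mathcal{E}}^n(Q)$ is projective. The next step is to establish the multinomial decomposition
\[ \mathsf{Sym}_{\mathcal{E}}^n(P_1 \oplus \cdots \oplus P_n) \;\cong\; \bigoplus_{a_1 + \cdots + a_n = n} \mathsf{Sym}_{\mathcal{E}}^{a_1}(P_1) \otimes \cdots \otimes \mathsf{Sym}_{\mathcal{E}}^{a_n}(P_n). \]
The strategy is to decompose $(P_1 \oplus \cdots \oplus P_n)^{\otimes n}$ as a coproduct indexed by functions $f \colon \{1,\ldots,n\} \to \{1,\ldots,n\}$, observe that $\Sigma_n$ permutes these summands with the stabiliser of a function of type $(a_1,\ldots,a_n)$ equal to $\prod_i \Sigma_{a_i}$, and then compute the coequalizer orbit by orbit. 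The summand indexed by $(a_1,\ldots,a_n) = (1,\ldots,1)$ is precisely the coequalizer $\Sigma_n \otimes P_1 \otimes \cdots \otimes P_n \rightrightarrows P_1 \otimes \cdots \otimes P_n$ appearing in the statement. Being a direct summand of a projective object, it is itself projective.

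The main technical obstacle is establishing the multinomial decomposition in the exact (rather than abelian) setting, where $\mathsf{Sym}^n$ cannot be computed as the image of a symmetrisation map and one has no Schur--Weyl machinery at hand. The crucial input is that, since $\mathcal{E}$ is projectively monoidal, the functor $P \otimes (-)$ preserves colimits for each projective $P$, so that coequalizers commute past coproduct decompositions of $(P_1\oplus\cdots\oplus P_n)^{\otimes n}$; combined with the orbit analysis of the $\Sigma_n$-action this yields the decomposition. With this in hand both implications are immediate.
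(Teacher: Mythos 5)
Your proposal is correct and follows essentially the same route as the paper: the backward direction by specialising all $P_i$ to a single projective $P$, and the forward direction by forming $P=\bigoplus_i P_i$ and exhibiting the mixed coequalizer as a direct summand (retract) of $\mathsf{Sym}^n_{\mathcal{E}}(P)$. The paper simply asserts the retract claim, whereas you supply the multinomial/orbit justification; just note that the distributivity of $\otimes$ over coproducts and coequalizers comes from the \emph{closed} monoidal structure rather than from projective monoidality per se.
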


\begin{proof}
One direction is clear. Suppose that $\mathcal{E}$ has symmetric projectives, and let $P_{1},\ldots,P_{n}$  be a finite collection of projectives. Write $P\defeq\bigoplus_{i=1}^{n} P_i$. Then $P$ is projective. The coequaliser of $\Sigma_{n}\otimes P_{1}\otimes\ldots\otimes P_{n}\rightarrow P_{1}\otimes\ldots\otimes P_{n}$ is a retract of $S^{n}(P)$, and is therefore projective.
\end{proof}

\begin{lemma}\label{lem:IndBan_symclosed}
Let \(\mathcal{E}\) be a finitely cocomplete monoidal exact category, in which the tensor product commutes with finite colimits. Then the category $\mathsf{Ind}(\mathcal{E})$ has symmetric projectives if and only if $\mathcal{E}$ has symmetric projectives.
\end{lemma}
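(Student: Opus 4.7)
\emph{Proof plan.} The strategy is to describe projectives in $\mathsf{Ind}(\mathcal{E})$ explicitly using Proposition \ref{prop:ind(E)_exact}, and then to reduce computation of symmetric powers in $\mathsf{Ind}(\mathcal{E})$ to computations already taking place in $\mathcal{E}$ via the Yoneda embedding $y\colon\mathcal{E}\hookrightarrow\mathsf{Ind}(\mathcal{E})$. The key facts about $y$ are that it is fully faithful, exact, preserves finite colimits, and is strong monoidal (the latter by the construction of the tensor product in Lemma \ref{lem:indmon}). Moreover, because conflations in $\mathcal{E}$ are conflations in $\mathsf{Ind}(\mathcal{E})$, an object $X\in\mathcal{E}$ is projective in $\mathcal{E}$ if and only if $y(X)$ is projective in $\mathsf{Ind}(\mathcal{E})$.

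For the ``only if'' direction, let $P\in\mathcal{E}$ be projective. Since $y$ is strong monoidal and commutes with the finite colimit (coequaliser of the $\Sigma_n$-action on $P^{\otimes n}$) computing $\mathsf{Sym}^n$, we have $\mathsf{Sym}^n_{\mathsf{Ind}(\mathcal{E})}(y(P))\cong y(\mathsf{Sym}^n_{\mathcal{E}}(P))$. By hypothesis the left-hand side is projective in $\mathsf{Ind}(\mathcal{E})$, and then by the reflection property above, $\mathsf{Sym}^n_{\mathcal{E}}(P)$ is projective in $\mathcal{E}$.

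For the ``if'' direction, let $Q$ be projective in $\mathsf{Ind}(\mathcal{E})$. By the proof of Proposition \ref{prop:ind(E)_exact}, $Q$ is a retract of a direct sum $\bigoplus_{i\in I}P_i$ with each $P_i\in\mathcal{E}$ projective. Since $\mathsf{Sym}^n$ preserves retracts, it suffices to show that $\mathsf{Sym}^n_{\mathsf{Ind}(\mathcal{E})}(\bigoplus_{i\in I}P_i)$ is projective. Because the tensor product in $\mathsf{Ind}(\mathcal{E})$ is a left adjoint and therefore commutes with all colimits, the standard binomial-type argument (decomposing $(\bigoplus_{i\in I}P_i)^{\otimes n}$ as a sum over sequences in $I^n$, grouping by $\Sigma_n$-orbits, and taking coinvariants) gives a canonical isomorphism
\[
\mathsf{Sym}^n\Bigl(\bigoplus_{i\in I}P_i\Bigr)\;\cong\;\bigoplus_{\substack{(n_i)_{i\in I}\\ \sum n_i = n}}\;\bigotimes_{i\in I}\mathsf{Sym}^{n_i}(P_i),
\]
where each multi-index $(n_i)$ has only finitely many nonzero entries. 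Each summand is a finite tensor product of objects $\mathsf{Sym}^{n_i}(P_i)$ of $\mathcal{E}$ which are projective by hypothesis; this finite tensor product is again projective in $\mathcal{E}$ since $\mathcal{E}$ is projectively monoidal; and finally an arbitrary direct sum of projectives of $\mathcal{E}$ is projective in $\mathsf{Ind}(\mathcal{E})$, because $\mathrm{Hom}_{\mathsf{Ind}(\mathcal{E})}(\bigoplus_\alpha P_\alpha,-)\cong\prod_\alpha\mathrm{Hom}_{\mathsf{Ind}(\mathcal{E})}(P_\alpha,-)$ and products of surjections of abelian groups are surjections.

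The only step that requires genuine care is the binomial decomposition of $\mathsf{Sym}^n$ of a coproduct in a non-additive enriched setting, but this is a purely formal consequence of the cocontinuity of $\otimes$ together with the fact that $\mathsf{Sym}^n$ is the $\Sigma_n$-coinvariants of the $n$-fold tensor power; everything else reduces to bookkeeping via the Yoneda embedding.
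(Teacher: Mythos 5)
Your proof is correct and follows essentially the same route as the paper: reduce a projective of $\mathsf{Ind}(\mathcal{E})$ to a retract of a coproduct of projectives of $\mathcal{E}$, use cocontinuity of $\otimes$ to decompose $\mathsf{Sym}^{n}$ of that coproduct over $\Sigma_{n}$-orbits of index tuples, and conclude from the fact that coproducts of projectives of $\mathcal{E}$ are projective in $\mathsf{Ind}(\mathcal{E})$. The only cosmetic difference is that you identify each orbit contribution explicitly as $\bigotimes_{i}\mathsf{Sym}^{n_{i}}(P_{i})$ (legitimately invoking that $\mathcal{E}$ is projectively monoidal, which is built into the definition of having symmetric projectives), whereas the paper leaves it as the multilinear coequalizer and cites Corollary \ref{cor:symproj}.
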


\begin{proof}
Since $\mathcal{E}$ is closed under tensor products and colimits as a subcategory of $\mathsf{Ind}(\mathcal{E})$, if the latter has symmetric projectives then the former clearly does as well.

Conversely, suppose $\mathcal{E}$ has symmetric projectives, and let $P$ be a projective in $\mathsf{Ind}(\mathcal{E})$. $P$ is a retract of an object of the form $\coprod_{i\in\mathcal{I}}P_{i}$ where the $P_{i}$ are projectives in $\mathcal{E}$. Then $S^{n}(P)$ is a retract of the coequaliser of $\coprod_{(i_{1},\ldots,i_{n})\in\mathcal{I}^{n}}\Sigma_{n}\otimes P_{i_{1}}\otimes\ldots\otimes P_{i_{n}}\rightarrow\coprod_{(i_{1},\ldots,i_{n})\in\mathcal{I}^{n}}P_{i_{1}}\otimes\ldots\otimes P_{i_{n}}$. The claim now follows from Corollary \ref{cor:symproj}.
\end{proof}

\begin{proposition}\label{prop:projclosed}
Let $\mathcal{E}$ be a quasi-projectively monoidal elementary exact category with symmetric projectives. Suppose further that the tensor product of two compact objects is compact. Then there exists a set $\mathcal{P}$ of tiny projective generators of $\mathcal{E}$ such that 
\begin{enumerate}
\item
for any $P\in\mathcal{P}$ and any $n\in\mathbb{N}$, $\mathsf{Sym}_{\mathcal{E}}^{n}(P)\in\mathcal{P}$
\item
for any $P,Q\in\mathcal{P}$, $P\otimes Q\in\mathcal{P}$
\item
$\mathcal{P}$ is closed under finite coproducts.
\end{enumerate}
\end{proposition}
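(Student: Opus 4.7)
The plan is to construct $\mathcal{P}$ by iterated closure starting from a given generating set. Since $\mathcal{E}$ is elementary, pick a small set $\mathcal{P}_{0}$ of tiny projective generators. Define $\mathcal{P}_{n+1}$ to be the union of $\mathcal{P}_{n}$ together with a representative of each isomorphism class of objects of the form $P\oplus Q$, $P\otimes Q$, and $\mathsf{Sym}^{m}_{\mathcal{E}}(P)$ for $P,Q\in\mathcal{P}_{n}$ and $m\in\mathbb{N}$. Set $\mathcal{P}\defeq\bigcup_{n\ge0}\mathcal{P}_{n}$. Each $\mathcal{P}_{n}$ remains small by induction (only set-many new objects are produced at each stage), hence $\mathcal{P}$ is small, and it generates $\mathcal{E}$ because it contains $\mathcal{P}_{0}$. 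The closure properties (1)--(3) hold by construction since, if $P,Q\in\mathcal{P}$, then they appear jointly in some $\mathcal{P}_{n}$, and so $\mathsf{Sym}^{m}(P)$, $P\otimes Q$, $P\oplus Q$ all land in $\mathcal{P}_{n+1}\subset\mathcal{P}$.

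The remaining task is to verify that every object in $\mathcal{P}$ is still both projective and tiny. Projectivity is easy: finite coproducts of projectives are projective, tensor products of projectives are projective by the quasi-projectively monoidal hypothesis, and symmetric powers of projectives are projective by the symmetric projectives hypothesis. So by induction on $n$ every element of $\mathcal{P}_{n}$ is projective.

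For tinyness, finite coproducts of tiny objects are tiny because filtered colimits commute with finite coproducts, and tensor products of tiny objects are tiny by the standing hypothesis that tensor products of compact objects are compact. The main obstacle is to see that $\mathsf{Sym}^{n}_{\mathcal{E}}(P)$ is tiny whenever $P$ is tiny. By Corollary \ref{cor:symproj} together with the standard coequaliser presentation, $\mathsf{Sym}^{n}_{\mathcal{E}}(P)$ is the coequaliser of the pair $\Sigma_{n}\otimes P^{\otimes n}\rightrightarrows P^{\otimes n}$. Now $P^{\otimes n}$ is tiny by $n-1$ iterated applications of the tensor hypothesis, and $\Sigma_{n}\otimes P^{\otimes n}$ is a finite coproduct of copies of $P^{\otimes n}$ and hence also tiny. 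Finally, an elementary exact category which is cocomplete and which possesses a small set of tiny projective generators is locally presentable, and in any locally presentable $(\infty,1)$-category, finite colimits of compact objects are compact; applied at the level of $1$-categories this shows the coequaliser $\mathsf{Sym}^{n}_{\mathcal{E}}(P)$ is tiny.

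If local presentability is not at hand, one can argue directly: given a filtered diagram $\{X_{i}\}$, a map $\mathsf{Sym}^{n}(P)\to\colim_{i}X_{i}$ corresponds to a $\Sigma_{n}$-equivariant map $P^{\otimes n}\to\colim_{i}X_{i}$, which by tinyness of $P^{\otimes n}$ factors through some $X_{i}$; by tinyness of $\Sigma_{n}\otimes P^{\otimes n}$ the two composites exhibiting $\Sigma_{n}$-equivariance agree already at some further stage, yielding the required factorisation of the symmetric-power map. An induction on $n$ then completes the proof.
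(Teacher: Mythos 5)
Your proposal is correct and follows essentially the same route as the paper: start from any set of tiny projective generators and close it up iteratively under finite coproducts, tensor products, and symmetric powers, checking that each operation preserves tiny projectivity (the paper handles the symmetric-power case with the one-line observation that a finite colimit of compact objects is compact, which is exactly the coequaliser argument you spell out). Your detour through local presentability is unnecessary — the commutation of finite limits with filtered colimits in $\mathsf{Set}$ already gives the direct argument you supply afterwards — but it does no harm.
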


\begin{proof}
Let $\tilde{\mathcal{P}}$ be any set of tiny projective generators. Let 
\begin{itemize}
\item
$T(\tilde{\mathcal{P}})$ be the  set of finite tensor products of objects in $\tilde{\mathcal{P}}$
\item
$S(\tilde{\mathcal{P}})$ be the set of symmetric powers of objects in $\tilde{\mathcal{P}}$
\item
$C(\tilde{\mathcal{P}})$ be the  set of finite coproducts of objects in $\tilde{\mathcal{P}}$
\end{itemize}
Since the tensor product of two compact objects is compact, and a finite colimit of compact objects is compact, all of these sets are sets of tiny projectives containing $\tilde{\mathcal{P}}$. In particular they are themselves sets of tiny projective generators. Then
$$\mathcal{P}\defeq\bigcup_{n\in\mathbb{N}}(T\circ S\circ C)^{n}(\tilde{\mathcal{P}})$$ is the required set of tiny projective generators. 
\end{proof}

\subsection{$t$-Structures on Exact Categories}

In this subsection, we describe the generalisation of the Postnikov $t$-structure to quasi-elementary exact categories with kernels.

\begin{definition}
Let $\mathcal{E}$ be an exact category with enough projectives, and consider the Postnikov \(t\)-structure \((\mathbf{Ch}(\mathsf{Mod}_\Z)_{\geq 0}^P, \mathbf{Ch}(\mathsf{Mod}_\Z)_{\leq 0}^P)\) on \(\mathbf{Ch}(\mathsf{Mod}_\Z)\). The \textit{projective} $t$-\textit{structure on }$\mathbf{Ch}(\mathcal{E})$ is the $t$-structure defined as follows:
\begin{itemize}
\item
$\mathbf{Ch}_{\ge 0}(\mathcal{E})$ is the full subcategory of $\mathbf{Ch}(\mathcal{E})$ consisting of objects $X$ such that for every projective $P$, $\Hom(P,X)$ is in $\mathbf{Ch}_{\ge0}(\mathsf{Mod}_{\mathbb{Z}})^P$.
\item
$\mathbf{Ch}_{\le 0}(\mathcal{E})$ is the full subcategory of $\mathbf{Ch}(\mathcal{E})$ consisting of objects $X$ such that for every projective $P$, $\Hom(P,X)$ is in $\mathbf{Ch}_{\le0}(\mathsf{Mod}_{\mathbb{Z}})^P$.
\end{itemize}
\end{definition}

We claim that this does define a $t$-structure on $\mathbf{Ch}(\mathcal{E})$ as long as $\mathcal{E}$ has kernels. For an object $X\in\mathbf{Ch}(\mathcal{E})$ let $\tau_{\ge0}^{L}X$ denote the complex whose $n$th entry is $0$ for $n<0$, is $X_{n}$ for $n>0$, and is $\ker(d_{X}^{0})$ for $n=0$. Similarly, we can define \(\tau_{\geq n}^L(X)\) for any \(n \geq 0\). Denote by $i_{0}$ the natural map $\tau_{\ge0}^{L}X\rightarrow X$. The proof of the following is an adaptation from the abelian case, and is  identical to \cite{milicic2007lectures}*{Lemma 1.1.1}:

\begin{lemma}\label{lem:truncation_inclusion_adjunction}
Let $X$ be a complex in $\mathbf{Ch}_{\ge n}(\mathcal{E})$, and $Y$ an object of $\mathbf{Ch}(\mathcal{E})$. The canonical map 
$$i \colon \Hom_{D(\mathcal{E})}(X,\tau_{\ge n}Y)\rightarrow \Hom_{D(\mathcal{E})}(X,Y)$$
is a bijection. In other words, the truncation functor \(\tau_{\geq n} \colon \mathsf{Ho}(\mathbf{Ch}(\mathcal{E})) \to \mathsf{Ho}(\mathbf{Ch}(\mathcal{E})_{\geq n})\) is  right adjoint to the inclusion \(\mathsf{Ho}(\mathbf{Ch}(\mathcal{E})_{\geq n}) \subseteq \mathsf{Ho}(\mathbf{Ch}(\mathcal{E})\). 
\end{lemma}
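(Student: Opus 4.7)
The plan is to mimic the proof of \cite{milicic2007lectures}*{Lemma 1.1.1}, adapted to the exact-category setting by using the calculus of fractions for the derived category.

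First I would observe that, since $\mathcal{E}$ has kernels, the assignment $X\mapsto \tau^{L}_{\ge n}X$ extends canonically to an additive functor $\tau_{\ge n}:\mathbf{Ch}(\mathcal{E})\to\mathbf{Ch}(\mathcal{E})_{\ge n}$ equipped with a natural transformation $\iota:\tau_{\ge n}\Rightarrow\mathrm{id}$, given by the inclusion of subcomplexes. For any projective $P$, the functor $\Hom(P,-)$ is exact and commutes with kernels, so $\Hom(P,\tau_{\ge n}X)$ coincides with the classical truncation $\tau_{\ge n}\Hom(P,X)$ of the abelian complex $\Hom(P,X)$. Hence $\tau_{\ge n}$ sends quasi-isomorphisms to quasi-isomorphisms and descends to a functor $\mathsf{Ho}(\mathbf{Ch}(\mathcal{E}))\to\mathsf{Ho}(\mathbf{Ch}(\mathcal{E})_{\ge n})$; moreover, for $X\in\mathbf{Ch}(\mathcal{E})_{\ge n}$ the map $\iota_{X}$ is itself a quasi-isomorphism, since it reduces to the classical Postnikov adjunction applied to each $\Hom(P,X)$, which has vanishing homology in degrees $<n$ by definition of the projective $t$-structure.

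Next I would establish the surjectivity of $i_{*}:\Hom_{D(\mathcal{E})}(X,\tau_{\ge n}Y)\to\Hom_{D(\mathcal{E})}(X,Y)$. A morphism $f:X\to Y$ in $D(\mathcal{E})$ is represented by a roof $X\xleftarrow{s}Z\xrightarrow{g}Y$ with $s$ a quasi-isomorphism. Applying $\tau_{\ge n}$ produces a roof $\tau_{\ge n}X\xleftarrow{\tau_{\ge n}s}\tau_{\ge n}Z\xrightarrow{\tau_{\ge n}g}\tau_{\ge n}Y$ in which $\tau_{\ge n}s$ remains a quasi-isomorphism, and precomposing with $\iota_{X}^{-1}$ (an iso in $D(\mathcal{E})$ by the first step) yields a morphism $\bar f:X\to\tau_{\ge n}Y$. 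The naturality square
\begin{displaymath}
\xymatrix{
\tau_{\ge n}Z\ar[r]^{\tau_{\ge n}g}\ar[d]_{\iota_{Z}} & \tau_{\ge n}Y\ar[d]^{\iota_{Y}}\\
Z\ar[r]^{g} & Y
}
\end{displaymath}
immediately yields $\iota_{Y}\circ\bar f=f$ in $D(\mathcal{E})$. For injectivity, suppose $\phi:X\to\tau_{\ge n}Y$ satisfies $\iota_{Y}\circ\phi=0$ in $D(\mathcal{E})$. Since $\tau_{\ge n}$ is idempotent with $\iota_{\tau_{\ge n}Y}=\mathrm{id}=\tau_{\ge n}\iota_{Y}$, applying $\tau_{\ge n}$ gives $\tau_{\ge n}\phi=0$. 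Naturality of $\iota$ then reads $\phi\circ\iota_{X}=\iota_{\tau_{\ge n}Y}\circ\tau_{\ge n}\phi=\tau_{\ge n}\phi=0$, and since $\iota_{X}$ is an isomorphism in $D(\mathcal{E})$ we conclude $\phi=0$.

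The main obstacle I anticipate is the first step: one has to ensure that the $t$-structural definition of $\mathbf{Ch}(\mathcal{E})_{\ge n}$ via $\Hom(P,-)$ is fully compatible with the intrinsic truncation $\tau^{L}_{\ge n}$ at the level of complexes, which boils down to the exactness of $\Hom(P,-)$ on $\mathcal{E}$ and the fact that the projective generators detect quasi-isomorphisms in $\mathbf{Ch}(\mathcal{E})$. Once this compatibility is established, the remaining roof-calculus steps are formally identical to the abelian case treated by Milicic.
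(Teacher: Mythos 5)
Your proof is correct, and it reaches the same destination through the same basic mechanism as the paper --- the explicit truncation $\tau^{L}_{\ge n}$, the roof description of morphisms in $D(\mathcal{E})$, and the fact that $\iota_X\colon\tau^{L}_{\ge n}X\to X$ is a quasi-isomorphism for $X\in\mathbf{Ch}_{\ge n}(\mathcal{E})$ --- but the execution differs in a way worth noting. The paper argues by hand: for surjectivity it replaces the apex $C$ of a roof by $\tau^{L}_{\ge n}C$ and observes that a chain map out of a complex vanishing below degree $n$ factors through $\tau^{L}_{\ge n}Y$; for injectivity it produces an explicit nullhomotopy $h$ and checks that $h$ factors through $\tau^{L}_{\ge n}Y$. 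You instead promote $\tau^{L}_{\ge n}$ to an endofunctor of $D(\mathcal{E})$ equipped with the copoint $\iota$, and then both halves follow formally from naturality, the idempotence identities $\iota_{\tau_{\ge n}Y}=\mathrm{id}=\tau_{\ge n}\iota_Y$, and invertibility of $\iota_X$ --- the standard argument that an idempotent copointed endofunctor whose copoint is invertible on a subcategory exhibits that subcategory as coreflective. The price of your packaging is the extra input that $\tau^{L}_{\ge n}$ preserves \emph{all} quasi-isomorphisms (the paper only ever needs $\iota_C$ to be one for $C\in\mathbf{Ch}_{\ge n}(\mathcal{E})$); this does hold, exactly as you say, because $\Hom(P,-)$ commutes with kernels, so $\Hom(P,\tau^{L}_{\ge n}X)\cong\tau^{L}_{\ge n}\Hom(P,X)$, good truncation preserves quasi-isomorphisms of complexes of abelian groups, and the projective generators detect quasi-isomorphisms in the projective model structure --- a compatibility the paper itself relies on (e.g.\ in the proof of Theorem \ref{thm:projectivetstructure}). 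One small additional point you leave implicit: to descend $\tau^{L}_{\ge n}$ and $\iota$ to $D(\mathcal{E})$ you only need preservation of quasi-isomorphisms and the universal property of the localisation, so you need not separately verify that truncation respects chain homotopies (though it does). The payoff of your route is that the injectivity step, which in the paper is a homotopy chase, becomes a two-line formal computation.
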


\begin{proof}
Consider a morphism in the derived category of \(\mathcal{E}\) between \(X\) and \(Y\). It is represented by a zig-zag
\begin{displaymath}
\xymatrix{
 & C\ar[dl]^{s}_{\sim}\ar[dr]^{f} & \\
 X & & Y
}
\end{displaymath}
with $s$ a quasi-isomorphism. Therefore $C$ is also in $\mathbf{Ch}_{\ge n}(\mathcal{E})$. Consider the diagram
\begin{displaymath}
\xymatrix{
 & C\ar[dl]^{s}_{\sim}\ar[dr]^{f} & \\
 X & \tau^{L}_{\ge n}C\ar[u]^{j}_{\sim}\ar[d]^{Id} & Y\\
  &\tau^{L}_{\ge n}C\ar[ul]^{s\circ j}\ar[ur]_{f\circ j} & 
}
\end{displaymath}
Thus from the outset we may assume that $C_{k}=0$ for $k<n$. Then the map $f_{n}$ factors through $\ker(d_{n}^{Y})$, i.e. $f$ factors through $\tau^{L}_{\ge n}Y$. It follows that the map $\Hom_{D(\mathcal{E})}(X,\tau_{\ge n}Y)\rightarrow \Hom_{D(\mathcal{E})}(X,Y)$ is surjective.

Let $\psi:X\rightarrow\tau_{\ge n}Y$ be a map such that $i\circ\psi=0$. The map $\psi$ is once again represented by a zig-zag 
\begin{displaymath}
\xymatrix{
 & C\ar[dl]^{t}_{\sim}\ar[dr]^{g} & \\
 X & & \tau_{\ge n}Y
}
\end{displaymath}
where $C_{k}=0$ for $k<n$. Since $i\circ\psi=0$, we have a diagram 
\begin{displaymath}
\xymatrix{
 & C\ar[dl]^{t}_{\sim}\ar[dr]^{i\circ g} & \\
 X & D\ar[d]\ar[u]^{j} & Y\\
  & X\ar[ul]^{Id}\ar[ur]^{0}
}
\end{displaymath}
Now $D$ is in $\mathbf{Ch}_{\ge n}(\mathcal{E})$, so again as before we may assume that $D_{k}=0$ for $k<n$. $a=i\circ g\circ j=0$ in the homotopy category of complexes $\mathcal{K}(\mathcal{E})$, i.e. $a$ is homotopic to zero, vis a homotopy $h$: $D\rightarrow Y$. It is easy to check that $h$ in fact factors through $\tau^{L}_{\ge n}Y$. Thus $g\circ j$ is homotopic to $0$, and $\psi=0$ in $\mathcal{K}(\mathcal{E})$.
\end{proof}

\begin{theorem}\label{thm:projectivetstructure}
If kernels exist in $\mathcal{E}$, then the projective $t$-structure defines a $t$-structure on $\mathbf{Ch}(\mathcal{E})$. Moreover, $\mathcal{E}$ is a full subcategory of $\mathbf{Ch}(\mathcal{E})^{\heart}$.
\end{theorem}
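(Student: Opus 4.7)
The plan is to verify the three axioms of a $t$-structure on $\mathrm{Ho}(\mathbf{Ch}(\mathcal{E}))$ for the pair $(\mathbf{Ch}_{\ge 0}(\mathcal{E}),\mathbf{Ch}_{\le 0}(\mathcal{E}))$. The central observation throughout is that for any $P \in \mathcal{E}$ the additive functor $\Hom(P,-)\colon \mathcal{E} \to \mathsf{Mod}_\Z$ preserves kernels, hence commutes with the concrete good truncation $\tau^{L}_{\ge n}$ on chain complexes: one has $\Hom(P,\tau^{L}_{\ge n}X) \cong \tau^{L}_{\ge n}\Hom(P,X)$ in $\mathbf{Ch}(\mathsf{Mod}_\Z)$, and by additivity the same functor also commutes with mapping cones. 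This identification reduces every question about the projective $t$-structure on $\mathbf{Ch}(\mathcal{E})$ to the corresponding, classical statement about the Postnikov $t$-structure on $\mathbf{Ch}(\mathsf{Mod}_\Z)$. Closure under $\Sigma^{\pm 1}$ is then immediate from $\Hom(P,X[n]) \cong \Hom(P,X)[n]$.

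For the truncation axiom, given $X \in \mathbf{Ch}(\mathcal{E})$, set $X' \defeq \tau^{L}_{\ge 0}X$ (which exists because $\mathcal{E}$ has kernels); since $X'$ is literally supported in degrees $\ge 0$, it lies in $\mathbf{Ch}_{\ge 0}(\mathcal{E})$ tautologically. Let $X''$ denote the mapping cone of the inclusion $X' \hookrightarrow X$ in $\mathbf{Ch}(\mathcal{E})$, producing a distinguished triangle $X' \to X \to X''$ in $D(\mathcal{E})$. Applying $\Hom(P,-)$ yields the cone of $\tau^{L}_{\ge 0}\Hom(P,X) \to \Hom(P,X)$, which is quasi-isomorphic to $\tau_{\le -1}\Hom(P,X)$, so $X'' \in \mathbf{Ch}_{\le -1}(\mathcal{E}) = \Sigma^{-1}\mathbf{Ch}_{\le 0}(\mathcal{E})$. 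For orthogonality, given $X \in \mathbf{Ch}_{\ge 0}(\mathcal{E})$ and $Y \in \mathbf{Ch}_{\le 0}(\mathcal{E})$, set $Z \defeq Y[-1] \in \mathbf{Ch}_{\le -1}(\mathcal{E})$; Lemma~\ref{lem:truncation_inclusion_adjunction} gives $\Hom_{D(\mathcal{E})}(X,Z) \cong \Hom_{D(\mathcal{E})}(X,\tau^{L}_{\ge 0}Z)$, and by the kernel-preservation identity $\Hom(P,\tau^{L}_{\ge 0}Z) \cong \tau^{L}_{\ge 0}\Hom(P,Z)$ is acyclic for every projective $P$ (the homology of $\Hom(P,Z)$ sits in degrees $\le -1$). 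Hence $\tau^{L}_{\ge 0}Z$ is a zero object in $D(\mathcal{E})$, so $\Hom_{D(\mathcal{E})}(X,Z) = 0$.

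Finally, any $E \in \mathcal{E}$ regarded as a complex concentrated in degree zero has $\Hom(P,E)$ supported in degree zero, so it lies in $\mathbf{Ch}_{\ge 0}(\mathcal{E}) \cap \mathbf{Ch}_{\le 0}(\mathcal{E}) = \mathbf{Ch}(\mathcal{E})^{\heart}$; full faithfulness of $\mathcal{E} \hookrightarrow \mathbf{Ch}(\mathcal{E})^{\heart}$ follows from the standard projective-resolution computation of Hom in the derived category of an exact category with enough projectives. The one subtle point in the argument is the assertion, used in the orthogonality step, that $\Hom(P,-)$-acyclicity for all projective $P$ forces a complex to be a zero object of $D(\mathcal{E})$. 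This is not a purely formal consequence of merely having enough projectives, but it is precisely the definition of weak equivalence in the projective model structure on $\mathbf{Ch}(\mathcal{E})$ recalled earlier in the section, so it is built into the framework being used.
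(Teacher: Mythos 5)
Your proof is correct and follows essentially the same route as the paper: the same good truncation $\tau^{L}_{\ge 0}$, the same distinguished triangle, the same use of Lemma~\ref{lem:truncation_inclusion_adjunction} for orthogonality, and the same reduction to $\mathbf{Ch}(\mathsf{Mod}_\Z)$ via the identity $\Hom(P,\tau^{L}_{\ge n}X)\cong\tau^{L}_{\ge n}\Hom(P,X)$. Your closing remark is slightly misattributed — the paper's projective model structure takes quasi-isomorphisms (not $\Hom(P,-)$-equivalences) as weak equivalences, so the detection of zero objects by $\Hom(P,-)$-acyclicity is a (true, standard) theorem of the framework rather than a definition — but the paper's own step ``$\tau^{L}_{\ge0}Y[1]\cong 0$'' silently relies on exactly the same fact, so this is a point of presentation, not a gap.
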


\begin{proof}
For $P$ projective the functor $\Hom(P,-)$ commutes with shifts, so clearly if $X$ is in $\mathbf{Ch}_{\ge0}(\mathcal{E})$ then $X[-1]\in \mathbf{Ch}_{\ge0}(\mathcal{E})$. 

By Lemma \ref{lem:truncation_inclusion_adjunction}, we have
a bijection
$$Hom_{D(\mathcal{E})}(X,\tau^{L}_{\ge 0}Y[1])\rightarrow Hom_{D(\mathcal{E})}(X,Y[1])$$
But $\tau_{\ge 0}^{L}Y[1]\cong 0$ since $Y\in\mathbf{Ch}_{\le0}(\mathcal{E})$. Thus $\Hom_{D(\mathcal{E})}(X,Y[1])=\{0\}$.

Consider the distinguished triangle
$$\tau_{\ge 0}^{L}(X)\rightarrow X\rightarrow \mathsf{cone}(i_{0})\rightarrow \tau_{\ge 0}^{L}(X)[-1]$$
Applying $\Hom(P,-)$ to this distinguised triangle gives the distinguished triangle 
$$\tau^{L}_{\ge 0}Hom(P,X)\rightarrow \Hom(P,X)\rightarrow \mathsf{cone}(\Hom(P,i_{0}))\rightarrow \tau_{\ge 0}^{L}\Hom(P,X)[-1]$$
Clearly $\tau^{L}_{\ge 0}\Hom(P,X)$ is in $\mathbf{Ch}_{\ge0}(\mathsf{Mod}_{\mathbb{Z}})^P$ and $\mathsf{cone}(\Hom(P,i_{0}))$ is in $\mathbf{Ch}_{\le0}(\mathsf{Mod}_{\mathbb{Z}})^P$. Thus $\tau^{L}_{\ge n}(X)$ is in $\mathbf{Ch}_{\ge0}(\mathcal{E})$ and $\mathsf{cone}(i_{0})$ is in $\mathbf{Ch}_{\le-1}(\mathcal{E})$

Clearly an object $X$ of $\mathcal{E}$, regarded as a complex concentrated in degree $0$, is in $\mathbf{Ch}_{\ge0}(\mathcal{E})\cap \mathbf{Ch}_{\le0}(\mathcal{E})$.
\end{proof}

\begin{proposition}
Let $\mathcal{P}$ be a generating subcategory of projectives in $\mathcal{E}$, and let $X\in\mathbf{Ch}(\mathcal{E})$. Then $X\in \mathbf{Ch}_{\ge0}(\mathcal{E})$ (resp. $\mathbf{Ch}_{\le0}(\mathcal{E})$) if and only if for any $P\in\mathcal{P}$, $\Hom(P,X)\in\mathbf{Ch}_{\ge0}(\mathsf{Mod}_{\mathbb{Z}})^P$ (resp. $\mathbf{Ch}_{\le0}(\mathsf{Mod}_{\mathbb{Z}})^P$)
\end{proposition}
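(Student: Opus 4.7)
The plan is straightforward: one direction is tautological, and the other reduces to the observation that the generating subcategory $\mathcal{P}$ controls all of the projectives up to retracts, together with the fact that the Postnikov $t$-structure on $\mathbf{Ch}(\mathsf{Mod}_\Z)$ is closed under retracts and products.

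First I would dispose of the ``only if'' direction. Since $\mathcal{P}$ consists of projectives, if $X$ lies in $\mathbf{Ch}_{\ge 0}(\mathcal{E})$ (resp. $\mathbf{Ch}_{\le 0}(\mathcal{E})$) as per the definition, then in particular $\Hom(P,X)$ has the required connectivity for every $P \in \mathcal{P}$.

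For the ``if'' direction, let $Q$ be an arbitrary projective of $\mathcal{E}$. By the generating hypothesis, there exists some $P \in \mathcal{P}$ and a deflation $P \twoheadrightarrow Q$. Because $Q$ is projective, this deflation admits a section, so $Q$ is a retract of $P$ in $\mathcal{E}$. Applying the functor $\Hom(-,X)$ exhibits $\Hom(Q,X)$ as a retract of $\Hom(P,X)$ in $\mathbf{Ch}(\mathsf{Mod}_\Z)$. Passing to homology in each degree, $H_n(\Hom(Q,X))$ is a retract of $H_n(\Hom(P,X))$ as an abelian group. Since by hypothesis the latter vanishes for $n < 0$ (resp.\ $n > 0$), so does the former, and therefore $\Hom(Q,X)$ lies in $\mathbf{Ch}_{\ge 0}(\mathsf{Mod}_\Z)^P$ (resp.\ $\mathbf{Ch}_{\le 0}(\mathsf{Mod}_\Z)^P$), as required.

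There is no real obstacle here; the only thing one needs to verify is that the definition of ``generating'' does indeed produce a single deflation $P \twoheadrightarrow Q$ from an object $P \in \mathcal{P}$, which is exactly the definition given earlier in the section. Note that we do not need to form direct sums of objects in $\mathcal{P}$ to cover $Q$, since the definition of a generating subcategory already provides a single such $P$, and projectivity of $Q$ immediately splits the situation.
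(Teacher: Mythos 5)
Your argument is internally valid if one reads the paper's definition of ``generate'' literally (``for every object $X$ there is an object $P\in\mathcal{P}$ and a deflation $P\onto X$''): the deflation $P\onto Q$ splits because $Q$ is projective, $\Hom(Q,X)$ is then a retract of $\Hom(P,X)$ in $\mathbf{Ch}(\mathsf{Mod}_\Z)$, and the Postnikov truncation classes are closed under retracts since homology preserves retracts. The ``only if'' direction is indeed tautological.

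The caveat is that the paper's own proof does not take this shortcut: it writes ``$Q$ is a summand of a coproduct of objects in $\mathcal{P}$'' and then uses that $\mathbf{Ch}_{\ge0}(\mathsf{Mod}_\Z)^P$ and $\mathbf{Ch}_{\le0}(\mathsf{Mod}_\Z)^P$ are closed under products and summands. This is because in every application the generating subcategory is a \emph{set} of tiny (hence compact) projectives --- e.g.\ $\{\Z^n\}_{n\in\N}$ in $\mathbf{Ch}(\mathsf{Mod}_\Z)$, or $\mathcal{P}^0$ in $\mathsf{Ind}(\mathsf{Ban}_R)$ --- and such a set cannot deflate onto an arbitrary projective one object at a time: no single $\Z^n$ admits a surjection onto $\Z^{(\N)}$, since such a surjection would split and make $\Z^{(\N)}$ finitely generated. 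So the operative notion of ``generating'' throughout the paper is the weaker, standard one, where every object receives a deflation from a \emph{coproduct} of objects of $\mathcal{P}$. Under that reading your first step fails, and you need the extra two lines: split $\coprod_i P_i\onto Q$, identify $\Hom\bigl(\coprod_i P_i,X\bigr)\cong\prod_i\Hom(P_i,X)$, and note that homology of complexes of abelian groups commutes with products (products are exact in $\mathsf{Mod}_\Z$), so the Postnikov halves are closed under products as well as retracts. With that addition your argument coincides with the paper's; without it the proposition would not apply to the generating sets it is actually used for.
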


\begin{proof}
Clearly if $X\in \mathbf{Ch}_{\ge0}(\mathcal{E})$ (resp. $\mathbf{Ch}_{\le0}(\mathcal{E})$) then $\Hom(P,X)\in\mathbf{Ch}_{\ge0}(\mathsf{Mod}_{\mathbb{Z}})^P$ (resp. $\mathbf{Ch}_{\le0}(\mathsf{Mod}_{\mathbb{Z}})^P$) for any $P\in\mathcal{P}$. 

Conversely, suppose $\Hom(P,X)\in \mathbf{Ch}_{\ge0}(\mathsf{Mod}_{\mathbb{Z}})^P$ (resp. $\mathbf{Ch}_{\le0}(\mathsf{Mod}_{\mathbb{Z}})$) for any $P\in\mathcal{P}$. Let $Q$ be an arbitrary projective. Then $Q$ is a summand of a coproduct of objects in $\mathcal{P}$. Since $\mathbf{Ch}_{\ge0}(\mathsf{Mod}_{\mathbb{Z}})^P$ and $\mathbf{Ch}_{\le0}(\mathsf{Mod}_{\mathbb{Z}})^P$ are closed under products and summands, the claim follows.
\end{proof}

\begin{remark}
The heart of this $t$-structure is the full subcategory consisting of objects $X_{\bullet}$ such that for any $P$, $\Hom(P,X_{\bullet})$ is in $\mathbf{Ch}(\mathsf{Mod}_{\mathbb{Z}})^{\heart}\cong \mathsf{Mod}_{\mathbb{Z}}$. It contains $\mathcal{E}$ as a full subcategory, regarded as complexes concentrated in degree $0$, but in general these categories do not coincide. The heart is an \textit{abelianisation} of $\mathcal{E}$ in the sense of \cite{kelly2016homotopy}. If $\mathcal{E}$ is a quasi-abelian category, then the heart is the \textit{left heart} in the sense of \cite{qacs}*{1.2}.
\end{remark}

\subsubsection{Properties of the Projective $t$-Structure}

Here we record some important properties of the projective $t$-structure on an elementary exact category. First we give a model category presentation of the non-negative part of the $t$-structure.

\begin{proposition}
Let $\mathcal{E}$ be a quasi-elementary exact category with kernels. Then $\mathbf{Ch}_{\ge0}(\mathcal{E})$ is equivalent to $\mathrm{L}^{H}(\mathrm{Ch}_{\ge0}(\mathcal{E}))$, where $\mathrm{Ch}_{\ge0}(\mathcal{E})$ is equipped with the projective model structure.
\end{proposition}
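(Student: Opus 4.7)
The plan is to induce from the inclusion $\iota:\mathrm{Ch}_{\geq 0}(\mathcal{E})\hookrightarrow\mathrm{Ch}(\mathcal{E})$ a fully faithful $(\infty,1)$-functor $L\iota:\mathrm{L}^{H}(\mathrm{Ch}_{\geq 0}(\mathcal{E}))\to\mathbf{Ch}(\mathcal{E})$ whose essential image is precisely the subcategory $\mathbf{Ch}_{\geq 0}(\mathcal{E})$ of the projective $t$-structure from Theorem \ref{thm:projectivetstructure}.

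First I would establish the model-categorical setup. The cited theorem of \cite{kelly2016homotopy} yields the projective model structure on $\mathrm{Ch}_{\geq 0}(\mathcal{E})$ using only kernels and enough projectives. For the same theorem to also produce the projective model structure on $\mathrm{Ch}(\mathcal{E})$, one needs weak $(\aleph_{0},\mathbf{SplitMon})$-elementariness, but this is implied by quasi-elementariness: a split-mono sequence $X_{0}\hookrightarrow X_{1}\hookrightarrow\cdots$ with $X_{n}=X_{n-1}\oplus Y_{n}$ has colimit $\bigoplus_{n}Y_{n}$, and coproducts are exact because $\mathrm{Hom}(P,-)$ commutes with them for any projective generator $P$ and projectives detect exactness. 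The inclusion $\iota$ manifestly preserves cofibrations (degreewise inflations with projective cokernel) and weak equivalences, hence is left Quillen.

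Next I would argue full faithfulness of $L\iota$. Both model categories are simplicially enriched by Remark \ref{rem:DoldKan} via Dold-Kan applied to the internal Hom chain complex, and for $X,Y\in\mathrm{Ch}_{\geq 0}(\mathcal{E})$ the non-negative part of $\mathrm{Hom}_{\bullet}(X,Y)$ agrees whether computed in $\mathrm{Ch}_{\geq 0}(\mathcal{E})$ or in $\mathrm{Ch}(\mathcal{E})$; this is all that the Dold-Kan functor sees. Moreover cofibrant replacements of $X$ inside $\mathrm{Ch}_{\geq 0}(\mathcal{E})$ remain cofibrant in $\mathrm{Ch}(\mathcal{E})$, and every object of $\mathrm{Ch}_{\geq 0}(\mathcal{E})$ is fibrant in both structures (fibrations are degreewise deflations and $X\to 0$ is always such). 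Hence the derived simplicial mapping spaces coincide, and $L\iota$ is fully faithful. Containment of the essential image in $\mathbf{Ch}_{\geq 0}(\mathcal{E})$ is then immediate: for any projective $P$ and $X\in\mathrm{Ch}_{\geq 0}(\mathcal{E})$ the complex $\mathrm{Hom}(P,X)$ is concentrated in non-negative degrees.

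For the reverse inclusion, let $X\in\mathbf{Ch}_{\geq 0}(\mathcal{E})$. Theorem \ref{thm:projectivetstructure} furnishes a distinguished triangle $\tau_{\geq 0}^{L}X\to X\to\mathsf{cone}(i_{0})$ with $\tau_{\geq 0}^{L}X\in\mathrm{Ch}_{\geq 0}(\mathcal{E})\subseteq\mathbf{Ch}_{\geq 0}(\mathcal{E})$ and $\mathsf{cone}(i_{0})\in\mathbf{Ch}_{\leq -1}(\mathcal{E})$. Applying $\mathrm{Hom}(P,-)$ yields a distinguished triangle of abelian chain complexes whose first two terms lie in $\mathbf{Ch}_{\geq 0}(\mathsf{Mod}_{\mathbb{Z}})^{P}$; since this subcategory is closed under cofibres, so is $\mathrm{Hom}(P,\mathsf{cone}(i_{0}))$, placing $\mathsf{cone}(i_{0})$ in $\mathbf{Ch}_{\geq 0}(\mathcal{E})\cap\mathbf{Ch}_{\leq -1}(\mathcal{E})$. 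The orthogonality axiom of any $t$-structure makes this intersection zero, so $\mathsf{cone}(i_{0})\simeq 0$ and $\tau_{\geq 0}^{L}X\weq X$ exhibits $X$ in the essential image. The main obstacle I expect is the full-faithfulness step, specifically matching the Dold-Kan simplicial mapping spaces between the two enrichments and tracking cofibrant resolutions across the inclusion; once this is in place, the $t$-structure truncation argument in the last step is essentially formal.
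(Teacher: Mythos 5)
Your proof is correct and follows essentially the same route as the paper: the paper exhibits the Quillen adjunction $i_{\ge0}\dashv\tau^{L}_{\ge0}$, notes that the derived inclusion realises $\mathrm{L}^{H}(\mathrm{Ch}_{\ge0}(\mathcal{E}))$ as a coreflective (hence full) subcategory of $\mathbf{Ch}(\mathcal{E})$, and identifies the essential image using that $\tau^{L}_{\ge0}$ is the truncation for the projective $t$-structure. Your explicit comparison of Dold--Kan mapping spaces is just a more hands-on substitute for the observation that the derived unit $X\to\tau^{L}_{\ge0}i_{\ge0}X$ is already an isomorphism on the nose, and your essential-image argument via $\mathbf{Ch}_{\ge0}(\mathcal{E})\cap\mathbf{Ch}_{\le-1}(\mathcal{E})=0$ is the same $t$-structure computation the paper delegates to the proof of Theorem \ref{thm:projectivetstructure}.
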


\begin{proof}
There is a Quillen adjunction
$$\adj{i_{\ge0}}{\mathrm{Ch}_{\ge0}(\mathcal{E})}{\mathrm{Ch}(\mathcal{E})}{\tau^{L}_{\ge0}}$$
This realises $\mathrm{L}^{H}(\mathrm{Ch}_{\ge0}(\mathcal{E}))$ as a coreflective subcategory of $\textbf{Ch}(\mathcal{E})$. It remains to check that the essential image of $i_{\ge0}$ contains $\textbf{Ch}_{\ge0}(\mathcal{E})$. However this is immediate from the proof of Theorem \ref{thm:projectivetstructure}, which shows that $\tau^{L}_{\ge0}$ is the truncation functor for the projective $t$-structure. 
\end{proof}

\begin{proposition}
Let $\mathcal{E}$ be a quasi-elementary exact category with kernels. Then any projective object of $\mathcal{E}$ is projective as an object of $\mathrm{L}^{H}({Ch}_{\ge0}(\mathcal{E}))$. Moreover any object $X_{\bullet}$ of $\mathrm{Ch}_{\ge0}(\mathcal{E})$ may be written as a homotopy sifted colimit of objects of $\mathpzc{C}^{0}$. 
\end{proposition}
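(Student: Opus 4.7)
The plan is to exploit the Dold-Kan Quillen equivalence from Remark \ref{rem:DoldKan} to reduce both assertions to tractable statements about the simplicial category $\mathrm{s}\mathcal{E}$, where degreewise-projective cofibrant replacements exist by the projective model structure and the simplicial enrichment admits an explicit description. Throughout I interpret $\mathpzc{C}^0$ as the chosen generating set $\mathcal{P}$ of tiny projectives of $\mathcal{E}$ sitting in $\mathrm{Ch}_{\ge 0}(\mathcal{E})$ in degree zero.

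For the first claim, let $P \in \mathcal{E}$ be projective. Viewed in degree zero, $P$ is cofibrant in $\mathrm{Ch}_{\ge 0}(\mathcal{E})$ and, via $\Gamma^{\mathcal{E}}$, corresponds to the constant simplicial object in $\mathrm{s}\mathcal{E}$, which is again cofibrant. By the formula in Remark \ref{rem:DoldKan}, for fibrant $Y_\bullet$, the simplicial mapping space $\mathrm{Map}_{\mathbf{C}_{\ge 0}}(P, Y_\bullet)$ is $N^{\mathrm{Ab}}$ applied to the chain complex $\mathrm{Hom}_{\mathcal{E}}(P, Y_\bullet)$, i.e.\ $\mathrm{Hom}_{\mathcal{E}}(P, -)$ applied levelwise. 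The projectivity of $P$ makes this functor exact, and the quasi-elementary hypothesis makes it preserve coproducts; together these properties imply that $\mathrm{Hom}_{\mathcal{E}}(P,-)$ preserves the coends/diagonals that compute homotopy colimits of simplicial diagrams. Thus for any simplicial object $Y^{(\bullet)}$ in $\mathbf{C}_{\ge 0}$, by resolving Reedy-cofibrantly and computing the realization as a diagonal of the resulting bisimplicial object in $\mathrm{s}\mathcal{E}$, one sees that $\mathrm{Map}(P, |Y^{(\bullet)}|) \simeq |\mathrm{Map}(P, Y^{(\bullet)})|$, establishing that $P$ is projective in the $\infty$-category.

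For the second claim, given $X_\bullet \in \mathrm{Ch}_{\ge 0}(\mathcal{E})$, pass to $\tilde X = \Gamma^{\mathcal{E}}(X_\bullet) \in \mathrm{s}\mathcal{E}$ and take a cofibrant replacement $P^{(\bullet)} \to \tilde X$ in the projective model structure. The small object argument against the generating cofibrations built from $\mathcal{P}$ ensures that $P^{(\bullet)}$ can be chosen degreewise projective. A cofibrant simplicial object in $\mathrm{s}\mathcal{E}$ is its own homotopy colimit: the simplicial diagram $[n] \mapsto P^{(n)}$, valued in $\mathbf{C}_{\ge 0}$ (each $P^{(n)}$ sitting in degree zero), has homotopy colimit equivalent to $P^{(\bullet)} \simeq \tilde X \simeq X_\bullet$. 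Because $\Delta^{\mathrm{op}}$ is sifted, this exhibits $X_\bullet$ as a homotopy sifted colimit of objects of $\mathpzc{C}^0$.

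The main obstacle is the first claim: one must verify carefully that the degreewise formula for the simplicial mapping space does model the $\infty$-categorical $\mathrm{Map}(P,-)$, and that the quasi-elementary preservation of coproducts---together with exactness of $\mathrm{Hom}(P,-)$---suffices to commute past the homotopy colimits defining geometric realizations (as opposed to merely past strict colimits of chain complexes or reflexive coequalizers in $\mathcal{E}$). The second claim is then largely formal once one has the cofibrant replacement in the projective model structure and the standard identification of a cofibrant simplicial object with its own homotopy realization.
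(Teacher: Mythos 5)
Your strategy is the same as the paper's: reduce both claims via Dold--Kan, use the explicit $\mathrm{sAb}$-enrichment to identify $\mathrm{Map}(P,-)$ with $N^{\mathrm{Ab}}\mathrm{Hom}_{\bullet}(P,-)$ (which already computes the derived mapping space since $P$ is projective), and commute $\mathrm{Hom}(P,-)$ past the totalisation/diagonal that computes a geometric realisation; for the second claim, take a projective cofibrant resolution and realise it as the homotopy colimit of the associated simplicial object over the sifted category $\Delta^{\mathrm{op}}$. One remark on the first claim: for a homotopy colimit over $\Delta^{\mathrm{op}}$ the relevant totalisation of a first-quadrant double complex is a \emph{finite} direct sum in each degree, so additivity of $\mathrm{Hom}(P,-)$ already suffices and you do not need the coproduct-preservation supplied by quasi-elementarity there; this matters because the statement concerns an \emph{arbitrary} projective of $\mathcal{E}$, which need not be tiny, so it is good that your argument does not actually depend on that hypothesis at this step.

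There is, however, a gap in the last step of the second claim. Your simplicial resolution $P^{(\bullet)}$ has terms that are (at best) coproducts of objects of the generating set $\mathcal{P}=\mathpzc{C}^{0}$, not objects of $\mathpzc{C}^{0}$ itself, so exhibiting $X_{\bullet}$ as $\lvert P^{(\bullet)}\rvert$ does not yet exhibit it as a sifted colimit of objects of $\mathpzc{C}^{0}$. Two additions are needed. First, you should take the cofibrant replacement to be an actual cell complex produced by the small object argument (rather than a retract of one), so that each $P^{(n)}$ is an honest coproduct $\coprod_{\alpha}Q_{\alpha}$ with $Q_{\alpha}\in\mathcal{P}$; ``degreewise projective'' is too weak, since a general projective is only a retract of such a coproduct and need not lie in the sifted cocompletion's image in any obvious way. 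Second, you must observe that such a coproduct is the filtered colimit of its finite sub-coproducts, each of which lies in $\mathpzc{C}^{0}$ because $\mathpzc{C}^{0}$ is closed under finite coproducts, and that a sifted colimit of sifted colimits of objects of $\mathpzc{C}^{0}$ is again a sifted colimit of objects of $\mathpzc{C}^{0}$ (the full subcategory of such objects is closed under sifted colimits). This is exactly the point the paper makes with the phrase that each term is ``a coproduct, in particular a homotopy sifted colimit, of objects of $\mathpzc{C}^{0}$''; with it added, your argument is complete.
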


\begin{proof}
Let $P$ be a projective object. First note that $\mathbb{R}\mathrm{Hom}_{\bullet}(P,-)\cong\mathrm{Hom}_{\bullet}(P,-)$ as functors since $P$ is projective. Let $Z:\Delta^{op}\rightarrow\mathrm{Ch}_{\ge0}(\mathcal{E})$ be a functor. Consider the Dold-Kan equivalence
$$\adj{N^{\mathrm{Ch}_{\ge0}(\mathcal{E})}}{\mathpzc{Fun}(\Delta^{op},\mathrm{Ch}_{\ge0}(\mathcal{E}))}{\mathrm{Ch}_{\ge0}(\mathrm{Ch}_{\ge0}(\mathcal{E}))}{\Gamma^{\mathrm{Ch}_{\ge0}(\mathcal{E})}}$$

By checking on generating cofibrations for the projective model structure on $\mathpzc{Fun}(\Delta^{op},\mathrm{Ch}_{\ge0}(\mathcal{E}))$, we have an equivalence
$$\mathrm{hocolim}_{\Delta^{op}}Z\cong\mathrm{Tot}(N^{\mathrm{Ch}_{\ge0}(\mathcal{E})}(Z))$$
Now an easy computation shows that for $P$ projective we have natural isomorphisms
$$\mathrm{Hom}_{\bullet}(P,N^{\mathrm{Ch}_{\ge0}(\mathcal{E})}(Z))\cong N^{\mathrm{Ch}_{\ge0}(\mathpzc{Ab})}(\underline{\mathrm{Hom}}_{\Delta^{op}}(P,Z))$$
Thus to show that $P$ is projective in $\mathpzc{C}_{\ge0}$ it suffices to show that $\mathrm{Hom}_{\bullet}(P,-)$ commutes with totalisations of double complexes. This is clear.

Now let $X_{\bullet}$ be a complex, and $P_{\bullet}\rightarrow X_{\bullet}$ an equivalence with $P_{\bullet}$ a complex of projectives such that each $P_{n}$ is a coproduct of objects of $\mathpzc{C}^{0}$. Then $X_{\bullet}$ is the homotopy colimit of the simplicial object $\Gamma^{\mathcal{E}}(P_{\bullet})$ and each $\Gamma_{n}^{\mathcal{E}}(P_{\bullet})$ is a coproduct, in particular a homotopy sifted colimit, of objects of $\mathpzc{C}^{0}$. This completes the proof. 
\end{proof}

Next we show that the non-positive part is closed under filtered colimits. For this result we need $\mathcal{E}$ to be elementary.

\begin{proposition}
Let $\mathcal{E}$ be an elementary exact category. Then $\mathbf{Ch}_{\le0}(\mathcal{E})$ is closed under filtered colimits. 
\end{proposition}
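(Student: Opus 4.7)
The strategy is to reduce the closure of $\mathbf{Ch}_{\le0}(\mathcal{E})$ under filtered colimits to the corresponding (well-known) closure for the Postnikov $t$-structure on $\mathbf{Ch}(\mathsf{Mod}_{\mathbb{Z}})$, using the generating set of tiny projectives $\mathcal{P}$ provided by the elementary hypothesis. By the preceding proposition, it suffices to show that for every filtered diagram $\{X^{(i)}\}_{i\in\mathcal{I}}$ with each $X^{(i)}\in\mathbf{Ch}_{\le0}(\mathcal{E})$, and every $P\in\mathcal{P}$, the complex of abelian groups $\mathrm{Hom}_{\bullet}(P,\colim_{i}X^{(i)})$ has no homology in positive degrees.

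First I would verify that the $\infty$-categorical filtered colimit in $\mathbf{Ch}(\mathcal{E})$ is modelled by the (levelwise) $1$-categorical filtered colimit in the model category $\mathrm{Ch}(\mathcal{E})$. Since $\mathcal{E}$ is elementary, filtered colimits in $\mathcal{E}$ are exact; applied degreewise this means that filtered colimits in $\mathrm{Ch}(\mathcal{E})$ commute with kernels and cokernels, hence with homology, and therefore preserve quasi-isomorphisms. Consequently the ordinary filtered colimit computes the homotopy colimit.

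Next, I use that $P\in\mathcal{P}$, viewed as a complex concentrated in degree $0$, is cofibrant in the projective model structure (as $0\to P$ is an inflation with projective cokernel $P$), so that the derived internal Hom $\mathbb{R}\mathrm{Hom}(P,-)$ is represented by the strict $\mathrm{Hom}_{\bullet}(P,-)$, whose degree-$n$ component is simply $\mathrm{Hom}_{\mathcal{E}}(P,X_{n})$. Tininess of $P$ then gives $\mathrm{Hom}_{\mathcal{E}}(P,-)\colon\mathcal{E}\to\mathsf{Ab}$ commuting with filtered colimits, so at the level of complexes we obtain a natural isomorphism $\mathrm{Hom}_{\bullet}(P,\colim_{i}X^{(i)})\cong\colim_{i}\mathrm{Hom}_{\bullet}(P,X^{(i)})$ in $\mathrm{Ch}(\mathsf{Ab})$.

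Finally, filtered colimits of abelian groups are exact (AB5), so $H_{n}$ commutes with the right-hand side; since each $\mathrm{Hom}_{\bullet}(P,X^{(i)})$ has vanishing homology in positive degrees by hypothesis, so does the colimit, and the result follows. The main obstacle is the first step: it is essential that $\mathcal{E}$ be \emph{elementary} and not merely weakly elementary, for otherwise filtered colimits in $\mathrm{Ch}(\mathcal{E})$ need not be exact and will not in general compute $\infty$-categorical filtered colimits; everything else in the argument is formal bookkeeping.
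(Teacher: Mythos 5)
Your proposal is correct and follows essentially the same route as the paper: reduce via the generating set of tiny projectives to the statement that $\mathrm{Hom}(P,-)$ carries the filtered colimit into $\mathbf{Ch}_{\le 0}(\mathsf{Mod}_{\mathbb{Z}})^{P}$, which is closed under filtered colimits by exactness (AB5) of filtered colimits of abelian groups. The only difference is that you spell out the details the paper leaves implicit — that the $\infty$-categorical filtered colimit is presented by the levelwise one and that $\mathbb{R}\mathrm{Hom}(P,-)$ is computed by the strict $\mathrm{Hom}_{\bullet}(P,-)$ — which is a reasonable elaboration rather than a different argument.
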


\begin{proof}
Let $\mathcal{P}$ be a generating set consisting of tiny projectives. For $P\in\mathcal{P}$, the functor $\Hom(P,-):\mathbf{Ch}(\mathcal{E})\rightarrow\mathbf{Ch}(\mathsf{Mod}_{\mathbb{Z}})$ commutes with filtered colimits. Since $\mathbf{Ch}_{\le0}(\mathsf{Mod}_{\mathbb{Z}})^P$ is closed under filtered colimits, so is $\mathbf{Ch}_{\le0}(\mathcal{E})$.
\end{proof}

\begin{proposition}\label{prop:postnikovexactrightcomplete}
Let $\mathcal{E}$ be a quasi-elementary exact category. Then the projective $t$-structure on $\mathbf{Ch}(\mathcal{E})$ is right complete. 
\end{proposition}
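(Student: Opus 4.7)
The plan is to reduce right-completeness of the projective $t$-structure on $\mathbf{Ch}(\mathcal{E})$ to the already-known right-completeness of the Postnikov $t$-structure on $\mathbf{Ch}(\mathsf{Mod}_\Z)$ by testing against the projective generators of $\mathcal{E}$.

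Suppose $X \in \bigcap_{n \in \Z} \mathbf{Ch}_{\leq n}(\mathcal{E})$. Unwinding the definition of the projective $t$-structure, for every projective $P$ of $\mathcal{E}$ and every $n \in \Z$ the chain complex of abelian groups $\Hom_{\mathcal{E}}(P,X)$ lies in $\mathbf{Ch}(\mathsf{Mod}_\Z)_{\leq n}^{P}$, so $H_k(\Hom_{\mathcal{E}}(P,X)) = 0$ for every $k > n$. Letting $n$ range over all integers shows that $\Hom_{\mathcal{E}}(P,X)$ is acyclic for every projective $P$ of $\mathcal{E}$.

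Now fix a small generating subcategory $\mathcal{P}$ of projectives, which exists by the quasi-elementary hypothesis. Each $P \in \mathcal{P}$, viewed as a complex concentrated in degree $0$, is cofibrant in the projective model structure on $\mathrm{Ch}(\mathcal{E})$, so the plain chain complex $\Hom_{\mathcal{E}}(P,X)$ already computes the derived mapping object $\mathbb{R}\underline{\Hom}_{\mathbf{Ch}(\mathcal{E})}(P,X)$. The acyclicity above then gives $\mathrm{Map}_{\mathbf{Ch}(\mathcal{E})}(P[n],X) \simeq 0$ for every $P \in \mathcal{P}$ and every $n \in \Z$. Because $\{P[n] : P \in \mathcal{P},\, n \in \Z\}$ is a generating family for the stable presentable $\infty$-category $\mathbf{Ch}(\mathcal{E})$ (the set $\mathcal{P}$ generates $\mathbf{Ch}_{\geq 0}(\mathcal{E})$ under sifted colimits, as recorded earlier in the text, and stabilisation adjoins the negative shifts), we conclude $X \simeq 0$, giving the right-completeness.

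The only genuinely non-trivial step is the generation claim used at the end: that the shifts of the projectives in $\mathcal{P}$ jointly detect zero objects of $\mathbf{Ch}(\mathcal{E})$. This will follow from the fact that the generating (trivial) cofibrations of the projective model structure on $\mathrm{Ch}(\mathcal{E})$ are, up to shift, built from these projectives, so that vanishing of $\mathrm{Map}(P[n],X)$ for all $P \in \mathcal{P}$ and $n \in \Z$ is equivalent to $X \to 0$ having the right lifting property against all generating cofibrations, i.e.\ being a trivial fibration after fibrant replacement. Once this generation is in place, the rest of the argument is a formal consequence of the definitions together with right-completeness of the Postnikov $t$-structure on $\mathbf{Ch}(\mathsf{Mod}_\Z)$.
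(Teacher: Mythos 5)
Your argument is correct, but it is not the route the paper takes. The paper's proof picks an arbitrary $X$, replaces it by a DG-projective resolution $P$, and invokes the fact (from the cited construction of such resolutions) that $P$ is a sequential colimit of complexes realising the connective truncations $\tau^{L}_{\ge n}$; an object lying in every $\mathbf{Ch}_{\le n}(\mathcal{E})$ then has all truncations zero and is a colimit of zeros. You instead verify the intersection condition head-on: unwinding the definition of the projective $t$-structure shows that $X\in\bigcap_{n}\mathbf{Ch}_{\le n}(\mathcal{E})$ forces $\Hom(P,X)$ to be acyclic for every projective $P$ (and it suffices to check $P$ in a generating set, by the paper's own reduction), and acyclicity of all these hom-complexes is exactly the right-lifting property of $X\to 0$ against the generating cofibrations $S^{n-1}P\to D^{n}P$ of the Christensen--Hovey-style projective model structure, so $X\to 0$ is an acyclic fibration and $X\simeq 0$. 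Given that the $t$-structure is \emph{defined} by testing against projectives, your verification is arguably the more economical one; the paper's argument has the advantage of implicitly establishing the stronger fact that every object is the colimit of its connective covers, i.e.\ that $\mathbf{Ch}(\mathcal{E})$ agrees with the right completion. One caution: your first justification of the generation claim (``$\mathcal{P}$ generates $\mathbf{Ch}_{\ge 0}$ under sifted colimits and stabilisation adjoins the negative shifts'') is circular, because the identification of $\mathbf{Ch}(\mathcal{E})$ with the stabilisation of $\mathbf{Ch}_{\ge 0}(\mathcal{E})$ is itself deduced from right-completeness; discard it and keep only the second, model-categorical justification via the generating cofibrations, which is sound (note also that every object is fibrant in the projective model structure, so the ``after fibrant replacement'' is superfluous and $\Hom_{\mathcal{E}}(P,X)$ computes the derived mapping complex on the nose).
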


\begin{proof}
Let $X$ be an object of $\mathbf{Ch}(\mathcal{E})$. As shown in \cite{kelly2016homotopy} Corollary 2.3.67, $X$ has a resolution $P\rightarrow X$ where $P$ is a complex of projectives and is $DG$-projective. Moreover by the proof of Corollary 2.3.67 in \cite{kelly2016homotopy}, $P$ can be written as a colimit $\colim_{n\in\mathbb{N}}P_{n}$ where $P_{n}\rightarrow P_{n+1}$ induces an isomorphism $P_{n}\rightarrow\tau^{L}_{\ge n}P_{n+1}$. This shows that $\mathbf{Ch}(\mathcal{E})$ is right complete. 
\end{proof}

\begin{lemma}\label{lem:compatiblet}
Let $\mathcal{E}$ be a quasi-projectively monoidal elementary exact category. The projective $t$-structure on $\mathbf{Ch}(\mathcal{E})$ is compatible with the tensor product. 
\end{lemma}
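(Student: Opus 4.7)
The plan is to verify the three axioms of a compatible $t$-structure in turn, namely that $\mathbf{Ch}_{\le 0}(\mathcal{E})$ is closed under filtered colimits, that the monoidal unit lies in $\mathbf{Ch}_{\ge 0}(\mathcal{E})$, and that the tensor product restricts to $\mathbf{Ch}_{\ge 0}(\mathcal{E}) \times \mathbf{Ch}_{\ge 0}(\mathcal{E}) \to \mathbf{Ch}_{\ge 0}(\mathcal{E})$. The first of these is exactly the content of the previous proposition and requires no further argument. The second is immediate: the monoidal unit $k$ is an object of $\mathcal{E}$, and by Theorem \ref{thm:projectivetstructure}, $\mathcal{E}$ embeds into the heart of the projective $t$-structure, so in particular $k \in \mathbf{Ch}_{\ge 0}(\mathcal{E})$.

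The substantive step is the closure of $\mathbf{Ch}_{\ge 0}(\mathcal{E})$ under the derived tensor product. I would argue at the model-categorical level using the monoidal, simplicial, combinatorial model structure on $\mathrm{Ch}(\mathcal{E})$ provided by Theorem \ref{thm:monoidlmodel}; since the monoid axiom holds, the derived tensor product computing the $\infty$-categorical tensor product on $\mathbf{Ch}(\mathcal{E})$ is modelled by taking cofibrant replacements and tensoring. Given $X, Y \in \mathbf{Ch}_{\ge 0}(\mathcal{E})$, one chooses cofibrant replacements $P \to X$ and $Q \to Y$ in the projective model structure on $\mathrm{Ch}_{\ge 0}(\mathcal{E})$ (which presents $\mathbf{Ch}_{\ge 0}(\mathcal{E})$ as shown earlier in the section); these are then complexes of projectives concentrated in non-negative degrees.

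The tensor product $P \otimes Q$ has $(P \otimes Q)_n = \bigoplus_{i+j=n} P_i \otimes Q_j$, which is a \emph{finite} coproduct since $P$ and $Q$ are concentrated in non-negative degrees. By the quasi-projectively monoidal assumption, each summand $P_i \otimes Q_j$ is projective, and finite coproducts of projectives are projective, so $P \otimes Q$ is a complex of projectives concentrated in non-negative degrees. Hence $P \otimes Q \in \mathbf{Ch}_{\ge 0}(\mathcal{E})$, which shows $X \otimes^{L} Y \in \mathbf{Ch}_{\ge 0}(\mathcal{E})$ as required.

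The main obstacle is mostly notational: one must be careful that the tensor product taken at the level of the $\infty$-category $\mathbf{Ch}(\mathcal{E})$ really is computed by the degreewise tensor product of cofibrant complexes — this is where Theorem \ref{thm:monoidlmodel} (monoidal model structure, monoid axiom) is essential, and where the quasi-projectively monoidal hypothesis is exactly what is needed so that cofibrant replacements of connective objects remain connective after tensoring. No flatness beyond the projective-tensor-projective-is-projective property is used, so the hypothesis of strong projective monoidality is not needed for this compatibility.
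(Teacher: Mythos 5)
Your proof is correct and follows essentially the same route as the paper: filtered-colimit closure of $\mathbf{Ch}_{\le 0}(\mathcal{E})$ from the preceding proposition, the unit lying in $\mathcal{E}\subset\mathbf{Ch}_{\ge 0}(\mathcal{E})$, and connectivity of the tensor product via cofibrant resolutions concentrated in non-negative degrees. The only difference is that you spell out the degreewise description of $P\otimes Q$ and the role of the quasi-projectively monoidal hypothesis, which the paper leaves implicit.
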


\begin{proof}
We have already shown that $\mathbf{Ch}_{\le0}(\mathcal{E})$ is closed under filtered colimits. Moreover the unit $k\in\mathcal{E}\subset\mathbf{Ch}_{\ge0}(\mathcal{E})$. Now let $X,Y\in\mathbf{Ch}_{\ge0}(\mathcal{E})$. Then there are cofibrant resolutions $P\rightarrow X$, $Q\rightarrow Y$, where $P$ and $Q$ are both concentrated in degrees $\ge0$. The tensor product of $X$ and $Y$ in $\mathbf{Ch}(\mathcal{E})$ is computed by $P\otimes Q$, which is concentrated in degrees $\ge0$, i.e. is in $\mathbf{Ch}_{\ge0}(\mathcal{E})$. 
\end{proof}

\subsection{Derived Algebraic Contexts from Exact Categories}

In this section we prove our main results regarding the construction of derived algebraic contexts from exact categories.

\begin{theorem}
Let $\mathcal{E}$ be a quasi-monoidal elementary exact category with symmetric projectives such that the tensor product of two compact objects is compact. Let $\mathcal{P}^{0}$ be a set of tiny projective generators closed under finite direct sums, tensor products, and the formation of symmetric powers. Then, with $\mathrm{Ch}(\mathcal{E})$ equipped with the projective model structure and the projective t-structure, $(\mathrm{Ch}(\mathcal{E}),\mathrm{Ch}_{\ge0}(\mathcal{E}),\mathrm{Ch}_{\le0}(\mathcal{E}),\mathcal{P}^{0})$ is a model derived algebraic context.
\end{theorem}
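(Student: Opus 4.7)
The plan is to verify each clause of Definition \ref{defn:modelder} in turn, drawing almost everything from results established earlier in this section. Since $\mathcal{E}$ is assumed quasi-monoidal elementary with the additional hypothesis that tensor products of compact objects are compact, Theorem \ref{thm:monoidlmodel} already gives that $\mathrm{Ch}(\mathcal{E})$ with the projective model structure is a combinatorial, simplicial, monoidal model category satisfying the monoid axiom. Stability is the standard fact that on unbounded chain complexes the shift/loop adjunction is a Quillen equivalence. The projective $t$-structure $(\mathrm{Ch}_{\ge 0}(\mathcal{E}),\mathrm{Ch}_{\le 0}(\mathcal{E}))$ is produced by Theorem \ref{thm:projectivetstructure}; compatibility with the monoidal structure is Lemma \ref{lem:compatiblet}; and right-completeness is Proposition \ref{prop:postnikovexactrightcomplete}.

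Next I would check the conditions on $\mathcal{P}^{0}$. By Theorem \ref{thm:projectivetstructure}, each $P\in\mathcal{P}^{0}\subset\mathcal{E}$, regarded as a complex concentrated in degree $0$, lies in $\mathrm{Ch}(\mathcal{E})^{\heart}$. All objects of $\mathrm{Ch}(\mathcal{E})$ are fibrant since in any exact category $P\to 0$ is a deflation; and $P$, viewed in degree $0$, is cofibrant because $0\hookrightarrow P$ is degreewise an inflation with projective cokernel. Closure of $\mathcal{P}^{0}$ under the tensor product, finite coproducts, and $\mathcal{E}$-symmetric powers is the standing hypothesis; since projectives live in the fully-faithful embedding $\mathcal{E}\hookrightarrow\mathrm{Ch}(\mathcal{E})^{\heart}$ and the symmetrisation coequaliser commutes with this embedding on projectives, the hypothesis delivers the required closure under $\mathrm{Ch}(\mathcal{E})^{\heart}$-symmetric powers as well.

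For the tiny projective generator condition, projectivity of each $P\in\mathcal{P}^{0}$ in $\mathrm{L}^{H}(\mathrm{Ch}_{\ge 0}(\mathcal{E}))$, and the fact that every object of $\mathbf{Ch}_{\ge 0}(\mathcal{E})$ is a homotopy sifted colimit of objects of $\mathcal{P}^{0}$, are both proved by the Dold--Kan-based argument already given above Proposition \ref{prop:postnikovexactrightcomplete} (one chooses a cofibrant resolution by a complex $P_\bullet$ of coproducts of objects of $\mathcal{P}^{0}$ and exhibits $X_\bullet$ as $\mathrm{hocolim}_{\Delta^{\mathrm{op}}}\Gamma^{\mathcal{E}}(P_\bullet)$). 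Compactness of $P$ in $\mathbf{Ch}_{\ge 0}(\mathcal{E})$ follows because $\mathrm{Hom}_{\bullet}(P,-)$ commutes with the levelwise filtered colimits of chain complexes (tininess of $P$ in $\mathcal{E}$), while the elementary hypothesis on $\mathcal{E}$ guarantees that these strict filtered colimits coincide with the homotopy filtered colimits in $\mathrm{Ch}_{\ge 0}(\mathcal{E})$.

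The main obstacle, to the extent that there is one, is the reconciliation of the $\mathcal{E}$-symmetric powers (in which $\mathcal{P}^{0}$ is assumed closed) with the heart-symmetric powers demanded by Definition \ref{defn:modelder}; in general the heart $\mathrm{Ch}(\mathcal{E})^{\heart}$ is a proper abelianisation of $\mathcal{E}$ and one must be sure the two symmetric-power constructions agree on the generators. Once this identification is established on projectives, the rest of the verification is a purely formal assembly of Theorems \ref{thm:monoidlmodel} and \ref{thm:projectivetstructure}, Lemma \ref{lem:compatiblet}, Proposition \ref{prop:postnikovexactrightcomplete}, and the two preceding propositions on projective and homotopy sifted colimit generation.
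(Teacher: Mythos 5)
Your proposal is correct and follows essentially the same route as the paper's proof, which is likewise a purely formal assembly of Theorem \ref{thm:monoidlmodel}, Theorem \ref{thm:projectivetstructure}, Lemma \ref{lem:compatiblet}, Proposition \ref{prop:postnikovexactrightcomplete}, and the closure hypotheses on $\mathcal{P}^{0}$. If anything you are more careful than the paper, which treats the identification of $\mathcal{E}$-symmetric powers with $\mathbf{Ch}(\mathcal{E})^{\heart}$-symmetric powers on the generators as holding ``by construction'' rather than flagging it as a point needing verification.
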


\begin{proof}
By Theorem \ref{thm:projectivetstructure} the projective $t$-structure exists on $\mathrm{Ch}(\mathcal{E})$ and by Lemma \ref{lem:compatiblet} the monoidal structure is compatible with the  projective $t$-structure. By Corollary \ref{prop:postnikovexactrightcomplete} the projective $t$-structure is right complete. By construction $\mathcal{P}^{0}$ is a set of tiny projective generators in $\mathrm{Ch}_{\ge0}(\mathcal{E})$, consists of fibrant-cofibrant objects which are projective generators in $\mathrm{Ch}_{\ge0}(\mathcal{E})$, and is closed under taking tensor products and symmetric powers in $\mathrm{Ch}(\mathcal{E})$.
\end{proof}

Let $(\mathbf{C},\mathbf{C}_{\ge0},\mathbf{C}_{\le0},\mathbf{C}^{0})$ be a derived algebraic context and consider the heart $\mathbf{C}^{\heart}$. By assumption the objects of $\mathbf{C}^{0}$ form a set of tiny projective generators of $\mathbf{C}^{\heart}$, so it is elementary. Moreover, again by assumption, it inherits a symmetric monoidal structure from $\mathbf{C}$ such that $\mathbf{C}^{0}$ forms a symmetrically closed projective generating set. Hence $\mathbf{C}^{\heart}$ is a quasi-monoidal elementary exact category, and therefore
$$(\mathbf{Ch}(\mathbf{C}^{\heart}),\mathbf{Ch}_{\ge0}(\mathbf{C}^{\heart}),\mathbf{Ch}_{\le0}(\mathbf{C}^{\heart}),\mathcal{P}^{0})$$
is a derived algebraic context. The functor
$$\mathbf{C}^{\heart}\rightarrow\mathbf{Ch}(\mathbf{C}^{\heart})$$
induces a sifted colimit-preserving strongly monoidal functor
$$\mathbf{C}\cong\mathcal{P}_{\Sigma}(\mathbf{C}^{0})\rightarrow\mathbf{Ch}_{\ge0}(\mathbf{C}^{\heart})$$
and by stabilisation a strongly monoidal functor
$$\mathbf{C}\rightarrow\mathbf{Ch}(\mathbf{C}^{\heart}).$$ Since this functor is the identity on $\mathbf{C}^{0}$, it defines an equivalence of derived algebraic contexts. In particular every derived algebraic context is equivalent to 
$$(\mathbf{Ch}(\mathbf{C}^{\heart}),\mathbf{Ch}_{\ge0}(\mathbf{C}^{\heart}),\mathbf{Ch}_{\le0}(\mathbf{C}^{\heart}),\mathcal{P}^{0}).$$

\begin{remark}\label{rem:classificationdac}
Let $(\mathbf{C},\mathbf{C}_{\ge0},\mathbf{C}_{\le0},\mathbf{C}^{0})$ be a derived algebraic context. Then
$$(\mathbf{Ch}(\mathbf{C}^{\heart}),\mathbf{Ch}_{\ge0}(\mathbf{C}^{\heart}),\mathbf{Ch}_{\le0}(\mathbf{C}^{\heart}),\mathcal{P}^{0})$$
is a derived algebraic context which is equivalent to $(\mathbf{C},\mathbf{C}_{\ge0},\mathbf{C}_{\le0},\mathbf{C}^{0})$. 
\end{remark}

\subsubsection{Relation to HA Contexts}

A consequence of Remark \ref{rem:classificationdac} is that a large class of derived algebraic contexts are modeled by HA contexts in the sense of \cite{toen2008homotopical}*{Definition 1.1.0.11}. We will give a slightly truncated definition here (for the category $\mathsf{C}_{0}$ in \cite{toen2008homotopical}, we always take $\mathsf{C}=\mathsf{C}_{0}$).

\begin{definition}\label{def:HA context}
Let \(\mathpzc{M}\) be a combinatorial, symmetric, monoidal model category \(\mathpzc{M}\). We call \(\mathpzc{M}\) a \textit{homotopical algebra context} (or, \textit{HA}-context) if it satisfies the following properties:

\begin{itemize}
\item the  model  category \(\mathpzc{M}\) is  proper,  pointed  and  for  any  two  objects \(X\) and \(Y\) in \(\mathpzc{M}\), the  natural morphisms \[QX \coprod QY \to X \coprod Y \to RX \times RY\] are weak equivalences, where \(Q\) and \(R\) are the cofibrant and fibrant replacement functors, respectively;
\item \(\mathsf{Ho}(\mathpzc{M})\) is an additive category;
\item with the transferred model structure and monoidal structure \(\otimes_A\) for some commutative algebra object \(A \in \mathsf{CAlg}(\mathpzc{M})\), the category \(\mathpzc{Mod}_A\) is a combinatorial, proper, symmetric, monoidal model category;
\item for any cofibrant object \(M \in \mathpzc{Mod}_A\), the functor \[ - \otimes_A M \colon \mathpzc{Mod}_A \to \mathpzc{Mod}_A\] preserves weak equivalences;
\item with the transferred model structures the categories \(\mathsf{CAlg}(\mathpzc{Mod}_A)\) and \(\mathsf{CAlg}^{nu}(\mathpzc{Mod}_A)\) are combinatorial, proper model categories (where  \(\mathsf{CAlg}^{nu}(\mathpzc{Mod}_A)\) is the category of \textit{non-unital commutative monoids};
\item if \(B\) is cofibrant in \(\mathsf{CAlg}(\mathpzc{Mod}_A)\), then the functor \[B \otimes_A - \colon \mathpzc{Mod}_A \to \mathpzc{Mod}_B\] preserves weak equivalences. 
\end{itemize}
\end{definition}

The exact categories in which we are interested are HA contexts. 

\begin{theorem}\label{theorem:exact_HAC}
Let \((\mathcal{E}, \otimes, k, \underline{\mathsf{Hom}})\) be a locally presentable, closed, projectively monoidal exact category that is \(\mathbf{AdMon}\)-elementary and enriched over $\mathbb{Q}$. Then \(\mathbf{Ch}(\mathcal{E})\) and \(\mathbf{Ch}_{\geq 0}(\mathcal{E})\) are HA-contexts.  
\end{theorem}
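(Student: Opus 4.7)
My plan is to verify each of the six bullets of Definition \ref{def:HA context} in turn, treating the unbounded case $\mathbf{Ch}(\mathcal{E})$ and the connective case $\mathbf{Ch}_{\geq 0}(\mathcal{E})$ in parallel, with the $\mathbb{Q}$-enrichment reserved for the step where commutative algebras appear. The starting point is Theorem \ref{thm:monoidlmodel}, which already supplies that $\mathrm{Ch}(\mathcal{E})$ and $\mathrm{Ch}_{\geq 0}(\mathcal{E})$ are combinatorial, simplicial, symmetric monoidal model categories satisfying the monoid axiom. Pointedness is immediate (the zero complex), and since $\mathcal{E}$ (hence $\mathrm{Ch}(\mathcal{E})$) is additive, the homotopy category is automatically additive and the first bullet comparing $QX \coprod QY$ with $RX \times RY$ reduces to the strict identity $X \oplus Y = X \times Y$ on the underlying additive category.

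Next, I would settle properness. Left properness follows because cofibrations in the projective model structure are degreewise inflations with projective cokernels, so a pushout along such a cofibration is computed degreewise as a conflation in $\mathcal{E}$, and this preserves quasi-isomorphisms. Right properness follows dually from the fact that fibrations are degreewise deflations; pullbacks of deflations exist and are deflations in any exact category, and so pullback squares along fibrations become degreewise conflations, which preserve quasi-isomorphisms by a standard long exact sequence argument using that $\mathcal{E}$ is $\mathbf{AdMon}$-elementary (so the colimits computing homology behave exactly).

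For the bullets concerning $\mathpzc{Mod}_A$ for $A \in \mathsf{CAlg}(\mathrm{Ch}(\mathcal{E}))$: since the projective model structure on $\mathrm{Ch}(\mathcal{E})$ is combinatorial, monoidal, satisfies the monoid axiom, and is $\mathsf{sSet}$-enriched, the Schwede--Shipley transfer theorem produces a combinatorial, cofibrantly generated model structure on $\mathpzc{Mod}_A$ whose weak equivalences and fibrations are detected by the forgetful functor. Monoidality over $A$ is inherited from $\mathcal{E}$ being closed monoidal; properness of $\mathpzc{Mod}_A$ transfers from that of the base; and the assertion that $-\otimes_A M$ preserves weak equivalences when $M$ is cofibrant is exactly the content of the monoid axiom together with the standard observation that cofibrant $A$-modules are retracts of cell objects built from $A \otimes P$ with $P$ projective, on which $-\otimes_A(A\otimes P) \cong -\otimes P$ is already homotopically exact.

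The main obstacle, and the step where the $\mathbb{Q}$-enrichment is essential, is the construction of model structures on $\mathsf{CAlg}(\mathpzc{Mod}_A)$ and $\mathsf{CAlg}^{nu}(\mathpzc{Mod}_A)$. Over a $\mathbb{Q}$-linear base, the commutative operad is $\Sigma$-cofibrant, equivalently $\mathsf{Sym}^n_A(M) = (M^{\otimes_A n})_{\Sigma_n}$ is a retract of $M^{\otimes_A n}$ by the idempotent $\frac{1}{n!}\sum_{\sigma\in \Sigma_n}\sigma$; consequently $\mathsf{Sym}_A$ preserves cofibrations and trivial cofibrations between cofibrant objects. This is precisely the hypothesis needed to run Schwede--Shipley transfer along the free-commutative-algebra adjunction $\mathsf{Sym}_A \dashv U$, yielding a combinatorial model structure on $\mathsf{CAlg}(\mathpzc{Mod}_A)$, and analogously for the non-unital variant. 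Properness is then inherited: for a cofibration $A \to B$ of commutative algebras, $B$ is a retract of a cell object, each attaching cell being of the form $\mathsf{Sym}_A(P)$ with $P$ cofibrant, and the $\mathbb{Q}$-linearity ensures these factorisations preserve weak equivalences under pushout and pullback as required. The final bullet, that $B\otimes_A -$ preserves weak equivalences for cofibrant $B \in \mathsf{CAlg}(\mathpzc{Mod}_A)$, reduces by the same cell decomposition to the module-level statement already established, using that $\mathsf{Sym}_A(P)$ is cofibrant as an $A$-module whenever $P$ is, which is again a consequence of the characteristic-zero identification of symmetric powers with retracts of tensor powers. The same arguments apply verbatim to $\mathbf{Ch}_{\geq 0}(\mathcal{E})$ since the connective projective model structure is stable under all the constructions used, and the relevant symmetric and tensor powers remain connective.
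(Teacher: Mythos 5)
The paper's own ``proof'' is a bare citation to \cite{kelly2016homotopy}*{Theorem 6.9}, and your bullet-by-bullet verification of Definition \ref{def:HA context} --- with the $\mathbb{Q}$-enrichment entering exactly where the transferred model structure on $\mathsf{CAlg}(\mathpzc{Mod}_A)$ is built via the averaging idempotent exhibiting $\mathsf{Sym}^n_A(M)$ as a retract of $M^{\otimes_A n}$ --- is precisely the argument that reference carries out, and it is correct in all essentials. The only thin spots are (i) properness of $\mathsf{CAlg}(\mathpzc{Mod}_A)$, where right properness is actually automatic because every object is fibrant in each of the transferred structures, and left properness needs the cell-attachment analysis you only allude to; and (ii) your appeal to Theorem \ref{thm:monoidlmodel}, which is stated for (quasi-monoidal) \emph{elementary} categories while the present theorem assumes only $\mathbf{AdMon}$-elementary --- a hypothesis mismatch already present in the paper's own usage rather than an error you introduce.
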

\begin{proof}
See \cite{kelly2016homotopy}*{Theorem 6.9}.
\end{proof}

Even in the algebraic case complexes of abelian groups do not form an HA context - the transferred model structure does not exist on commutative dg-rings. One can remedy this by instead considering simplicial objects. Indeed, it will be shown in \cite{BKK} that for  \((\mathcal{E}, \otimes, k, \underline{\mathsf{Hom}})\) a locally presentable, closed, monoidal elementary exact category with symmetric projectives, $\mathsf{s}\mathcal{E}$ is an HA-context.

If $(\mathbf{C},\mathbf{C}_{\ge0},\mathbf{C}_{\le0},\mathbf{C}^{0})$ is a derived algebraic context such that all objects of $\mathbf{C}^{0}$ are flat in $\mathbf{C}^{\heart}$, then in particular $\mathbf{C}^{\heart}$ is a monoidal elementary abelian category. Therefore, $\mathbf{C}_{\ge0}$ will be presented by the HA context $\mathrm{s}\mathbf{C}^{\heart}$. However, it can happen that $\mathcal{E}$ is a monoidal elementary exact category, but $\mathbf{Ch}(\mathcal{E})^{\heart}$ is not (for example if projectives in $\mathcal{E}$ are flat but not strongly flat), and $\mathbf{Ch}_{\ge0}(\mathbf{C}^{\heart})\cong\mathbf{Ch}_{\ge0}(\mathcal{E})$ is still presented by an HA context (namely $\mathrm{s}\mathcal{E})$.

\subsection{Main Example: Bornological Modules}
In what follows, we show that the categories of relevance in analytic geometry as introduced in Section \ref{sec:bornologies}, are strong monoidal $\textbf{AdMon}$-elementary quasi-abelian categories. This includes the categories \(\mathsf{CBorn}_R\) and \(\mathsf{Ind}(\mathsf{Ban}_R)\) of complete bornological and inductive systems of Banach \(R\)-modules, where \(R\) is any Banach ring (with the latter of course being elementary). Most of these claims have been proven in the case where \(R = \C\), or a Banach field, and the proofs carry over also to the integral case. 

\begin{lemma}\label{lem:bornologies_exact}
Let \(R\) be a Banach ring. Then the categories \(\mathsf{NMod}_R^{1/2}\), \(\mathsf{NMod}_R\), \(\mathsf{Ban}_R\), \(\mathsf{Born}_R\), \(\mathsf{CBorn}_R\) are all  quasi-abelian categories. Furthermore, the \(\mathsf{Ind}\)-completions  \(\mathsf{Ind}(\mathsf{NMod}_R^{1/2})\), \(\mathsf{Ind}(\mathsf{NMod}_R)\) and \(\mathsf{Ind}(\mathsf{Ban}_R)\) are  quasi-abelian categories.
\end{lemma}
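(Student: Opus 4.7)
The plan is to verify the quasi-abelian axioms directly for the three basic categories $\mathsf{NMod}_R^{1/2}$, $\mathsf{NMod}_R$, and $\mathsf{Ban}_R$, and then deduce every remaining statement from Proposition \ref{prop:ind(E)_exact}. Concretely, the Ind-completions $\mathsf{Ind}(\mathsf{NMod}_R^{1/2})$, $\mathsf{Ind}(\mathsf{NMod}_R)$, $\mathsf{Ind}(\mathsf{Ban}_R)$ inherit quasi-abelianness from the first half of that proposition, while the bornological categories $\mathsf{Born}_R = \mathsf{Ind}^m(\mathsf{NMod}_R)$ and $\mathsf{CBorn}_R = \mathsf{Ind}^m(\mathsf{Ban}_R)$ inherit it from the second half.

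For each of the three basic categories, additivity is immediate: the zero object is the zero module with zero (semi-)norm, and the biproduct of $M$ and $N$ is $M \oplus N$ equipped with the sum (semi-)norm (or the maximum, in the non-Archimedean case). Kernels are computed as the underlying module kernel equipped with the restricted (semi-)norm. Cokernels are computed as $N/f(M)$ with the quotient semi-norm in $\mathsf{NMod}_R^{1/2}$, as the separation of this semi-normed quotient in $\mathsf{NMod}_R$, and as its separated completion (equivalently $N/\overline{f(M)}$ after Hausdorff completion) in $\mathsf{Ban}_R$. Universal properties are routine.

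The substantive step is the quasi-abelian axiom: the pullback of a strict epimorphism is a strict epimorphism, and dually for strict monomorphisms and pushouts. Recall that $f: M \to N$ is a strict epimorphism precisely when the (semi-)norm on $N$ agrees with the quotient (semi-)norm induced by $f$. Given such an $f$ and any bounded $g: N' \to N$, the pullback in $\mathsf{NMod}_R^{1/2}$ is the submodule $M \times_N N' \subseteq M \oplus N'$ cut out by the condition $f(m) = g(n')$, equipped with the restricted semi-norm, and one checks by a direct bookkeeping of the boundedness constants — using strictness of $f$ to lift elements of $N'$ to elements of $M$ of controlled norm — that the projection $M \times_N N' \to N'$ is surjective and that the semi-norm on $N'$ is the quotient semi-norm. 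The dual computation handles pushouts of strict monomorphisms. For $\mathsf{NMod}_R$ and $\mathsf{Ban}_R$ one additionally verifies that the separation and separated-completion functors commute with pullbacks of strict epimorphisms in the sense needed, which is automatic from their left-adjointness to the inclusion. These are the arguments of \cite{qacs}*{Section 2.1} transcribed to the setting of a Banach ring; nothing in those arguments uses that the base is a field.

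The main obstacle, such as it is, is the careful constant-tracking in the pullback/pushout verification, and the fact that over a general Banach ring one cannot invoke the open mapping theorem, so strictness must be propagated by hand through each construction rather than deduced from mere surjectivity. Since all relevant pieces already appear in \cite{qacs} and \cite{ben2020fr}*{Lemma 3.59} for the closely related cases, I would cite those references at the strictness step rather than redo the bookkeeping in full.
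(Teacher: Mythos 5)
Your proposal follows the same route as the paper: reduce via Proposition \ref{prop:ind(E)_exact} (together with Lemma \ref{lem:bornologies_bicomplete}) to the three basic categories, exhibit kernels and cokernels explicitly, and verify the strictness axioms by direct norm estimates, citing \cite{qacs} for the bookkeeping. The only discrepancies are minor: the paper spells out the pushout-of-strict-monomorphism case (the direction where the bounded-below estimate is what guarantees that separation and completion still produce a strict monomorphism) and disposes of \(\mathsf{NMod}_R\) and \(\mathsf{Ban}_R\) as full subcategories closed under kernels and cokernels by strict maps, whereas your appeal to left-adjointness for compatibility with \emph{pullbacks} is misdirected --- pullbacks are limits and are inherited simply because the subcategories are reflective, while it is the pushout direction where the reflector and the norm estimate must be made to interact.
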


\begin{proof}
By Proposition \ref{lem:bornologies_bicomplete} and Proposition \ref{prop:ind(E)_exact} it suffices to prove that \(\mathsf{NMod}_R^{1/2}\), \(\mathsf{NMod}_R\), \(\mathsf{Ban}_R\) are quasi-abelian.

We first prove that \(\mathsf{NMod}_R^{1/2}\) is quasi-abelian. Given a bounded \(R\)-linear map \(f \colon V \to W\), its kernel is given by the \(R\)-module \(f^{-1}(0)\) with the subspace topology induced by the norm on \(V\), and the canonical inclusion into \(V\). The cokernel of \(f\) is given by the \(R\)-module \(W/\overline{f(V)}\) with the quotient norm and the map \(W \onto W/\overline{f(V)}\). Now suppose \(f\) is a strict monomorphism and \(g \colon V \to V'\) is an arbitrary bounded \(R\)-module map. Their pushout is given by the \(R\)-module \(W' \defeq \coker(V \overset{(g,-f)}\to V' \oplus W)\), equipped with the quotient semi-norm induced by \(V' \oplus W\). The canonical morphisms are given by the compositions \(V' \to V' \oplus W \to W'\) and \(W \to W \oplus V' \to W'\). We claim that the composed map \(f' \colon V' \to W'\) is a strict monomorphism - that is, an injective \(R\)-module map with closed image. To see injectivity, we note that the map \(f'\) is given explicitly by \(f'(v') = [(v',0)]\), where \([\cdot,\cdot]\) denotes the equivalence class of \((v',0)\) under the quotient map \(V' \oplus W \to W'\). So if \(f'(v') = 0\), there is a \(v \in V\) such that \(g(v) = v'\) and \(f(v)=0\). Since \(f\) is in particular injective, \(v=0\) and hence \(v' = 0\).

We now show that the map \(f'\) has closed image. Let \(v' \in V'\). The strictness and the boundedness of \(g\) implies that there are positive constants \(C\), \(C'\) and \(C''\) such that 

\begin{multline*}
\norm{v'}_{V'} \leq \norm{v' + g(v)}_{V'} + \norm{g(v)}_{V'} 
\leq \norm{v' + g(v)}_{V'} + C \norm{v} \\
 \leq \norm{v' + g(v)}_{V'} + C \norm{v}_V \leq \norm{v' + g(v)}_{V'} + C' \norm{f(v)}_W \leq C'' \norm{(v' + g(v), - f(v))}_{V' \oplus W}.
\end{multline*}

Since \(v'\) was arbitrary, we have 

\begin{multline*}
\norm{v'}_{V'} \leq C'' \underset{v \in V}\inf \norm{(v'+g(v),-f(v))}_{V' \oplus W} \leq C'' \underset{(x,y) \in \mathsf{im}(g,-f)}\inf \norm{(v'+ x,y)}_{V' \oplus W} \\
\leq C'' \norm{[(v',0)]}_{W'} \leq C'' \norm{f'(v')}_{W'}. 
\end{multline*}

What we have therefore shown is that \(\mathsf{NMod}_R^{1/2}\) is quasi-abelian. The categories \(\mathsf{NMod}_R\) and \(\mathsf{Ban}_R\) are quasi-abelian since these are full, additive subcategories of \(\mathsf{NMod}_R^{1/2}\), that are closed under kernels and cokernels by strict maps.
\end{proof}

Given a Banach ring \(R\) and \(r>0\), let \(R_r\) denote the Banach ring whose underlying ring is \(R\), and whose norm is given by \(\norm{x}_r \defeq r \norm{x}\), \(x \in R\). Similarly, if \((M,\norm{\cdot})\) is a (semi)-normed or Banach \(R\)-module, we denote by \(M_r\), the module \(M\) with semi-norm \(\norm{m}_r \defeq  r \norm{m}\). It is a norm if \(M\) is normed, and it is complete in this norm if \(M\) is complete in its norm. In order to talk about projective objects in our categories of interest, we introduce the following \textit{contracting} categories: let \(\fC\) be any of the categories \(\mathsf{NMod}_R^{1/2}\), \(\mathsf{NMod}_R\) or  \(\mathsf{Ban}_R\). We define \(\fC^{\leq 1}\) as the category with the same objects as \(\fC\), but with those morphisms \(f \colon M \to N\) in \(\fC\) that satisfy \(\norm{f(m)}_N \leq \norm{m}_M\). The restriction to contracting morphisms ensures the following:

\begin{lemma}\label{lem:contracting}
The categories \(\fC^{\leq 1}\) have infinite coproducts and infinite products.
\end{lemma}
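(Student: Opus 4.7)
The plan is the standard \(\ell^\infty\)/\(\ell^1\) construction from functional analysis: the product should be the sup-bounded tuples with the supremum norm, and the coproduct the absolutely summable tuples with the \(\ell^1\) norm. Restricting to contractions is exactly what makes the induced universal maps norm-bounded, and this is why the statement is about \(\fC^{\leq 1}\) rather than \(\fC\).

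For the product of a family \(\{M_i\}_{i \in I}\) I would set
\[
\textstyle\prod_i M_i \defeq \bigl\{(m_i) : \sup_i \|m_i\|_{M_i} < \infty\bigr\}, \qquad \|(m_i)\| \defeq \sup_i \|m_i\|_{M_i}.
\]
I would verify that this lies in \(\fC^{\leq 1}\) — it is evidently semi-normed, normed when each \(M_i\) is normed, and in the Banach case the usual coordinate-wise limit argument works: Cauchyness in the sup-norm yields Cauchyness in each factor, and the uniform bound \(\sup_i \|m_i^{(n)} - m_i\|_{M_i} \le \varepsilon\) passes to the limit. The projections are contractions by definition of the supremum, and given a cone of contractions \(f_i : N \to M_i\) the tuple \((f_i(n))\) lies in the sup-bounded subspace with norm \(\le \|n\|\), providing the unique universal map.

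For the coproduct, in \(\mathsf{Ban}_R^{\leq 1}\) I would take
\[
\textstyle\coprod_i M_i \defeq \bigl\{(m_i) : \sum_i \|m_i\|_{M_i} < \infty\bigr\}, \qquad \|(m_i)\| \defeq \sum_i \|m_i\|_{M_i},
\]
whereas in the (semi-)normed cases I would instead take the purely algebraic direct sum equipped with the same \(\ell^1\) (semi-)norm, since there only finite sums arise and no completeness is required. Completeness of the \(\ell^1\)-sum in the Banach case is the standard Cauchy-sequence-plus-swap-of-limits calculation; this is the only genuinely analytic ingredient and is where the contractivity restriction earns its keep, because an arbitrary family of bounded maps \(f_i\) with unbounded norms would produce series \(\sum f_i(m_i)\) that need not be absolutely convergent and universal maps that need not land in the sup-bounded or \(\ell^1\)-summable subsets. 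Under contractivity, however, \(\sum_i f_i(m_i)\) is absolutely convergent when \(N\) is Banach (since \(\sum \|f_i(m_i)\| \le \sum \|m_i\|\)) and is a finite sum otherwise, and the triangle inequality makes the induced map a contraction. The main (and only) obstacle is therefore completeness of the \(\ell^1\)-sum, and the rest is bookkeeping.
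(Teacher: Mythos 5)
Your construction is the same as the paper's: sup-bounded tuples with the supremum norm for the product, the algebraic direct sum with the $\ell^1$-(semi)norm for the coproduct in the (semi-)normed cases, and its completion in the Banach case; the verification of the universal properties via contractivity is exactly as you describe, and this part is correct.

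The one concrete gap is the non-Archimedean case. Recall the paper's standing convention that when $R$ is non-Archimedean, every (semi-)normed $R$-module is required to be non-Archimedean, i.e.\ to satisfy $\norm{m+n}\le\max\{\norm{m},\norm{n}\}$. The $\ell^1$-norm on a direct sum fails this: with two summands and $m=(m_1,0)$, $n=(0,n_2)$ of norm $1$ each, one gets $\norm{m+n}=2>\max\{\norm{m},\norm{n}\}=1$. So your proposed coproduct is not an object of the category in that setting. The paper instead equips the direct sum with the norm $\norm{(m_i)}=\max_i\norm{m_i}_i$ in the non-Archimedean case (and the Banach-case completion is then the space of tuples with $\norm{m_i}_i\to 0$ under the sup norm, rather than the absolutely summable tuples). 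The universal-property check goes through verbatim with the max norm, since $\norm{\sum_i f_i(m_i)}\le\max_i\norm{f_i(m_i)}\le\max_i\norm{m_i}_i$ for contractions $f_i$, so the fix is routine — but as written your argument only covers the Archimedean case.
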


\begin{proof}
Let \((M_i, \norm{\cdot}_i)_{i \in I}\) be an arbitrary collection of objects in \(\mathsf{NMod}_R^{1/2}\). In both the Archimedean and non-Archimedean case ,their product in \( (\mathsf{NMod}_R^{1/2})^{\leq 1}\) is given by \(\prod_{i \in I}^{\leq 1} M_i = \setgiven{(m_i) \in \prod_{i \in I} M_i}{\sup_{i \in I} \norm{m_i}_i<\infty}\) with the supremum norm \(\norm{(m_i)} =  \sup_{i \in I} \norm{m_i}_i\). Also in both cases the underlying $R$-module of the coproduct is given by the usual coproduct of $R$-modules. In the Archimedean case the norm is given by the \(l^1\)-norm \(\norm{(m_i)} =  \sum_{i \in I} \norm{m_i}_i\), and the in the non-Archimedean case it is given by \(\norm{(m_i)} =  \max_{i \in I} \norm{m_i}_i\). The constructions in the normed categories are the same, and in the Banach categories are given by completing the constructions in \(\mathsf{NMod}_R^{\le1}\). 
\end{proof}

Since we will need these contracting coproducts throughout the article, we fix some notation: for a normed set \(X\), we denote by \(l^1(X, R)\) the contracting coproduct in any of the categories in Lemma \ref{lem:contracting}. Concretely, \(l^1(X,R) = \coprod_{x \in X}^{\leq 1} R_{\norm{x}_X}\) with the \(l^1\)-norm \(\norm{(r_x)_{x\in X}}= \sum_{x \in X} \norm{r_x}_R \norm{x}_X\) in the Archimedean case, and $||(r_{x})_{x\in X}=\mathrm{sup}_{x\in X}||r_{x}||_{R}||x||_{X}$ in the non-Archimedean case.

The coproducts in the contracting categories \(\fC^{\leq 1}\) are crucial in the proof of the following, which is  \cite{ben2020fr}*{Lemma 3.35, Lemma 3.36} 


%

\begin{corollary}\label{lem:CBorn_elementary}
The category \(\mathsf{Ind}(\mathsf{Ban}_R)\) is a strongly monoidal elementary exact category, and \(\mathsf{CBorn}_R\) is a strongly monoidal $\mathbf{AdMon}$-elementary exact category.
\end{corollary}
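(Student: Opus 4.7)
The plan is to deduce this corollary directly from Lemma \ref{lem:indmon} applied to $\mathcal{E}=\mathsf{Ban}_R$, with some extra care taken to upgrade \emph{projectively} monoidal to \emph{strongly} monoidal. All hypotheses on the input category are either already recorded earlier in the paper or follow from a short unpacking of the cited Lemmas 3.35 and 3.36 of \cite{ben2020fr}.

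First I would verify that $\mathsf{Ban}_R$ is a (small, with respect to our fixed universe $\mathbb{U}$) finitely complete and cocomplete, projectively monoidal exact category. Quasi-abelianness and the existence of finite (co)limits come from Lemma \ref{lem:bornologies_exact}; the closed symmetric monoidal structure $\haotimes_R$ was recalled earlier. The objects $\ell^1(X,R)$ for a normed set $X$ of cardinality in $\mathbb{U}$ are projective in $\mathsf{Ban}_R$ by Lemmas 3.35, 3.36 of \cite{ben2020fr}, and the natural isomorphism $\ell^1(X,R)\,\haotimes_R\,\ell^1(Y,R)\cong\ell^1(X\times Y,R)$ (with the product normed-set structure) shows that projectives are closed under $\haotimes_R$. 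Thus $\mathsf{Ban}_R$ is projectively monoidal, and its projectives form a small set of generators.

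Next I would check strong flatness of the generating projectives. For any normed set $X$, the functor $\ell^1(X,R)\haotimes_R(-)$ is naturally isomorphic to the contracting coproduct $\coprod_{x\in X}^{\leq 1}(-)_{\norm{x}}$, which is a left adjoint and preserves closed sub-Banach modules (kernels in $\mathsf{Ban}_R$ are just closed submodules with the subspace norm), hence commutes with kernels. Strong flatness is stable under retracts, so every projective in $\mathsf{Ban}_R$ is strongly flat. Together with the previous paragraph, $\mathsf{Ban}_R$ is a strongly projectively monoidal exact category.

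Now the two statements follow by feeding this into Lemma \ref{lem:indmon}. Lemma \ref{lem:ind_ban_bicomplete} shows $\mathsf{Ind}(\mathsf{Ban}_R)$ is bicomplete (in particular it has projective limits), so Lemma \ref{lem:indmon} gives that $\mathsf{Ind}(\mathsf{Ban}_R)$ is projectively monoidal elementary, with tiny projectives the retracts of objects of the form $\coprod_{i}\ell^1(X_i,R)$. Lemma \ref{lem:bornologies_bicomplete} shows $\mathsf{CBorn}_R=\mathsf{Ind}^m(\mathsf{Ban}_R)$ is cocomplete, and strong flatness of projectives in $\mathsf{Ban}_R$ then lets the second clause of Lemma \ref{lem:indmon} conclude that $\mathsf{CBorn}_R$ is projectively monoidal $\mathbf{AdMon}$-elementary. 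Finally, to upgrade to \emph{strongly} monoidal in both cases: tiny projectives in $\mathsf{Ind}(\mathsf{Ban}_R)$ (resp. $\mathsf{CBorn}_R$) are summands of filtered colimits of strongly flat projectives of $\mathsf{Ban}_R$; tensoring with such an object is a left adjoint, so it commutes with all colimits, and kernels in the ind-category are computed by a (monomorphic) filtered colimit of kernels of representatives from $\mathsf{Ban}_R$—so strong flatness passes from $\mathsf{Ban}_R$ to the ind-category. The step I expect to be most delicate is exactly this last transfer: one must present a morphism of ind-objects by a level diagram, form the kernel levelwise, and then check that the resulting presentation is monomorphic (for the $\mathsf{Ind}^m$ case) and that completion of the weighted coproduct does not destroy the required closed-image property.
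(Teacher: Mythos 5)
Your proposal is correct and follows the route the paper intends: the paper itself offers no argument beyond citing Lemmas 3.35--3.36 of the reference for the projectivity and generation properties of the $l^1(X,R)$'s, and then implicitly invokes Lemma \ref{lem:indmon} exactly as you do. Your extra care in upgrading ``projectively monoidal'' to ``strongly monoidal'' (strong flatness of $l^1(X,R)\haotimes_R(-)$ as a kernel-preserving contracting coproduct, transferred to the ind-category via levelwise kernels) is a genuine gap-fill that the paper glosses over, and your argument for it is sound.
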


%


\begin{corollary}
Let $R$ be a Banach ring. Then
$$\mathbf{C}^{IB}_{R}\defeq(\mathbf{Ch}(\mathrm{Ind(Ban_{R})}),\mathbf{Ch}_{\ge0}(\mathrm{Ind(Ban)}_{R}),\mathbf{Ch}_{\ge0}(\mathrm{Ind(Ban)}_{R}),\mathcal{P}^{0})$$
is a derived algebraic context. 
\end{corollary}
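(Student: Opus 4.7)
The plan is to verify the hypotheses of the preceding theorem (the one that promotes a quasi-monoidal elementary exact category with symmetric projectives into a model derived algebraic context), specialised to $\mathcal{E} = \mathsf{Ind}(\mathsf{Ban}_R)$. All the necessary pieces have already been established in the excerpt; the task is essentially one of assembly.

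First, by Corollary \ref{lem:CBorn_elementary}, $\mathsf{Ind}(\mathsf{Ban}_R)$ is a strongly monoidal elementary exact category. In particular it is quasi-monoidal elementary, has kernels and cokernels, and is locally presentable. Next I would verify that $\mathsf{Ind}(\mathsf{Ban}_R)$ has symmetric projectives. By Lemma \ref{lem:IndBan_symclosed}, it suffices to check this for $\mathsf{Ban}_R$ itself. The projectives in $\mathsf{Ban}_R$ are retracts of spaces of the form $l^1(X,R)$ for $X$ a normed set, and a direct computation shows that $\mathsf{Sym}^n_{\mathsf{Ban}_R}(l^1(X,R))$ is again an $l^1$-space on the normed set $\mathsf{Sym}^n(X)$ (or a retract thereof after passing to coinvariants), hence projective. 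The fact that $R$ is not necessarily a Banach field is immaterial here: the projectivity of $l^1$-spaces and their symmetric powers only uses the universal property of the contracting coproduct as in Lemma \ref{lem:contracting}.

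Next I would check that the tensor product of two compact objects in $\mathsf{Ind}(\mathsf{Ban}_R)$ is compact. This is straightforward: tiny (equivalently compact) objects of $\mathsf{Ind}(\mathsf{Ban}_R)$ are precisely the objects of $\mathsf{Ban}_R$ under the Yoneda embedding, and $\mathsf{Ban}_R$ is closed under the completed projective tensor product, so $M \haotimes_R N \in \mathsf{Ban}_R$ for $M, N \in \mathsf{Ban}_R$. With these three properties in hand, I invoke Proposition \ref{prop:projclosed} to produce a set $\mathcal{P}^0$ of tiny projective generators of $\mathsf{Ind}(\mathsf{Ban}_R)$ which is simultaneously closed under finite coproducts, tensor products, and $\mathsf{Ind}(\mathsf{Ban}_R)^{\heartsuit}$-symmetric powers.

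With all hypotheses verified, the previous theorem in the subsection applies directly to give that
\[
(\mathrm{Ch}(\mathsf{Ind}(\mathsf{Ban}_R)),\, \mathrm{Ch}_{\ge 0}(\mathsf{Ind}(\mathsf{Ban}_R)),\, \mathrm{Ch}_{\le 0}(\mathsf{Ind}(\mathsf{Ban}_R)),\, \mathcal{P}^0)
\]
is a model derived algebraic context; passing to the presented $(\infty,1)$-category via $\mathrm{L}^H$ then yields the desired derived algebraic context $\mathbf{C}^{IB}_R$. The only step with any real content is the verification that symmetric powers of $l^1$-spaces remain projective; this is the only place where one must look inside Banach modules rather than merely cite earlier general statements, but it reduces to the universal property of $l^1(X,R)$ combined with the fact that $\mathsf{Sym}^n$ commutes with the free-module functor at the level of sets. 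Everything else is bookkeeping, chaining Corollary \ref{lem:CBorn_elementary}, Lemma \ref{lem:IndBan_symclosed}, and Proposition \ref{prop:projclosed} into the master theorem.
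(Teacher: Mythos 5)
Your proposal is correct and follows exactly the route the paper intends: chain Corollary \ref{lem:CBorn_elementary}, Lemma \ref{lem:IndBan_symclosed}, and Proposition \ref{prop:projclosed} into the theorem on model derived algebraic contexts from exact categories, then pass to $\mathrm{L}^H$. Your identification of the one genuinely Banach-specific step — that $\mathsf{Sym}^n_{\mathsf{Ban}_R}(l^1(X,R)) \cong l^1(\mathsf{Sym}^n(X),R)$ (up to retract), hence projective, because the free functor $l^1(-,R)$ is strong monoidal and a left adjoint — is accurate and is the right way to discharge the symmetric-projectives hypothesis.
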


\begin{remark}
Consider now the category $\mathrm{CBorn}_{R}$. There is a Quillen equivalence 
$$\mathrm{Ch}(\mathrm{CBorn}_{R})\cong\mathrm{Ch}(\mathrm{Ind(Ban_{R})})$$ 
which restricts to a Quillen equivalence 
$$\mathrm{Ch}_{\ge0}(\mathrm{CBorn}_{R})\cong\mathrm{Ch}_{\ge0}(\mathrm{Ind(Ban_{R})})$$ 
Via this equivalence we may transport the $t$-structure on $\mathbf{Ch}(\mathrm{Ind(Ban_{R})})$ to one on $\mathbf{Ch}(\mathrm{CBorn}_{R})$, and the non-negative part of this transferred $t$-structure coincides with the non-negative part of the projective $t$-structure on $\mathbf{Ch}(\mathrm{CBorn}_{R})$. However the transferred non-positive part, which we denote by $\widetilde{\mathbf{Ch}}_{\ge0}(\mathrm{CBorn_{R}})$ will be different. This gives us a derived algebraic context
$$\mathbf{C}^{CB}_{R}\defeq (\mathbf{Ch}(\mathrm{CBorn_{R}}),\mathbf{Ch}_{\ge0}(\mathrm{CBorn_{R}}),\widetilde{\mathbf{Ch}}_{\ge0}(\mathrm{CBorn_{R}}),\mathcal{P}^{0})$$
which is equivalent to $\mathbf{C}^{IB}_{R}$. 
\end{remark}

%
%
%
%

\section{Derived de Rham cohomology and filtered Hochschild homology}\label{sec:HKR}

In this section, we use the universal properties of derived de Rham cohomology and Hochschild homology, to prove a version of the Hochschild-Kostant-Rosenberg Theorem for derived \textit{bornological} algebras, using the results of Section \ref{sec:exact}. We first recall the constructions of the categories where these complexes with their additional structures reside, leaving the reader to find all relevant details in \cite{raksit2020hochschild}.

\subsection{\(S^1\)-equivariant Hochschild homology}

Let \(R\) be a commutative, unital ring and let \(A\) be a flat, commutative \(R\)-algebra. The Hochschild homology \(\HH_*(A/R)\) of \(A\) with coefficients in \(R\) is defined as the homology of the chain complex \[ \mathsf{HH}(A/R) \defeq \cdots \to A^{\otimes_R n + 1} \overset{b_n}\to A^{\otimes_R n} \to \cdots \to A \otimes_R A \to A,\] where 

\begin{multline*}
b_n(a_0 \otimes \cdots \otimes a_n) = a_0a_1 \otimes \cdots a_n + \sum_{i=1}^{n-1}(-1)^i a_0 \otimes \cdots \otimes a_i a_{i+1} \otimes \cdots \otimes a_n \\
+ (-1)^n a_n a_0 \otimes \cdots \otimes a_{n-1}. 
\end{multline*}

By the Dold-Kan correspondence, this complex can equivalently be viewed as a simplicial \(R\)-module \(\mathsf{HH}(A/R) \defeq ([n] \mapsto A^{\otimes_R n+1})\), with face and degeneracy maps obtained from the Hochschild differential \(b_n \colon \mathsf{HH}(A/R)_n \to \mathsf{HH}(A/R)_{n-1}\) above. Since \(A\) is commutative, the face and degeneracy maps are actually \(R\)-algebra homomorphisms, so that \(\mathsf{HH}(A/R)\) is a simplicial, commutative \(R\)-algebra. Furthermore, we can view \(\mathsf{HH}(A/R)\) as a \textit{cyclic object} in the category \(\mathsf{CAlg}_R\) of commutative \(R\)-algebras, that is, as a functor \(\Lambda^\op \to \mathsf{CAlg}_R\) from Connes' cyclic category to the category of commutative \(R\)-algebras. By \cite{Loday}*{Theorem 7.1.4}, the geometric realisation \(\vert \mathsf{HH}(A/R) \vert\) has a canonical \(S^1\)-action. Applying the nerve construction, and using that \(\vert N(\Lambda) \vert \cong BS^1\),  we can view \(\mathsf{HH}(A/R)\) as an object of the \(\infty\)-category \(\mathsf{Fun}(BS^1, \mathbf{Ch}(\mathsf{Mod}_R))\) of chain complexes of \(R\)-modules with an \(S^1\)-action.

\begin{remark}
We can drop the assumption that \(A\) is flat by using a simplicial resolution \(P_\bullet \to A\) of flat \(R\)-algebras, and defining \(\mathsf{HH}(A/R) \defeq \vert \mathsf{HH}(P_\bullet/R) \vert\). More formally, it is the left Kan extension of the functor \(\mathsf{Poly}_R \ni A \mapsto \vert\mathsf{HH}(A/R)\vert \in \mathsf{Fun}(BS^1, \mathbf{Ch}(\mathsf{Mod}_R))\) along the inclusion \(\mathsf{Poly}_R \to \mathsf{sCAlg}_R\) of finitely generated polynomial algebras into simplicial commutative \(R\)-algebras.  
\end{remark}

Now suppose \(A\) is a (simplicial) commutative \(R\)-algebra. Then by the remark above, \(\mathsf{HH}(A/R)\) is an \(\mathbb{E}_\infty\)-\(R\)-algebra with an \(S^1\)-action. Here by \(\mathbb{E}_\infty\)-\(R\)-algebras, we mean the \(\infty\)-category \(\mathsf{CAlg}(\mathbf{Ch}(\mathsf{Mod}_R))\) of commutative monoids in the symmetric monoidal \(\infty\)-category of complexes \(\mathbf{Ch}(\mathsf{Mod}_R)\). The Hochschild homology algebra \(\mathsf{HH}(A/R)\) is the initial such object in the following sense:

\begin{theorem}\cite{BMS}*{Section 2.2}
Let \(X \in \mathsf{Fun}(BS^1, \mathsf{CAlg}(\mathbf{Ch}(\mathsf{Mod}_R))\) be an \(\mathbb{E}_\infty\)-\(R\)-algebra with a compatible \(S^1\)-action, and let \(A \to X\) be a morphism of \(\mathbb{E}_\infty\)-\(R\)-algebras. Then there exists a unique extension to an \(S^1\)-equivariant \(\mathbb{E}_\infty\)-\(R\)-algebra morphism \(\mathsf{HH}(A/R) \to X\).   
\end{theorem}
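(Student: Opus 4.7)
The strategy is to recognize the claimed universal property as the defining property of a left adjoint, and to identify $\mathsf{HH}(A/R)$ with the image of $A$ under this left adjoint. Concretely, the forgetful functor
\[\mathsf{U} \colon \mathsf{Fun}(BS^1, \mathsf{CAlg}(\mathbf{Ch}(\mathsf{Mod}_R))) \to \mathsf{CAlg}(\mathbf{Ch}(\mathsf{Mod}_R))\]
is the restriction along the essentially unique map $\ast \to BS^1$. Because $\mathsf{CAlg}(\mathbf{Ch}(\mathsf{Mod}_R))$ is presentable, $\mathsf{U}$ admits a left adjoint $\mathsf{L}$ computed by left Kan extension; since the slice $\ast\downarrow \ast$ inside $BS^1$ is the loop space $\Omega BS^1 \simeq S^1$, the pointwise formula gives
\[\mathsf{L}(A) \;\simeq\; \colim_{S^1} \mathrm{const}_A \;\simeq\; S^1 \otimes A,\]
where $\otimes$ denotes the canonical tensoring of a presentable $\infty$-category over spaces, and the $S^1$-action on $\mathsf{L}(A)$ is induced by the rotation action of $S^1$ on itself. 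The first step of the proof is therefore simply to verify this abstract adjunction, which is immediate from presentability and the adjoint functor theorem, together with the standard description of left Kan extension along $\ast \to BS^1$.

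The second, and substantive, step is the identification $\mathsf{L}(A) \simeq \mathsf{HH}(A/R)$. I would carry this out using the pushout presentation $S^1 \simeq \ast \sqcup_{S^0} \ast$ of the circle as a space, together with the fact that $\otimes$ preserves colimits in the space variable. Since coproducts in $\mathsf{CAlg}(\mathbf{Ch}(\mathsf{Mod}_R))$ are given by the derived relative tensor product over $R$, the evaluations $\ast \otimes A = A$ and $S^0 \otimes A = A \otimes^{\mathbf{L}}_R A$ lead to
\[S^1 \otimes A \;\simeq\; A \otimes^{\mathbf{L}}_{A \otimes^{\mathbf{L}}_R A} A,\]
which is precisely derived Hochschild homology. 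Concretely, when $A$ is flat, the cyclic bar construction $[n] \mapsto A^{\otimes_R (n+1)}$ is a simplicial resolution computing this relative tensor product; the non-flat case follows by left Kan extension from finitely generated polynomials, as indicated in the Remark preceding the theorem.

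The main obstacle will be matching the $S^1$-action produced by the abstract tensor construction with Connes' cyclic action on $\mathsf{HH}(A/R)$. For this I would use that the cyclic bar construction naturally upgrades to a functor $\Lambda^{\op} \to \mathsf{CAlg}_R$, and invoke the identification $|N(\Lambda)| \simeq BS^1$ already cited in the excerpt to produce the cyclic $S^1$-action on the geometric realisation. To see that this coincides with the action coming from $\mathsf{L}$, one checks that both actions are induced by the rotation of $S^1$ on itself via the pushout decomposition $S^1 \simeq \ast \sqcup_{S^0} \ast$: the two factors of $\ast$ get swapped, and this swap is exactly what implements both Connes' cyclic operator and the abstract rotation. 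Once this compatibility is established, the theorem follows immediately from the adjunction $\mathsf{L} \dashv \mathsf{U}$, giving the natural equivalence
\[\mathsf{Map}_{\mathsf{Fun}(BS^1, \mathsf{CAlg}_R)}(\mathsf{HH}(A/R), X) \;\simeq\; \mathsf{Map}_{\mathsf{CAlg}_R}(A, \mathsf{U}X).\]
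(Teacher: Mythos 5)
Your proposal is correct and follows essentially the same route as the cited source \cite{BMS}*{Section 2.2}, which the paper invokes without reproving (the paper itself contains no proof of this statement): one identifies $\mathsf{HH}(A/R)$ with the value at $A$ of the left adjoint to restriction along $\ast\to BS^1$, computes that left adjoint as $S^1\otimes A$ via the pointwise Kan extension formula, and uses $S^1\simeq \ast\sqcup_{S^0}\ast$ to recognise $A\otimes^{\mathbf{L}}_{A\otimes^{\mathbf{L}}_R A}A$. The one step to treat with more care is the comparison of the rotation action on $S^1\otimes A$ with Connes' cyclic action on $\lvert\mathsf{HH}(A/R)\rvert$: your ``swap of the two cone points'' heuristic is not itself a proof, since the cyclic operator permutes $n+1$ tensor factors in each simplicial degree, and this identification is a genuinely delicate point that is standardly handled by exhibiting the cyclic bar construction as a colimit over $\Lambda^{\op}$ and invoking $\lvert N(\Lambda)\rvert\simeq BS^1$, exactly as you indicate.
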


The main result in \cite{raksit2020hochschild} is the generalisation of the universal property above to the setting where \(\mathsf{HH}(A/R)\) is equipped with its \textit{HKR filtration} and correspondingly, a \textit{filtered \(S^1\)-action}. It is this generality we need in order to prove statements about global analytic spaces - that is, analytic spaces over a Banach ring that is not a field. 

In order to be able to do this, we first observe that we can rewrite a chain complex \(X \in \mathsf{Fun}(BS^1, \mathbf{Ch}(\mathsf{Mod}_\Z))\) as module over the group ring \(\Z[S^1]\). In general, let \(\mathbf{C}\) be any \(\Z\)-linear, stable, presentable, symmetric monoidal \(\infty\)-category. Denote by \(\mathbb{T}_{\mathbf{C}}\) the image of the group ring \(\Z[S^1]\) under the canonical symmetric monoidal functor \(\mathbf{Ch}(\mathsf{Mod}_\Z) \to \mathbf{C}\). Then there is an equivalence of \(\infty\)-categories \[\mathsf{Fun}(BS^1, \mathbf{C}) \cong \mathbf{Mod}_{\mathbb{T}_{\mathbf{C}}}(\mathbf{C}).\]

The group ring \(\mathbb{T}_{\mathbf{C}}\) is dualisable. To see this, we first note that \(\Z[S^1]\) is dualisable, where the dual \(\Z[S^1]^\vee\) is given by the colimit of the constant diagram ~\(S^1 \to \mathbf{Ch}(\mathsf{Mod}_\Z)\) with value \(\Z\).  Now denote by \(\mathbf{T}_{\mathbf{C}}^\vee\) the image of \(\Z[S^1]^\vee\) under the canonical map \(\mathbf{Ch}(\mathsf{Mod}_\Z) \to \mathbf{C}\). Furthermore, the group ring \(\Z[S^1]\) is a bicommutative bialgebra, where the bialgebra structure is induced by the group structure on \(S^1\). This induces a bicommutative bialgebra structure on \(\mathbb{T}_{\mathbf{C}}\), and hence on \(\mathbb{T}_{\mathbf{C}}^\vee\).

If \(\mathbf{C}\) is also a derived algebraic context, there is a \textit{derived} bicommutative, bialgebra structure on \(\mathbb{T}_{\mathbf{C}}^\vee\), promoting the bicommutative, bialgebra structure on \(\Z[S^1]^\vee\). This results in an equivalence of \(\infty\)-categories \[\Fun(BS^1, \mathsf{DAlg}_A(\mathbf{C})) \cong \mathsf{cMod}_{\mathbb{T}_{\mathbf{C}}^\vee}(\mathsf{DAlg}_A(\mathbf{C})),\] where \(A\) is a derived algebra object in \(\mathbf{C}\). The right hand side of the above equivalence denotes the \(\infty\)-category of \(\mathbb{T}_{\mathbf{C}}^\vee\)-comodule objects in \(\mathsf{DAlg}_A(\mathbf{C})\).

Now let \(\tau_{\geq *} \colon \mathbf{Ch}(\mathsf{Mod}_\Z) \to \mathbf{Fil}(\mathbf{Ch}(\mathsf{Mod}_\Z))\) be the \textit{Postnikov filtration functor}, equipping a chain complex \(X \in \mathbf{Ch}(\mathsf{Mod}_\Z)\) with its Postnikov filtration. The image of the Hochschild homology complex \(\mathsf{HH}(A/R)\) under the Postnikov filtration functor is called \textit{HKR-filtered Hochschild homology}. The images \(\Z[S^1]_\fil\) and \(\Z[S^1]_{\fil}^\vee\) of the group algebra \(\Z[S^1]\) and its dual under this functor define filtered algebras. The algebra \(\Z[S^1]_{\mathrm{fil}}\) is called the \textit{filtered circle}. These are bicommutative bialgebras, lifting the bicommutative bialgebra structures on \(\Z[S^1]\) and \(\Z[S^1]^\vee\). Similarly, there is also a \textit{derived} bicommutative bialgebra structure on \(\Z[S^1]_{\fil}^\vee\) that lifts the derived bicommutative bialgebra structure on \(\Z[S^1]^\vee\). 

Again, using the base change map \(\mathbf{Ch}(\mathsf{Mod}_\Z) \to \mathbf{C}\) into a \(\Z\)-linear, stable, presentable symmetric monoidal \(\infty\)-category as above, we can define the filtered circle \(\mathbb{T}_{\mathbf{C}, \mathrm{fil}}\) in \(\mathbf{C}\).  If additionally \(\mathbf{C}\) is a derived algebraic context, then given a derived commutative algebra object \(A\)  in \(\mathbf{C}\), we can define the \(\infty\)-category \[\mathsf{Fil}_{S^1}(\mathsf{DAlg}_A(\mathbf{C})) \defeq \mathsf{cMod}_{\Z[S^1]_{\fil}^\vee}(\mathsf{DAlg}_A(\mathbf{Fil}(\mathbf{C})))\] of \textit{filtered derived commutative \(A\)-algebras with filtered \(S^1\)-actions}.

Since the HKR filtration on Hochschild homology is non-negatively filtered (that is, the Postnikov filtration functor takes values in the non-negative part of the \(t\)-structure \(\mathbf{C}_{\geq 0}\)), we need to restrict to the \(\infty\)-category \[\mathsf{Fil}_{S^1}(\mathsf{DAlg}_A(\mathbf{C}))^{\geq 0} \defeq \mathsf{Fil}_{S^1}(\mathsf{DAlg}_A(\mathbf{C})) \times_{\mathsf{DAlg}_A(\mathbf{Fil}(\mathbf{C}))} \mathsf{DAlg}_A(\mathbf{Fil}(\mathbf{C}))^{\geq 0}\] of \textit{non-negatively filtered derived commutative \(A\)-algebras with filtered \(S^1\)-action}.   

The following definition is a consequence of \cite{raksit2020hochschild}*{Proposition 6.2.4}. It formulates the universal property of HKR-filtered Hochschild homology.

\begin{definition}\label{def:Hochschild_homology}
We define the \textit{HKR-filtered Hochschild functor} \(\mathsf{HH}(-/A)_\fil \colon \mathsf{DAlg}_A \to \mathbf{Fil}_{S^1}(\mathsf{DAlg}_A(\mathbf{C}))^{\geq 0}\) as the left adjoint of the forgetful functor 
\(U_\fil \colon \mathbf{Fil}_{S^1}(\mathsf{DAlg}_A(\mathbf{C}))^{\geq 0} \to \mathbf{Fil}(\mathsf{DAlg}_A(\mathbf{C}))^{\geq 0}\), composed with the evaluation functor \(\mathbf{Fil}(\mathsf{DAlg}_A(\mathbf{C}))^{\geq 0} \to \mathsf{DAlg}_A(\mathbf{C})\).
\end{definition}

Our primary interest is in the derived algebraic context \(\mathbf{Ch}(\mathcal{E})\), where \(\mathcal{E}\) is either the category \(\mathsf{CBorn}_R\) or \(\mathsf{Ind}(\mathsf{Ban}_R)\). Specialised to these categories, the universal property of Hochschild homology reads as follows: let \(A\) be a simplicial commutative complete bornological \(R\)-algebra, and let \(B\) be a simplicial commutative complete bornological \(A\)-algebra. Suppose \(X = (X_\bullet^n)_{n \in \Z}\) is a non-negatively filtered simplicial commutative complete bornological \(R\)-algebra with a filtered \(S^1\)-action, together with a morphism of simplicial \(R\)-algebras \(B \to X^0\), then there is a unique extension \(\mathsf{HH}_{\mathrm{fil}}(B/A) \to X_\bullet\) of filtered, \(S^1\)-equivariant simplicial commutative complete bornological \(R\)-algebras. 


\subsection{Homotopy coherent cochain complexes and derived de Rham cohomology}

Let \(R\) be a commutative ring and \(A\) a finitely generated polynomial \(R\)-algebra. The left Kan extension of the functor \(A \mapsto \Omega_{A/R}^1 \in \mathbf{Ch}(\mathsf{Mod}_R)\) to the \(\infty\)-category \(\mathsf{sCAlg}_R\) is called the \textit{cotangent complex} functor, denoted \(\mathsf{L}\Omega_{-/R}^1 \colon \mathsf{sCAlg}_R \to \mathbf{Ch}(\mathsf{Mod}_R)\).  Explicitly, we resolve a simplicial commutative \(R\)-algebra \(A\) by polynomial \(R\)-algebras \(P_\bullet \to A\), and set \(\mathsf{L}\Omega_{A/R}^1 = \Omega_{P_\bullet/R}^1 \otimes_{P_\bullet} A \in \mathbf{Ch}(\mathsf{Mod}_R)\). Similarly, we can define \(\mathsf{L}\Omega_{-/R}^n\) as left Kan extensions of the functor \(A \mapsto \Omega_{A/R}^n \defeq \bigwedge^n \Omega_{A/R}^1\). The modules of K\"ahler differentials \(\Omega_{A/R}^\bullet\), and their derived versions \(\mathsf{L}\Omega_{A/R}^\bullet\) assemble into an \(\mathbb{E}_\infty\)-\(R\)-algebra. Furthermore, we are also interested in the derived de Rham complex equipped with its \textit{Hodge filtration}, which is a complete, \(\N\)-indexed filtration. The resulting \textit{Hodge-completed derived de Rham complex} is a complete, filtered \(\mathbb{E}_\infty\)-\(R\)-algebra. This can be viewed as a graded \(\mathbb{E}_\infty\)-algebra with the structure of a \(\mathbb{D}_+ \defeq \mathsf{gr}(\Z[S^1]_\mathrm{fil})\)-module structure, capturing the higher algebraic analogue of a strictly commutative dg-algebra. The Hochschild-Kostant-Rosenberg Theorem then identifies this object with the associated graded of the HKR filtered Hochschild homology with its filtered \(S^1\)-action.



Let \(\mathbf{C}\) be a \(\Z\)-linear, stable, presentable, symmetric monoidal \(\infty\)-category. We define the graded algebra \(\mathbb{D}_+ \defeq \mathsf{gr}(\mathbb{T}_{\mathbf{C},\mathrm{fil}}) \in \mathbf{Gr}(\mathbf{C})\). It is a dualisable, bicommutative bialgebra object of \(\mathbf{Gr}(\mathbf{C})\) (see \cite{raksit2020hochschild}*{Construction 5.1.1}). When \(\mathbf{C} = \mathbf{Ch}(\mathsf{Mod}_\Z)\), the underlying graded \(\Z\)-module of \(\mathbb{D}_+\) is the split square-zero algebra \(\Z \oplus \Z[1]\). 


\begin{definition}\label{def:htpy_coherent}
Let \(\mathbf{C}\) be a \(\Z\)-linear stable, presentable, symmetric monoidal \(\infty\)-category. The \(\infty\)-category \(\mathsf{DG}_+(\mathbf{C}) \defeq \mathsf{Mod}_{\mathbb{D}_+}(\Gr(\mathbf{C}))\) is called the \(\infty\)-category of \textit{\(h_+\)-differential graded objects} of \(\mathbf{C}\). 
\end{definition}

The \(\infty\)-category \(\mathsf{DG}_+(\mathbf{C})\) is a presentable, symmetric monoidal \(\infty\)-category, using the bicommutative bialgebra structures on \(\mathbb{D}_+\).

Now let \(\mathbf{C}\) be a derived algebraic context, and \(A\) a derived commutative algebra object of \(\mathbf{C}\). Consider the dual bialgebra $\mathbb{D}^{\vee}_{+}$ of $\mathbb{D}_{+}$. As in \cite{raksit2020hochschild} Definition 5.1.10 define
$$\mathsf{DG}_+(\mathsf{DAlg}_A(\mathbf{C})) \defeq \mathsf{cMod}_{\mathbb{D}^{\vee}_{+}}(\mathsf{DAlg}_{A}(\mathbf{Gr}(\mathbf{C})))$$


The objects of the \(\infty\)-category \(\mathsf{DG}_+(\mathsf{DAlg}_A(\mathbf{C}))\) are called \textit{\(h_+\)- differential graded commutative \(A\)-algebras in \(\mathbf{C}\)}. We can define its non-negative part as the \(\infty\)-category
\[\mathsf{DG}_+ \mathsf{DAlg}_A(\mathbf{C})^{\geq 0} \defeq \mathsf{DG}_+ \mathsf{DAlg}_A(\mathbf{C}) \times_{\mathsf{DAlg}_{A}(\mathbf{Gr}(\mathbf{C}))} \mathsf{DAlg}_{A}(\mathbf{Gr}(\mathbf{C})^{\ge0})\] as the \(\infty\)-category of \textit{non-negative \(h_+\)-differential graded derived commutative \(A\)-algebras}. 


The forgetful functor composed with the evaluation functor \[\mathsf{DG}_+\mathsf{DAlg}_A(\mathbf{C})^{\geq 0}  \overset{U}\to \mathsf{DAlg}_{A}(\mathbf{Gr}(\mathbf{C}))^{\geq 0} \overset{\ev^0}\to \mathsf{DAlg}_A(\mathbf{C})\] admits a left adjoint \[\mathsf{L}\Omega_{-/A}^{+\bullet} \colon \mathsf{DAlg}_A(\mathbf{C}) \to \mathsf{DG}_+^{\geq 0}(\mathsf{DAlg}_A(\mathbf{C}))\] called the \textit{derived de Rham functor}  over \(A\). For a derived commutative \(A\)-algebra \(B\),  the image of this functor \(\mathsf{L}\Omega_{B/A}^{+ \bullet}\) is called the \textit{derived de Rham complex of \(B\) over \(A\)}. By construction, it is a non-negative \(h_+\)-graded derived commutative algebra.

\subsubsection{Derivations, the Cotangent Complex, and de Rham Chomology}

Here we relate the formally defined derived de Rham complex \(\mathsf{L}\Omega_{B/A}^{+ \bullet}\) above to the usual definition involving the derived symmetric algebra \(\mathsf{LSym}_B(\mathsf{L}_{B/A})\) of the relative cotangent complex \(\mathsf{L}_{B/A}\). Let $A$ be a derived commutative algebra and $M$ an $A$-module.  \cite{raksit2020hochschild}*{Construction 5.1.1} defines for all such pairs $(A,M)$ the \textit{square-zero extension} $A\oplus M$ as an object of $\mathrm{DAlg}_{A}(\mathbf{C})$.

\begin{definition}\cite{raksit2020hochschild}*{Definition 4.4.7}
Let $A\in\mathrm{DAlg}(\mathbf{C}),B\in\mathrm{DAlg}_{A}(\mathbf{C})$, and $M\in\mathbf{Mod}_{B}$. Define 
$$\mathrm{Der}_{A}(B,M)\defeq\mathrm{Map}_{\mathrm{DAlg}_{A}\big\slash B}(B,B\oplus M)$$
An $A$-\textit{linear derivation of} $B$ \textit{into} $M$ is an element of $\pi_{0}(\mathrm{Der}_{A}(B,M))$.
\end{definition}

In \cite{raksit2020hochschild}*{Construction 4.4.10} Raksit shows the following.

\begin{proposition}
Let $A\in\mathrm{DAlg}(\mathbf{C})$ and $B\in\mathrm{DAlg}_{A}(\mathbf{C})$. The functor 
$$\mathbf{Mod}_{B}\rightarrow\mathbf{sSet},\;\; M\mapsto\mathrm{Der}_{A}(B,M)$$
is representable by a $B$-module $\mathsf{L}\Omega_{B/A}^{1}$. 
\end{proposition}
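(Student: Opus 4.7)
The plan is to construct $\mathsf{L}\Omega^{1}_{B/A}$ as the value of a left adjoint to the square-zero extension functor, and then read off representability as an adjunction identity. Concretely, the square-zero extension construction of \cite{raksit2020hochschild}*{Construction 5.1.1} produces a functor
\[
\sigma \colon \mathbf{Mod}_{B} \longrightarrow \mathsf{DAlg}_{A}/B, \qquad M \longmapsto (B \oplus M \twoheadrightarrow B),
\]
where the structure map is the projection onto the first factor. By the definition of $\mathrm{Der}_{A}(B,M)$, we tautologically have $\mathrm{Der}_{A}(B,M) = \mathrm{Map}_{\mathsf{DAlg}_{A}/B}(B,\sigma(M))$, so the claim is equivalent to showing that $\sigma$ admits a left adjoint $L$. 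Granted such an $L$, setting $\mathsf{L}\Omega^{1}_{B/A} \defeq L(\mathrm{id}_{B} \colon B \to B)$ yields
\[
\mathrm{Map}_{\mathbf{Mod}_{B}}(\mathsf{L}\Omega^{1}_{B/A}, M) \simeq \mathrm{Map}_{\mathsf{DAlg}_{A}/B}(B, B \oplus M) = \mathrm{Der}_{A}(B,M),
\]
as required.

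To produce $L$, I will invoke the adjoint functor theorem of \cite{L}*{Corollary 5.5.2.9}. Both source and target of $\sigma$ are presentable $\infty$-categories: $\mathsf{DAlg}_{A}$ is presentable because it is monadic over the presentable $\infty$-category $\mathbf{C}$, and both $\mathbf{Mod}_{B}$ and the slice $\mathsf{DAlg}_{A}/B$ remain presentable by standard stability of presentability under slicing and under modules over an algebra. Hence it suffices to verify that $\sigma$ preserves small limits and is accessible. Accessibility is immediate, since $\sigma$ is built out of the direct sum bifunctor, the trivialisation of the multiplication $M \otimes_{B} M \to 0$, and the underlying-object functor, all of which commute with sufficiently filtered colimits.

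The main obstacle is limit-preservation, which is also the only nontrivial content. The underlying object of $\sigma(M)$ in $\mathbf{C}$ is the direct sum $B \oplus M$, so the forgetful functor $\mathsf{DAlg}_{A}/B \to \mathbf{C}$ applied to $\sigma$ clearly preserves limits. The remaining work is to verify that the \emph{derived commutative algebra structure} on $B \oplus M$ is functorial in $M$ in a limit-preserving way: given a diagram $\{M_i\}$ in $\mathbf{Mod}_{B}$, the derived algebra structure on $B \oplus \lim_i M_i$ must coincide with the one obtained as the limit of the derived algebra structures on $B \oplus M_i$. This is essentially the content of the compatibility statements accompanying \cite{raksit2020hochschild}*{Construction 5.1.1}: the square-zero extension is characterised by a universal property (the initial derived $A$-algebra over $B$ equipped with a map from $M$ into its augmentation ideal whose self-product vanishes), and universal constructions of this form are automatically compatible with limits in the variable $M$. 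Combining these ingredients yields the adjoint $L$ and thereby the cotangent complex $\mathsf{L}\Omega^{1}_{B/A}$.
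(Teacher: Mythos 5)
Your proposal is correct and follows essentially the same route as the source the paper defers to (Raksit's Construction 4.4.10): identify $\mathrm{Der}_A(B,-)$ with maps into the square-zero extension and deduce representability from presentability plus limit-preservation and accessibility of the relevant functor. The only wobble is the final justification that ``universal constructions of this form are automatically compatible with limits'' --- initial-object-type universal properties are not limit-compatible in general; the clean argument is the one you already half-state, namely that the forgetful functor $\mathsf{DAlg}_A/B\to\mathbf{C}$ is conservative and creates limits (by monadicity of $\mathsf{DAlg}_A$ over $\mathbf{C}$), so limit-preservation of $\sigma$ reduces to the evident fact that $M\mapsto B\oplus M$ preserves limits of underlying objects.
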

The \(B\)-module $\mathsf{L}\Omega_{B/A}^{1}$ is called the \textit{cotangent complex of }$B$\textit{ over }$A$. Note that the identity map $\mathsf{L}\Omega_{B/A}^{1}\rightarrow \mathsf{L}\Omega_{B/A}^{1}$ means that $\mathsf{L}\Omega_{B/A}^{1}$ has a universal $A$-linear derivation $d:B\rightarrow\mathsf{L}\Omega_{B/A}^{1}$. If $A$ and $B$ are connective, then the functor
$$\mathbf{Mod}^{cn}_{B}\rightarrow\mathbf{sSet},\;\; M\mapsto\mathrm{Der}_{A}(B,M)$$
is also representable by $\mathsf{L}\Omega_{B/A}^{1}$, where $\mathbf{Mod}^{cn}_{B}\defeq\mathbf{Mod}_{B}\times_{\mathbf{C}}\mathbf{C}_{\ge0}$.

\begin{theorem}\cite{raksit2020hochschild}*{Theorem 5.3.6}
Let \(\mathbf{C}\) be a derived algebraic context and \(A\) a derived commutative algebra object in \(\mathbf{C}\). Then for any derived commutative \(A\)-algebra \(B\), there is a canonical equivalence of graded commutative \(B\)-algebras

\[\mathsf{L}\Omega_{B/A}^{+\bullet} \cong \mathsf{LSym}_B(\mathsf{L}\Omega_{B/A}^1[1](1)),\]

\noindent inducing equivalences of \(B\)-modules \(\mathsf{L}\Omega_{B/A}^{+i} \cong (\bigwedge_B^i \mathsf{L}\Omega_{B/A}^1)[i]\), for \(i \geq 0\). Furthermore, under this equivalence, the first differential \[B \cong \mathsf{L}\Omega_{B/A}^{+0} \to \mathsf{L}\Omega_{B/A}^{+1}[-1] \cong \mathsf{L}_{B/A}\] of \(h_+\)-differential graded algebras is the universal \(A\)-linear derivation \(B \to \mathsf{L}_{B/A}\). 
\end{theorem}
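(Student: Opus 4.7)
The plan is to build a canonical comparison map $\phi \colon \mathsf{LSym}_B(\mathsf{L}_{B/A}[1](1)) \to \mathsf{L}\Omega_{B/A}^{+\bullet}$ of graded derived commutative $B$-algebras and check that it is an equivalence by reducing to free algebras. To construct $\phi$, I would use that the $h_+$-dga $\mathsf{L}\Omega_{B/A}^{+\bullet}$ carries, via its $\mathbb{D}_+^\vee$-comodule structure in graded derived commutative algebras, a degree-raising square-zero derivation whose first component gives an $A$-linear derivation $d \colon B \cong \mathsf{L}\Omega_{B/A}^{+0} \to \mathsf{L}\Omega_{B/A}^{+1}[-1]$. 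By the universal property of the cotangent complex established earlier in the paper, $d$ factors canonically through a $B$-module map $\mathsf{L}_{B/A} \to \mathsf{L}\Omega_{B/A}^{+1}[-1]$, equivalently a morphism $\mathsf{L}_{B/A}[1](1) \to \mathsf{L}\Omega_{B/A}^{+\bullet}$ of graded $B$-modules, whose adjoint under $\mathsf{LSym}_B \dashv U$ is $\phi$. By construction the first differential of $\mathsf{L}\Omega_{B/A}^{+\bullet}$ corresponds under $\phi$ to the universal derivation, so once $\phi$ is shown to be an equivalence the second assertion of the theorem follows automatically.

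Next I would reduce to the case of free algebras. Both functors $B \mapsto \mathsf{L}\Omega_{B/A}^{+\bullet}$ and $B \mapsto \mathsf{LSym}_B(\mathsf{L}_{B/A}[1](1))$, viewed as functors $\mathsf{DAlg}_A(\mathbf{C}) \to \mathbf{Gr}(\mathbf{C})$, preserve sifted colimits: the former because it is a left adjoint to the forgetful functor, the latter because it is built from the cotangent-complex functor and the relative derived symmetric algebra functor, each of which commutes with sifted colimits. Since every object of $\mathsf{DAlg}_A(\mathbf{C})$ is a sifted colimit of free algebras $\mathsf{LSym}_A(P)$ with $P \in \mathbf{C}^0$, it suffices to check that $\phi$ is an equivalence when $B = \mathsf{LSym}_A(P)$.

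In the free case the right-hand side simplifies via $\mathsf{L}_{B/A} \cong B \otimes_A P$ to $B \otimes_A \mathsf{LSym}_A(P[1](1)) \cong \mathsf{LSym}_A(P \oplus P[1](1))$. For the left-hand side I would unpack the definition $\mathsf{DG}_+^{\geq 0}(\mathsf{DAlg}_A(\mathbf{C})) = \mathsf{cMod}_{\mathbb{D}_+^\vee}(\mathsf{DAlg}_A(\mathbf{Gr}(\mathbf{C}))^{\geq 0})$, using that $\mathbb{D}_+ = k \oplus k[1](1)$ so that an $h_+$-dga structure amounts precisely to a square-zero degree-raising derivation; this identifies the left adjoint applied to $\mathsf{LSym}_A(P)$ as the free graded derived commutative $A$-algebra on the graded module $P \oplus P[1](1)$, namely $\mathsf{LSym}_A(P \oplus P[1](1))$. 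A check on generators then confirms that $\phi$ realises this identification, and the module-level formula $\mathsf{L}\Omega_{B/A}^{+i} \cong (\bigwedge_B^i \mathsf{L}_{B/A})[i]$ follows from the décalage identity $\mathsf{LSym}_B^i(M[1](1)) \cong (\bigwedge_B^i M)[i](i)$ for derived symmetric powers of a module placed in graded and homological degree one.

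The main obstacle will be the explicit free-case computation: extracting the structure of the free $h_+$-dga on a single degree-zero generator from the comodule definition requires a careful analysis of the bialgebra $\mathbb{D}_+$, in particular of how its coalgebra structure encodes derivations and of the self-duality phenomena that make passing between $\mathbb{D}_+$-modules and $\mathbb{D}_+^\vee$-comodules tractable. The remaining steps — the universal-property construction of $\phi$ and the sifted-colimit reduction — are essentially formal, so the heart of the argument lies in this single free-generator calculation, after which naturality of $\phi$ and preservation of sifted colimits propagate the equivalence, together with the identification of the first differential with the universal derivation, to arbitrary $B$.
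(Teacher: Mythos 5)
The paper does not actually prove this statement—it is quoted directly from Raksit (Theorem 5.3.6)—and your sketch is a faithful reconstruction of Raksit's argument: the comparison map built from the universal property of the cotangent complex, the sifted-colimit reduction to free algebras, the identification of the free \(h_+\)-dga on \(\mathsf{LSym}_A(P)\) with \(\mathsf{LSym}_A(P\oplus P[1](1))\) via the bialgebra \(\mathbb{D}_+\), and the d\'ecalage identity. You also correctly locate where the genuine work lies, namely the free-generator computation (the compatibility of \(\mathsf{LSym}\) with \(\mathbb{D}_+\)-module/\(\mathbb{D}_+^\vee\)-comodule structures and the resulting commutation of the two free functors), which is exactly where Raksit's Sections 5.1--5.3 spend their effort.
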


To equip the derived de Rham complex with its Hodge structure, first consider the Koszul duality identification \[\coma{\mathsf{Fil}}(\mathbf{C}) \cong \mathsf{Mod}_{\mathbb{D}_-}(\mathbf{Gr}(\mathbf{C})).\] 

The latter \(\infty\)-category can be identified with \(\mathsf{DG}_+(\mathbf{C}) = \mathsf{Mod}_{\mathbb{D}_+}(\mathbf{Gr}(\mathbf{C}))\) by the shift equivalence functor \([2*] \colon (X^n)_{n \in \Z} \mapsto (X^n[2n])_{n \in \Z}\), which sends \(\mathbb{D}_- \mapsto \mathbb{D}_+\). The inverse of this functor is the reverse shift \([-2*] \colon (X^n)_{n \in \Z} \mapsto (X^n[-2n])_{n \in \Z}\). Combining these equivalences, the following objects can be defined:

\begin{definition}\cite{raksit2020hochschild}*{Definition 5.2.4}\label{def:brutal_filtration}
Let \(\mathbf{C}\) be a derived algebraic context and let \(X \in \mathsf{DG}_+(\mathbf{C})\).
\begin{itemize}
\item The \textit{brutal filtration} of \(X\) is the object \(\abs{X[-2*]}^{\geq *}  \in \coma{\mathsf{Fil}}(\mathbf{C})\) obtained from the composition of the reverse shift equivalence \([-2*]\) and the Koszul duality equivalence \(\vert - \vert \colon \mathsf{Mod}_{\mathbb{D}_-}(\mathbf{Gr}(\mathbf{C})) \to \coma{\mathsf{Fil}}(\mathbf{C})\);
\item The \textit{cohomology type} of \(X\) is the object \(\vert X \vert  \defeq \mathsf{colim} \abs{X[-2*]}^{\geq *} \in \mathbf{C}\).
\end{itemize}
\end{definition}

Applying Definition \ref{def:brutal_filtration} to  \(X = \mathsf{L}\Omega_{B/A}^{+\bullet}\), the cohomology type \[\mathsf{dR}_{B/A}^{\wedge} \defeq \vert \mathsf{L}\Omega_{B/A}^{+\bullet} \vert\] is called the \textit{Hodge-completed derived de Rham cohomology}, and its brutal filtration is called the \textit{Hodge filtration}. When \(\mathbf{C} = \mathbf{Ch}(\mathsf{Mod}_\Z)\) and \(A\) is an ordinary commutative ring, these definitions recover the usual Hodge-completed derived de Rham cohomology and Hodge filtrations by \cite{raksit2020hochschild}*{Corollary 5.3.9}. 


\begin{definition}[Bornological derived de Rham cohomology]
Let \(\mathbf{C}\) be the derived algebraic context \(\mathbf{Ch}(\mathcal{E})\), where \(\mathcal{E}\) is the quasi-abelian category \(\mathsf{CBorn}_R\) or \(\mathsf{Ind}(\mathsf{Ban}_R)\). Let \(A\) be a derived commutative algebra object in \(\mathbf{C}\). Then for a derived commutative \(A\)-algebra \(B\),  we call the corresponding derived de Rham complex, its cohomology type, and its brutal filtration, the \textit{bornological derived de Rham complex}, the \textit{bornologically Hodge-completed derived de Rham cohomology}, and the \textit{bornological Hodge filtration}, respectively. 
\end{definition}

\begin{remark}
The bornologically Hodge-completed derived de Rham cohomology is an object of \(\mathbb{E}_\infty\)-bornological \(A\)-algebras (i.e., \(\mathsf{CAlg}(\mathbf{Ch}(\mathsf{CBorn}_R))\)), and the bornological Hodge filtration is an object of complete, filtered \(\mathbb{E}_\infty\)-bornological \(A\)-algebras. 
\end{remark}

\subsection{The Hochschild-Kostant-Rosenberg Theorem}

What we have so far is as follows: let \(\mathbf{C}\) be a derived algebraic context and \(A\) a derived algebra object in \(\mathbf{C}\). There are two \(\infty\)-functors:

\begin{itemize}
\item the \textit{derived de Rham functor} with its Hodge filtration:

\[\mathsf{L}\Omega_{-/A}^{+\bullet} \colon \mathsf{DAlg}_A \to \mathsf{DG}_+^{\geq 0} \mathsf{DAlg}_A,\]

\item the HKR filtered Hochschild homology functor \[\HH(-/A)_\fil \colon \mathsf{DAlg}_A \to  \mathsf{Fil}_{S^1}^{\geq 0}(\mathsf{DAlg}_A).\]
\end{itemize}

\noindent Both these functors arise as left adjoint functors to appropriate forgetful functors, as previously described. Finally, the associated graded functor is adjoint to the functor \[\zeta \colon \mathsf{DG}_+ (\mathsf{DAlg}_A)^{\geq 0} \to \mathsf{Fil}_{S^1}^{\geq 0}(\mathsf{DAlg}_A)\]  that assigns to a filtered object \((X^n)_{n \in \Z}\), the trivially graded object \(\cdots \to X^{-1} \overset{0}\to X^0 \overset{0}\to X^1 \overset{0}\to \cdots\). Summarising,

\begin{theorem}[\cite{raksit2020hochschild}*{Theorem 6.2.6}]\label{theorem:HKR}
There is a commuting diagram of adjunctions:

\[ \begin{tikzcd}
\mathsf{Fil}_{S^1}(\mathsf{DAlg}_A(\mathbf{C})^{\geq 0} \arrow[r, yshift=0.7ex, "\mathsf{gr}"] \arrow[d, swap, "\ev^0"] & \mathsf{DG}_+ \mathsf{DAlg}_A(\mathbf{C})^{\geq 0} \arrow[d,swap,"\ev^0"] \arrow{l}{\zeta} \\%
\mathsf{DAlg}_A(\mathbf{C}) \arrow[u, xshift=0.8ex, swap,"\HH(-/A)_\fil" ]\arrow{r}{\mathsf{id}}& \mathsf{DAlg}_A(\mathbf{C}) \arrow[u,xshift=0.8ex, swap, "\mathsf{L}\Omega_{-/A}^{+\bullet}"].
\end{tikzcd}
\]

\end{theorem}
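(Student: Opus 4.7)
The plan is to exploit the fact that all vertical and horizontal arrows in the diagram are halves of adjunctions, and to show commutativity by passing to the (simpler) diagram of right adjoints. Namely, by the uniqueness of adjoints, showing that $\ev^0 \circ \zeta \simeq \mathsf{id} \circ \ev^0$ as functors $\mathsf{DG}_+ \mathsf{DAlg}_A(\mathbf{C})^{\geq 0} \to \mathsf{DAlg}_A(\mathbf{C})$ is equivalent to showing that the left adjoints assemble into a commuting square $\mathsf{gr} \circ \mathsf{HH}(-/A)_\fil \simeq \mathsf{L}\Omega_{-/A}^{+\bullet}$, which is the content of the statement.

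First I would carefully unpack $\zeta$. Recall that $\mathbf{Fil}^\wedge(\mathbf{C}) \simeq \mathsf{Mod}_{\mathbb{D}_-}(\mathbf{Gr}(\mathbf{C}))$ and that the filtered circle $\mathbb{T}_{\mathbf{C},\fil}$ has associated graded $\mathbb{D}_+$. The functor $\mathsf{gr}$ sends a filtered $\mathbb{T}^\vee_{\fil}$-comodule to its associated graded, which inherits a $\mathbb{D}_+^\vee$-comodule structure, i.e.\ an $h_+$-differential graded structure. Its right adjoint $\zeta$ takes an $h_+$-DG object $(X^n)_{n \in \mathbb{Z}}$ and promotes it to the filtered object whose $n$-th filtered piece has $X^n$ in degree $n$ with all transition maps zero and with the induced (trivial) filtered $S^1$-action. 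Evaluating at $n=0$, one tautologically recovers $X^0$, matching $\ev^0$ applied to $X$ as a graded object. This yields the promised identification $\ev^0 \circ \zeta \simeq \ev^0$.

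Second, I would verify that this identification respects the derived algebra structure: the trivialisation used to define $\zeta$ is compatible with the derived commutative structure on both sides, since $\mathsf{DAlg}_A$ in either category is built monadically on top of the underlying graded/filtered category, and the graded/filtered unit and multiplication of $\zeta(X)$ are, by construction, induced from those of $X$ in filtration degree $0$. Combined with the universal properties defining $\mathsf{HH}(-/A)_\fil$ and $\mathsf{L}\Omega_{-/A}^{+\bullet}$ as left adjoints to $\ev^0$ (Definition \ref{def:Hochschild_homology} and the discussion preceding Theorem \ref{theorem:HKR}), the equivalence of the two right-adjoint compositions transfers by adjunction uniqueness to the desired equivalence of left-adjoint compositions.

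The main obstacle in this strategy, and the technical heart of Raksit's argument, is the careful identification of $\zeta$ as a right adjoint to $\mathsf{gr}$ at the level of $\mathbb{T}^\vee_{\fil}$- and $\mathbb{D}_+^\vee$-comodules inside derived commutative $A$-algebras, while simultaneously keeping track of the non-negativity condition inherited from the HKR filtration and the Hodge filtration. The delicate point is that the right adjoint to $\mathsf{gr}$ on \emph{plain} filtered/graded objects is the functor $\mathrm{split}$ inserting coarser-degree summands, but on comodules for the bialgebras $\mathbb{T}^\vee_{\fil}$ and $\mathbb{D}_+^\vee$—and with values in derived commutative algebras—the right adjoint is precisely $\zeta$, computed by equipping $X$ with trivial transition maps. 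Once this identification is in place, the remainder of the argument is essentially formal, and I would rely on the abstract machinery developed in \cite{raksit2020hochschild}*{Sections 5--6}, to which the theorem appeals.
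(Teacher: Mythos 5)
The paper gives no proof of its own here: the statement is imported verbatim as Raksit's Theorem 6.2.6, so the ``paper's proof'' is the citation. Your reconstruction — identifying all four sides as halves of adjunctions, checking that the square of right adjoints commutes via the tautological identification $\ev^0\circ\zeta\simeq\ev^0$ (once $\zeta$ is recognised as the trivial-transition-maps functor), and transferring the equivalence to the square of left adjoints by uniqueness of adjoints — is precisely the argument in the cited source, with the genuinely technical point (that $\zeta$ really is the right adjoint to $\mathsf{gr}$ at the level of $\mathbb{T}^\vee_{\fil}$- and $\mathbb{D}_+^\vee$-comodules in derived commutative algebras) correctly flagged and deferred to \cite{raksit2020hochschild}.
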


\begin{corollary}
Let \(\mathbf{C}\) be the derived algebraic context  \(\mathbf{Ch}(\mathsf{CBorn}_R)\), and let \(A\) be a simplicial commutative, complete bornological \(R\)-algebra. Then for any simplicial, commutative, complete bornological \(A\)-algebra, the associated graded of the HKR-filtered Hochschild homology complex \(\mathsf{HH}(B/A)_{\mathsf{fil}}\) agrees with the bornological derived de Rham complex \(\mathsf{L}(\Omega_{B/A}^{+\bullet})\). 
\end{corollary}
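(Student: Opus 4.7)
The plan is to invoke Theorem \ref{theorem:HKR} in the specific derived algebraic context $\mathbf{C} = \mathbf{Ch}(\mathsf{CBorn}_R)$. The substantive work has already been carried out: by Corollary \ref{lem:CBorn_elementary}, $\mathsf{CBorn}_R$ is a strongly monoidal $\mathbf{AdMon}$-elementary quasi-abelian category, and the remark immediately following establishes that $\mathbf{Ch}(\mathsf{CBorn}_R)$ with its projective $t$-structure and the appropriate set of tiny projective generators $\mathcal{P}^0$ is a derived algebraic context (equivalent, via the indisation equivalence of the associated chain complex categories, to $\mathbf{C}_R^{IB}$). One checks symmetric projectives using Lemma \ref{lem:IndBan_symclosed} together with the construction in Proposition \ref{prop:projclosed}. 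The given simplicial commutative complete bornological $R$-algebra $A$ then defines a connective derived commutative algebra object of $\mathbf{C}$, and $B$ correspondingly defines an object of $\mathsf{DAlg}_A(\mathbf{C})$.

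Once this identification is made, Theorem \ref{theorem:HKR} applies verbatim. Reading off the commuting diagram of adjunctions, both composites $\mathsf{gr} \circ \mathsf{HH}(-/A)_\fil$ and $\mathsf{L}\Omega_{-/A}^{+\bullet} \circ \mathrm{id}$ from $\mathsf{DAlg}_A(\mathbf{C})$ to $\mathsf{DG}_+\mathsf{DAlg}_A(\mathbf{C})^{\ge 0}$ are left adjoint to the same functor: namely, passage through the right-hand column, which is $\ev^0$, and which agrees with $\ev^0 \circ \zeta$ on $\mathsf{DG}_+\mathsf{DAlg}_A(\mathbf{C})^{\ge 0}$ because $\zeta$ sends a graded object $(X^n)$ to the trivially graded filtered object whose zeroth piece is $X^0$. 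By uniqueness of left adjoints in the $\infty$-categorical sense, one obtains a natural equivalence of functors
\[
\mathsf{gr} \circ \mathsf{HH}(-/A)_\fil \;\simeq\; \mathsf{L}\Omega_{-/A}^{+\bullet}.
\]
Evaluating at $B$ then yields the claimed equivalence $\mathsf{gr}\bigl(\mathsf{HH}(B/A)_\fil\bigr) \simeq \mathsf{L}\Omega_{B/A}^{+\bullet}$.

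There is essentially no new obstacle at this stage: all the difficulty has been front-loaded into establishing that $\mathbf{Ch}(\mathsf{CBorn}_R)$ really is a derived algebraic context, a task that occupies Sections \ref{sec:DACs} and \ref{sec:exact}. In particular, the same argument applied to $\mathbf{Ch}(\mathsf{Ind}(\mathsf{Ban}_R))$ gives the analogous statement in that context, and the equivalence of derived algebraic contexts $\mathbf{C}_R^{CB} \simeq \mathbf{C}_R^{IB}$ noted in the remark following Corollary \ref{lem:CBorn_elementary} identifies the two versions.
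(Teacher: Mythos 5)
Your proposal is correct and matches the paper's (implicit) argument exactly: the corollary is stated as an immediate specialization of Theorem \ref{theorem:HKR} to the derived algebraic context $\mathbf{Ch}(\mathsf{CBorn}_R)$ established in Section \ref{sec:exact}, with the equivalence $\mathsf{gr}\circ\mathsf{HH}(-/A)_{\mathrm{fil}}\simeq\mathsf{L}\Omega^{+\bullet}_{-/A}$ read off from the commuting square of adjunctions by uniqueness of left adjoints. Your additional remarks on viewing simplicial algebras as connective derived commutative algebras and on the equivalence $\mathbf{C}^{CB}_R\simeq\mathbf{C}^{IB}_R$ are consistent with the paper's setup.
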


\begin{proposition}\label{prop:ratequiv}
If $\mathbf{C}$ is a rational derived algebraic context, then the HKR filtration is canonically split, and there is a natural equivalence
$$\mathsf{Sym}(\mathrm{L}\Omega^{1}_{B\big\slash A}[-1])\cong\mathsf{HH}(B/A).$$
\end{proposition}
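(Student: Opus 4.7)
The plan is to combine Theorem \ref{theorem:HKR} with the rationality hypothesis in two steps: first collapse derived symmetric powers to ordinary ones, then split the HKR filtration.

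Since $\mathsf{Ho}(\mathbf{C})$ is $\mathbb{Q}$-enriched, the canonical map of monads $\mathsf{Sym}_{\mathbf{C}} \to \mathsf{L}\mathsf{Sym}_{\mathbf{C}}$ is an equivalence: the Schur decomposition splits once the orders $|\Sigma_n|$ are invertible, so derived symmetric powers agree with ordinary ones. As noted already in the excerpt, this implies $\Theta : \mathsf{DAlg}(\mathbf{C}) \to \mathsf{CAlg}(\mathbf{C})$ is an equivalence. Combined with the Raksit identification $\mathsf{L}\Omega^{+\bullet}_{B/A} \simeq \mathsf{L}\mathsf{Sym}_B(\mathrm{L}\Omega^1_{B/A}[1](1))$, Theorem \ref{theorem:HKR} then yields a natural equivalence of $h_+$-differential graded commutative $A$-algebras
$$\mathrm{gr}\bigl(\mathsf{HH}(B/A)_{\mathrm{fil}}\bigr) \;\simeq\; \mathsf{Sym}_B\bigl(\mathrm{L}\Omega^1_{B/A}[1](1)\bigr).$$

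To upgrade this to the splitting asserted in the proposition, I would construct an equivalence of bicommutative bialgebras
$$\mathbb{T}^\vee_{\mathbf{C},\mathrm{fil}} \;\simeq\; \mathrm{split}\bigl(\mathbb{D}^\vee_{-}\bigr)$$
in $\mathbf{Fil}(\mathbf{C})$, where $\mathrm{split} : \mathbf{Gr}(\mathbf{C}) \to \mathbf{Fil}(\mathbf{C})$ is the split-filtration functor discussed after Theorem 3.2.14. In the initial rational context $\mathbf{C}_{\mathbb{Q}} = \mathbf{Ch}(\mathsf{Mod}_{\mathbb{Q}})$, this is the statement that the rational chain bialgebra $\mathrm{C}_*(S^1;\mathbb{Q})$ is formal and that its Postnikov filtration splits canonically and compatibly with both the multiplication and the comultiplication. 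Since $\mathbf{C}$ is rational, the essentially unique $t$-exact symmetric-monoidal functor $\mathbf{C}_{\mathbb{Q}} \to \mathbf{C}$ transports this bialgebra splitting. The induced equivalence of comodule $\infty$-categories
$$\mathsf{Fil}_{S^1}(\mathsf{DAlg}_A(\mathbf{C}))^{\geq 0} \;\simeq\; \mathsf{DG}_{+}\mathsf{DAlg}_A(\mathbf{C})^{\geq 0}$$
intertwines the $\mathrm{ev}^0$ functors on either side of Theorem \ref{theorem:HKR}. By uniqueness of left adjoints this carries $\mathsf{HH}(B/A)_{\mathrm{fil}}$ to $\mathrm{split}(\mathsf{L}\Omega^{+\bullet}_{B/A})$, i.e. the HKR filtration is canonically split. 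Passing to underlying objects via $\mathsf{und}$, together with the first step, yields the desired natural equivalence $\mathsf{HH}(B/A) \simeq \mathsf{Sym}(\mathrm{L}\Omega^1_{B/A}[-1])$ (with the shift convention of the proposition).

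The main obstacle is promoting the rational splitting of $\mathbb{T}^\vee_{\mathrm{fil}}$ from an equivalence of underlying filtered objects to an equivalence of \emph{bicommutative bialgebras}. Splitting the underlying filtered object is essentially the rational formality of $\mathrm{C}_*(S^1;\mathbb{Q})$, but one must check this splitting is simultaneously compatible with the multiplication induced by the cyclic structure and the comultiplication induced by the group structure on $S^1$, so that the resulting equivalence of comodule categories respects the full algebraic structure needed to compare Hochschild homology with derived de Rham cohomology, rather than merely matching underlying filtered chain complexes.
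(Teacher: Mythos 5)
Your proposal follows essentially the same route as the paper: split the dual filtered circle rationally as a bicommutative bialgebra, identify it with the split filtration on $\mathbb{D}^{\vee}_{+}$ (note the paper's sign: it is $\mathsf{split}(\mathbb{D}^{\vee}_{+})$, not $\mathbb{D}^{\vee}_{-}$), and then match the two left adjoints $\mathsf{HH}(-/A)_{\mathrm{fil}}$ and $\mathrm{split}(\mathsf{L}\Omega^{+\bullet}_{-/A})$ through the induced equivalence of comodule categories. The one step you flag as the main obstacle --- that the splitting of $\mathbb{Q}\otimes\mathbb{Z}[S^1]^{\vee}_{\mathrm{fil}}$ respects the full bicommutative bialgebra structure --- is exactly what the paper takes as input by citing Theorem 1.2.1 of Moulinos--Robalo--To\"en, so your argument is complete once that reference is invoked rather than reproved.
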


\begin{proof}
The base change $\mathbb{Q}\otimes\mathbb{Z}[S_{1}]^{\vee}_{\mathsf{fil}}$ is split by \cite{toen-robalo} Theorem 1.2.1, and therefore isomorphic to $\mathsf{split}(\mathbb{D}^{\vee}_{+})$. We get a commutative diagram
\begin{displaymath}
\xymatrix{
\mathsf{Fil}_{S^1}(\mathsf{DAlg}_A(\mathbf{C})^{\geq 0}) \ar[d]\ar[r] & \mathsf{cMod}_{\mathrm{split}(\mathbb{D}^{\vee}_{+})}\mathsf{DAlg}_{A}(\mathsf{Fil}(\mathbf{C}))\ar[d]^{\mathsf{und}}\\
\mathsf{DAlg}_{A}(\mathsf{Fil}(\mathbf{C}))\ar[d] & \mathrm{DG}_{+}^{\ge0}\mathsf{DAlg}_{A}\ar[d]^{\mathrm{ev}^{0}\circ U}\\
\mathsf{DAlg}_{A}(\mathbf{C})\ar@{=}[r] & \mathsf{DAlg}_{A}(\mathbf{C})
}
\end{displaymath}
All maps are right adjoints, and the top and bottom maps are equivalences. The left adjoint of the path along the left-hand column is $\mathsf{HH}_{\mathsf{fil}}$, and the left adjoint along the right-hand path is $\mathrm{split}(\mathsf{L}\Omega_{-/A}^{+\bullet})$. 
\end{proof}

\subsection{Affinisation}

The (rational) HKR theorem we have proven can be viewed as the affine version of a geometric result. Let $\mathbf{M}$ be a locally copresentable (or more generally a cototal) $(\infty,1)$-category. The following is explained in great detail in \cite{toen2006champs} Section 2.

Consider the category $\mathbf{PreStk}(\mathbf{M})$ of prestacks valued in \(\infty\)-groupoids, and the Yoneda embedding which we write as $y:\mathbf{M}\rightarrow\mathbf{PreStk}(\mathbf{M})$. Since $\mathbf{M}$ is cototal, we have an adjunction
$$\adj{\mathbf{Aff}}{\mathbf{PreStk}(\mathbf{M})}{\mathbf{M}}{y}.$$
Roughly speaking, $\mathbf{Aff}$ is the functor which sends $\mathcal{X}\cong\colim y(M_{\alpha})$ to $\mathbf{Aff}(\mathcal{X})\defeq\colim M_{\alpha}$. We may regard $\textbf{Top}\subset\mathbf{PreStk}(\mathbf{M})$ as a full subcategory of constant prestacks. In particular we may regard the circle $S^{1}$ as a pre-stack on $\mathbf{M}$.
Write $\mathrm{Spec}:\mathbf{M}^{op}\rightarrow\mathbf{M}$ for the canonical anti-equivalence, and denote its inverse by $\mathcal{O}$. Let $k$ denote the initial object of $\mathbf{M}^{op}$.
Then $\mathrm{Spec}(k)$ is the terminal object of $\mathbf{PreStk}(\mathbf{M})$. We may write the circle as 
$$S^{1}\cong \mathrm{Spec}(k)\coprod_{\mathrm{Spec}(k)\coprod\mathrm{Spec}(k)}\mathrm{Spec}(k)$$

We therefore get a map in $\mathbf{PreStk}(\mathbf{M})$
$$S^{1}\rightarrow \mathrm{Spec}(k\prod_{k\prod k}k)$$
and therefore a map in $\mathbf{M}$
$$\mathbf{Aff}(S^{1})\rightarrow\mathrm{Spec}(k\prod_{k\prod k}k)$$
Thus we get a natural transformation of endofunctors of $\mathbf{M}$
$$\mathrm{Map}(\mathrm{Spec}(k\prod_{k\prod k}k),\mathrm{Spec}(-))\rightarrow\mathrm{Map}(S^{1},\mathrm{Spec}(-))$$

If $\mathbf{C}$ is a derived algebraic context, then taking $\mathbf{M}=\mathsf{DAlg}(\mathbf{C}_{\ge0})^{op}$, we have that $k$ is the monoidal unit and
 $$k\prod_{k\prod k}k\cong k\oplus k[1].$$
Indeed, by direct computation, this is true in $\mathbf{C}_{\mathrm{ab}}$ and remains true after base-changing to $\mathbf{C}$. 
The left-hand side of the map above is precisely the derived de Rham complex functor, and the right-hand side is the Hochschild homology functor.  The rational HKR theorem says precisely that when $\mathbf{C}$ is a rational derived algebraic context, this map is an equivalence. With this formulation it is clear that these functors and the natural transformation between them can be boosted to endofunctors on $\mathbf{PreStk}$. The geometric HKR results which we prove in the remaining sections extend this equivalence to larger subcategories of $\mathbf{PreStk}(\mathsf{DAlg}(\mathbf{C})^{op})$. These subcategories will be the schemes for certain geometric contexts modelled on $\mathsf{DAlg}(\mathbf{C})^{op}$, using the general setup of geometry relative to a monoidal $(\infty,1)$-category. 
In particular, if $R$ has characteristic zero, then applying our results to $\mathbf{C}_{R}$ will recover results of \cite{ben-zvinadler} (and the proof strategy is essentially a generalisation of Ben-Zvi-Nadler's work). We also explain how one can deduce a HKR theorem for derived analytic stacks, in the sense of forthcoming work \cite{BKK}. This is closely related to work of Anotnio, Petit, and Porta \cite{Antoniothesis}.

\section{A Categorical Context for Geometry}
\label{sec:categorical_geometry}

\subsection{A Basic Context for Geometry}

Before specialising to geometry relative to derived algebraic contexts, let us give a very general setup for geometry in the style of \cite{toen2005homotopical}. 

\begin{definition}[\cite{toen2005homotopical}*{Proposition 4.3.5}]
Let $\mathbf{M}$ be an $(\infty,1)$-category. A \textit{Grothendieck topology on }$\mathbf{M}$, is a Grothendieck topology on $\mathrm{Ho}(\mathbf{M})$ in the usual $1$-categorical sense. 
\end{definition}

Let $\mathbf{M}$ be an $(\infty,1)$-category and $\tau$ a topology on $\mathbf{M}$. Recall that the category of \textit{pre-stacks} (in $\infty$-groupoids) on $\mathbf{M}$ is the category of functors $\textbf{PrStk}(\mathbf{M})\defeq\textbf{Fun}(\mathbf{M}^{op},\textbf{Grpd})$ where $\textbf{Grpd}$ is the category of $\infty$-groupoids, which is presented by the model category of simplicial sets (with its Quillen model structure).
We define
$$\mathbf{Stk}(\mathbf{M},\tau)$$
to be the full subcategory of $\textbf{PrStk}(\mathbf{M})$ consisting of stacks which satisfy descent for hypercovers in $\tau$. $\mathbf{Stk}(\mathbf{M},\tau)$ is a reflective subcategory of $\mathbf{PreStk}(\mathbf{M})$ and is therefore a topos (see \cite{lurie}*{Chapter 6}). It is also closed monoidal for the cartesian monoidal structure, in particular one can form the mapping stack $\underline{\mathsf{Map}}(\mathcal{X},\mathcal{Y})$ between any two-stacks. 

\begin{definition}
Let $(\mathbf{M},\tau)$ be a site. An object $X$ of $\mathbf{M}$ is said to be \textit{admissible} if $\mathrm{Map}(-,X)$ is a stack.
\end{definition}

\begin{definition}
An $(\infty,1)$-\textit{pre-geometry triple} is a triple $(\mathbf{M},\tau,\mathbf{P})$ where
\begin{enumerate}
\item
$\mathbf{M}$ is an $(\infty,1)$-category.
\item
$\tau$ is a Grothendieck topology on $\mathbf{M}$.
\item
$\mathbf{P}$ is a class of maps in $\mathbf{M}$ 
\end{enumerate}
such that
\begin{enumerate}
\item
If $\{U_{i}\rightarrow U\}$ is a cover in $\tau$, then each $U_{i}\rightarrow U$ is in $\mathbf{P}$.
\item
If $f:U\rightarrow V$ is a map in $\mathbf{M}$, and $\{U_{i}\rightarrow U\}$ is a cover such that each composition $U_{i}\rightarrow V$ is in $\mathbf{P}$, then $f\in\mathbf{P}$. 
\item
The class $\mathbf{P}$ contains isomorphisms, and is closed under pullbacks and compositions.
\end{enumerate}
An $(\infty,1)$-pre-geometry triple is said to be an $(\infty,1)$-\textit{geometry triple} if every object of $\mathbf{M}$ is admissible.
\end{definition}

With a tremendous amount of caution, $\mathbf{P}$ will be regarded as a formalisation of a class of (local) immersions. Particularly for applications to derived analytic geometry, it is convenient to have a relative version of this.

\begin{definition}
\begin{enumerate}
\item
A \textit{relative} $(\infty,1)$-\textit{pre-geometry tuple} is a tuple $(\mathbf{M},\tau,\mathbf{P},\mathbf{A})$ where $(\mathbf{M},\tau,\mathbf{P})$ is an $(\infty,1)$-pre-geometry triple, and $\mathbf{A}\subset\mathbf{M}$ is a full subcategory such that if $f:X\rightarrow Y$ is a map in $\mathbf{P}\cap\mathbf{A}$, and $Z\rightarrow Y$ is any map with $Z$ in $\mathbf{A}$, then $X\times_{Y}Z$ is in $\mathbf{A}$. 
\item
A relative$(\infty,1)$-pre-geometry tuple $(\mathbf{M},\tau,\mathbf{P},\mathbf{A})$ is said to be \textit{strong} if whenever $\{U_{i}\rightarrow U\}$ is a cover in $\mathbf{M}$ (not necessarily in $\mathbf{A}$), and $Z\rightarrow U$ is any map with $Z$ in $\mathbf{A}$, then $\{U_{i}\times_{U}Z\rightarrow Z\}$ is a cover in $\tau|_{\mathbf{A}}$.
%

\end{enumerate}
\end{definition}

The idea behind this definition is that $\mathbf{A}$ is the collection of (affine) geometric objects in which we are primarily interested, and $\mathbf{M}$ is a larger class of spaces which permits more categorical constructions than are available in $\mathbf{A}$.

Note that in particular if $(\mathbf{M},\tau,\mathbf{P},\mathbf{A})$ is a relative $(\infty,1)$-pre-geometry tuple then $(\mathbf{A},\tau|_{\mathbf{A}},\mathbf{P}|_{\mathbf{A}})$ is an $(\infty,1)$-pre-geometry triple, where $\mathbf{P}|_{\mathbf{A}}$ denotes the class of maps $f:X\rightarrow Y$ in $\mathbf{A}$ which are in $\mathbf{P}$ when regarded as maps in $\mathbf{M}$, and $\tau|_{\mathbf{A}}$ denotes the restriction of $\tau$ to $\mathbf{A}$. 

\begin{remark}
If $(\mathbf{M},\tau,\mathbf{P},\mathbf{A})$ is a relative $(\infty,1)$-geometry tuple, then there is an associated strong relative $(\infty,1)$-geometry tuple defined as follows. Let $\mathbf{P}_{\mathbf{A}}\subset\mathbf{P}$ denote the class of maps $f:X\rightarrow Y$ such that whenever $Z\rightarrow Y$ is a map with $Z\in\mathbf{A}$ then $X\times_{Y}Z$ is in $\mathbf{A}$. Let $\tau_{\mathbf{A}}$ denote the class of covers $\{U_{i}\rightarrow V\}$ in $\tau$ such that each $U_{i}\rightarrow V$ is in $\mathbf{P}_{\mathbf{A}}$. Then $(\mathbf{M},\tau_{\mathbf{A}},\mathbf{P})$ is a strong relative $(\infty,1)$-geometry tuple. 
\end{remark}

\begin{definition}
Let $(\mathbf{M},\tau,\mathbf{P},\mathbf{A})$ be a (strong) relative $(\infty,1)$-pre-geometry tuple .
\begin{enumerate}
\item
An object $X$ of $\mathbf{M}$ is said to be $\mathbf{A}$-admissible if the restriction of $\mathrm{Map}(-,X)$ to $\mathbf{A}$ is a stack for $\tau|_{\textbf{A}}$.
\item
$(\mathbf{M},\tau,\mathbf{P},\mathbf{A})$  is said to be a \textit{(strong) relative} $(\infty,1)$-\textit{geometry tuple} if each $X\in\mathbf{A}$ is $\mathbf{A}$-admissible. 
\end{enumerate}
\end{definition}

We have a fully faithful functor, given by left Kan extension,

$$i:\mathbf{PreStk}(\mathbf{A})\rightarrow\mathbf{PreStk}(\mathbf{M})$$
which is left adjoint to the restriction functor. The restriction functor
$$(-)^{\mathbf{A}}:\mathbf{Stk}(\mathbf{M},\tau)\rightarrow\mathbf{Stk}(\mathbf{A},\tau|_{\mathbf{A}})$$
also has a left adjoint 
$$i^{\#}:\mathbf{Stk}(\mathbf{A},\tau|_{\mathbf{A}})\rightarrow\mathbf{Stk}(\mathbf{M},\tau),$$
which is the composition of the stackification functor with $i$.

\begin{proposition}
Let $(\mathbf{M},\tau,\mathbf{P},\mathbf{A})$ be a relative $(\infty,1)$-geometry tuple.
\begin{enumerate}
\item
If $\mathcal{X}\rightarrow\mathcal{Y}$ is an epimorphism in $\mathbf{Stk}(\mathbf{A},\tau|_{\mathbf{A}})$ then $i^{\#}(\mathcal{X})\rightarrow i^{\#}(\mathcal{Y})$ is an epimorphism in $\mathbf{Stk}(\mathbf{M},\tau)$.
\item
If the relative $(\infty,1)$-geometry tuple is strong then $i^{\#}$ is fully faithful. 
\end{enumerate}
\end{proposition}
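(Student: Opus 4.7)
The plan is to treat the two parts separately, with the strong hypothesis entering only in Part (2).

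For Part (1), the key observation is that $i^{\#}$ is a composition of two left adjoints---the left Kan extension $i \colon \mathbf{PreStk}(\mathbf{A}) \to \mathbf{PreStk}(\mathbf{M})$ and the stackification $L_{\tau} \colon \mathbf{PreStk}(\mathbf{M}) \to \mathbf{Stk}(\mathbf{M}, \tau)$---and hence preserves all colimits. In each of the two $\infty$-topoi in question, a morphism is an effective epimorphism precisely when the augmentation of its \v{C}ech nerve is a colimit diagram, so it suffices to check that $i^{\#}$ commutes with the pullbacks appearing in those \v{C}ech nerves. Stackification is left exact, while the Kan extension $i$ sends representables to representables along the fully faithful inclusion $\mathbf{A} \hookrightarrow \mathbf{M}$, which is enough to preserve the relevant finite limits.

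For Part (2), by the adjunction $i^{\#} \dashv (-)^{\mathbf{A}}$, fully faithfulness of $i^{\#}$ is equivalent to the unit $\eta_\mathcal{X} \colon \mathcal{X} \to (i^{\#} \mathcal{X})^{\mathbf{A}}$ being an equivalence for every $\mathcal{X} \in \mathbf{Stk}(\mathbf{A}, \tau|_{\mathbf{A}})$. The plan is to factor this in two steps: first, $(i\mathcal{X})|_{\mathbf{A}} \simeq \mathcal{X}$, which is the standard property that left Kan extension along a fully faithful functor restricts back to the identity, since the Kan extension formula $(i\mathcal{X})(a) = \colim_{b \to a,\, b \in \mathbf{A}} \mathcal{X}(b)$ is computed over a slice with terminal object $\mathrm{id}_a$; second, that restriction commutes with sheafification, i.e.\ $(L_\tau F)|_{\mathbf{A}} \simeq L_{\tau|_{\mathbf{A}}}(F|_{\mathbf{A}})$ for $F \in \mathbf{PreStk}(\mathbf{M})$. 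Combining these gives
\[
(i^{\#}\mathcal{X})^{\mathbf{A}} \;\simeq\; L_{\tau|_{\mathbf{A}}}\bigl((i\mathcal{X})|_{\mathbf{A}}\bigr) \;\simeq\; L_{\tau|_{\mathbf{A}}}(\mathcal{X}) \;\simeq\; \mathcal{X},
\]
the last equivalence because $\mathcal{X}$ is already a $\tau|_{\mathbf{A}}$-stack.

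The strong hypothesis is exactly what is needed to prove the commutation of restriction and sheafification. Applying the strong axiom with $Z = a$ and the identity $a \to a$ shows that every $\tau$-cover of any $a \in \mathbf{A}$ already has all its terms in $\mathbf{A}$ and is therefore a cover in $\tau|_{\mathbf{A}}$; iterating this pullback argument degree-by-degree extends the statement to hypercovers, using the closure of $\mathbf{A}$ under the relevant pullbacks built into the relative pre-geometry axioms. Consequently the indexing categories for the two hypercover limits computing $(L_\tau F)(a)$ and $L_{\tau|_{\mathbf{A}}}(F|_{\mathbf{A}})(a)$ become cofinal in each other, and the limits agree. The main technical obstacle lies in making this cofinality argument precise at the level of hypercovers rather than just individual covers.
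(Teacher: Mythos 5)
Your Part (2) is essentially the paper's argument: the paper likewise reduces to showing that the unit $\mathcal{X}\to(i^{\#}\mathcal{X})^{\mathbf{A}}$ is an equivalence, evaluates $i^{\#}(\mathcal{X})(U)$ for $U\in\mathbf{A}$ by the hypercover colimit formula, and uses the strong axiom (applied exactly as you do, to the identity map of an object of $\mathbf{A}$) to replace $\tau$-hypercovers of $U$ by $\tau|_{\mathbf{A}}$-hypercovers, so that the formula computes the $\tau|_{\mathbf{A}}$-stackification of $\mathcal{X}$, which is $\mathcal{X}$ itself.

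Part (1) is where there is a genuine gap. You reduce the claim to showing that $i^{\#}$ commutes with the pullbacks appearing in the \v{C}ech nerve, and you justify this by saying that the Kan extension $i$ sends representables to representables. That justification does not work: $\mathcal{X}$ and $\mathcal{Y}$ are arbitrary stacks, and left Kan extension along $\mathbf{A}\hookrightarrow\mathbf{M}$ is a colimit-preserving functor between presheaf categories which in general does \emph{not} preserve fibre products (it need not even preserve the terminal object); preserving representables says nothing about preserving limits of non-representable objects. There are two ways out. The paper's: effective epimorphisms in an $\infty$-topos are detected on $\pi_{0}$ of the sheafification, and for $0$-truncated objects being an epimorphism is a pure colimit condition (the codiagonal $Y\sqcup_{X}Y\to Y$ is an equivalence), hence is preserved by the left adjoint $i^{\#}$; one then identifies $\pi_{0}^{\tau}\circ i$ with $i^{\#}\circ\pi_{0}^{\tau|_{\mathbf{A}}}$ because both are colimit-preserving and agree on representables. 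Alternatively, your route is repairable without the false step: you do not need $i^{\#}$ applied to the \v{C}ech nerve of $f$ to be the \v{C}ech nerve of $i^{\#}(f)$; it suffices that $i^{\#}\mathcal{Y}$ be the geometric realization of \emph{some} simplicial object whose zeroth term is $i^{\#}\mathcal{X}$, since the map $U_{0}\to|U_{\bullet}|$ is always an effective epimorphism. As written, however, the limit-preservation step would fail.
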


\begin{proof}
\begin{enumerate}
\item
Let $\mathcal{Z}$ be a pre-stack on $\mathbf{M}$. If $\tilde{\mathcal{Z}}$ is its stackification then $\pi_{0}^{\tau}(\mathcal{Z})\rightarrow\pi_{0}^{\tau}(\tilde{\mathcal{Z}})$ is an equivalence. Thus it suffices to show that $\pi_{0}^{\tau}(i(\mathcal{X}))\rightarrow\pi_{0}^{\tau}(i(\mathcal{Y}))$ is an epimorphism. Now since $\pi_{0}:\textbf{sSet}\rightarrow\textbf{Set}$ and sheafification commute with colimits, this map is equivalent to
$$i^{\#}(\pi_{0}^{\tau|_{\mathbf{A}}}(\mathcal{X}))\rightarrow i^{\#}(\pi_{0}^{\tau|_{\mathbf{A}}}(\mathcal{Y}))$$
Since $i^{\#}$ is a left adjoint, this map is an epimorphism.
\item
It suffices to show that the unit map $\mathcal{X}\rightarrow(i^{\#}(\mathcal{X}))^{\mathbf{A}}$ is an isomorphism. Let $U\in\mathbf{A}$. We need to show that the map
$$\mathcal{X}(U)\rightarrow i^{\#}(\mathcal{X})(U)$$
is an isomorphism. Now 
$$i^{\#}(\mathcal{X})(U)\cong\colim_{K_{\bullet}\rightarrow U}\mathrm{Map}(K_{\bullet},\mathrm{LKE}(\mathcal{X}))$$
where $K_{\bullet}\rightarrow U$ is a hypercover. But since the relative $(\infty,1)$-geometry tuple is strong, a cover in $\tau$ of an object in $\mathbf{A}$ we may assume that it is a hypercover of the object in $\tau|_{\mathbf{A}}$. Thus the right-hand side is nothing but the stackification of $\mathcal{X}$, which is of course just $\mathcal{X}$.\qedhere
\end{enumerate}
\end{proof}

\subsection{Geometric Stacks}

In this section we introduce geometric stacks, following closely \cite{toen2008homotopical} Chapter 1.3. These are stacks which are glued together from affines in a controllable way. We fix  a relative $(\infty,1)$-pre-geometry tuple $(\mathbf{M},\tau,\mathbf{P},\mathbf{A})$.

\begin{definition}[\cite{toen2008homotopical} Definition 1.3.3.1]
\begin{enumerate}
\item
A prestack $\mathcal{X}$ in $\mathbf{Stk}(\mathbf{A},\tau|_{\mathbf{A}})$ is $(-1)$-\textit{geometric} if it is of the form $\mathcal{X}\cong\mathrm{Map}(-,M)$ for some $\mathbf{A}$-admissible $M\in\mathbf{M}$.
\item
A morphism $f:\mathcal{X}\rightarrow\mathcal{Y}$ in $\mathbf{Stk}(\mathbf{A},\tau|_{\mathbf{A}})$ is $(-1)$-\textit{representable} if for any map $U\rightarrow\mathcal{Y}$ with $U\in\mathbf{A}$ the pullback $\mathcal{X}\times_{\mathcal{Y}}U$ is $(-1)$-geometric.
\item
A morphism $f:\mathcal{X}\rightarrow\mathcal{Y}$ in  $\mathbf{Stk}(\mathbf{A},\tau|_{\mathbf{A}})$ is in $(-1)-\mathbf{P}$ if it is $(-1)$-representable and for any map $U\rightarrow\mathcal{Y}$ with $U\in\mathbf{A}$, the map $\mathcal{X}\times_{\mathcal{Y}}U\rightarrow U$ is in $\mathbf{P}$.
\end{enumerate}
\end{definition}

\begin{definition}[\cite{toen2008homotopical} Definition 1.3.3.1]\label{defn:ngeom}
Let $n\ge0$.
\begin{enumerate}
\item
Let $\mathcal{X}$ be in  $\mathbf{Stk}(\mathbf{A},\tau|_{\mathbf{A}})$. An $n$\textit{-atlas} for $\mathcal{X}$ is a set of morphisms $\{U_{i}\rightarrow \mathcal{X}\}$ such that 
\begin{enumerate}
\item
Each $U_{i}$ is $(-1)$-geometric.
\item
Each map $U_{i}\rightarrow\mathcal{X}$ is in $(n-1)$-$\mathbf{P}$ and is an epimorphism of stacks.
\end{enumerate}
\item
A stack $\mathcal{X}$ is $n$-\textit{geometric}  if
\begin{enumerate}
\item
The map $\mathcal{X}\rightarrow\mathcal{X}\times\mathcal{X}$ is $(n-1)$-representable.
\item
$\mathcal{X}$ admits an $n$-atlas.
\end{enumerate}
\item
A morphism of stacks $f:\mathcal{X}\rightarrow\mathcal{Y}$ is $n$-\textit{representable} if for any map  $U\rightarrow \mathcal{Y}$ with $U\in\mathbf{A}$ the pullback $\mathcal{X}\times_{\mathcal{Y}}U$ is $n$-geometric.
\item
A morphism of stacks $\mathcal{X}\rightarrow\mathcal{Y}$ is in $n$-$\mathbf{P}$ if it is $n$-representable, and for any map $U\rightarrow\mathcal{Y}$ with $U\in\mathbf{A}$, there is an $n$-atlas $\{U_{i}\rightarrow\mathcal{X}\times_{\mathcal{Y}}U\}_{i\in\mathcal{I}}$ such that each map $U_{i}\rightarrow U$ is in $\mathbf{P}$.
\end{enumerate}
\end{definition}

The full subcategory of $\mathbf{Stk}(\mathbf{A},\tau|_{\mathbf{A}})$ consisting of $n$-geometric  stacks is denoted $\mathbf{Stk}_{n}(\mathbf{M},\tau,\mathbf{P},\mathbf{A})$. As in \cite{toen2008homotopical}*{Proposition 1.3.3.6}, we have  $\mathbf{Stk}_{m}(\mathbf{M},\tau,\mathbf{P},\mathbf{A})\subseteq \mathbf{Stk}_{n}(\mathbf{M},\tau,\mathbf{P},\mathbf{A})$ for $-1\le m\le n$. The full subcategory of geometric stacks is $\mathbf{Stk}_{geom}(\mathbf{M},\tau,\mathbf{P},\mathbf{A})\defeq\bigcup_{n=-1}^{\infty}\mathbf{Stk}_{n}(\mathbf{M},\tau,\mathbf{P},\mathbf{A})$. As in \cite{toen2008homotopical}*{Corollary 1.3.3.5}, $\mathbf{Stk}_{n}(\mathbf{M},\tau,\mathbf{P},\mathbf{A})$ is closed under pullbacks.

\begin{remark}
Note that since we have not made any quasi-compactness assumptions on our topology disjoint unions of geometric stacks need not be geometric.
\end{remark}

If $(\mathbf{M},\tau,\mathbf{P})$ is an $(\infty,1)$-geometry triple then $(\mathbf{M},\tau,\mathbf{P},\mathbf{M})$ is a relative $(\infty,1)$-pre-geometry tuple. In this case we write
$$\mathbf{Stk}_{m}(\mathbf{M},\tau,\mathbf{P})\defeq\mathbf{Stk}_{m}(\mathbf{M},\tau,\mathbf{P},\mathbf{M})$$
and we say that map $f:\mathcal{X}\rightarrow\mathcal{Y}$ is $n$-representable, or in $n-\mathbf{P}$ if it is so in the sense of Definition \ref{defn:ngeom} for the relative $(\infty,1)$-pre-geometry tuple $(\mathbf{M},\tau,\mathbf{P},\mathbf{M})$. The following is immediate:

\begin{proposition}
Let $(\mathbf{M},\tau,\mathbf{P},\mathbf{A})$ be a relative $(\infty,1)$-geometry tuple. Then for each $m\ge -1$ we have $\mathbf{Stk}_{m}(\mathbf{A},\tau|_{\mathbf{A}},\mathbf{P}|_{\mathbf{A}})\subseteq \mathbf{Stk}_{m}(\mathbf{M},\tau,\mathbf{P},\mathbf{A})$.
\end{proposition}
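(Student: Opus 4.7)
I will prove the proposition by induction on $m\ge-1$, noting that it essentially amounts to tracing through the definitions and observing that no data gets lost in passing from the smaller $(\infty,1)$-pre-geometry triple to the larger tuple.

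\textbf{Base case} ($m=-1$). A $(-1)$-geometric stack for $(\mathbf{A},\tau|_{\mathbf{A}},\mathbf{P}|_{\mathbf{A}})$ is by definition of the form $\mathrm{Map}(-,M)$ for some $\mathbf{A}$-admissible object $M$ of $\mathbf{A}$. Since $(\mathbf{M},\tau,\mathbf{P},\mathbf{A})$ is a relative $(\infty,1)$-geometry tuple, every object of $\mathbf{A}$ is $\mathbf{A}$-admissible in the sense of the larger tuple, so $\mathrm{Map}(-,M)$ is $(-1)$-geometric in $\mathbf{Stk}_{-1}(\mathbf{M},\tau,\mathbf{P},\mathbf{A})$ as well.

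\textbf{Inductive step.} Assume $\mathbf{Stk}_{k}(\mathbf{A},\tau|_{\mathbf{A}},\mathbf{P}|_{\mathbf{A}})\subseteq \mathbf{Stk}_{k}(\mathbf{M},\tau,\mathbf{P},\mathbf{A})$ for all $-1\le k\le m-1$. We simultaneously prove two statements: (i) every $(m-1)$-representable morphism for the small triple is $(m-1)$-representable for the large tuple, and (ii) every morphism in $(m-1)$-$\mathbf{P}|_{\mathbf{A}}$ is in $(m-1)$-$\mathbf{P}$ for the large tuple. For (i), both notions of representability are tested by pulling back along maps $U\to\mathcal{Y}$ with $U\in\mathbf{A}$ (the same class of test affines in each case), and the resulting fibre product being $(m-1)$-geometric for the small triple implies the same for the large tuple by the induction hypothesis. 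For (ii), after pullback along such $U\to\mathcal{Y}$ an atlas exhibiting membership in $(m-1)$-$\mathbf{P}|_{\mathbf{A}}$ automatically exhibits membership in $(m-1)$-$\mathbf{P}$ because $\mathbf{P}|_{\mathbf{A}}\subseteq\mathbf{P}$ by definition.

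With (i) and (ii) established, let $\mathcal{X}\in\mathbf{Stk}_{m}(\mathbf{A},\tau|_{\mathbf{A}},\mathbf{P}|_{\mathbf{A}})$. The diagonal $\mathcal{X}\to\mathcal{X}\times\mathcal{X}$ is $(m-1)$-representable for the small triple, hence by (i) it is $(m-1)$-representable for the large tuple. An $m$-atlas $\{U_i\to\mathcal{X}\}$ for the small triple consists of morphisms from objects $U_i\in\mathbf{A}$ (hence $(-1)$-geometric for the large tuple by the base case) which lie in $(m-1)$-$\mathbf{P}|_{\mathbf{A}}$ (hence in $(m-1)$-$\mathbf{P}$ by (ii)) and are epimorphisms of stacks (independent of the choice of pre-geometry data). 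Thus the same family is an $m$-atlas for the large tuple, so $\mathcal{X}\in\mathbf{Stk}_{m}(\mathbf{M},\tau,\mathbf{P},\mathbf{A})$.

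The argument is essentially formal, so I do not expect any real obstacle; the only point requiring care is keeping clear the two notions of $\mathbf{A}$-admissibility (the one for the triple $(\mathbf{A},\tau|_{\mathbf{A}},\mathbf{P}|_{\mathbf{A}})$ versus the one for the relative tuple $(\mathbf{M},\tau,\mathbf{P},\mathbf{A})$), and observing that the assumption that $(\mathbf{M},\tau,\mathbf{P},\mathbf{A})$ is a relative geometry tuple is exactly what bridges the two at the base case.
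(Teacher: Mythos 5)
Your proof is correct and fills in exactly the definitional unwinding that the paper leaves implicit — the paper offers no proof at all, simply declaring the proposition "immediate." The two points you isolate are indeed the only ones needing any attention: the base case uses precisely the hypothesis that $(\mathbf{M},\tau,\mathbf{P},\mathbf{A})$ is a relative $(\infty,1)$-geometry tuple (so that objects of $\mathbf{A}$ are $\mathbf{A}$-admissible), and the inductive step works because both notions of representability and of the property $\mathbf{P}$ are tested against the same class of maps $U\to\mathcal{Y}$ with $U\in\mathbf{A}$, with $\mathbf{P}|_{\mathbf{A}}\subseteq\mathbf{P}$ and the epimorphism condition living in the common ambient category $\mathbf{Stk}(\mathbf{A},\tau|_{\mathbf{A}})$.
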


If $(\mathbf{M},\tau,\mathbf{P},\mathbf{A})$ is a relative $(\infty,1)$-geometry tuple such that $(\mathbf{M},\tau,\mathbf{P})$ is an $(\infty,1)$-geometry triple (i.e. all objects of $\mathbf{M}$ are admissible) then we have a permanency property for geometric stacks.
Consider the functor $i^{\#}:\mathbf{Stk}(\mathbf{A},\tau|_{\mathbf{A}})\rightarrow\mathbf{Stk}(\mathbf{M},\tau)$. 

\begin{lemma}
Let $(\mathbf{M},\tau,\mathbf{P},\mathbf{A})$ be a relative $(\infty,1)$-geometry tuple such that $(\mathbf{M},\tau,\mathbf{P})$ is an $(\infty,1)$-geometry triple. Let $f:\mathcal{X}\rightarrow\mathcal{Y}$ be an map in $\mathbf{Stk}_{n}(\mathbf{A},\tau|_{\mathbf{A}},\mathbf{P}|_{\mathbf{A}})$ which is in $n-\mathbf{P}|_{\mathbf{A}}$. Then $i^{\#}(f)$ is in $n-\mathbf{P}$.
\end{lemma}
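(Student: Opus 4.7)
My plan is to induct on $n\ge-1$. The key structural observation that I will use throughout is that since $i^{\#}$ is the stackification of the left Kan extension $i$, and since $i(\mathcal{Y})(V)=\colim_{(U\in\mathbf{A},V\to U)}\mathcal{Y}(U)$, any morphism $V\to i^{\#}(\mathcal{Y})$ with $V\in\mathbf{M}$ can be refined along a $\tau$-cover $\{V_j\to V\}$ so that each $V_j\to i^{\#}(\mathcal{Y})$ factors as $V_j\to U_j\to\mathcal{Y}$ with $U_j\in\mathbf{A}$. I will also use that, because $i$ sends a representable presheaf on $\mathbf{A}$ to its counterpart on $\mathbf{M}$ and stackification is left-exact (being a reflection into a topos), there is a natural equivalence
$$i^{\#}(\mathcal{X})\times_{i^{\#}(\mathcal{Y})}U_j\;\cong\;i^{\#}(\mathcal{X}\times_{\mathcal{Y}}U_j)$$
for each factorization.

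For the base case $n=-1$: by hypothesis $\mathcal{X}\times_{\mathcal{Y}}U_j$ is representable by some $X_j\in\mathbf{A}$ with $X_j\to U_j$ in $\mathbf{P}|_{\mathbf{A}}\subset\mathbf{P}$. Since $\mathbf{P}$ is stable under pullback in $\mathbf{M}$, the pullback $X_j\times_{U_j}V_j$ exists in $\mathbf{M}$ and its structure map to $V_j$ is again in $\mathbf{P}$. Using that every object of $\mathbf{M}$ is admissible---so that representable presheaves are $\tau$-stacks---these local representables glue via descent along $\{V_j\to V\}$ to realise $i^{\#}(\mathcal{X})\times_{i^{\#}(\mathcal{Y})}V$ as a representable stack on $\mathbf{M}$, with total structure map to $V$ lying in $\mathbf{P}$ by the locality axiom of the pre-geometry triple (local membership in $\mathbf{P}$ implies membership).

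For the inductive step: I will assume the result for $m<n$, including the stability of $(n-1)$-$\mathbf{P}$ under pullback (which is established along the way). Fix $f$ in $n$-$\mathbf{P}|_{\mathbf{A}}$, $V\in\mathbf{M}$, and $V\to i^{\#}(\mathcal{Y})$; first choose $\{V_j\to V\}$ and factorisations $V_j\to U_j\to\mathcal{Y}$ as above. The hypothesis on $f$ yields an $n$-atlas $\{W_{j,k}\to\mathcal{X}\times_{\mathcal{Y}}U_j\}$ in $\mathbf{Stk}(\mathbf{A},\tau|_{\mathbf{A}})$ with each $W_{j,k}\to U_j$ in $\mathbf{P}|_{\mathbf{A}}$. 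The atlas maps $W_{j,k}\to\mathcal{X}\times_{\mathcal{Y}}U_j$ themselves lie in $(n-1)$-$\mathbf{P}|_{\mathbf{A}}$, so the inductive hypothesis turns them into $(n-1)$-$\mathbf{P}$-maps after applying $i^{\#}$; pulling them back along $V_j\to U_j$ (using the concurrently-established stability of $(n-1)$-$\mathbf{P}$ under pullback, together with the fact that epimorphisms of stacks are pullback-stable in a topos) yields an $n$-atlas of $i^{\#}(\mathcal{X})\times_{i^{\#}(\mathcal{Y})}V_j$ whose structure maps to $V_j$ are in $\mathbf{P}$. The diagonal condition making each $i^{\#}(\mathcal{X})\times_{i^{\#}(\mathcal{Y})}V_j$ into an $n$-geometric stack is inherited from the diagonal of the $n$-geometric $\mathcal{X}\times_{\mathcal{Y}}U_j$ by the same pullback procedure applied to the inductive hypothesis.

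I expect the main obstacle to be carefully controlling the interaction of $i^{\#}$ with the pullbacks $\mathcal{X}\times_{\mathcal{Y}}U_j$, and guaranteeing that the local constructions over the cover $\{V_j\to V\}$ descend to a global representable in $\mathbf{M}$. Both rest on the admissibility hypothesis---which forces every representable presheaf on $\mathbf{M}$ to be a stack---combined with left-exactness of stackification and the closure properties of $\mathbf{P}$ (under pullback and compositions, and satisfying locality) built into the pre-geometry axioms.
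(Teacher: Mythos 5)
Your argument is essentially the paper's: induct on $n$; refine a test map $V\to i^{\#}(\mathcal{Y})$ along a $\tau$-cover so that each piece factors through some $U_j\in\mathbf{A}$; use that left Kan extension and stackification commute with finite limits to identify $i^{\#}(\mathcal{X})\times_{i^{\#}(\mathcal{Y})}U_j$ with $i^{\#}(\mathcal{X}\times_{\mathcal{Y}}U_j)$; pull back the given $n$-atlas along $V_j\to U_j$; and conclude by the locality of $n$-geometricity and of $n$-$\mathbf{P}$ (To\"en--Vezzosi, Proposition 1.3.3.4) --- which, note, you need in the inductive step as well, not only in the base case, since your construction only produces atlases of $i^{\#}(\mathcal{X})\times_{i^{\#}(\mathcal{Y})}V_j$ over the members of a cover of $V$.

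The one step that does not hold as written is in your base case: from representability of $i^{\#}(\mathcal{X})\times_{i^{\#}(\mathcal{Y})}V_j$ for each $V_j$ in a cover of $V$ you cannot conclude that these ``glue via descent'' to exhibit $i^{\#}(\mathcal{X})\times_{i^{\#}(\mathcal{Y})}V$ as representable; a stack that is representable locally on a cover is not representable in general (a non-affine scheme covered by affines is the standard counterexample in the algebraic setting). For $n=-1$ the representability of the pullback should come directly from the hypotheses rather than from gluing: $\mathcal{X}$ and $\mathcal{Y}$ are by assumption $(-1)$-geometric, i.e.\ restrictions of representables of admissible objects $M$ and $N$ of $\mathbf{M}$, so the pullback against $V$ is representable outright by $M\times_{N}V$. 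Your locality argument for membership of the structure map in $\mathbf{P}$ --- covering the source by the pullbacks over the $V_j$ and using closure of $\mathbf{P}$ under composition and pullback together with the second axiom of a pre-geometry triple --- is then the correct way to finish, and is the content behind the paper's ``the claim is clear.''
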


\begin{proof}
The proof is by induction on $n$. For $n=-1$ the claim is clear. Suppose the claim has been proven for some $(n-1)$ with $n\ge1$, and consider $n$. Let $\mathrm{Map}(-,X)\rightarrow i^{\#}(\mathcal{Y})$ be a map. There is a cover $\{\mathrm{Map}(-,X_{i})\rightarrow\mathrm{Map}(-,X)\}_{i\in\mathcal{I}}$ such that each $\mathrm{Map}(-,X_{i})\rightarrow i^{\#}(\mathcal{Y})$ factors through $i(\mathcal{Y})\rightarrow i^{\#}(\mathcal{Y})$. But then each map $\mathrm{Map}(-,X_{i})\rightarrow i(\mathcal{Y})$ factors through some map $\mathrm{Map}(-,A_{i})\rightarrow i(\mathcal{Y})$ with $A_{i}\in\mathbf{A}$. Consider the fibre product $\mathrm{Map}(-,A_{i})\times_{\mathcal{Y}}\mathcal{X}$. There is an an $n$-atlas $\{\mathrm{Map}(-,A_{ij})\rightarrow\mathrm{Map}(-,A_{i})\times_{\mathcal{Y}}\mathcal{X}$ such that each composition $\mathrm{Map}(-,A_{ij})\rightarrow\mathrm{Map}(-,A_{i})$ is in $\mathbf{P}$.  Now both the left Kan extension and stackification commute with finite limits. Thus $\mathrm{Map}(-,A_{ij})\rightarrow\mathrm{Map}(-,A_{i})\times_{i^{\#}(\mathcal{Y})}i^{\#}(\mathcal{X})$ is an $n$-atlas. Write $X_{ij}\defeq X_{i}\times_{A_{i}}A_{ij}$. Then $\{\mathrm{Map}(-,X_{ij})\rightarrow\mathrm{Map}(-,X_{i})\times_{i^{\#}(\mathcal{Y})}i^{\#}(\mathcal{X})\}$ is an $n$-atlas, and the composition $\mathrm{Map}(-,X_{ij})\rightarrow\mathrm{Map}(-,X_{i})$ is in $\mathbf{P}$. The result now follows from \cite{toen2008homotopical}*{Proposition 1.3.3.4}.
\end{proof}

\begin{corollary}
Let $(\mathbf{M},\tau,\mathbf{P},\mathbf{A})$ be a relative $(\infty,1)$-geometry tuple such that $(\mathbf{M},\tau,\mathbf{P})$ is an $(\infty,1)$-geometry triple. The functor  $i^{\#}:\mathbf{Stk}(\mathbf{A},\tau|_{\mathbf{A}})\rightarrow\mathbf{Stk}(\mathbf{M},\tau)$ induces a functor 
 $$i^{\#}:\mathbf{Stk}_{n}(\mathbf{A},\tau|_{\mathbf{A}},\mathbf{P}|_{\mathbf{A}})\rightarrow\mathbf{Stk}_{n}(\mathbf{M},\tau,\mathbf{P})$$
 for each $n$, which is fully faithful if the relative $(\infty,1)$-geometry tuple is strong.
\end{corollary}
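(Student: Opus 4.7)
The plan is to argue by simultaneous induction on $n$, establishing two claims in parallel: $(i)_n$ that $i^{\#}$ sends $n$-geometric stacks in $\mathbf{Stk}(\mathbf{A},\tau|_{\mathbf{A}})$ to $n$-geometric stacks in $\mathbf{Stk}(\mathbf{M},\tau)$, and $(ii)_n$ that it sends $n$-representable morphisms to $n$-representable morphisms. The preceding lemma, which handles the stronger class $n$-$\mathbf{P}$, will serve as the technical engine for treating atlas maps, while $(ii)_n$ is precisely what is required to treat diagonals.

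For the base case $n = -1$, I note that left Kan extension of $\mathrm{Map}_{\mathbf{A}}(-,M)$ along $\mathbf{A}\hookrightarrow\mathbf{M}$ produces the representable $\mathrm{Map}_{\mathbf{M}}(-,M)$, which by the hypothesis that $(\mathbf{M},\tau,\mathbf{P})$ is a geometry triple is already a $\tau$-stack; so $i^{\#}$ sends $(-1)$-geometric stacks to $(-1)$-geometric stacks, and since $i^{\#}$ commutes with finite limits, preservation of $(-1)$-representability is immediate.

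For the inductive step I would first establish $(i)_n$ from $(ii)_{n-1}$ and the preceding lemma. Given $\mathcal{X}$ with an $n$-atlas $\{U_i\to\mathcal{X}\}$, apply $(ii)_{n-1}$ to the $(n-1)$-representable diagonal $\Delta_{\mathcal{X}}$, using that $i^{\#}$ preserves products to identify $i^{\#}(\Delta_{\mathcal{X}})$ with $\Delta_{i^{\#}(\mathcal{X})}$; apply the preceding lemma to each $U_i\to\mathcal{X}$ to get maps in $(n-1)$-$\mathbf{P}$; and apply part $(1)$ of the proposition preceding the lemma to see that the family remains jointly epimorphic. Then $(ii)_n$ follows from $(i)_n$ by repeating the cover-refinement argument of that lemma: given a test morphism $\mathrm{Map}(-,X)\to i^{\#}(\mathcal{Y})$ with $X\in\mathbf{M}$, pick a $\tau$-cover $\{\mathrm{Map}(-,X_i)\to\mathrm{Map}(-,X)\}$ whose members factor through $i(\mathcal{Y})$ and further through some $\mathrm{Map}_{\mathbf{A}}(-,A_i)\to\mathcal{Y}$ with $A_i\in\mathbf{A}$; $n$-representability of $f$ on $\mathbf{A}$ makes $\mathcal{X}\times_{\mathcal{Y}}\mathrm{Map}_{\mathbf{A}}(-,A_i)$ an $n$-geometric stack, $(i)_n$ transports this across $i^{\#}$, and commutation of $i^{\#}$ with finite limits glues the resulting atlases into an $n$-atlas for $i^{\#}(\mathcal{X})\times_{i^{\#}(\mathcal{Y})}\mathrm{Map}(-,X)$. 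The full faithfulness claim in the strong case is then obtained simply by restricting the fully faithful functor of part $(2)$ of the proposition preceding the lemma to the subcategories of geometric stacks.

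The main subtlety I expect is keeping the order of the induction clean: $(i)_n$ must be proved using only $(i)_{n-1}$, $(ii)_{n-1}$ and the preceding lemma, before $(i)_n$ can legitimately be invoked in the proof of $(ii)_n$, so that the apparent circularity between the two statements at level $n$ is in fact only apparent. Beyond this, the argument is essentially a more general re-run of the proof of the preceding lemma, now applied not only to $n$-$\mathbf{P}$ morphisms but also to bare $n$-representable morphisms, and carried out once at the level of stacks themselves as well.
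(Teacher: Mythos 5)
Your proof is correct and fills in the argument exactly as the paper intends: the corollary is stated without proof as an immediate consequence of the preceding lemma (for the atlas maps in $(n-1)$-$\mathbf{P}$), part (1) of the earlier proposition (preservation of epimorphisms), exactness of $i^{\#}$ (for the diagonal), and part (2) of that proposition (full faithfulness in the strong case). Your care in ordering the simultaneous induction on $(i)_n$ and $(ii)_n$ is a sensible and faithful way to make the paper's implicit bootstrapping between the lemma and the corollary explicit.
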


%

\subsubsection{Segal Groupoids}

For quasi-compact topologies there is an alternative extremely useful description of geometric stacks using Segal groupoids. This is explained in \cite{toen2008homotopical} specifically for geometry relative to so-called HAG contexts. Although the important Proposition 1.3.4.2 in \cite{toen2008homotopical} does not quite go through in complete generality , we can still say something useful.

\begin{definition}[\cite{toen2008homotopical}*{Definition 1.3.1.6}]
Let $\textbf{N}$ be an $(\infty,1)$-category. A \textit{Segal groupoid in }$\textbf{N}$ is a simplicial object $X_{*}:\Delta^{op}\rightarrow\textbf{N}$ such that 
\begin{enumerate}
\item
For any $n>0$ the natural morphism
$$\prod_{0\le i<n}\sigma_{i}:X_{n}\rightarrow X_{1}\times_{X_{0}}X_{1}\times_{X_{0}}\ldots\times_{X_{0}}X_{1}$$
is an isomorphism.
\item
The morphism 
$$d_{0}\times d_{1}:X_{2}\rightarrow X_{1}\times_{X_{0}}X_{1}$$
is an isomorphism. 
\end{enumerate}
\end{definition}

Let $\mathcal{X}$ be an $n$-geometric stack with $n$-atlas $\{U_{i}\rightarrow\mathcal{X}\}_{i\in\mathcal{I}}$. For $\underline{i}=(i_{0},\ldots,i_{n})\in\mathcal{I}^{n+1}$, write
$$U_{\underline{i}}\defeq U_{i_{0}}\times_{\mathcal{X}}U_{i_{1}}\times_{\mathcal{X}}\times\ldots\times_{\mathcal{X}}U_{i_{n}}$$
 Consider the simplicial object $U_{\bullet}$ with 
$$U_{n}=\coprod_{\underline{i}\in\mathcal{I}^{n+1}}U_{\underline{i}}$$
As in \cite{toen2008homotopical}, it is easily shown that the map
$$|U_{\bullet}|\rightarrow\mathcal{X}$$
is an equivalence (in fact this uses only the fact that $\coprod_{i\in\mathcal{I}}U_{i}\rightarrow\mathcal{X}$ is an epimorphism). Moreover, each $U_{\underline{i}}$ is an $(n-1)$-geometric stack. 



\subsubsection{Monomorphisms and Schemes}

For the purposes of geometric HKR, we will be particularly interested in schemes.

\begin{definition}
A map $f:\mathcal{X}\rightarrow\mathcal{Y}$ in an $(\infty,1)$-category $\mathbf{M}$ is said to be a \textit{monomorphism} if the map $\mathcal{X}\rightarrow\mathcal{X}\times_{\mathcal{Y}}\mathcal{X}$ is an isomorphism.
\end{definition}

It is easy to see that the class $\textbf{Mon}$ of all monomorphisms is in fact itself closed under composition and pullbacks, and contains all isomorphisms. 


\begin{definition}
Let $(\mathbf{M},\tau,\mathbf{P},\mathbf{A})$ be a relative $(\infty,1)$-geometry tuple. An $n$-geometric -stack $\mathcal{X}$ is said to be an $n$-\textit{scheme} if 
\begin{enumerate}
\item
it has an  $n$-atlas $\{U_{i}\rightarrow\mathcal{X}\}$ such that each $U_{i}\rightarrow\mathcal{X}$ is a monomorphism in $\mathbf{PreStk}(\mathbf{A},\tau|_{\mathbf{A}})$,
\item
the map $\mathcal{X}\rightarrow\mathcal{X}\rightarrow\mathcal{X}$ is a monomorphism in $\mathbf{PreStk}(\mathbf{A},\tau|_{\mathbf{A}})$.
\end{enumerate}
\end{definition}

The full subcategory of $\mathbf{Stk}(\mathbf{A},\tau|_{\mathbf{A}})$ consisting of $n$-schemes is denoted $\mathbf{Sch}_{n}(\mathbf{M},\tau,\mathbf{P},\mathbf{A})$. We also write $\mathbf{Sch}(\mathbf{M},\tau,\mathbf{P},\mathbf{A})\defeq\bigcup_{n=-1}^{\infty}\mathbf{Sch}_{n}(\mathbf{M},\tau,\mathbf{P},\mathbf{A})$.

\subsubsection{Equivalences of Geometric Stacks}

One useful consequence of the inductive definition of geometric stacks (and schemes) is that it allows us to `boost' equivalences defined at the level of affines to equivalences of $n$-geometric stacks. For schemes in particular we have the following.

\begin{proposition}\label{prop:geomequiv}
Let $F,G:\mathbf{Stk}(\mathbf{A},\tau|_{\mathbf{A}})\rightarrow\mathbf{Stk}(\mathbf{N},\gamma)$ be functors, and $\eta:F\rightarrow G$ a natural transformation. Suppose that
\begin{enumerate}
\item
$\eta$ is an equivalence when restricted to $(-1)$-geometric stacks;
\item
$F$ and $G$ commute with fibre products of $(-1)$-geometric stacks;
\item
If $\{U_{i}\rightarrow\mathcal{X}\}_{i\in\mathcal{I}}$ is an $n$-atlas with each $U_{i}\rightarrow\mathcal{X}$ a monomorphism then $\{F(U_{i})\rightarrow F(\mathcal{X})\}_{i\in\mathcal{I}}$ and $\{G(U_{i})\rightarrow G(\mathcal{X})\}_{i\in\mathcal{I}}$ are atlases;
\end{enumerate}
Then $\eta$ is an equivalence when restricted to $\mathbf{Sch}(\mathbf{M},\tau,\mathbf{P},\mathbf{A})$.
\end{proposition}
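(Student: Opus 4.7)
The argument is by induction on $n\ge -1$, showing that $\eta_{\mathcal{X}}\colon F(\mathcal{X})\to G(\mathcal{X})$ is an equivalence for every $n$-scheme $\mathcal{X}$ in $\mathbf{Sch}(\mathbf{M},\tau,\mathbf{P},\mathbf{A})$. The base case $n=-1$ is precisely hypothesis $(1)$. For the inductive step, fix an $n$-scheme $\mathcal{X}$ with a monomorphism $n$-atlas $\{U_{i}\to\mathcal{X}\}_{i\in\mathcal{I}}$, and assume the result is known for all $m<n$. The plan is to express $\mathcal{X}$, $F(\mathcal{X})$ and $G(\mathcal{X})$ as geometric realisations of compatible \v Cech nerves and then apply the inductive hypothesis levelwise.

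First I would verify that the simplicial components of the \v Cech nerve $U_{\bullet}$ of $\coprod_{i}U_{i}\to\mathcal{X}$, namely $U_{\underline{i}}\defeq U_{i_{0}}\times_{\mathcal{X}}\cdots\times_{\mathcal{X}}U_{i_{k}}$, are $(n-1)$-schemes. This uses two monomorphism inputs: each atlas map $U_{i}\to\mathcal{X}$ is a monomorphism and the scheme condition makes the diagonal $\mathcal{X}\to\mathcal{X}\times\mathcal{X}$ a monomorphism. Pullback-stability of monomorphisms then realises $U_{\underline{i}}$ as a subobject of the affine $U_{i_{0}}$, and an $(n-1)$-scheme structure with monomorphism atlas is inherited from any monomorphism atlas of $U_{i_{0}}$. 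Consequently the inductive hypothesis applies and $\eta_{U_{\underline{i}}}$ is an equivalence for every $\underline{i}$. Since $\coprod_{i}U_{i}\to\mathcal{X}$ is an effective epimorphism (it is an atlas), $\mathcal{X}\simeq|U_{\bullet}|$ as stacks; by hypothesis $(3)$ the images $\{F(U_{i})\to F(\mathcal{X})\}$ and $\{G(U_{i})\to G(\mathcal{X})\}$ are atlases, so $F(\mathcal{X})$ and $G(\mathcal{X})$ are likewise the geometric realisations of the \v Cech nerves of $\coprod F(U_{i})\to F(\mathcal{X})$ and $\coprod G(U_{i})\to G(\mathcal{X})$.

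The task then reduces to the following \emph{main obstacle}: show that the canonical comparison maps
\[
F(U_{\underline{i}})\longrightarrow F(U_{i_{0}})\times_{F(\mathcal{X})}\cdots\times_{F(\mathcal{X})}F(U_{i_{k}})
\]
are equivalences, and analogously for $G$. Hypothesis $(2)$ gives this directly only when the middle object is $(-1)$-geometric, which $\mathcal{X}$ is not. I would bridge this gap using the monomorphism structure: because $U_{\underline{i}}\hookrightarrow U_{i_{0}}$ is a monomorphism, the fibre product can be computed locally on an atlas $\{V_{\alpha}\to U_{\underline{i}}\}$ with each $V_{\alpha}$ affine, and hypothesis $(3)$ ensures the images of such atlases remain atlases after applying $F$ and $G$; hypothesis $(2)$ now applies on each affine $V_{\alpha}$, where all fibre products take place between $(-1)$-geometric stacks. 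Assembling these local identifications via descent along the atlases produces the required equivalence of \v Cech nerves. Once the \v Cech nerves of $F$ and $G$ are identified with $F(U_{\bullet})$ and $G(U_{\bullet})$ respectively, the map of simplicial diagrams induced by $\eta$ is a levelwise equivalence by the inductive hypothesis, and passing to geometric realisations yields that $\eta_{\mathcal{X}}$ is an equivalence, completing the induction.
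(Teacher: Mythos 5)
Your overall strategy---induction on $n$, writing $\mathcal{X}$ as the realisation of the \v{C}ech nerve of a monomorphism atlas, applying the inductive hypothesis to the pieces $U_{\underline{i}}=U_{i_{0}}\times_{\mathcal{X}}\cdots\times_{\mathcal{X}}U_{i_{k}}$, and passing to realisations---is exactly the paper's. You go further than the paper in isolating the real crux, namely the comparison $F(U_{\underline{i}})\to F(U_{i_{0}})\times_{F(\mathcal{X})}\cdots\times_{F(\mathcal{X})}F(U_{i_{k}})$: the paper's own proof silently identifies these two objects when it asserts that $\coprod F(U_{i_{1}})\times_{F(\mathcal{X})}\cdots\to\coprod G(U_{i_{1}})\times_{G(\mathcal{X})}\cdots$ is an equivalence ``because each $U_{\underline{i}}$ is $(n-1)$-geometric''. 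Flagging this is to your credit.

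However, your proposed bridge does not close the gap. Passing to an affine atlas $\{V_{\alpha}\to U_{\underline{i}}\}$ changes nothing about the fibre product you are trying to commute $F$ past: the base of $U_{i_{0}}\times_{\mathcal{X}}\cdots\times_{\mathcal{X}}U_{i_{k}}$ is still $\mathcal{X}$, which is not $(-1)$-geometric, so hypothesis (2)---which concerns only fibre products of $(-1)$-geometric stacks---never becomes applicable no matter how finely you localise the source; and in general $U_{i_{0}}\times_{\mathcal{X}}U_{i_{1}}$ is itself only $(n-1)$-geometric, not affine, so you cannot rewrite it as a fibre product of affines over an affine. What actually makes the argument work in the paper's applications is that $\mathcal{L}=\underline{\mathsf{Map}}(S^{1},-)$ and $T^{M}=\underline{\mathsf{Map}}(\mathrm{Spec}(k\oplus M),-)$ are mapping stacks out of fixed objects and therefore commute with \emph{all} fibre products (this is how the paper verifies its own hypothesis (2)); combined with Corollary \ref{cor:epiloop}, Corollary \ref{cor:atlasloop}, Lemma \ref{lem:geometpullback2} and Corollary \ref{cor:atlastangent}, this identifies the \v{C}ech nerve of $\{F(U_{i})\to F(\mathcal{X})\}$ with $F$ applied to the \v{C}ech nerve of $\{U_{i}\to\mathcal{X}\}$. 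To make your write-up self-contained you should either strengthen hypothesis (2) to commutation with the relevant fibre products over $\mathcal{X}$ (or with all fibre products), or build the nerve-compatibility into hypothesis (3); as written, the localisation step would fail.
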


\begin{proof}
The proof is by induction on $n$. The $n=-1$ case is by assumption. Suppose the result has been proven for some $n-1$ with $n> 0$, and let $\mathcal{X}$ be $n$-geometric. Fix an atlas $\{U_{i}\rightarrow\mathcal{X}\}_{i\in\mathcal{I}}$. Then the map $|U_{\bullet}|\rightarrow\mathcal{X}$ is an equivalence, where $U_{n}=\coprod_{(i_{1},\ldots,i_{n})\in\mathcal{I}^{n}}U_{i_{1}}\times_{\mathcal{X}}\ldots\times_{\mathcal{X}}U_{i_{n}}$. Note that each $U_{i_{1}}\times_{\mathcal{X}}\ldots\times_{\mathcal{X}}U_{i_{n}}$ is $(n-1)$-geometric. Thus the map $\coprod_{(i_{1},\ldots,i_{n})\in\mathcal{I}^{n}}F(U_{i_{1}})\times_{F(\mathcal{X})}\ldots\times_{F(\mathcal{X})}F(U_{i_{n}})\rightarrow\coprod_{(i_{1},\ldots,i_{n})\in\mathcal{I}^{n}}G(U_{i_{1}})\times_{G(\mathcal{X})}\ldots\times_{G(\mathcal{X})}G(U_{i_{n}})$ is an isomorphism. Moreover, since we have assumed that $\{F(U_{i})\rightarrow F(\mathcal{X})\}_{i\in\mathcal{I}}$ and $\{G(U_{i})\rightarrow G(\mathcal{X})\}_{i\in\mathcal{I}}$ are atlases, the maps \newline
$|\coprod_{(i_{1},\ldots,i_{n})\in\mathcal{I}^{n}}F(U_{i_{1}})\times_{F(\mathcal{X})}\ldots\times_{F(\mathcal{X})}F(U_{i_{n}})|\rightarrow F(\mathcal{X})$ and $|\coprod_{(i_{1},\ldots,i_{n})\in\mathcal{I}^{n}}G(U_{i_{1}})\times_{G(\mathcal{X})}\ldots\times_{G(\mathcal{X})}G(U_{i_{n}})|\rightarrow G(\mathcal{X})$ are isomorphisms. 
\end{proof}

To prove the geometric HKR theorem, we will essentially show that for geometry relative to derived algebraic contexts, the conditions of Proposition \ref{prop:geomequiv} for the loop stack and shifted tangent stack functors are satisfied.

\subsection{The Loop Stack}
One side of the geometric HKR theorem will involve the loop stack construction. We may embed the category $\textbf{Grpd}$ in $\mathbf{Stk}(\mathbf{M},\tau)$  by regarding any simplicial set $S$ as a constant functor on $\mathbf{M}$ with value $S$. For $S^{1}$ the simplicial circle and $\mathcal{X}\in\mathbf{PreStk}(\mathbf{M},\tau)$ we define the \textit{loop pre-stack} 
$$\mathcal{L}(\mathcal{X}) \defeq \underline{\mathsf{Map}}(S^1, \mathcal{X}) \in \mathbf{PreStk}(\mathbf{M},\tau)$$
 Note that if $\mathcal{X}$ is a stack then so is $\mathcal{L}(\mathcal{X})$.

\begin{lemma}
There is an equivalence of pre-stacks \(\mathcal{L}(X) \cong X \times_{X \times X} X\).
\end{lemma}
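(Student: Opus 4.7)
The plan is to realise $S^1$ as a homotopy pushout in $\infty$-groupoids and then invoke the fact that the internal mapping functor $\underline{\mathsf{Map}}(-,\mathcal{X})$ on prestacks turns colimits in its first argument into limits.

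First, I would recall (or verify) the standard presentation
\[
S^1 \;\cong\; \mathrm{pt} \coprod_{\mathrm{pt}\sqcup \mathrm{pt}} \mathrm{pt}
\]
in the $\infty$-category of $\infty$-groupoids (equivalently, the Kan--Quillen model of simplicial sets), viewing the two maps $\mathrm{pt}\sqcup\mathrm{pt}\rightrightarrows\mathrm{pt}$ as the two inclusions. This is the standard cofiber description of the circle, and it holds homotopically, not just strictly.

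Next, I would embed this pushout into $\mathbf{PreStk}(\mathbf{M})$ via the constant-prestack inclusion $\mathbf{Grpd}\hookrightarrow\mathbf{PreStk}(\mathbf{M})$ used above to define $\mathcal{L}$. Since colimits in $\mathbf{PreStk}(\mathbf{M})=\mathbf{Fun}(\mathbf{M}^{op},\mathbf{Grpd})$ are computed objectwise, and the constant-diagram functor is a left adjoint (to evaluation at any object, roughly), the pushout square for $S^1$ remains a pushout when regarded inside $\mathbf{PreStk}(\mathbf{M})$.

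Now I would apply $\underline{\mathsf{Map}}(-,\mathcal{X})$. Because $\mathbf{PreStk}(\mathbf{M})$ is cartesian closed, this functor is a right adjoint in the first variable (to the product with $\mathcal{X}$) and hence sends colimits in the source to limits in the target. This yields
\[
\underline{\mathsf{Map}}(S^1,\mathcal{X}) \;\cong\; \underline{\mathsf{Map}}(\mathrm{pt},\mathcal{X}) \times_{\underline{\mathsf{Map}}(\mathrm{pt}\sqcup\mathrm{pt},\mathcal{X})} \underline{\mathsf{Map}}(\mathrm{pt},\mathcal{X}).
\]
Using that $\mathrm{pt}$ is the terminal object, we have $\underline{\mathsf{Map}}(\mathrm{pt},\mathcal{X})\cong\mathcal{X}$, and $\underline{\mathsf{Map}}(\mathrm{pt}\sqcup\mathrm{pt},\mathcal{X})\cong\mathcal{X}\times\mathcal{X}$. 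Substituting gives the desired identification $\mathcal{L}(\mathcal{X})\cong\mathcal{X}\times_{\mathcal{X}\times\mathcal{X}}\mathcal{X}$.

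The only mildly delicate point is ensuring the pushout/colimit is taken in the $\infty$-categorical sense throughout, so that the resulting fibre product on the right is likewise a homotopy fibre product. This is automatic in our setup because all constructions take place in the $(\infty,1)$-category $\mathbf{PreStk}(\mathbf{M})$ where limits and colimits are $\infty$-categorical by definition; no extra cofibrancy argument is needed.
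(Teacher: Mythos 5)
Your proposal is correct and follows essentially the same route as the paper: write $S^1$ as the homotopy pushout $\ast \coprod_{\ast \sqcup \ast} \ast$ and use that the internal mapping functor carries this pushout to a pullback, identifying $\underline{\mathsf{Map}}(\ast,\mathcal{X})\cong\mathcal{X}$. The extra care you take about the constant-prestack embedding and the $\infty$-categorical nature of the colimit is a fine elaboration of the same one-line argument.
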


\begin{proof}
This follows from writing the circle as a homotopy fibered coproduct \(S^1 \cong * \underset{* \coprod *} \coprod *\) and using that internal \(\Hom\) takes a homotopy pushout to a homotopy pullback. 
\end{proof}


Suppose now that $(\mathbf{M},\tau,\mathbf{P},\mathbf{A})$ is a relative $(\infty,1)$-geometry tuple, and that $\mathcal{X}$ is an  $n$-geometric stack. As a fibre product $\mathcal{L}(X)$ is also an $n$-geometric stack. 

\begin{remark}
If $\mathcal{X}\cong\mathrm{Map}(-,U)$ is representable then $\mathcal{L}(\mathcal{X})\cong\mathrm{Map}(-,U\times_{U\times U}U)$ is representable. However even if $U\in\mathbf{A}$ then $U\times_{U\times U}U$ is not necessarily in $\mathbf{A}$. It is however still $\mathbf{A}$-admissible, as the category of stacks is closed under limits in the category of prestacks.
\end{remark}

Let $\mathcal{X}$ be an $n$-geometric stack which is the realisation of an $(n,\mathbf{P})$-hypergroupoid $\coprod_{\mathcal{I}_{n}}U_{i_{m}}$. Then $\mathcal{X}_{\mathcal{X}\times\mathcal{X}}\mathcal{X}$ is the realisation of the hypegroupoid in $(n-1)$-geometric stacks $\coprod_{i_{m},j_{n}\in\mathcal{I}_{n}}U_{i_{m}}\times_{\mathcal{X}\times\mathcal{X}}U_{j_{n}}$.

\begin{proposition}
Suppose that each $U_{i_{n}}\rightarrow\mathcal{X}$ is a monomorphism, i.e. the map $U_{i_{n}}\times_{\mathcal{X}}U_{i_{n}}\rightarrow U_{i_{n}}$ is an equivalence. Then $U_{i}\times_{\mathcal{X}\times\mathcal{X}}U_{j}\cong (U_{i}\times_{\mathcal{X}}U_{j})\times_{\mathcal{X}\times\mathcal{X}}(U_{i}\times_{\mathcal{X}}U_{j})$. 
\end{proposition}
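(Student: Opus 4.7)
The plan is to reduce both sides of the claimed equivalence to a common object, namely $P \times_Q P$, where $P \defeq U_i \times_{\mathcal{X}} U_j$ and $Q \defeq U_i \times U_j$. The monomorphism hypothesis will enter only once, to collapse $Q \times_{\mathcal{X} \times \mathcal{X}} Q$ back to $Q$; the rest of the argument is a pure manipulation of pullbacks.

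For the right-hand side, the two projections $P \to U_i$ and $P \to U_j$ assemble into a map $P \to Q$, and the composite $P \to \mathcal{X} \times \mathcal{X}$ used in the statement factors as $P \to Q \xrightarrow{f_i \times f_j} \mathcal{X} \times \mathcal{X}$. By the pasting lemma for pullbacks,
\[
P \times_{\mathcal{X} \times \mathcal{X}} P \;\cong\; P \times_{Q} \bigl(Q \times_{\mathcal{X} \times \mathcal{X}} Q\bigr) \times_{Q} P.
\]
Since $Q \times_{\mathcal{X} \times \mathcal{X}} Q \cong (U_i \times_{\mathcal{X}} U_i) \times (U_j \times_{\mathcal{X}} U_j)$, and $U_i \to \mathcal{X}$, $U_j \to \mathcal{X}$ are both monomorphisms, each factor collapses to $U_i$, respectively $U_j$, giving $Q \times_{\mathcal{X} \times \mathcal{X}} Q \cong Q$. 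The right-hand side therefore reduces to $P \times_Q P$.

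For the left-hand side, I would factor each composition $U_i \to \mathcal{X} \xrightarrow{\Delta_{\mathcal{X}}} \mathcal{X} \times \mathcal{X}$ through the diagonal of $U_i$, namely $U_i \xrightarrow{\Delta_{U_i}} U_i \times U_i \xrightarrow{f_i \times f_i} \mathcal{X} \times \mathcal{X}$, and analogously for $U_j$. Two applications of the pasting lemma yield
\[
U_i \times_{\mathcal{X} \times \mathcal{X}} U_j \;\cong\; U_i \times_{U_i \times U_i} \bigl((U_i \times U_i) \times_{\mathcal{X} \times \mathcal{X}} (U_j \times U_j)\bigr) \times_{U_j \times U_j} U_j.
\]
The middle piece is $(U_i \times_{\mathcal{X}} U_j) \times (U_i \times_{\mathcal{X}} U_j) = P \times P$, and commuting the outer pullbacks (valid in any $\infty$-category) identifies this with $(P \times P) \times_{(U_i \times U_j) \times (U_i \times U_j)} (U_i \times U_j) \cong P \times_{Q} P$. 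Combining the two reductions gives the required equivalence.

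The main obstacle is purely book-keeping: several different maps $P \to \mathcal{X} \times \mathcal{X}$ and $U_i \to \mathcal{X} \times \mathcal{X}$ must be juggled, and one must carefully distinguish the one factoring through $\Delta_{\mathcal{X}}$ from the one factoring through the external product $f_i \times f_j$. Once the correct intermediate factorisations are fixed, both sides are identified with $P \times_Q P$ by iterated pullback pasting, and the monomorphism hypothesis is used only in the single step collapsing $Q \times_{\mathcal{X} \times \mathcal{X}} Q \cong Q$.
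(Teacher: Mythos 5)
Your argument is correct. The paper proves the equivalence by a direct two-step chain: it first uses the monomorphism hypothesis to replace $U_i$ by $U_i\times_{\mathcal{X}}U_i$ and $U_j$ by $U_j\times_{\mathcal{X}}U_j$, obtaining $U_i\times_{\mathcal{X}\times\mathcal{X}}U_j\cong(U_i\times_{\mathcal{X}}U_i)\times_{\mathcal{X}\times\mathcal{X}}(U_j\times_{\mathcal{X}}U_j)$, and then commutes limits to regroup the four factors as $(U_i\times_{\mathcal{X}}U_j)\times_{\mathcal{X}\times\mathcal{X}}(U_i\times_{\mathcal{X}}U_j)$. You instead meet in the middle at the common object $P\times_{Q}P$, with $P=U_i\times_{\mathcal{X}}U_j$ and $Q=U_i\times U_j$, applying the monomorphism hypothesis in the opposite direction: to collapse $Q\times_{\mathcal{X}\times\mathcal{X}}Q$ down to $Q$ rather than to expand $U_i$ into $U_i\times_{\mathcal{X}}U_i$. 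The underlying ingredients are identical in the two proofs --- the decomposition of a fibre product over $\mathcal{X}\times\mathcal{X}$ of external products into a product of fibre products over $\mathcal{X}$, together with the fact that $W\times_{Z}W\simeq W$ for a monomorphism $W\to Z$ --- so neither route is more general, but your explicit intermediate $P\times_{Q}P$ makes the regrouping step easier to audit. The one point you should make explicit is the identification, invoked at the start of your right-hand-side reduction, of the map $P\to\mathcal{X}\xrightarrow{\Delta}\mathcal{X}\times\mathcal{X}$ appearing in the statement with the composite $P\to Q\xrightarrow{f_i\times f_j}\mathcal{X}\times\mathcal{X}$: these agree only up to the structure homotopy $f_ip\simeq f_jq$ carried by the fibre product $P$, and since the whole computation takes place in an $(\infty,1)$-category that homotopy is part of the data rather than a triviality. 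Once that is recorded, both reductions are valid iterated-limit manipulations and the proof is complete.
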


\begin{proof}
Let $U\rightarrow\mathcal{X}$ and $V\rightarrow\mathcal{X}$ be monomorphisms. Then 
\begin{align*}
U\times_{\mathcal{X}\times\mathcal{X}}V & \cong (U\times_{\mathcal{X}}U)\times_{\mathcal{X}\times\mathcal{X}}(V\times_{\mathcal{X}}V)\\
&\cong (U\times_{\mathcal{X}}V)\times_{\mathcal{X}\times\mathcal{X}}(U\times_{\mathcal{X}}V)
\end{align*}
Now $(U\times_{\mathcal{X}}V)\rightarrow\mathcal{X}$ is a monomorphism, so $(U\times_{\mathcal{X}}V)\times (U\times_{\mathcal{X}}V)\rightarrow\mathcal{X}\times\mathcal{X}$ is as well. Thus the map 
$$(U\times_{\mathcal{X}}V)\times_{U\times_{\mathcal{X}}V\times U\times_{\mathcal{X}}V}(U\times_{\mathcal{X}}V)\rightarrow(U\times_{\mathcal{X}}V)\times_{\mathcal{X}\times\mathcal{X}}(U\times_{\mathcal{X}}V)$$
is an equivalence.
\end{proof}

\begin{corollary}\label{cor:epiloop}
Let $\mathcal{X}$ be an $n$-scheme with atlas $\{U_{i}\rightarrow\mathcal{X}\}_{i\in\mathcal{I}}$ such that for each $i\in\mathcal{I}$ the map $U_{i}\rightarrow\mathcal{X}$ is a monomorphism. Then $\mathcal{L}(\mathcal{X})$ is the geometric realisation of the Segal groupoid
$$n\mapsto\coprod_{(i_{1},\ldots,i_{n})}\mathcal{L}(U_{i_{1}}\times_{\mathcal{X}}\ldots\times_{\mathcal{X}}U_{i_{n}})$$
\end{corollary}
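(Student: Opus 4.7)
The plan is to combine the preceding lemma and proposition with universality of colimits in the $\infty$-topos $\mathbf{Stk}(\mathbf{A},\tau|_{\mathbf{A}})$. First I would record that $\mathcal{L}(\mathcal{X}) \cong \mathcal{X}\times_{\mathcal{X}\times\mathcal{X}}\mathcal{X}$ via the preceding lemma, with both maps being the diagonal. Since each $U_i\to\mathcal{X}$ is a monomorphism, and monomorphisms are stable under pullback and composition, every term in the \v{C}ech nerve of the atlas, namely $U_{\underline{i}}\defeq U_{i_0}\times_{\mathcal{X}}\cdots\times_{\mathcal{X}} U_{i_n}$, is itself a monomorphism into $\mathcal{X}$. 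Because $\coprod_i U_i \to \mathcal{X}$ is an effective epimorphism, $\mathcal{X}$ is presented as the geometric realisation of this \v{C}ech nerve $U_\bullet$.

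Next I would appeal to universality of colimits (pullbacks distribute over colimits) in both variables of $|U_\bullet|\times_{\mathcal{X}\times\mathcal{X}}|U_\bullet|$ to realise $\mathcal{L}(\mathcal{X})$ as a bisimplicial colimit; passing to the diagonal yields
\begin{align*}
\mathcal{L}(\mathcal{X}) \;\cong\; \Bigl\lvert n\mapsto \coprod_{\underline{i},\underline{j}\in\mathcal{I}^{n+1}} U_{\underline{i}}\times_{\mathcal{X}\times\mathcal{X}} U_{\underline{j}}\Bigr\rvert.
\end{align*}
The preceding proposition then applies to each summand: since both $U_{\underline{i}}, U_{\underline{j}}$ are monomorphisms into $\mathcal{X}$, it supplies a natural equivalence $U_{\underline{i}}\times_{\mathcal{X}\times\mathcal{X}} U_{\underline{j}}\cong \mathcal{L}(U_{\underline{i}}\times_{\mathcal{X}} U_{\underline{j}})$. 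Concatenating the tuples $\underline{i},\underline{j}$ into a single tuple in $\mathcal{I}^{2n+2}$ then identifies the resulting simplicial object with the claimed Segal groupoid, whose level is a coproduct of loops of iterated fibre products of atlas pieces (matching the statement up to the indexing convention used by the paper).

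The main obstacle I anticipate is the combinatorial bookkeeping: checking that the diagonal of the bisimplicial object agrees on the nose with the Segal groupoid structure as stated, and that the induced face and degeneracy maps are the evident ones under the identifications furnished by the proposition. The genuine mathematical content resides in the preceding proposition; everything else is descent in the ambient $\infty$-topos and reindexing of tuples.
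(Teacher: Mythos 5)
Your proposal is correct and follows essentially the same route as the paper: identify $\mathcal{L}(\mathcal{X})$ with $\mathcal{X}\times_{\mathcal{X}\times\mathcal{X}}\mathcal{X}$, present this as the realisation of the levelwise fibre products $U_{\underline{i}}\times_{\mathcal{X}\times\mathcal{X}}U_{\underline{j}}$ of the \v{C}ech nerve, and apply the preceding proposition to identify each such term with $\mathcal{L}(U_{\underline{i}}\times_{\mathcal{X}}U_{\underline{j}})$. The only difference is that you make explicit the use of universality of colimits and the passage through a bisimplicial diagonal, which the paper leaves implicit.
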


\begin{proof}
$\mathcal{X}\times_{\mathcal{X}\times\mathcal{X}}\mathcal{X}$ is the geometric realisation of the hypergroupoid 
$$n\mapsto\coprod_{(i_{1},\ldots,i_{n}),(j_{1},\ldots,j_{n})\in\mathcal{I}^{n}}(U_{i_{1}}\times_{\mathcal{X}}\ldots\times_{\mathcal{X}}U_{i_{n}})\times_{\mathcal{X}\times\mathcal{X}}(U_{j_{1}}\times_{\mathcal{X}}\ldots\times_{\mathcal{X}}U_{j_{n}})$$
For convenience of notation, we write $U_{\underline{i}}\defeq U_{i_{1}}\times_{\mathcal{X}}\ldots\times_{\mathcal{X}}U_{i_{n}}$ and $U_{\underline{j}}\defeq U_{j_{1}}\times_{\mathcal{X}}\ldots\times_{\mathcal{X}}U_{j_{n}}$. Now $U_{\underline{i}}\times_{\mathcal{X}\times\mathcal{X}}U_{\underline{j}}$ is equivalent to $U_{\underline{i}}\times_{\mathcal{X}}U_{\underline{j}}\times_{U_{\underline{i}}\times_{\mathcal{X}}U_{\underline{j}}\times U_{\underline{i}}\times_{\mathcal{X}}U_{\underline{j}}}U_{\underline{i}}\times_{\mathcal{X}}U_{\underline{j}}\cong\mathcal{L}(U_{\underline{i}}\times_{\mathcal{X}}U_{\underline{j}})$.
\end{proof}

\begin{corollary}\label{cor:atlasloop}
Let $\mathcal{X}$ be an $n$-scheme with $n$-atlas $\{U_{i}\rightarrow\mathcal{X}\}_{i\in\mathcal{I}}$ such that for each $i\in\mathcal{I}$ the map $U_{i}\rightarrow\mathcal{X}$ is a monomorphism. Then $\{\mathcal{L}(U_{i})\rightarrow\mathcal{L}(\mathcal{X})\}_{i\in\mathcal{I}}$ is an $n$-atlas.
\end{corollary}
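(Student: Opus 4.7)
The plan is to verify the three axioms of an $n$-atlas for $\{\mathcal{L}(U_{i})\rightarrow\mathcal{L}(\mathcal{X})\}_{i\in\mathcal{I}}$: each $\mathcal{L}(U_i)$ is $(-1)$-geometric, the total map is an epimorphism of stacks, and each individual map belongs to $(n-1)$-$\mathbf{P}$. Writing $U_i\cong\mathrm{Map}(-,M_i)$ for some $\mathbf{A}$-admissible $M_i$, the identity $\mathcal{L}(U_i)\cong U_i\times_{U_i\times U_i}U_i$ shows that $\mathcal{L}(U_i)$ is representable by $M_i\times_{M_i\times M_i}M_i$. This fiber product exists as a limit in $\mathbf{M}$ (we may take it in the larger category of stacks, where it is $\mathbf{A}$-admissible since $\mathbf{A}$-admissible objects are closed under limits, as observed in the remark preceding Corollary \ref{cor:epiloop}).

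For the epimorphism property, Corollary \ref{cor:epiloop} exhibits $\mathcal{L}(\mathcal{X})$ as the geometric realisation of a Segal groupoid whose $0$-simplices are precisely $\coprod_{i\in\mathcal{I}}\mathcal{L}(U_i)$; the canonical augmentation from $0$-simplices to the realisation is an effective epimorphism, giving what we need.

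The main obstacle is showing that each $\mathcal{L}(U_i)\to\mathcal{L}(\mathcal{X})$ lies in $(n-1)$-$\mathbf{P}$. The idea is to exploit the Cartesian square
\begin{equation*}
\begin{tikzcd}
U_i\times_{\mathcal{X}\times\mathcal{X}}U_i \arrow[r] \arrow[d] & U_i\times U_i \arrow[d, "{(U_i\to\mathcal{X})\times(U_i\to\mathcal{X})}"]\\
\mathcal{X}\times_{\mathcal{X}\times\mathcal{X}}\mathcal{X} \arrow[r] & \mathcal{X}\times\mathcal{X}
\end{tikzcd}
\end{equation*}
together with the fact, already used in the proof of Corollary \ref{cor:epiloop}, that the monomorphism hypothesis yields $U_i\times_{\mathcal{X}\times\mathcal{X}}U_i\cong\mathcal{L}(U_i)$. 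The right-hand vertical map is a product of two copies of $U_i\to\mathcal{X}\in(n-1)$-$\mathbf{P}$, which one checks is again in $(n-1)$-$\mathbf{P}$ by writing it as the composite of two base changes $U_i\times U_i\to\mathcal{X}\times U_i\to\mathcal{X}\times\mathcal{X}$ and invoking closure of $(n-1)$-$\mathbf{P}$ under base change and composition. Pulling this back along $\mathcal{L}(\mathcal{X})\to\mathcal{X}\times\mathcal{X}$ produces the left-hand vertical map, which is therefore in $(n-1)$-$\mathbf{P}$.

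Granting these three points, the collection $\{\mathcal{L}(U_i)\to\mathcal{L}(\mathcal{X})\}_{i\in\mathcal{I}}$ satisfies the conditions of Definition \ref{defn:ngeom}(1), i.e.\ it is an $n$-atlas. The only technical subtlety to be careful about is ensuring that the stability properties of $(n-1)$-$\mathbf{P}$ under products and base changes are used correctly, but these follow from the inductive set-up established in \cite{toen2008homotopical}*{Proposition 1.3.3.4}, as in the preceding lemma.
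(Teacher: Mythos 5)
Your argument is correct and is essentially the paper's own proof: the epimorphism statement is delegated to Corollary \ref{cor:epiloop}, and membership in $(n-1)$-$\mathbf{P}$ is deduced from stability of that class under base change and composition applied to the fibre-product presentation of the loop stack. The only difference is cosmetic — you realise $\mathcal{L}(U_i)\to\mathcal{L}(\mathcal{X})$ as one pullback of $U_i\times U_i\to\mathcal{X}\times\mathcal{X}$ along $\mathcal{L}(\mathcal{X})\to\mathcal{X}\times\mathcal{X}$, whereas the paper unfolds the same data into a $3\times 3$ grid of cartesian squares exhibiting the map as a composite of two base changes of $U_i\to\mathcal{X}$.
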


\begin{proof}
Corollary \ref{cor:epiloop} implies that $\coprod_{\mathcal{I}}\mathcal{L}(U_{i})\rightarrow\mathcal{L}(\mathcal{X})$ is an epimorphism. Consider the commutative diagram
\begin{displaymath}
\xymatrix{
\mathcal{L}(U_{i})\ar[d]\ar[r] & \mathcal{L}(\mathcal{X})\times_{\mathcal{X}}U_{i}\ar[d]\ar[r] & U_{i}\ar[d]\\
U_{i}\times_{\mathcal{X}}\mathcal{L}(\mathcal{X})\ar[d]\ar[r] & \mathcal{L}(\mathcal{X})\ar[d]\ar[r] & \mathcal{X}\ar[d]\\
U_{i}\ar[r] & \mathcal{X}\ar[r] & \mathcal{X}\times\mathcal{X}
}
\end{displaymath}
all squares in this diagram are cartesian. Since the map $U_{i}\rightarrow\mathcal{X}$ is in $n-\mathbf{P}$, it follows that $\mathcal{L}(U_{i})\rightarrow\mathcal{L}(\mathcal{X})$ is in $n-\mathbf{P}$.
\end{proof}

\subsection{The Shifted Tangent Stack}

The other side of the geometric HKR equivalence involves the shifted tangent stack. We can define this completely abstractly.
$$T(-)[-1]:\mathbf{PreStk}(\mathbf{M})\rightarrow\mathbf{PreStk}(\mathbf{M})$$
$$\mathcal{X}\mapsto\mathrm{Map}(\mathsf{Spec}(k\prod_{k\prod k}k),\mathcal{X})$$
However, not much can be said about the geometry of this object at this level of generality. This brings us to the next section.

\section{Geometry Relative to a Derived Algebraic Context}\label{sec:monoidal_geometry}

We now specialise to geometry relative to a derived algebraic context $(\mathbf{C},\mathbf{C}_{\ge0},\mathbf{C}_{\le0},\mathbf{C}^{0})$. The category of affine stacks relative to $\mathbf{C}$ is
$$\mathbf{Aff}^{cn}_{\mathbf{C}}\defeq(\mathsf{DAlg}^{cn}(\mathbf{C}))^{op}$$
For the remainder of the paper, this will function as the category $\mathbf{M}$ from the previous section.

\subsection{\'{E}tale Maps}

Let us define some properties of maps which will be useful for defining topologies on affine stacks. 
\begin{definition}[\cite{toen2008homotopical} Definition 1.2.6.1]
Let $f:A\rightarrow B$ be a map in $\mathsf{DAlg}^{cn}(\mathbf{C})$.
\begin{enumerate}
\item
$f$ is said to be an \textit{epimorphism} if the map $B\otimes_{A}B\rightarrow B$ is an equivalence;
\item
$f$ is said to be \textit{formally unramified} if $\mathsf{L}\Omega^{1}_{B\big\slash A}\cong 0$; 
\item
$f$ is said to be \textit{formally \'{e}tale} if the natural map
$$\mathsf{L}\Omega^{1}_{A}\otimes_{A}B\rightarrow\mathsf{L}\Omega^{1}_{B}$$
is an equivalence.
\end{enumerate}
\end{definition}

Since the suspension functor on $\mathbf{C}_{\ge0}$ is fully faithful, we get the following results.

\begin{proposition}[\cite{toen2008homotopical} Proposition 1.2.6.5 (1),(3)]
\begin{enumerate}
\item
An epimorphism is formally \'{e}tale;
\item
Let $f:A\rightarrow B$ and $g:B\rightarrow C$ be maps of commutative monoids with $g$ formally \'{e}tale. Then the map $\mathsf{L}\Omega^{1}_{B\big\slash A}\otimes_{B}C\rightarrow\mathsf{L}\Omega^{1}_{C\big\slash A}$ is an equivalence. In particular, in this case a map is formally unramified if and only if it is formally \'{e}tale.
\end{enumerate}
\end{proposition}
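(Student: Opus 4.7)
The plan is to deduce both parts from the universal property
\[
\mathrm{Map}_B(\mathsf{L}\Omega^1_{B/A}, M) \simeq \mathsf{Der}_A(B, M) = \mathrm{Map}_{\mathsf{DAlg}_A/B}(B, B \oplus M),
\]
combined with the standard transitivity cofibre sequence
\[
\mathsf{L}\Omega^1_{B/A} \otimes_B C \longrightarrow \mathsf{L}\Omega^1_{C/A} \longrightarrow \mathsf{L}\Omega^1_{C/B}
\]
attached to any composite $A \to B \to C$ of derived commutative algebras. The transitivity sequence is formal from the representability above via the fibre sequence of derivation spaces obtained by restricting along $A \to B$ and mapping into square-zero extensions of $C$; the role of the full faithfulness of suspension on $\mathbf{C}_{\ge 0}$ is that the connective cotangent complex is pinned down by derivations into \emph{connective} modules, which is exactly what this representability supplies. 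Applied to the tower $k \to B \to C$, the same sequence immediately gives that $g$ is formally \'etale iff the cofibre $\mathsf{L}\Omega^1_{C/B}$ vanishes iff $g$ is formally unramified, which already settles the ``in particular'' assertion.

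For (1), suppose $f \colon A \to B$ is an epimorphism, so $B \otimes_A B \simeq B$. For any $B$-module $M$, the square-zero extension $B \oplus M$ carries a natural $B$-algebra structure, and base change yields
\[
B \otimes_A (B \oplus M) \simeq (B \otimes_A B) \otimes_B (B \oplus M) \simeq B \oplus M.
\]
Thus every $A$-algebra map out of $B$ into a $B$-algebra is automatically a $B$-algebra map, so $\mathrm{Map}_{\mathsf{DAlg}_A}(B, B \oplus M) \simeq \mathrm{Map}_{\mathsf{DAlg}_B}(B, B \oplus M)$, which is contractible because $B$ is initial in $\mathsf{DAlg}_B$. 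Taking the fibre over the structural projection $B \oplus M \to B$ forces $\mathsf{Der}_A(B, M) \simeq \ast$ for every $M$, hence $\mathsf{L}\Omega^1_{B/A} \simeq 0$, and $f$ is formally \'etale by the cofibre sequence for $k \to A \to B$.

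For (2), the hypothesis that $g$ is formally \'etale gives $\mathsf{L}\Omega^1_{C/B} \simeq 0$ by the first paragraph, and the transitivity sequence for $A \to B \to C$ collapses to the required equivalence $\mathsf{L}\Omega^1_{B/A} \otimes_B C \simeq \mathsf{L}\Omega^1_{C/A}$. The main technical obstacle in executing this plan is setting up the transitivity cofibre sequence intrinsically in an abstract derived algebraic context, where no explicit Kähler-differential model is available; one handles this by observing that, viewed as functors of $C$ (or symmetrically of $B$), all three terms are left adjoints and so commute with sifted colimits, allowing a reduction to the case where $B$ and $C$ are free derived algebras on objects of $\mathbf{C}^{0}$, where the sequence can be verified directly from the explicit form of cotangent complexes of free algebras.
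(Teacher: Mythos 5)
Your argument is correct and is essentially the standard one: the paper itself gives no proof here, deferring to To\"en--Vezzosi (HAG II, Proposition 1.2.6.5), and your reasoning --- $B\otimes_A B\simeq B$ makes the forgetful functor $\mathsf{DAlg}_B\to\mathsf{DAlg}_A$ fully faithful, so $\mathrm{Der}_A(B,M)\simeq *$ and $\mathsf{L}\Omega^1_{B/A}\simeq 0$, plus the transitivity cofibre sequence in the stable module category --- is exactly the argument in the cited source, transported to a derived algebraic context via Raksit's representability of derivations. The only blemish is your gloss on why full faithfulness of suspension on $\mathbf{C}_{\ge 0}$ matters; the relevant point is simply that in the stable category $\mathbf{Mod}_B$ a map with vanishing cofibre is an equivalence, which is what converts ``formally unramified'' into ``formally \'etale''.
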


\subsection{Weak relative DAG contexts}\label{subsubsec:RDAG}

Now we fix a full subcategory $\mathbf{A}\subset\mathbf{Aff}^{cn}_{\mathbf{C}}$ stable by equivalences. Before establishing our formalism for relative derived geometry, we need some terminology.

\begin{definition}
Let $(\mathbf{C},\mathbf{C}_{\ge0},\mathbf{C}_{\le0},\mathbf{C}^{0})$ be a derived algebraic context, $\tau$ a topology on $\mathbf{Aff}^{cn}_{\mathbf{C}}$ and $B\in\mathrm{DAlg}^{cn}(\mathbf{C})$. A $B$-module $M$ is said to \textit{satisfy  weak \v{C}ech descent} if for any cover $\{\mathrm{Spec}(A_{i})\rightarrow\mathrm{Spec}(A)\}_{i\in\mathcal{I}}$ in $\tau$ the natural map from $M$ to the homotopy limit of the cosimplicial diagram
$$n\mapsto\prod_{(i_{0},\ldots,i_{n})\in\mathcal{I}^{n+1}}M\otimes_{A}A_{i_{0}}\otimes_{A}A_{i_{1}}\otimes_{A}\ldots\otimes_{A}A_{i_{n}}$$
is an equivalence. $\tau$ is said to \textit{satisfy weak \v{C}ech descent} if for any $B\in\mathrm{DAlg}^{cn}(\mathbf{C})$, any $B$-module satisfies weak \v{C}ech descent.
\end{definition}

Let $f:A\rightarrow B$ be a map of commutative monoids, $M$ a stable $B$-module, and $d\in\pi_{0}(\mathrm{Der}_{A}(B,M))$. This defines a map of commutative monoids $d:B\rightarrow B\oplus M$. There is also the zero section map $s:B\rightarrow B\oplus M$. We consider the homotopy pullback
\begin{displaymath}
\xymatrix{
C\ar[d]\ar[r] & B\ar[d]^{d}\\
B\ar[r]^{s} & B\oplus M
}
\end{displaymath}
As explained in \cite{toen2008homotopical} Section 1.2.1. The homotopy fibre of the map $C\rightarrow B$ is equivalent to the loops module $\Omega M$, so that $C$ is a square-zero extension of $B$ by $\Omega M$, which we denote by $B\oplus_{d}\Omega M$. Here $\Omega M$ is the \textit{loop module} of $M$ given by the pullback $0\times_{M}0$. 

\begin{definition}
A \textit{good system of module categories} $\mathbf{F}$ on $\mathbf{A}\subseteq\mathbf{Aff}^{cn}_{\mathbf{C}}$ is an assignment to each $\mathrm{Spec}(B)\in \mathbf{A}$ a full subcategory $\mathbf{F}_{B}\subset\mathbf{Mod}_{B}$ such that
\begin{enumerate}
\item
$A\in\mathbf{F}_{A}$ for any $\mathrm{Spec}(A)\in\mathbf{A}$;
\item
$\mathsf{L}\Omega^{1}_{A}\in\mathbf{F}_{A}$ for any $\mathrm{Spec}(A)\in\mathbf{A}$;
\item
$\mathbf{F}_{A}$ is closed under isomorphisms, finite colimits, and retracts;
\item
for any map $f:B\rightarrow C$ and $M\in\mathbf{F}_{B}$, $C\otimes_{B}M\in\mathbf{F}_{C}$;
\item
any $F\in\mathbf{F}_{A}$ satisfies weak \v{C}ech descent;
\item
For $M\in\mathbf{F}_{A}^{1}$ and $\mathrm{Spec}(A)\in\mathbf{A}$, $\mathrm{Spec}(A\oplus M)\in\mathbf{A}$. 
\end{enumerate}
\end{definition}

For integers $n$, we shall write
$$\mathbf{F}_{A}^{n}\defeq\mathbf{F}_{A}\times_{\mathbf{Mod}_{A}}\mathbf{Mod}^{n}_{A}$$
and in particular, $\mathbf{F}_{A}^{cn}\defeq\mathbf{F}_{A}^{0}$.

\begin{definition}
A \textit{weak relative DAG context} is a tuple $(\mathbf{C},\mathbf{C}_{\ge0},\mathbf{C}_{\le0},\mathbf{C}^{0},\tau,\mathbf{P},\mathbf{A},\mathbf{F})$ where
\begin{enumerate}
\item
$(\mathbf{C},\mathbf{C}_{\ge0},\mathbf{C}_{\le0},\mathbf{C}^{0})$ is a derived algebraic context;
\item
$\tau$ is a Grothendieck topology on $\mathbf{Aff}^{cn}_{\mathbf{C}}$;
\item
$\mathbf{P}$ is a class of maps in $\mathsf{Ho}(\mathbf{Aff}^{cn}_{\mathbf{C}})$ which is stable by composition, pullback, and contains isomorphisms;
\item
$\mathbf{P}$ consists of formally \'{e}tale morphisms;
\item
$\mathbf{F}$ is a good system of modules on $\mathbf{A}$,
\end{enumerate}
such that $(\mathbf{Aff}^{cn}_{\mathbf{C}},\tau,\mathbf{P},\mathbf{A})$ is a strong relative $(\infty,1)$-geometry tuple. 
\end{definition}

\subsection{Relative Cotangent Complexes and Tangent Stacks}

In the setting of geometry relative to a weak relative DAG context, it is possible to meaningfully define cotangent complexes of stacks, and tangent stacks. Here we follow \cite{toen2008homotopical} Section 1.4.1.

\begin{definition}[\cite{toen2008homotopical}, Definition 1.4.1.4]
Let $\mathcal{X}\in\mathbf{PreStk}(\mathbf{Aff}^{cn}_{\mathbf{C}},)$, $\mathrm{Spec}(\mathbf{A})$, and $x:\mathrm{Spec}(A)\rightarrow\mathcal{X}$ a map. Define 
$$\mathrm{Der}_{\mathcal{X}}(\mathrm{Spec}(A),-):\mathbf{F}^{cn}_{A}\rightarrow\textbf{sSet}$$
to be the functor
$$\mathrm{Map}_{{}_{\mathrm{Spec}(A)\big\backslash\mathbf{Aff}^{cn}_{\mathbf{C}}}}(\mathrm{Spec}(A\oplus M),\mathcal{X}).$$
\end{definition}

\begin{definition}[\cite{toen2008homotopical} Section 1.4.1]
Let $f:\mathcal{X}\rightarrow\mathcal{Y}$ be a map in $\mathbf{PreStk}(\mathbf{Aff}^{cn}_{\mathbf{C}})$ and $x:\mathrm{Spec}(A)\rightarrow\mathcal{X}$ a map. By composition we get a map
$$\mathrm{Der}_{\mathcal{X}}(\mathrm{Spec}(A),-)\rightarrow \mathrm{Der}_{\mathcal{Y}}(\mathrm{Spec}(A),-).$$
We define 
$$\mathrm{Der}_{\mathcal{X}\big\slash\mathcal{Y}}(\mathrm{Spec}(A),-):\mathbf{F}^{\mathrm{cn}}_{A}\rightarrow\textbf{sSet}$$
to be the fibre of the map $\mathrm{Der}_{\mathcal{X}}(\mathrm{Spec}(A),-)\rightarrow \mathrm{Der}_{\mathcal{Y}}(\mathrm{Spec}(A),-)$.
\end{definition}


\begin{definition}[\cite{toen2008homotopical}, Definition 1.4.1.14]
A map $f:\mathcal{X}\rightarrow\mathcal{Y}$ in $\mathbf{PreStk}(\mathbf{Aff}^{cn}_{\mathbf{C}})$ is said to \textit{have a relative cotangent complex at }$x:\mathrm{Spec}(A)\rightarrow\mathcal{X}$ if there is a $(-n)$-connective $A$-module $\mathsf{L}\Omega^{1}_{\mathcal{X}\big\slash\mathcal{Y},x}\in\mathbf{F}_{A}$ and an equivalence 
$$\mathrm{Der}_{\mathcal{X}\big\slash\mathcal{Y}}(\mathrm{Spec}(A),-)\cong\mathrm{Map}_{\mathbf{Mod}_{A}}(\mathsf{L}\Omega^{1}_{\mathcal{X}\big\slash\mathcal{Y},x},-)$$
\end{definition}
Let

\begin{displaymath}
\xymatrix{
\mathrm{Spec}(B)\ar[dr]^{y}\ar[rr]^{u} & & \mathrm{Spec}(A)\ar[dl]^{x}\\
& \mathcal{X} & 
}
\end{displaymath}
be a commutative diagram with $u$ a morphism in $\mathbf{A}$. Suppose that $f:\mathcal{X}\rightarrow\mathcal{Y}$ has a relative cotangent complex at both $x$ and $y$. Using the universal properties of $\mathsf{L}\Omega^{1}_{\mathcal{X}\big\slash\mathcal{Y},x}$ and $\mathsf{L}\Omega^{1}_{\mathcal{X}\big\slash\mathcal{Y},y}$, there is a natural map.
$$u^{*}:\mathsf{L}\Omega^{1}_{\mathcal{X}\big\slash\mathcal{Y},x}\otimes_{A}B\rightarrow\mathsf{L}\Omega^{1}_{\mathcal{X}\big\slash\mathcal{Y},y}$$


\begin{definition}[\cite{toen2008homotopical} Definition 1.4.1.15]
A morphism $f:\mathcal{X}\rightarrow\mathcal{Y}$ of prestacks is said to have a \textit{global cotangent complex relative to }$\mathbf{A}$ if
\begin{enumerate}
\item
for any $A\in\mathbf{A}$ and any $x:\mathrm{Spec}(A)\rightarrow\mathcal{X}$, $f$ has a relative cotangent complex at $x$ 
\item
For any morphism $u:\mathrm{Spec}(B)\rightarrow\mathrm{Spec}(A)$ in $\mathbf{A}$, and any commutative diagram
\begin{displaymath}
\xymatrix{
\mathrm{Spec}(B)\ar[dr]^{y}\ar[rr]^{u} & & \mathrm{Spec}(A)\ar[dl]^{x}\\
& \mathcal{X} & 
}
\end{displaymath}
the map 
$$u^{*}:\mathsf{L}\Omega^{1}_{\mathcal{X}\big\slash\mathcal{Y},x}\otimes_{A}B\rightarrow\mathsf{L}\Omega^{1}_{\mathcal{X}\big\slash\mathcal{Y},y}$$
is an equivalence. 
\end{enumerate}
\end{definition}

The following is proven exactly as in \cite{toen2008homotopical}.

\begin{proposition}[\cite{toen2008homotopical}*{Lemma 1.4.2.16 (3)}]\label{prop:globalcotinfad}
Let $f:\mathcal{X}\rightarrow\mathcal{Y}$ be a map of prestacks such that for any map $\mathrm{Spec}(A)\rightarrow\mathcal{Y}$ with $\mathrm{Spec}(A)$ in $\mathbf{A}$, the map
$$\mathcal{X}\times_{\mathcal{Y}}\mathrm{Spec}(A)\rightarrow\mathcal{Y}$$
has a global contangent complex relative to $\mathbf{A}$. Then $f$ has a global cotangent complex relative to $\mathbf{A}$, and there is a natural isomorphism
$$\mathsf{L}\Omega^{1}_{\mathcal{X}\big\slash\mathcal{Y},x}\cong\mathsf{L}\Omega^{1}_{\mathcal{X}\times_{\mathcal{Y}}\mathrm{Spec}(A)\big\slash\mathrm{Spec}(A),x}$$
\end{proposition}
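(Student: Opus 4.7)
The plan is to deduce the global cotangent complex of $f:\mathcal{X}\rightarrow\mathcal{Y}$ from that of each base change $\mathcal{X}\times_{\mathcal{Y}}\mathrm{Spec}(A)\rightarrow\mathrm{Spec}(A)$, by identifying their respective derivation functors. Fix a map $x:\mathrm{Spec}(A)\rightarrow\mathcal{X}$ with $\mathrm{Spec}(A)\in\mathbf{A}$, let $\phi:\mathrm{Spec}(A)\rightarrow\mathcal{Y}$ denote the composition, and let $s:\mathrm{Spec}(A)\rightarrow\mathcal{X}_{\phi}\defeq\mathcal{X}\times_{\mathcal{Y}}\mathrm{Spec}(A)$ be the canonical section induced by $x$ and the identity on $\mathrm{Spec}(A)$.

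First I would establish a natural equivalence
$$\mathrm{Der}_{\mathcal{X}\big\slash\mathcal{Y}}(\mathrm{Spec}(A),M)\simeq\mathrm{Der}_{\mathcal{X}_{\phi}\big\slash\mathrm{Spec}(A)}(\mathrm{Spec}(A),M)$$
for $M\in\mathbf{F}^{\mathrm{cn}}_{A}$. Unwinding both definitions, the left-hand side is the fibre of $\mathrm{Map}(\mathrm{Spec}(A\oplus M),\mathcal{X})\rightarrow\mathrm{Map}(\mathrm{Spec}(A\oplus M),\mathcal{Y})$ over $\phi\circ p$, where $p:\mathrm{Spec}(A\oplus M)\rightarrow\mathrm{Spec}(A)$ is the canonical projection, while the right-hand side is the fibre of $\mathrm{Map}(\mathrm{Spec}(A\oplus M),\mathcal{X}_{\phi})\rightarrow\mathrm{Map}(\mathrm{Spec}(A\oplus M),\mathrm{Spec}(A))$ over $p$. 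These fibres agree by the universal property of the pullback $\mathcal{X}_{\phi}$. By hypothesis the latter functor is represented by $\mathsf{L}\Omega^{1}_{\mathcal{X}_{\phi}\big\slash\mathrm{Spec}(A),s}\in\mathbf{F}_{A}$, so we may define $\mathsf{L}\Omega^{1}_{\mathcal{X}\big\slash\mathcal{Y},x}\defeq\mathsf{L}\Omega^{1}_{\mathcal{X}_{\phi}\big\slash\mathrm{Spec}(A),s}$, which gives both the existence at $x$ and the desired identification.

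Next I would verify the base change property. Given $u:\mathrm{Spec}(B)\rightarrow\mathrm{Spec}(A)$ in $\mathbf{A}$ and $y=x\circ u$, pasting of pullback squares yields a canonical equivalence $\mathcal{X}_{\phi\circ u}\simeq\mathcal{X}_{\phi}\times_{\mathrm{Spec}(A)}\mathrm{Spec}(B)$ compatible with the sections $s'$ and $s$. The hypothesis that $\mathcal{X}_{\phi}\rightarrow\mathrm{Spec}(A)$ has a \emph{global} cotangent complex relative to $\mathbf{A}$ then provides
$$u^{*}\mathsf{L}\Omega^{1}_{\mathcal{X}_{\phi}\big\slash\mathrm{Spec}(A),s}\otimes_{A}B\weq \mathsf{L}\Omega^{1}_{\mathcal{X}_{\phi\circ u}\big\slash\mathrm{Spec}(B),s'},$$
and combining this with the identification from Step~1 at both $A$ and $B$ yields the required equivalence $u^{*}\mathsf{L}\Omega^{1}_{\mathcal{X}\big\slash\mathcal{Y},x}\simeq\mathsf{L}\Omega^{1}_{\mathcal{X}\big\slash\mathcal{Y},y}$.

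I expect the only subtlety to be bookkeeping: making the identification of derivation functors in Step~1 fully natural in $M$ (so as to genuinely lift to a representability statement rather than a mere bijection on $\pi_{0}$), and ensuring the comparison maps used in Step~2 agree with the canonical maps $u^{*}$ from the definition of a global cotangent complex. Both are standard pullback-pasting arguments at the level of mapping spaces, so no deeper input is needed beyond the hypothesis and the universal properties already recalled in Section~\ref{sec:monoidal_geometry}.
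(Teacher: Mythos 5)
Your argument is correct and is exactly the proof of this statement in the cited source (\cite{toen2008homotopical}*{Lemma 1.4.1.16}), which the paper itself does not reproduce but simply defers to: one identifies $\mathrm{Der}_{\mathcal{X}/\mathcal{Y}}(\mathrm{Spec}(A),-)$ with $\mathrm{Der}_{\mathcal{X}\times_{\mathcal{Y}}\mathrm{Spec}(A)/\mathrm{Spec}(A)}(\mathrm{Spec}(A),-)$ via the universal property of the pullback, transports representability, and checks base change by the same pullback-pasting. The two subtleties you flag (naturality in $M$ and compatibility of $u^{*}$) are indeed the only points requiring care, and both go through as you indicate.
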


The following is essentially  tautological.

\begin{proposition}[\cite{toen2008homotopical} Lemma 1.4.1.16 (1)]
Let $f:\mathcal{X}\rightarrow\mathcal{Y}$ be a map in $\mathbf{PreStk}(\mathbf{Aff}^{cn}_{\mathbf{C}},\tau)$ and let $x:\mathrm{Spec}(A)\rightarrow\mathcal{X}$ be a map. If $\mathcal{X}$ and $\mathcal{Y}$ have contangent complexes at $x$, then $f:\mathcal{X}\rightarrow\mathcal{Y}$ has a contangent complex at $x$, and there is a  cofibre sequence.
$$\mathsf{L}\Omega^{1}_{\mathcal{Y},f(x)}\rightarrow\mathsf{L}\Omega^{1}_{\mathcal{X},x}\rightarrow\mathsf{L}\Omega^{1}_{\mathcal{X}\big\slash\mathcal{Y},x}$$
\end{proposition}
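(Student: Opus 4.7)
The plan is to produce the relative cotangent complex as a concrete cofibre and then use stability to verify the universal property. First, I would extract from the map $f : \mathcal{X} \to \mathcal{Y}$ a comparison map of absolute cotangent complexes. By definition the derivation functors sit in a natural transformation
$$f_{*} \colon \mathrm{Der}_{\mathcal{X}}(\mathrm{Spec}(A),-) \longrightarrow \mathrm{Der}_{\mathcal{Y}}(\mathrm{Spec}(A),-)$$
on $\mathbf{F}^{cn}_{A}$ obtained by postcomposing a derivation $\mathrm{Spec}(A\oplus M)\to\mathcal{X}$ with $f$. The two functors are, by hypothesis, corepresented by the $A$-modules $\mathsf{L}\Omega^{1}_{\mathcal{X},x}$ and $\mathsf{L}\Omega^{1}_{\mathcal{Y},f(x)}$, so the Yoneda lemma produces a well-defined map $\varphi \colon \mathsf{L}\Omega^{1}_{\mathcal{Y},f(x)} \to \mathsf{L}\Omega^{1}_{\mathcal{X},x}$ in $\mathbf{Mod}_{A}$ (reversing the direction of the transformation).

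Next, I would define the candidate relative cotangent complex as the cofibre computed in the stable category $\mathbf{Mod}_{A}$:
$$\mathsf{L}\Omega^{1}_{\mathcal{X}/\mathcal{Y},x} \;\defeq\; \mathrm{cofib}\bigl(\varphi \colon \mathsf{L}\Omega^{1}_{\mathcal{Y},f(x)}\longrightarrow \mathsf{L}\Omega^{1}_{\mathcal{X},x}\bigr).$$
Because $\mathbf{F}_{A}$ is closed under finite colimits and retracts (axiom (3) of a good system of module categories), and because both inputs are bounded-below $A$-modules in $\mathbf{F}_{A}$, the cofibre again lies in $\mathbf{F}_{A}$ and is appropriately connective; in particular it is a legitimate candidate cotangent complex.

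Finally, I would verify the universal property. Fix $M \in \mathbf{F}^{cn}_{A}$. Since $\mathbf{Mod}_{A}$ is stable, $\mathrm{Map}_{\mathbf{Mod}_{A}}(-,M)$ converts the defining cofibre sequence into a fibre sequence of spaces
$$\mathrm{Map}(\mathsf{L}\Omega^{1}_{\mathcal{X}/\mathcal{Y},x},M) \longrightarrow \mathrm{Map}(\mathsf{L}\Omega^{1}_{\mathcal{X},x},M) \longrightarrow \mathrm{Map}(\mathsf{L}\Omega^{1}_{\mathcal{Y},f(x)},M).$$
The hypothesis that $\mathcal{X}$ and $\mathcal{Y}$ have cotangent complexes at $x$ and $f(x)$ identifies the last two terms with $\mathrm{Der}_{\mathcal{X}}(\mathrm{Spec}(A),M)$ and $\mathrm{Der}_{\mathcal{Y}}(\mathrm{Spec}(A),M)$; moreover the connecting map matches $f_{*}$ by construction of $\varphi$. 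On the other hand, by the definition of relative derivations recalled above, the fibre of this very map is $\mathrm{Der}_{\mathcal{X}/\mathcal{Y}}(\mathrm{Spec}(A),M)$. Comparing the two fibre sequences yields a natural equivalence $\mathrm{Der}_{\mathcal{X}/\mathcal{Y}}(\mathrm{Spec}(A),-) \cong \mathrm{Map}_{\mathbf{Mod}_{A}}(\mathsf{L}\Omega^{1}_{\mathcal{X}/\mathcal{Y},x},-)$ on $\mathbf{F}^{cn}_{A}$, which is both the desired representability statement and the stated cofibre sequence.

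The one point requiring care is the Yoneda step producing $\varphi$: a priori the natural transformation $f_{*}$ is only defined on connective modules, while the cotangent complexes themselves may be bounded below with arbitrary negative amplitude. This is harmless, however, because $\mathsf{L}\Omega^{1}_{\mathcal{Y},f(x)}$ and $\mathsf{L}\Omega^{1}_{\mathcal{X},x}$ are $(-n)$-connective for some $n$, and evaluating $f_{*}$ on sufficiently high connective shifts of the target recovers $\mathrm{Map}(\mathsf{L}\Omega^{1}_{\mathcal{Y},f(x)},\mathsf{L}\Omega^{1}_{\mathcal{X},x})$; equivalently, representing objects for left-exact functors on $\mathbf{F}^{cn}_{A}$ are determined by compatibility with the $t$-structure, so the map $\varphi$ is unambiguous. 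With this understood, the argument is formal.
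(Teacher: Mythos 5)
Your argument is correct and is exactly the standard one the paper is invoking when it calls this proposition ``essentially tautological'' and defers to To\"en--Vezzosi: define $\mathsf{L}\Omega^{1}_{\mathcal{X}/\mathcal{Y},x}$ as the cofibre of the Yoneda-induced map $\mathsf{L}\Omega^{1}_{\mathcal{Y},f(x)}\to\mathsf{L}\Omega^{1}_{\mathcal{X},x}$ and use stability of $\mathbf{Mod}_{A}$ to match the resulting fibre sequence of mapping spaces with the defining fibre sequence of derivation functors. Your closing remark on extracting $\varphi$ from a natural transformation defined only on $\mathbf{F}^{cn}_{A}$ (via suspending into the connective range, using that $\mathbf{F}_{A}$ is closed under finite colimits and the cotangent complexes are $(-n)$-connective) is the right way to handle the one genuine subtlety.
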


With our assumptions, the next result can be proven formally as in \cite{toen2008homotopical}. 

\begin{lemma}[\cite{toen2008homotopical} Lemma 1.4.1.12]\label{lem:pullbackgt}
Let
\begin{displaymath}
\xymatrix{
\mathrm{Spec}(B)\ar[d]^{u}\ar[r]^{y} & \mathcal{X}'\ar[d]\ar[r] & \mathcal{Y}'\ar[d]\\
\mathrm{Spec}(A)\ar[r]^{x} & \mathcal{X}\ar[r] & \mathcal{Y}
}
\end{displaymath}
be a commutative diagram in $\mathbf{PreStk}(\mathbf{Aff}^{cn}_{\mathbf{C}},\tau)$ with the right-hand square being cartesian and $u$ being a map in $\mathbf{A}$. Suppose that $\mathcal{X}$ and $\mathcal{Y}$ have global cotangent complexes relative to $\mathbf{A}$. Then the square
\begin{displaymath}
\xymatrix{
B\otimes_{A}\mathsf{L}\Omega^{1}_{\mathcal{Y},x}\ar[d]\ar[r] &B\otimes_{A}\mathsf{L}\Omega^{1}_{\mathcal{X},x}\ar[d]\\
\mathsf{L}\Omega^{1}_{\mathcal{Y'},y}\ar[r] & \mathsf{L}\Omega^{1}_{\mathcal{X}',y}
}
\end{displaymath}
is cartesian in $\mathbf{Mod}_{B}$ (and hence in $\mathbf{F}_{B}$). 
\end{lemma}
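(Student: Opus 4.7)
The plan is as follows. Fix a connective module $M\in\mathbf{F}^{\mathrm{cn}}_B$, and apply the mapping space functor $\mathrm{Map}_{\mathrm{Spec}(B)/\mathbf{Aff}^{cn}_{\mathbf{C}}}(\mathrm{Spec}(B\oplus M),-)$ to the cartesian square of prestacks $\mathcal{X}'\cong\mathcal{X}\times_{\mathcal{Y}}\mathcal{Y}'$. Mapping out of a fixed object preserves limits, and taking fibres over the basepoints determined by $y$ (and its compositions along the maps to $\mathcal{X}$, $\mathcal{Y}$, and $\mathcal{Y}'$) also preserves pullbacks, so this yields a natural equivalence
$$\mathrm{Der}_{\mathcal{X}'}(\mathrm{Spec}(B),M)\cong\mathrm{Der}_\mathcal{X}(\mathrm{Spec}(B),M)\times_{\mathrm{Der}_\mathcal{Y}(\mathrm{Spec}(B),M)}\mathrm{Der}_{\mathcal{Y}'}(\mathrm{Spec}(B),M).$$

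Next, I would invoke the global cotangent complex hypothesis for $\mathcal{X}$ and $\mathcal{Y}$, combined with the base change property along $u\in\mathbf{A}$, to obtain
$$\mathrm{Der}_\mathcal{X}(\mathrm{Spec}(B),M)\cong\mathrm{Map}_{\mathbf{Mod}_B}\!\big(B\otimes_A\mathsf{L}\Omega^1_{\mathcal{X},x},M\big),$$
and analogously for $\mathcal{Y}$, with the derivations into $\mathcal{Y}'$ at $y$ corepresented by $\mathsf{L}\Omega^1_{\mathcal{Y}',y}$. Substituting these identifications into the pullback formula and using that $\mathrm{Map}_{\mathbf{Mod}_B}(-,M)$ takes pushouts to pullbacks, the functor $\mathrm{Der}_{\mathcal{X}'}(\mathrm{Spec}(B),-)$ is corepresented by
$$\mathsf{L}\Omega^1_{\mathcal{X}',y}\cong B\otimes_A\mathsf{L}\Omega^1_{\mathcal{X},x}\coprod_{B\otimes_A\mathsf{L}\Omega^1_{\mathcal{Y},x}}\mathsf{L}\Omega^1_{\mathcal{Y}',y},$$
which lies in $\mathbf{F}_B$ because good systems of module categories are closed under finite colimits.

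To conclude, I would appeal to the stability of $\mathbf{Mod}_B$: cocartesian squares coincide with cartesian squares, so the cocartesian identification of $\mathsf{L}\Omega^1_{\mathcal{X}',y}$ just obtained is exactly the desired cartesian square. The main obstacle in this approach is the $(\infty,1)$-categorical bookkeeping: one must verify that the fibres of the three derivation spaces are taken over compatible basepoints (all of which are induced from $y$ via the structure maps of the cartesian diagram), and that the base change maps $u^\ast$ from the definition of a global cotangent complex assemble into a natural morphism of pullback diagrams. Once this coherence is in hand, the remainder of the argument is a formal application of Yoneda and of stability.
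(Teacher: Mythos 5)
Your proposal is correct and follows essentially the same route as the paper: the paper's proof consists precisely of exhibiting the pullback square of derivation spaces $\mathrm{Der}_{\mathcal{X}'}(\mathrm{Spec}(B),M)\cong \mathrm{Der}_{\mathcal{X}}(\mathrm{Spec}(B),M)\times_{\mathrm{Der}_{\mathcal{Y}}(\mathrm{Spec}(B),M)}\mathrm{Der}_{\mathcal{Y}'}(\mathrm{Spec}(B),M)$ (together with the base-change identifications over $u$) and deferring the rest to To\"en--Vezzosi. Your version simply fills in the omitted formal steps --- corepresentability, Yoneda, and stability turning the resulting pushout into the stated cartesian square --- which is exactly the intended argument.
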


\begin{proof}
This is as in \cite{toen2008homotopical}, and follows from the pullback square
\begin{displaymath}
\xymatrix{
\mathrm{Der}_{\mathcal{X}'}(\mathrm{Spec}(B),M)\ar[d]\ar[r] & \mathrm{Der}_{\mathcal{Y}'}(\mathrm{Spec}(B),M)\ar[d]\\
\mathrm{Der}_{\mathcal{X}}(\mathrm{Spec}(B),M)\cong\mathrm{Der}_{\mathcal{X}}(\mathrm{Spec}(A),M)\ar[r] & \mathrm{Der}_{\mathcal{Y}}(\mathrm{Spec}(B),M)\cong\mathrm{Der}_{\mathcal{Y}}(\mathrm{Spec}(A),M).
}
\end{displaymath}
\end{proof}

An important class of maps which have relative cotangent complexes is the class of geometrically \'{e}tale maps.

\begin{definition}
A map $f:\mathcal{X}\rightarrow\mathcal{Y}$ of prestacks is said to be \textit{geometrically \'{e}tale} if for any map $x:\mathrm{Spec}(A)\rightarrow\mathcal{X}$ with $\mathrm{Spec}(A)\in\mathbf{A}$, $\mathsf{L}\Omega^{1}_{\mathcal{X}\big\slash\mathcal{Y},x}$ is zero. 
\end{definition}

Note that the class of geometrically \'{e}tale maps is closed under pullback and composition, and contains isomorphisms.

%
%

\begin{example}
Let $f:\mathcal{X}\rightarrow\mathcal{Y}$ be a monomorphism in $\mathbf{PreStk}(\mathbf{A})$. Then $f$ is geometrically \'{e}tale. Indeed, by construction of the contangent complex, for any $x:\mathrm{Spec}(A)\rightarrow\mathcal{X}$ the fibre of $\mathcal{X}\rightarrow\mathcal{Y}$ over $x$ is contractible. Thus the fibre of $\mathrm{Der}_{\mathcal{X}}(\mathrm{Spec}(A),-)\rightarrow\mathrm{Der}_{\mathcal{Y}}(\mathrm{Spec}(A),-)$ is contractible, and
$\mathsf{L}\Omega^{1}_{\mathcal{X}\big\slash\mathcal{Y},x}\cong0$. 
\end{example}

\begin{definition}\label{defn:geomet}
An $n$-geometric stack $\mathcal{X}$ in $\mathbf{Stk}_{n}(\mathbf{M},\tau,\mathbf{P},\mathbf{A})$ is said to be \textit{geometrically \'{e}tale} if
\begin{enumerate}
\item
 it has an atlas $\{U_{i}\rightarrow\mathcal{X}\}_{i\in\mathcal{I}}$ such that each $U_{i}\rightarrow\mathcal{X}$ is geometrically \'{e}tale.
  \item
  The map $\mathcal{X}\rightarrow\mathcal{X}\times\mathcal{X}$ is geometrically \'{e}tale.
\end{enumerate}
\end{definition}


\begin{corollary}
Schemes in $\mathbf{Sch}(\mathbf{M},\tau,\mathbf{P},\mathbf{A})$ are geometrically \'{e}tale.
\end{corollary}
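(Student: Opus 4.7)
The plan is to reduce the statement directly to the example immediately preceding Definition \ref{defn:geomet}, which asserts that any monomorphism of prestacks is geometrically étale. Let $\mathcal{X} \in \mathbf{Sch}(\mathbf{M},\tau,\mathbf{P},\mathbf{A})$, so that $\mathcal{X}$ is an $n$-scheme for some $n \geq -1$. By definition, $\mathcal{X}$ admits an $n$-atlas $\{U_i \to \mathcal{X}\}_{i \in \mathcal{I}}$ in which each structural map $U_i \to \mathcal{X}$ is a monomorphism in $\mathbf{PreStk}(\mathbf{A},\tau|_{\mathbf{A}})$, and the diagonal $\mathcal{X} \to \mathcal{X} \times \mathcal{X}$ is itself a monomorphism.

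First I would check condition (1) of Definition \ref{defn:geomet}: for each $i \in \mathcal{I}$, the map $U_i \to \mathcal{X}$ is a monomorphism, hence by the cited example is geometrically étale. Thus $\{U_i \to \mathcal{X}\}_{i \in \mathcal{I}}$ is an atlas consisting of geometrically étale maps, yielding condition (1). Next, condition (2) asks that $\mathcal{X} \to \mathcal{X} \times \mathcal{X}$ be geometrically étale; since this diagonal is a monomorphism by the second defining condition of an $n$-scheme, again by the example it is geometrically étale.

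Combining these two observations immediately shows that $\mathcal{X}$ satisfies Definition \ref{defn:geomet}, completing the proof. There is essentially no obstacle here beyond unpacking the definitions; the content has already been extracted in the preceding example (which in turn follows from the fact that the fibres of a monomorphism are contractible, so that the associated relative derivation functor vanishes). The only care one must take is to verify that the example genuinely applies in both places — for the atlas maps and for the diagonal — which is immediate since the defining monomorphism conditions of an $n$-scheme are stated precisely in $\mathbf{PreStk}(\mathbf{A},\tau|_{\mathbf{A}})$, the ambient category in which geometric étaleness is tested.
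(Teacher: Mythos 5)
Your proof is correct and is exactly the intended argument: the paper states this as an immediate corollary (with no written proof) of the preceding example that monomorphisms are geometrically étale, combined with the two monomorphism conditions in the definition of an $n$-scheme. Nothing further is needed.
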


In the next section we will show that many more geometric stacks and representable maps of interest have cotangent complexes.

\section{Obstruction Theories}\label{sec:obstruction_theories}

In this section we introduce shifted tangent stacks and obstruction theories, following \cite{toen2008homotopical} Section 1.4.2. This will allow us to put a geometric structure on the shifted tangent stack of a geometric stack. Shifted tangent stacks geometrise the (shifted) cotangent complex and in particular, the de Rham complex. We fix a weak relative DAG context $(\mathbf{C},\mathbf{C}_{\ge0},\mathbf{C}_{\le0},\mathbf{C}^{0},\tau,\mathbf{P},\mathbf{A},\mathbf{F})$.

\subsection{Obstruction Theories}

In this section we discuss obstruction theories, which we will need to construct an atlas for the shifted tangent stack of a scheme.

\begin{definition}[ \cite{toen2008homotopical} Definition 1.4.2.1]
\begin{enumerate}
\item
A pre-stack $\mathcal{X}\in\mathbf{PreStk}(\mathbf{Aff}^{cn}_{\mathbf{C}})$ is said to be \textit{inf-cartesian relative to }$\mathbf{A}$ if for any $A$ with $\mathrm{Spec}(A)\in\mathbf{A}$, any $1$-connective $M\in\mathbf{F}^{k}_{\mathbf{A}}$, and any derivation $d\in\pi_{0}\mathrm{Der}(A,M)$, the square 
\begin{displaymath}
\xymatrix{
\mathcal{X}(A\oplus_{d}\Omega M)\ar[d]\ar[r] & \mathcal{X}(A)\ar[d]^{d}\\
\mathcal{X}(A)\ar[r]^{s} & \mathcal{X}(A\oplus M)
}
\end{displaymath}
is cartesian in $\textbf{sSet}$.
\item
A pre-stack $\mathcal{X}$ \textit{has an obstruction theory relative to }$\mathbf{A}$ if it has a global cotangent complex relative to $\mathbf{A}$, and it is inf-cartesian relative to $\mathbf{A}$.
\end{enumerate}
\end{definition}

Let us show that any $\widetilde{\mathrm{Spec}}(A)\in\mathbf{A}$ has an obstruction theory relative to $\mathbf{A}$, where $\widetilde{\mathrm{Spec}}(A)$ is the stackification of $\mathrm{Spec}(A)$ in $\mathbf{Stk}(\mathbf{Aff}^{cn}_{\mathbf{C}},\tau)$. The following is proven exactly as in \cite{toen2008homotopical} Proposition 1.4.2.4 (1).
\begin{proposition}
Let $\mathrm{Spec}(A)\in\mathbf{A}$, $M\in\mathbf{F}_{A}^{1}$ and $d:A\rightarrow M$ a derivation. Then for any $\mathrm{Spec}(B)\in\mathbf{A}$ the diagram
\begin{displaymath}
\xymatrix{
\mathrm{Map}_{\mathrm{DAlg}^{cn}(\mathbf{C})}(B,A\oplus_{d}\Omega M)\ar[d]\ar[r] & \mathrm{Map}_{\mathrm{DAlg}^{cn}(\mathbf{C})}(B,A)\ar[d]\\
\mathrm{Map}_{\mathrm{DAlg}^{cn}(\mathbf{C})}(B,A)\ar[r]& \mathrm{Map}_{\mathrm{DAlg}^{cn}(\mathbf{C})}(B,A\oplus M)
}
\end{displaymath}
is cartesian in $\mathbf{sSet}$.
\end{proposition}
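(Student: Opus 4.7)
The plan is that this statement is essentially a tautology once we correctly interpret the construction of $A\oplus_{d}\Omega M$. The square $A\oplus_{d}\Omega M \to A \rightrightarrows A\oplus M$ is, by definition, a homotopy pullback in $\mathrm{DAlg}^{cn}(\mathbf{C})$, and the functor $\mathrm{Map}_{\mathrm{DAlg}^{cn}(\mathbf{C})}(B,-)$ preserves arbitrary homotopy limits; applying it termwise yields the desired cartesian square of mapping spaces.

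First I would verify that every object and arrow in the defining pullback square lives in $\mathrm{DAlg}^{cn}(\mathbf{C})$. Since $M\in\mathbf{F}_{A}^{1}\subseteq \mathbf{C}_{\geq 1}$, the loop module $\Omega M = 0\times_{M} 0$ lies in $\mathbf{C}_{\geq 0}$, so that the split square-zero extension $A\oplus \Omega M$ is a connective derived commutative $A$-algebra (using Raksit's square-zero extension construction, which preserves connectivity since the $t$-structure is compatible with the monoidal structure). Similarly $A\oplus M$ is connective, and the zero section $s$ as well as the derivation $d:A\to A\oplus M$ are morphisms in $\mathrm{DAlg}^{cn}(\mathbf{C})$.

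Next I would argue that the homotopy pullback defining $A\oplus_{d}\Omega M$ may be computed either in $\mathrm{DAlg}^{cn}(\mathbf{C})$ or in $\mathrm{DAlg}(\mathbf{C})$, with the same result. The inclusion $\mathrm{DAlg}^{cn}(\mathbf{C})\hookrightarrow \mathrm{DAlg}(\mathbf{C})$ is a right adjoint (its left adjoint is truncation with respect to the induced $t$-structure on $\mathrm{DAlg}(\mathbf{C})$ given by the forgetful functor to $\mathbf{C}$), so it preserves limits; equivalently, all entries and structure maps already being connective forces the pullback in $\mathrm{DAlg}(\mathbf{C})$ to be connective. So $A\oplus_{d}\Omega M$ genuinely represents the pullback in $\mathrm{DAlg}^{cn}(\mathbf{C})$.

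The conclusion then follows formally: for any $B\in\mathrm{DAlg}^{cn}(\mathbf{C})$, the corepresentable functor $\mathrm{Map}_{\mathrm{DAlg}^{cn}(\mathbf{C})}(B,-):\mathrm{DAlg}^{cn}(\mathbf{C})\to \mathbf{sSet}$ preserves all small limits, and in particular sends the cartesian square defining $A\oplus_{d}\Omega M$ to a cartesian square in $\mathbf{sSet}$. Note that the assumption $\mathrm{Spec}(B)\in\mathbf{A}$ plays no essential role in the argument — the same conclusion would hold for any $B\in\mathrm{DAlg}^{cn}(\mathbf{C})$ — but it is the form in which the statement will be used to deduce the inf-cartesian property of $\widetilde{\mathrm{Spec}}(A)$, and for the assignment $M\mapsto A\oplus M$ to land in $\mathbf{A}$ (via the last axiom of a good system of module categories). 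I do not expect any genuine obstacle here; the only thing requiring a moment's care is the connectivity bookkeeping that ensures the pullback genuinely exists inside $\mathrm{DAlg}^{cn}(\mathbf{C})$.
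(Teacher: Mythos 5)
Your proposal is correct and follows essentially the same route as the paper, which simply defers to To\"en--Vezzosi's Proposition 1.4.2.4(1): since $A\oplus_{d}\Omega M$ is \emph{defined} as the homotopy pullback $A\times_{A\oplus M}A$, the claim reduces to the fact that $\mathrm{Map}(B,-)$ preserves homotopy pullbacks, plus the bookkeeping that the pullback stays connective. One local slip: the inclusion $\mathrm{DAlg}^{cn}(\mathbf{C})\hookrightarrow\mathrm{DAlg}(\mathbf{C})$ is a \emph{left} adjoint (its \emph{right} adjoint is $\tau_{\ge0}$, cf.\ Lemma \ref{lem:truncation_inclusion_adjunction}), so it preserves colimits, not limits; the clause ``is a right adjoint \ldots so it preserves limits'' is backwards and proves nothing. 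Your parallel argument is the one that actually works: the fibre of $A\times_{A\oplus M}A\to A$ is $\Omega M$, which is connective precisely because $M\in\mathbf{F}^{1}_{A}$ is $1$-connective, and $\mathbf{C}_{\ge0}$ is closed under extensions, so the pullback computed in $\mathrm{DAlg}(\mathbf{C})$ lands in the full subcategory $\mathrm{DAlg}^{cn}(\mathbf{C})$ and hence is also the pullback there. (Note that ``all entries and structure maps connective'' alone does \emph{not} force a pullback to be connective --- $0\times_{M}0$ for $M$ merely connective is the standard counterexample --- so the $1$-connectivity of $M$ is doing real work and should be cited at that step, as you did implicitly when computing $\Omega M$.)
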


The next result is a simplification of Lemma 1.4.3.9 in \cite{toen2008homotopical}. Let $\mathrm{Spec}(A)$ be in $\mathbf{A}$, $M\in\mathbf{F}_{A}^{1}$,  $d:A\rightarrow M$ a derivation, and $K'_{\bullet}\rightarrow\mathrm{Spec}(A)$ a hypercover. Now each $K'_{n}$ is a coproduct of representables, $K'_{n}\cong\coprod_{\mathcal{I}_{n}}\mathrm{Spec}(A_{i_{n}})$. Denote $K^{d}$ the simplicial object with $K_{n}=\coprod_{\mathcal{I}_{n}}\mathrm{Spec}(A_{i_{n}}\oplus _{d_{i_{n}}}\Omega(A_{i_{n}}\otimes_{A}M))$, where $d_{i_{n}}:A_{i_{n}}\rightarrow A_{i_{n}}\otimes_{A}M$ is the unique derivation extending $d$. 

\begin{proposition}
Let $\mathrm{Spec}(A)\in\mathbf{A}$, $M\in\mathbf{F}_{A}^{1}$ and $d:A\rightarrow M$ a derivation. Then the assignment $K\mapsto K^{d}$ furnishes an equivalence of categories between the category of hypercovers of $\mathrm{Spec}(A)$, and the category of hypercovers of $\mathrm{Spec}(A\oplus_{d}\Omega M)$.
\end{proposition}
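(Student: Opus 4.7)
The plan is to exhibit a quasi-inverse $L_{\bullet}\mapsto L_{\bullet,\mathrm{red}}$ obtained by base change along the infinitesimal thickening $\iota\colon \mathrm{Spec}(A)\to\mathrm{Spec}(A\oplus_{d}\Omega M)$ dual to one of the two projections $A\oplus_{d}\Omega M\to A$. The equivalence of categories will then reduce to the deformation-theoretic rigidity of formally \'{e}tale morphisms under square-zero extensions.

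First I would establish the identification
\[ A_{i_{n}}\otimes_{A}(A\oplus_{d}\Omega M)\;\cong\; A_{i_{n}}\oplus_{d_{i_{n}}}\Omega(A_{i_{n}}\otimes_{A}M) \]
for any $\mathrm{Spec}(A_{i_{n}})\to\mathrm{Spec}(A)$ in $\mathbf{P}$. Here $d_{i_{n}}$ is the unique derivation extending $d$ along $A\to A_{i_{n}}$, which exists and is unique because $\mathbf{P}$ consists of formally \'{e}tale maps, so $\mathsf{L}\Omega^{1}_{A_{i_{n}}/A}\cong 0$. The identification follows by base-changing the defining pullback square for $A\oplus_{d}\Omega M$ along $A\to A_{i_{n}}$: the resulting square stays cartesian because the two maps $A\rightrightarrows A\oplus M$ share a common retraction, and one may equivalently check the universal property using the preceding proposition. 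Consequently $K^{d}_{n}\cong K'_{n}\times_{\mathrm{Spec}(A)}\mathrm{Spec}(A\oplus_{d}\Omega M)$, and $K^{d}_{\bullet}$ is indeed a hypercover, since the hypercover condition and the class $\mathbf{P}$ are both pullback-stable and the relative $(\infty,1)$-geometry tuple is strong.

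The candidate inverse sends $L_{\bullet}$ to $L_{\bullet,\mathrm{red}}\defeq L_{\bullet}\times_{\mathrm{Spec}(A\oplus_{d}\Omega M)}\mathrm{Spec}(A)$ via $\iota$; this is a hypercover of $\mathrm{Spec}(A)$ by the same pullback-stability argument. The composite $K\mapsto(K^{d})_{\mathrm{red}}$ is naturally equivalent to the identity because base changing $K'_{n}\times_{\mathrm{Spec}(A)}\mathrm{Spec}(A\oplus_{d}\Omega M)$ back along $\iota$ recovers $K'_{n}$, by the cancellation $A_{i_{n}}\otimes_{A}(A\oplus_{d}\Omega M)\otimes_{A\oplus_{d}\Omega M}A\cong A_{i_{n}}$, which again uses only pushout-pullback compatibility along the common-retraction diagram.

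The main obstacle is verifying the converse: every $\mathbf{P}$-cover $\mathrm{Spec}(B)\to\mathrm{Spec}(A\oplus_{d}\Omega M)$ should arise, up to canonical equivalence, as the base change of a unique (up to contractible choice) $\mathbf{P}$-cover $\mathrm{Spec}(B_{0})\to\mathrm{Spec}(A)$, where $B_{0}\defeq B\otimes_{A\oplus_{d}\Omega M}A$. This is the derived analogue of the topological invariance of the \'{e}tale site under nilpotent thickenings. The argument rests on showing that the spaces of obstructions and automorphisms for deforming a $\mathbf{P}$-morphism across the square-zero extension $\iota$ are contractible; both are governed by mapping spaces out of $\mathsf{L}\Omega^{1}_{B_{0}/A}\cong 0$ into $\Omega M$, using the inf-cartesian property of the good module system $\mathbf{F}$ together with the formal \'{e}taleness built into $\mathbf{P}$. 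Once this rigidity is in place, functoriality on morphisms of hypercovers (which are assembled out of the same type of $\mathbf{P}$-data) follows by the same obstruction-theoretic argument applied levelwise.
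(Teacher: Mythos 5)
Your high-level architecture (forward functor = infinitesimal deformation of the cover, quasi-inverse = restriction along $\iota\colon\mathrm{Spec}(A)\to\mathrm{Spec}(A\oplus_{d}\Omega M)$, with formal \'{e}taleness and obstruction theory as the engine) is the right one — it is exactly the shape of the T\"{o}en--Vezzosi argument (Lemma 1.4.3.9) that the paper is citing. But the way you try to dispose of the ``easy'' two-thirds of the proof does not work, and the error is structural rather than cosmetic.

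The identification $A_{i_{n}}\otimes_{A}(A\oplus_{d}\Omega M)\cong A_{i_{n}}\oplus_{d_{i_{n}}}\Omega(A_{i_{n}}\otimes_{A}M)$ is ill-typed: $A\oplus_{d}\Omega M$ is defined as the pullback $A\times_{A\oplus M}A$ along $d$ and the zero section, so it comes equipped with projections \emph{onto} $A$; it is a square-zero extension \emph{of} $A$, and $A$ is an $(A\oplus_{d}\Omega M)$-algebra, not the other way around. A ring map $A\to A\oplus_{d}\Omega M$ splitting the extension exists precisely when $d$ is null-homotopic. Consequently neither $A_{i_{n}}\otimes_{A}(A\oplus_{d}\Omega M)$ nor $K'_{n}\times_{\mathrm{Spec}(A)}\mathrm{Spec}(A\oplus_{d}\Omega M)$ makes sense, and $K^{d}$ is genuinely \emph{not} a base change of $K'$. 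This removes the justification for (i) $K^{d}_{\bullet}$ being a hypercover of $\mathrm{Spec}(A\oplus_{d}\Omega M)$ at all (you still need each $A_{i_{n}}\oplus_{d_{i_{n}}}\Omega(A_{i_{n}}\otimes_{A}M)$ to be formally \'{e}tale, in $\mathbf{P}$, and in $\mathbf{A}$ over $A\oplus_{d}\Omega M$, and the epimorphism/cover conditions to persist), and (ii) the computation that restriction along $\iota$ recovers $K'$, since the ``cancellation'' you invoke is a cancellation of a tensor product that does not exist. The correct statement in that direction, $(A_{i_{n}}\oplus_{d_{i_{n}}}\Omega(A_{i_{n}}\otimes_{A}M))\otimes_{A\oplus_{d}\Omega M}A\cong A_{i_{n}}$, is true but must itself be proved (e.g.\ by comparing universal properties of the two square-zero extensions, or as part of the lifting argument); it is not a formal pushout--pullback cancellation.

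The upshot is that essentially the \emph{entire} proposition — both that $K\mapsto K^{d}$ lands in hypercovers of the thickening and that it is quasi-inverse to restriction — rests on the unique-lifting statement you defer to your final paragraph: for $B_{0}$ formally \'{e}tale over $A$, the space of formally \'{e}tale lifts of $B_{0}$ over $A\oplus_{d}\Omega M$ is contractible, because the obstruction and deformation spaces are $\mathrm{Map}_{\mathbf{Mod}_{B_{0}}}(\mathsf{L}\Omega^{1}_{B_{0}/A},-)$ applied to shifts of $B_{0}\otimes_{A}\Omega M$, and $\mathsf{L}\Omega^{1}_{B_{0}/A}\cong 0$. That paragraph contains the correct idea and is where the real proof lives; you should promote it from an afterthought about ``the converse'' to the main body of the argument, run it levelwise over the hypercover (using that $d_{i_{n}}$ exists and is unique for the same reason), and then deduce both directions of the equivalence from it, rather than attempting a base-change shortcut that the direction of the thickening forbids.
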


This allows us to prove the following.

\begin{corollary}
Let $\mathrm{Spec}(B)\in\mathbf{A}$. Then $\widetilde{\mathrm{Spec}(B)}$ has an obstruction theory relative to $\mathbf{A}$. In particular for any $\mathrm{Spec}(A)$in $\mathbf{A}$, $M\in\mathbf{F}_{A}^{1}$ and $d:A\rightarrow M$ a derivation the map
$$\mathrm{Spec}(B)(A\oplus_{d}\Omega M)\rightarrow\widetilde{\mathrm{Spec}(B)}(A\oplus_{d}\Omega M)$$
is an equivalence.
\end{corollary}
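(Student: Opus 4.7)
The plan is to first establish the ``in particular'' equivalence $\mathrm{Spec}(B)(A \oplus_d \Omega M) \simeq \widetilde{\mathrm{Spec}(B)}(A \oplus_d \Omega M)$, and then deduce both inf-cartesianness and the existence of a global cotangent complex for $\widetilde{\mathrm{Spec}(B)}$ from it. The key inputs will be the preceding proposition (inf-cartesianness of the representable $\mathrm{Spec}(B)$), the preceding hypercover bijection $K \leftrightarrow K^{d}$, and the full-faithfulness of $i^{\#}$ on objects of $\mathbf{A}$ coming from strongness plus admissibility.

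For the ``in particular'' equivalence, I will compute the stackification via the filtered colimit formula
\[ \widetilde{\mathrm{Spec}(B)}(C) \simeq \colim_{K_{\bullet} \to \mathrm{Spec}(C)} \holim_{[n] \in \Delta} \mathrm{Map}(K_{n}, \mathrm{Spec}(B)) \]
with $C = A \oplus_{d} \Omega M$. Using the previous proposition to reparametrise the colimit by hypercovers $K'_{\bullet}$ of $\mathrm{Spec}(A)$ via $K' \mapsto (K')^{d}$, and applying the inf-cartesian proposition pointwise to each summand of $(K')^{d}_{n} = \coprod_{\mathcal{I}_{n}} \mathrm{Spec}(A_{i_{n}} \oplus_{d_{i_{n}}} \Omega(A_{i_{n}} \otimes_{A} M))$, each level $\mathrm{Map}((K')^{d}_{n}, \mathrm{Spec}(B))$ will be rewritten as a finite homotopy pullback
\[ \mathrm{Map}(K'_{n}, \mathrm{Spec}(B)) \times_{\mathrm{Map}((K')^{M}_{n}, \mathrm{Spec}(B))} \mathrm{Map}(K'_{n}, \mathrm{Spec}(B)), \]
where $(K')^{M}_{n}$ denotes the base-change of $K'_{n}$ along $\mathrm{Spec}(A \oplus M) \to \mathrm{Spec}(A)$. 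Commuting the cosimplicial $\holim$ and the filtered colimit past this finite pullback will yield
\[ \widetilde{\mathrm{Spec}(B)}(A \oplus_{d} \Omega M) \simeq \widetilde{\mathrm{Spec}(B)}(A) \times_{\widetilde{\mathrm{Spec}(B)}(A \oplus M)} \widetilde{\mathrm{Spec}(B)}(A). \]
Since $A$ and $A \oplus M$ both lie in $\mathbf{A}$ (the latter by the good-system axiom, as $M \in \mathbf{F}_{A}^{1}$), strongness plus admissibility (as in the earlier full-faithfulness argument for $i^{\#}$) lets me replace each $\widetilde{\mathrm{Spec}(B)}$ on the right with $\mathrm{Spec}(B)$, and one final application of the preceding inf-cartesian proposition for $\mathrm{Spec}(B)$ identifies this pullback with $\mathrm{Spec}(B)(A \oplus_{d} \Omega M)$.

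Inf-cartesianness of $\widetilde{\mathrm{Spec}(B)}$ is then immediate from the displayed pullback. For the global cotangent complex at $x \colon \mathrm{Spec}(A) \to \widetilde{\mathrm{Spec}(B)}$ (equivalently a map $B \to A$ in $\mathsf{DAlg}^{cn}(\mathbf{C})$ by admissibility), I will use the identification $A \oplus M \simeq A \oplus_{0} \Omega(M[1])$ together with the ``in particular'' equivalence applied to $M[1] \in \mathbf{F}_{A}^{1}$ to upgrade $\widetilde{\mathrm{Spec}(B)}(A \oplus M) \simeq \mathrm{Spec}(B)(A \oplus M)$ from the $1$-connective case to all $M \in \mathbf{F}_{A}^{cn}$; this reduces $\mathrm{Der}_{\widetilde{\mathrm{Spec}(B)}}(\mathrm{Spec}(A), -)$ to $\mathrm{Der}_{A}(B, -)$, represented by $\mathsf{L}\Omega^{1}_{B} \otimes_{B} A \in \mathbf{F}_{A}$ (the latter by good-system closure under base change), with base-change compatibility along maps in $\mathbf{A}$ following from the standard algebraic identity for the cotangent complex.

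The main obstacle will be the commutation of the filtered colimit over hypercovers with the cosimplicial $\holim_{\Delta}$ and the finite pullback. Filteredness of the hypercover category makes this formal in $\infty$-groupoids, but the argument requires invoking the bijection $K \leftrightarrow K^{d}$ \emph{before} taking the colimit, so that the indexing categories are identified on both sides; similarly, the pointwise application of the previous proposition requires that each summand $A_{i_{n}}$ of $K'_{n}$ lies in $\mathbf{A}$, which is ensured by strongness (hypercovers of an $\mathbf{A}$-object in $\tau$ may be taken to be hypercovers in $\tau|_{\mathbf{A}}$).
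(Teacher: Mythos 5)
Your proposal is correct and follows essentially the same route as the paper: compute $\widetilde{\mathrm{Spec}(B)}(A\oplus_d\Omega M)$ as a filtered colimit over hypercovers, reindex via the bijection $K\mapsto K^d$, apply inf-cartesianness of representables levelwise, commute the filtered colimit past the finite pullback, and use $\mathbf{A}$-admissibility to identify the resulting terms with $\mathrm{Spec}(B)(A)$ and $\mathrm{Spec}(B)(A\oplus M)$. Your explicit treatment of the global cotangent complex (via the shift trick $A\oplus M\simeq A\oplus_0\Omega(M[1])$) fills in a step the paper leaves implicit, and is sound since $\mathbf{F}_A$ is closed under finite colimits and hence suspension.
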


\begin{proof}
We have
\begin{align*}
\widetilde{\mathrm{Spec}(B)}(A\oplus_{d}\Omega M) &\cong\colim_{K_{\bullet}\rightarrow\mathrm{Spec}(A)}\mathrm{Spec}(B)(K^{d}_{\bullet})\\
&\cong\colim_{K_{\bullet}\rightarrow\mathrm{Spec}(A)}\mathrm{Spec}(B)(K_{\bullet})\times_{\mathrm{Spec}(B)(K^{M}_{\bullet})}\mathrm{Spec}(B)(K_{\bullet})
\end{align*}
The colimit runs over the category opposite to the category of hypercovers of $\mathrm{Spec}(A)$. This category is filtered, so this colimit is equivalent to
$$(\colim_{K_{\bullet}\rightarrow\mathrm{Spec}(A)}\mathrm{Spec}(B)(K_{\bullet}))\times_{\colim_{K_{\bullet}\rightarrow\mathrm{Spec}(A)}\mathrm{Spec}(B)(K_{\bullet}^{M})}(\colim_{K_{\bullet}\rightarrow\mathrm{Spec}(A)}\mathrm{Spec}(B)(K_{\bullet}))$$
$K_{\bullet}^{M}\rightarrow \mathrm{Spec}(A\oplus M)$ is a hypercover in $\mathbf{A}$. Thus $\mathrm{Spec}(B)(K_{\bullet}^{M})\cong\mathrm{Spec}(B)(A\oplus M)$. Similarly $\mathrm{Spec}(B)(K_{\bullet})\cong\mathrm{Spec}(B)(A)$. This gives
$$\widetilde{\mathrm{Spec}(B)}(A\oplus_{d}\Omega M)\cong\mathrm{Spec}(B)(A)\times_{\mathrm{Spec}(B)(A\oplus M)}\mathrm{Spec}(B)(A)\cong\mathrm{Spec}(B)(A\oplus_{d}\Omega M).$$
\end{proof}

\begin{proposition}\label{prop:formallyetcot}
Let $f:\mathcal{X}\rightarrow\mathcal{Y}$ be a formally \'{e}tale map between stacks in $\mathbf{Stk}(\mathbf{A},\tau|_{\mathbf{A}})$. Suppose that $\mathcal{X}\in\mathbf{Stk}_{n}(\mathbf{A},\tau|_{\mathbf{A}},\textbf{P}|_{\mathbf{A}})$ with atlas.
$$\{U_{i}\rightarrow\mathcal{X}\}_{i\in\mathcal{I}}$$
Let $y:V=\mathrm{Spec}(A)\rightarrow\mathcal{X}$ be a map with $\mathrm{Spec}(A)$ in $\mathbf{A}$. If $f$ has a global contangent complex relative to $\mathbf{A}$ then $\mathsf{L}\Omega^{1}_{\mathcal{X}\big\slash\mathcal{Y},y}\cong0$.
\end{proposition}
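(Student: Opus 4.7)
The overall strategy is to use the atlas of $\mathcal{X}$ to reduce vanishing at the arbitrary $\mathbf{A}$-point $y$ to vanishing at $\mathbf{A}$-points that factor through the atlas, where the hypothesis on $f$ can be exploited via the transitivity cofiber sequence together with the vanishing of $\mathsf{L}\Omega^1_{U_i/\mathcal{X}}$. The key ingredients are weak \v{C}ech descent for objects of $\mathbf{F}_A$, the base-change compatibility supplied by the global cotangent complex hypothesis, and the fact that maps in $\mathbf{P}$ are formally \'etale.

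First, I would pull back the atlas along $y:V\rightarrow\mathcal{X}$ to obtain $V_i := V\times_{\mathcal{X}} U_i$. Since the relative $(\infty,1)$-geometry tuple is strong and each $U_i\rightarrow\mathcal{X}$ is in $\mathbf{P}$, each $V_i=\mathrm{Spec}(A_i)$ lies in $\mathbf{A}$, the family $\{V_i\rightarrow V\}$ is a cover in $\tau|_{\mathbf{A}}$, and each projection $V_i\rightarrow V$ is in $\mathbf{P}$, hence formally \'etale. By the global cotangent complex hypothesis, base change along these projections gives equivalences
$$\mathsf{L}\Omega^1_{\mathcal{X}/\mathcal{Y}, y}\otimes_A A_i \xrightarrow{\sim} \mathsf{L}\Omega^1_{\mathcal{X}/\mathcal{Y}, y_i},$$
where $y_i:V_i\rightarrow\mathcal{X}$ is the composition. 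Since $\mathsf{L}\Omega^1_{\mathcal{X}/\mathcal{Y}, y}$ lies in $\mathbf{F}_A$, which satisfies weak \v{C}ech descent for $\tau$, proving its vanishing reduces to showing each $\mathsf{L}\Omega^1_{\mathcal{X}/\mathcal{Y}, y_i}$ is zero.

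Next, I would factor $y_i$ as $V_i\rightarrow U_i\rightarrow\mathcal{X}$ and apply the transitivity cofiber sequence for the composition $V_i\rightarrow U_i\rightarrow\mathcal{X}\rightarrow\mathcal{Y}$. Since $U_i\rightarrow\mathcal{X}$ is in $\mathbf{P}$, hence formally \'etale, the relative cotangent complex $\mathsf{L}\Omega^1_{U_i/\mathcal{X}}$ vanishes. This identifies the pullback of $\mathsf{L}\Omega^1_{\mathcal{X}/\mathcal{Y}}$ at the atlas point with the relative cotangent complex of the affine composite $U_i\rightarrow\mathcal{Y}$, which (as the composition of the two formally \'etale maps $U_i\rightarrow\mathcal{X}$ and $f$) is itself formally \'etale, yielding the desired vanishing.

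The main obstacle will be making the transitivity step precise: one must carefully distinguish cotangent complexes of maps of stacks from cotangent complexes of maps of derived commutative algebras, and verify that the cofiber sequences and base-change identifications (as in Proposition \ref{prop:globalcotinfad} and Lemma \ref{lem:pullbackgt}) indeed propagate through the chain $V_i\rightarrow U_i\rightarrow\mathcal{X}\rightarrow\mathcal{Y}$. In particular, one has to unpack the definition of $\mathrm{Der}_{\mathcal{X}/\mathcal{Y}}$ as a fiber of derivation spaces to pass back and forth between formal \'etaleness of $f$ at the stacky level and the corresponding vanishing statements at atlas-level affine points.
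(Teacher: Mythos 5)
Your overall strategy coincides with the paper's: reduce, via the atlas and weak \v{C}ech descent for objects of $\mathbf{F}_A$, to points that factor through a chart $U$, then run the transitivity fibre sequence $\mathsf{L}\Omega^{1}_{\mathcal{X}/\mathcal{Y},x}\rightarrow\mathsf{L}\Omega^{1}_{U/\mathcal{Y},x}\rightarrow\mathsf{L}\Omega^{1}_{U/\mathcal{X},x}$, killing the last term because $U\times_{\mathcal{X}}V\rightarrow V$ is formally unramified and the middle term because $U\rightarrow\mathcal{Y}$ is a composite of formally \'etale maps. Your second and third steps are exactly the paper's argument.

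There is, however, a genuine gap in your reduction step. You assert that each $V_i=V\times_{\mathcal{X}}U_i$ is of the form $\mathrm{Spec}(A_i)$ with $\mathrm{Spec}(A_i)\in\mathbf{A}$. For an $n$-atlas with $n\ge 1$ the maps $U_i\rightarrow\mathcal{X}$ are only in $(n-1)$-$\mathbf{P}$, i.e.\ $(n-1)$-representable, so $V\times_{\mathcal{X}}U_i$ is an $(n-1)$-geometric stack and in general not affine; consequently the family $\{V_i\rightarrow V\}$ is not a cover in $\tau|_{\mathbf{A}}$ and cannot be fed into weak \v{C}ech descent, which is formulated for covers by affines. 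The appeal to strongness of the geometry tuple is also misplaced here: that axiom governs pullbacks of covers of objects of $\mathbf{M}$ along maps from $\mathbf{A}$, not pullbacks of atlas maps of geometric stacks. The correct reduction, which is what the paper does, uses only that $\coprod_i U_i\rightarrow\mathcal{X}$ is an epimorphism of stacks: one chooses a cover $\{\mathrm{Spec}(A_j)\rightarrow\mathrm{Spec}(A)\}$ in $\tau|_{\mathbf{A}}$ such that each composite $x_j:\mathrm{Spec}(A_j)\rightarrow\mathcal{X}$ factors through some $U_{i(j)}$, and then applies base change $A_j\otimes_A\mathsf{L}\Omega^{1}_{\mathcal{X}/\mathcal{Y},x}\cong\mathsf{L}\Omega^{1}_{\mathcal{X}/\mathcal{Y},x_j}$ together with descent for $\mathbf{F}_A$. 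With that replacement your argument goes through; the remaining point you flag yourself (translating ``formally \'etale'' for maps of algebras into the vanishing $\mathsf{L}\Omega^{1}_{U/\mathcal{X},x}\cong 0$) is handled by pulling back to $\mathsf{L}\Omega^{1}_{U\times_{\mathcal{X}}V/V,s}$ as in Lemma \ref{lem:pullbackgt}.
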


\begin{proof}
The proof is formally identical to \cite{toen2008homotopical}*{Lemma 2.2.5.5}. Without loss of generality, we may assume that $\mathcal{Y}$ is affine. If $\mathcal{X}$ is affine   this is the definition of a formally \'{e}tale map. Suppose the claim has been proven for all $m<n$, and let $\mathcal{X}$ be $n$-geometric. Fix an $n$-atlas $\coprod_{i\in\mathcal{I}}U_{i}\rightarrow\mathcal{X}$, and let $x:\mathrm{Spec}(A)\rightarrow\mathcal{X}$ be a map. We may choose a covering $\{\mathrm{Spec}(A_{j})\rightarrow\mathrm{Spec}(A)\}$ such that each $x_{j}:\mathrm{Spec}(A_{j})\rightarrow\mathcal{X}$ factors through some $U_{i(j)}$. Now $A_{j}\otimes_{A}\mathsf{L}\Omega^{1}_{\mathcal{X}\big\slash\mathcal{Y},x} \cong\mathsf{L}\Omega^{1}_{\mathcal{X}\big\slash\mathcal{Y},x_{j}}$. Because $\mathsf{L}\Omega^{1}_{\mathcal{X}\big\slash\mathcal{Y},x}\in\mathbf{F}_{A}$, it suffices to show that each $\mathsf{L}\Omega^{1}_{\mathcal{X}\big\slash\mathcal{Y},x_{j}}\cong 0$. We write $U=U_{i(j)}$ and replace $x$ by $x_{j}$.

We get a fibration sequence of stable $A$-modules.
$$\mathsf{L}\Omega^{1}_{\mathcal{X}\big\slash\mathcal{Y},x}\rightarrow\mathsf{L}\Omega^{1}_{U\big\slash\mathcal{Y},x}\rightarrow\mathsf{L}\Omega^{1}_{U\big\slash\mathcal{X},x}$$
Since $U\rightarrow \mathcal{Y}$ is formally unramified, $\mathsf{L}\Omega^{1}_{U\big\slash\mathcal{Y},x}\cong0$. On the other hand, $\mathsf{L}\Omega^{1}_{U\big\slash\mathcal{X},x}$ is equivalent to $\mathsf{L}\Omega^{1}_{U\times_{\mathcal{X}}V\big\slash V,s}$ where $s:Y\rightarrow U\times_{\mathcal{X}}V$ is induced by the map $V\rightarrow U$. Since $U\times_{\mathcal{X}}V\rightarrow V$ is formally unramified  $\mathsf{L}\Omega^{1}_{U\times_{\mathcal{X}}V\big\slash V,s}\cong 0$. Thus $\mathsf{L}\Omega^{1}_{\mathcal{X}\big\slash\mathcal{Y},x}\cong0$.  
\end{proof}

The next two results can be proven exactly as in \cite{toen2008homotopical}. The first result uses the fact that each $\mathbf{F}_{A}$ is closed under finite limits and colimits.

\begin{proposition}[ \cite{toen2008homotopical}*{Proposition 1.4.1.11}]\label{prop:exactnesscot}
Let $\mathcal{X}$ be an object of $\mathbf{Stk}_{n}(\mathbf{A},\tau|_{\mathbf{A}},\mathbf{P}|_{\mathbf{A}})$. Assume that for any $\mathrm{Spec}(A)\rightarrow\mathcal{X}$ with $\mathrm{Spec}(A)\in\mathbf{A}$, and any $M\in\mathbf{F}^{cn}_{A}$ the map
$$\mathrm{Der}_{\mathcal{X}}(\mathrm{Spec}(A),\Omega\Sigma M)\rightarrow\Omega\mathrm{Der}_{\mathcal{X}}(\mathrm{Spec}(A),\Sigma M)$$
is an isomorphism. Then $\mathcal{X}$ has a global cotangent complex relative to $\mathbf{A}$which is $(-n)$-connective.
\end{proposition}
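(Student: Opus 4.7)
The plan is to proceed by induction on the level of geometricity $n$. The base case $n=-1$ (representable stacks) is immediate: if $\mathcal{X}\simeq\widetilde{\mathrm{Spec}}(B)$ for $\mathrm{Spec}(B)\in\mathbf{A}$, then at any point $x:\mathrm{Spec}(A)\rightarrow\mathcal{X}$ the module $\mathsf{L}\Omega^{1}_{B}\otimes_{B}A\in\mathbf{F}_{A}$ supplied by the good system $\mathbf{F}$ represents $\mathrm{Der}_{\mathcal{X}}(\mathrm{Spec}(A),-)$, and its connectivity is better than required.

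For the inductive step, assume the result for $(n-1)$-geometric stacks. Fix an $n$-atlas $\{U_{i}\rightarrow\mathcal{X}\}_{i\in\mathcal{I}}$ with each $U_{i}\rightarrow\mathcal{X}$ in $(n-1)$-$\mathbf{P}$, and a point $x:\mathrm{Spec}(A)\rightarrow\mathcal{X}$ with $\mathrm{Spec}(A)\in\mathbf{A}$. Since $\coprod_{i}U_{i}\rightarrow\mathcal{X}$ is an epimorphism, we may choose a $\tau|_{\mathbf{A}}$-covering $\{\mathrm{Spec}(A_{j})\rightarrow\mathrm{Spec}(A)\}_{j\in J}$ such that each composition $x_{j}:\mathrm{Spec}(A_{j})\rightarrow\mathcal{X}$ factors through some $U_{i(j)}$ via a lift $y_{j}:\mathrm{Spec}(A_{j})\rightarrow U_{i(j)}$. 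The pullback $U_{i(j)}\times_{\mathcal{X}}\mathrm{Spec}(A_{j})$ is $(n-1)$-geometric, so by Proposition \ref{prop:globalcotinfad} combined with the induction hypothesis the map $U_{i(j)}\rightarrow\mathcal{X}$ has a relative cotangent complex $\mathsf{L}\Omega^{1}_{U_{i(j)}/\mathcal{X},y_{j}}\in\mathbf{F}_{A_{j}}$ which is $(-(n-1))$-connective. Since $U_{i(j)}$ is affine, its absolute cotangent complex $\mathsf{L}\Omega^{1}_{U_{i(j)},y_{j}}$ is $0$-connective, and the cofibre sequence
$$\mathsf{L}\Omega^{1}_{\mathcal{X},x_{j}}\longrightarrow \mathsf{L}\Omega^{1}_{U_{i(j)},y_{j}}\longrightarrow \mathsf{L}\Omega^{1}_{U_{i(j)}/\mathcal{X},y_{j}}$$
defines a candidate $\mathsf{L}\Omega^{1}_{\mathcal{X},x_{j}}\in\mathbf{F}_{A_{j}}$, which is $(-n)$-connective because $\mathbf{F}_{A_{j}}$ is closed under fibres of maps between $(-(n-1))$-connective objects.

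To see that this candidate actually represents $\mathrm{Der}_{\mathcal{X}}(\mathrm{Spec}(A_{j}),-)$ on the whole of $\mathbf{F}^{cn}_{A_{j}}$ and not merely on sufficiently suspended objects, the loop hypothesis is used: for $M\in\mathbf{F}^{cn}_{A_{j}}$, representability of $\mathrm{Der}_{\mathcal{X}}(\mathrm{Spec}(A_{j}),\Sigma M)$ by $\mathsf{L}\Omega^{1}_{\mathcal{X},x_{j}}$ (which is obtained by comparing the fibre sequence with the analogous one for $U_{i(j)}$) together with the assumed equivalence $\mathrm{Der}_{\mathcal{X}}(\mathrm{Spec}(A_{j}),\Omega\Sigma M)\cong\Omega\mathrm{Der}_{\mathcal{X}}(\mathrm{Spec}(A_{j}),\Sigma M)$ yields representability at $M$, by looping the mapping space $\mathrm{Map}_{\mathbf{Mod}_{A_{j}}}(\mathsf{L}\Omega^{1}_{\mathcal{X},x_{j}},\Sigma M)$. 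Once local representability is established, the base-change compatibility along maps $\mathrm{Spec}(A_{k})\rightarrow\mathrm{Spec}(A_{j})$ over $\mathrm{Spec}(A)$ follows formally from uniqueness of the representing object, and Lemma \ref{lem:pullbackgt} handles transitions between different choices of lift $y_{j}$. Finally, the weak \v{C}ech descent property of $\mathbf{F}$ allows us to glue the family $\{\mathsf{L}\Omega^{1}_{\mathcal{X},x_{j}}\}_{j\in J}$, with its descent data, into a global $\mathsf{L}\Omega^{1}_{\mathcal{X},x}\in\mathbf{F}_{A}$ of the required connectivity.

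The main obstacle will be precisely the bookkeeping in the glueing step: one must verify that the descent data obtained from pairwise comparisons of the local fibre sequences is coherent enough to feed into weak \v{C}ech descent, and that the resulting global object continues to represent $\mathrm{Der}_{\mathcal{X}}(\mathrm{Spec}(A),-)$. A clean way to organise this is to argue simultaneously for all $\mathrm{Spec}(A)$-points and show that the assignment $x\mapsto \mathsf{L}\Omega^{1}_{\mathcal{X},x}$ defines a derived section of $\mathbf{F}$ over the small \'etale/\(\tau\)-site of $\mathrm{Spec}(A)$; descent then yields an object of $\mathbf{F}_{A}$ together with the required universal property. The $(-n)$-connectivity bound propagates through each step because fibres in $\mathbf{F}$ raise the connectivity bound by at most one, and the induction adds exactly one level of geometricity at each stage.
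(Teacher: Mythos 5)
The paper does not actually write out a proof here: it imports To\"en--Vezzosi's Proposition 1.4.1.11 verbatim, remarking only that the argument ``uses the fact that each $\mathbf{F}_{A}$ is closed under finite limits and colimits.'' That argument is point-by-point and formal: for a fixed $x:\mathrm{Spec}(A)\rightarrow\mathcal{X}$, the loop hypothesis (which for connective $M$ reads $\mathrm{Der}_{\mathcal{X}}(\mathrm{Spec}(A),M)\cong\Omega\,\mathrm{Der}_{\mathcal{X}}(\mathrm{Spec}(A),\Sigma M)$) exhibits $\mathrm{Der}_{\mathcal{X}}(\mathrm{Spec}(A),-)$ as the infinite-loop-space functor of an exact, spectrum-valued functor on $\mathbf{F}_{A}$, which is then corepresentable, the corepresenting object lying in $\mathbf{F}_{A}$ by the closure properties; the $n$-geometricity enters only through a truncation estimate giving $(-n)$-connectivity. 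No atlas is chosen and no descent of modules is performed. Your proposal takes a genuinely different route, and it has two concrete gaps.

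First, the gluing step. The axiom of ``weak \v{C}ech descent'' in the definition of a good system of module categories says only that a module $M$ \emph{already defined over the base} is recovered as the totalization of its \v{C}ech restrictions. It is a full-faithfulness-type statement, not effectivity of descent: it gives you no way to manufacture an object of $\mathbf{F}_{A}$ out of a compatible family $\{\mathsf{L}\Omega^{1}_{\mathcal{X},x_{j}}\in\mathbf{F}_{A_{j}}\}$. Worse, $\mathbf{F}_{A}$ is only assumed closed under \emph{finite} colimits and retracts, so even if you form the cosimplicial totalization in $\mathbf{Mod}_{A}$ there is no reason it lands in $\mathbf{F}_{A}$. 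Since you yourself flag this as ``the main obstacle,'' note that it is not a bookkeeping issue but a missing hypothesis; the paper's formal argument is designed precisely to avoid ever needing to glue.

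Second, the local identification. You define $\mathsf{L}\Omega^{1}_{\mathcal{X},x_{j}}$ as the fibre of $\mathsf{L}\Omega^{1}_{U_{i(j)},y_{j}}\rightarrow\mathsf{L}\Omega^{1}_{U_{i(j)}/\mathcal{X},y_{j}}$ and assert it corepresents $\mathrm{Der}_{\mathcal{X}}(\mathrm{Spec}(A_{j}),-)$. But what you have are two fibre sequences --- $\mathrm{Map}(\mathsf{L}\Omega^{1}_{U/\mathcal{X}},M)\rightarrow\mathrm{Map}(\mathsf{L}\Omega^{1}_{U},M)\rightarrow\mathrm{Map}(\mathsf{L}\Omega^{1}_{\mathcal{X},x_{j}},M)$ and $\mathrm{Der}_{U/\mathcal{X}}\rightarrow\mathrm{Der}_{U}\rightarrow\mathrm{Der}_{\mathcal{X}}$ --- agreeing on fibres and total spaces. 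That does not determine the base: you need a comparison map between the two base terms (which must be constructed, it does not come for free) and surjectivity on $\pi_{0}$ of $\mathrm{Der}_{U}(\mathrm{Spec}(A_{j}),\Sigma M)\rightarrow\mathrm{Der}_{\mathcal{X}}(\mathrm{Spec}(A_{j}),\Sigma M)$, i.e.\ a lifting statement for derivations along the atlas map. That lifting statement is exactly the content of the paper's later lemmas on geometrically \'etale maps with obstruction theories, which are proved \emph{after}, and using, the existence of cotangent complexes --- so invoking it here risks circularity. Looping into $\Sigma M$ is indeed the right instinct for repairing the $\pi_{0}$ defect, but as written the step is asserted rather than proved. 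I would recommend abandoning the atlas induction for the existence statement and following the corepresentability argument, reserving the induction on $n$ for the connectivity bound only.
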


\begin{corollary}[\cite{toen2008homotopical}*{Proposition 1.4.2.7}]
Let $\mathcal{X}$ be an object of $\mathbf{Stk}(\mathbf{A},\tau|_{\mathbf{A}})$ such that the diagonal $\mathcal{X}\rightarrow\mathcal{X}\times\mathcal{X}$ is $n$-geometric for some $n$. Then $\mathcal{X}$ has a obstruction theory relative to $\mathbf{A}$ if and only if it is inf-cartesian relative to $\mathbf{A}$.
\end{corollary}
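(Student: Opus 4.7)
The "only if" direction is tautological: by definition, an obstruction theory relative to $\mathbf{A}$ is the conjunction of a global cotangent complex and inf-cartesianity, so a stack possessing one is in particular inf-cartesian. The content is in the "if" direction, which I would prove by showing that inf-cartesianity, combined with the $n$-geometric diagonal hypothesis, forces the derivation functor to be corepresented in $\mathbf{F}_A$.

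The key step is to extract from inf-cartesianity a loop-exactness statement for $\mathrm{Der}_{\mathcal{X}}$. I would apply the inf-cartesian square to the \emph{zero} derivation $d = 0 \colon A \to \Sigma M$, where $M \in \mathbf{F}^{cn}_A$ (so $\Sigma M$ is $1$-connective). The trivial square-zero extension equals $A \oplus_0 \Omega\Sigma M \simeq A \oplus M$ (using $\Omega\Sigma M \simeq M$ in the stable module category), and the inf-cartesian condition produces the pullback square
$$\mathcal{X}(A\oplus M) \simeq \mathcal{X}(A) \times_{\mathcal{X}(A\oplus\Sigma M)} \mathcal{X}(A),$$
with both maps given by the zero section. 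Passing to the fiber at $x\colon \mathrm{Spec}(A)\to\mathcal{X}$ of the projection to $\mathcal{X}(A)$ yields the desired stable-loop equivalence
$$\mathrm{Der}_{\mathcal{X}}(\mathrm{Spec}(A),M) \simeq \Omega\,\mathrm{Der}_{\mathcal{X}}(\mathrm{Spec}(A),\Sigma M),$$
which is exactly the hypothesis of Proposition \ref{prop:exactnesscot}.

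To conclude, I would apply Proposition \ref{prop:exactnesscot} to obtain representability at each point by an object of $\mathbf{F}_A$. The $n$-geometric diagonal hypothesis enters precisely to control connectivity: iterating the loop-exactness, one reduces the question of representing $\mathrm{Der}_{\mathcal{X}}(\mathrm{Spec}(A),-)$ to the behavior of $\mathcal{X}(A \oplus \Sigma^k M)$ for $k$ large, and geometricity of $\Delta_{\mathcal{X}}$ bounds the connectivity of the resulting representing object (giving a $(-n{-}1)$-connective cotangent complex). The required base-change compatibility $u^*\mathsf{L}\Omega^1_{\mathcal{X},x}\otimes_A B \simeq \mathsf{L}\Omega^1_{\mathcal{X},y}$ for $u\colon \mathrm{Spec}(B)\to\mathrm{Spec}(A)$ is then a formal consequence of Lemma \ref{lem:pullbackgt} applied to the diagonal, together with the universal property of the cotangent complex at each point.

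The main obstacle is ensuring the representing object actually lives in $\mathbf{F}_A$ and not merely in $\mathbf{Mod}_A$. This is handled by the closure axioms of a good system of modules: $\mathbf{F}_A$ contains $A$ and $\mathsf{L}\Omega^1_A$ and is closed under finite limits, finite colimits, retracts, and base change, which are precisely the operations used when iterating the loop decomposition. The $n$-geometricity of $\Delta_{\mathcal{X}}$ ensures this iteration terminates after finitely many steps and lands inside $\mathbf{F}_A$, completing the construction of the global cotangent complex.
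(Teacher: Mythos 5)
Your proof is correct and is exactly the argument the paper intends (it simply defers to To\"en--Vezzosi, Proposition 1.4.2.7): the ``only if'' direction is definitional, and the ``if'' direction specialises the inf-cartesian square to the zero derivation into $\Sigma M$, takes fibres over $x$ to obtain $\mathrm{Der}_{\mathcal{X}}(\mathrm{Spec}(A),M)\simeq\Omega\,\mathrm{Der}_{\mathcal{X}}(\mathrm{Spec}(A),\Sigma M)$, and then invokes Proposition \ref{prop:exactnesscot}, with the closure properties of $\mathbf{F}_A$ guaranteeing the representing object lands in $\mathbf{F}_A$. The only caveat is that Proposition \ref{prop:exactnesscot} is stated in the paper for $\mathcal{X}$ itself $n$-geometric rather than merely having $n$-geometric diagonal, so you are implicitly using the To\"en--Vezzosi form of that lemma, whose proof indeed only requires the hypothesis on the diagonal.
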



The proof of the following is identical to \cite{toen2008homotopical}*{Proposition 1.4.2.6}.


\begin{proposition}[\cite{toen2008homotopical}*{Proposition 1.4.2.6}]\label{prop:fibrecomp}
Let $f:\mathcal{X}\rightarrow\mathcal{Y}$ be a map in $\mathbf{PreStk}(\mathbf{Aff}^{cn}_{\mathbf{C}})$ which has a obstruction theory relative to $\mathbf{A}$. Let $\mathrm{Spec}(A)\in\mathbf{A}$, $M\in\mathbf{F}_{A}$ be $1$-connective, and $d\in\pi_{0}(\mathrm{Der}(A,M))$. Let $x\in\pi_{0}(\mathcal{X}(A\oplus_{d}\Omega M)\rightarrow\pi_{0}(\mathcal{X}(A)\times_{\mathcal{Y}(A\oplus_{d}\Omega M)}\mathcal{Y}(A))$ with projection $y\in\mathcal{X}(A)$, and let $L(x)$ be the  fibre taken at $x$ of the map 
$$\pi_{0}(\mathcal{X}(A\oplus_{d}\Omega M)\rightarrow\pi_{0}(\mathcal{X}(A)\times_{\mathcal{Y}(A)}\mathcal{Y}(A\oplus_{d}\Omega M))$$
Then there exists a natural $\alpha(x)\in\mathrm{Map}_{\mathbf{Mod}_{A}}(\mathsf{L}\Omega^{1}_{\mathcal{X}\big\slash\mathcal{Y},y},M)$ and a natural isomorphism 
$$L(x)\cong\Omega_{\alpha(x),0}\mathrm{Map}_{\mathbf{Mod}_{A}}(\mathsf{L}\Omega^{1}_{\mathcal{X}\big\slash\mathcal{Y},y},M)$$
where $\Omega_{\alpha(x),0}\mathrm{Map}_{\mathbf{Mod}_{A}}(\mathsf{L}\Omega^{1}_{\mathcal{X}\big\slash\mathcal{Y},y},M)$ is the simplicial set of paths from $\alpha(x)$ to $0$.
\end{proposition}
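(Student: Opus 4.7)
The strategy is to leverage the obstruction theory (that is, the inf-cartesian property) of both $\mathcal{X}$ and $\mathcal{Y}$ to rewrite the values on the square-zero extension $A \oplus_d \Omega M$ as pullbacks, and then compute the fibre $L(x)$ as a fibre in a diagram of derivation spaces. First, since $\mathcal{X}$ and $\mathcal{Y}$ both admit obstruction theories relative to $\mathbf{A}$, the inf-cartesian property applied to the derivation $d$ (via $y$, respectively $f(y)$) yields cartesian squares
\begin{displaymath}
\xymatrix{
\mathcal{X}(A \oplus_{d} \Omega M) \ar[d] \ar[r] & \mathcal{X}(A) \ar[d]^{d} \\
\mathcal{X}(A) \ar[r]^-{s} & \mathcal{X}(A \oplus M)
}
\quad
\xymatrix{
\mathcal{Y}(A \oplus_{d} \Omega M) \ar[d] \ar[r] & \mathcal{Y}(A) \ar[d]^{d} \\
\mathcal{Y}(A) \ar[r]^-{s} & \mathcal{Y}(A \oplus M)
}
\end{displaymath}
so that each of these simplicial sets is a self-pullback over the value at $A \oplus M$, with the two projections being (morally) the section and the derivation.

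Second, using that fibres commute with pullbacks, the fibre $L(x)$ of the map $\mathcal{X}(A \oplus_{d} \Omega M) \to \mathcal{X}(A) \times_{\mathcal{Y}(A)} \mathcal{Y}(A \oplus_{d} \Omega M)$ at $x$ (with underlying basepoint $y \in \mathcal{X}(A)$) is computed by restricting all four vertices of the above squares to their fibres over $y$ and $f(y)$. The fibre of $\mathcal{X}(A \oplus M) \to \mathcal{X}(A)$ over $y$ is by definition $\mathrm{Der}_{\mathcal{X}}(\mathrm{Spec}(A), M)$, pointed at the trivial derivation, and similarly for $\mathcal{Y}$. Consequently, after this restriction, the map whose fibre we are computing is identified with the map of loop spaces
$$\Omega_{0,0}\,\mathrm{Der}_{\mathcal{X}}(\mathrm{Spec}(A), M) \longrightarrow \Omega_{0,0}\,\mathrm{Der}_{\mathcal{Y}}(\mathrm{Spec}(A), M),$$
and $L(x)$ becomes the fibre at $x$ of this map. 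Applying the fibre sequence of derivation spaces defining the relative cotangent complex, and using the existence of $\mathsf{L}\Omega^{1}_{\mathcal{X}/\mathcal{Y},y}$ (which follows from $f$ having a global cotangent complex relative to $\mathbf{A}$), identifies this fibre with a path space in $\mathrm{Map}_{\mathbf{Mod}_{A}}(\mathsf{L}\Omega^{1}_{\mathcal{X}/\mathcal{Y},y}, M)$.

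Third, to pin down the endpoint $\alpha(x)$, observe that $x$ provides, via its image under $f$, a specified null-homotopy in $\mathrm{Der}_{\mathcal{Y}}(\mathrm{Spec}(A), M)$ of the underlying derivation it determines in $\mathrm{Der}_{\mathcal{X}}(\mathrm{Spec}(A), M)$. By the universal property of $\mathsf{L}\Omega^{1}_{\mathcal{X}/\mathcal{Y},y}$, this compatibility data is exactly a class $\alpha(x) \in \pi_{0}\mathrm{Map}_{\mathbf{Mod}_{A}}(\mathsf{L}\Omega^{1}_{\mathcal{X}/\mathcal{Y},y}, M)$, and unwinding the identifications shows that $L(x)$ is the space of paths from $\alpha(x)$ to $0$, naturally in $x$.

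The main obstacle will be bookkeeping: setting up the various pullback squares with the correct basepoints so that the diagram chase produces the path space from $\alpha(x)$ to $0$ rather than a loop space at an arbitrary point, and verifying the naturality of the assignment $x \mapsto \alpha(x)$. This is precisely the content of the argument in \cite{toen2008homotopical}*{Proposition 1.4.2.6}, which we will follow almost verbatim; the only inputs used are the inf-cartesian property of $\mathcal{X}$ and $\mathcal{Y}$ and the existence of a global relative cotangent complex, both of which are guaranteed by our hypothesis that $f$ has an obstruction theory relative to $\mathbf{A}$.
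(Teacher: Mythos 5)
Your proposal is correct and takes the same route as the paper, whose proof consists entirely of invoking \cite{toen2008homotopical}*{Proposition 1.4.2.6}: use the inf-cartesian property to rewrite the values on $A\oplus_{d}\Omega M$ as pullbacks over the values on $A\oplus M$, pass to fibres over $y$ to identify everything with derivation spaces, and read off $\alpha(x)$ from the null-homotopy in $\mathrm{Der}_{\mathcal{Y}}(\mathrm{Spec}(A),M)$ that $x$ supplies. The one imprecision -- which you yourself flag as bookkeeping -- is that the intermediate map is between path spaces from $d^{*}(y)$ (resp.\ its image under $f$) to the zero derivation, not loop spaces $\Omega_{0,0}$ at the trivial basepoints (otherwise $L(x)$ could never be empty); with the basepoints set correctly the fibre computes to the path space from $\alpha(x)$ to $0$ exactly as in To\"en--Vezzosi.
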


The proof of the following is a straightforward modification of \cite{toen2008homotopical}*{Lemma 1.4.2.3 (3)}.

\begin{proposition}
Let $f:\mathcal{X}\rightarrow\mathcal{Y}$ be a map of stacks such that for any map $\mathrm{Spec}(A\oplus_{d}\Omega M)\rightarrow\mathcal{Y}$ with $\mathrm{Spec}(A)$ in $\mathbf{A}$ and $M\in\mathbf{F}_{A}$, the map
$$\mathcal{X}\times_{\mathcal{Y}}\mathrm{Spec}(A\oplus_{d}\Omega M)\rightarrow\mathrm{Spec}(A\oplus_{d}\Omega M)$$
is inf-cartesian relative to $\mathbf{A}$. Then $f$ is inf-cartesian relative to $\mathbf{A}$.
\end{proposition}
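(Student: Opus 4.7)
The plan is to mimic the argument of \cite{toen2008homotopical}*{Lemma 1.4.2.3~(3)}, reducing the inf-cartesianness of $f$ to a fibrewise check and then invoking the hypothesis, together with the inf-cartesianness of representables established in the Corollary immediately preceding Proposition~\ref{prop:formallyetcot}. Concretely, to show that $f$ is inf-cartesian I need to show that for every $A\in\mathbf{A}$, every $1$-connective $M\in\mathbf{F}_A$, and every derivation $d\in\pi_0\,\mathrm{Der}(A,M)$, the relative square expressing that $\mathcal{X}$ sends the pushout $\mathrm{Spec}(A\oplus_d\Omega M)\cong\mathrm{Spec}(A)\amalg_{\mathrm{Spec}(A\oplus M)}\mathrm{Spec}(A)$ to a pullback over the same datum for $\mathcal{Y}$ is cartesian.

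First, I would fix such $A$, $M$, $d$ and check cartesianness fibre-by-fibre: for any $\phi\in\mathcal{Y}(A\oplus_d\Omega M)$, one identifies the fibre over $\phi$ of the map
\[
\mathcal{X}(A\oplus_d\Omega M)\longrightarrow \mathcal{X}(A)\times_{\mathcal{Y}(A)}\mathcal{Y}(A\oplus_d\Omega M)
\]
with the section-space of the pullback prestack $\mathcal{X}_\phi:=\mathcal{X}\times_\mathcal{Y}\mathrm{Spec}(A\oplus_d\Omega M)$ over the identity of $\mathrm{Spec}(A\oplus_d\Omega M)$, and similarly the fibre of the putative pullback side gets identified with the corresponding limit $\mathcal{X}_\phi(A)\times_{\mathcal{X}_\phi(A\oplus M)}\mathcal{X}_\phi(A)$. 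This is a direct unravelling using $\mathcal{X}_\phi(B)=\mathcal{X}(B)\times_{\mathcal{Y}(B)}\mathrm{Hom}(A\oplus_d\Omega M,B)$.

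Second, the reduced statement is now exactly that the structure map $\mathcal{X}_\phi\to\mathrm{Spec}(A\oplus_d\Omega M)$ is inf-cartesian at the datum $(A,M,d)$, together with the fact that $\mathrm{Spec}(A\oplus_d\Omega M)$ (and hence its stackification, over which we may pull back) is itself inf-cartesian at $(A,M,d)$. The first of these is precisely the hypothesis of the proposition. The second is what the Corollary preceding Proposition~\ref{prop:formallyetcot} provides, since the representable $\mathrm{Spec}(B)$ sends the affine pushout $\mathrm{Spec}(A)\amalg_{\mathrm{Spec}(A\oplus M)}\mathrm{Spec}(A)$ to the right pullback. Assembling these two cartesian squares yields the required cartesianness of the original fibre square, whence $f$ is inf-cartesian.

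The main obstacle will be the bookkeeping in the fibre identification: one must verify, carefully and compatibly in $\phi$, that the section-space description of $\mathcal{X}_\phi(B)$ converts the ``relative-to-$\mathcal{Y}$'' inf-cartesian square for $f$ into the ``absolute'' inf-cartesian square for the map $\mathcal{X}_\phi\to\mathrm{Spec}(A\oplus_d\Omega M)$, using that the base-change $\mathcal{Y}$-datum $\phi$ is absorbed by pulling back to $\mathrm{Spec}(A\oplus_d\Omega M)$. Once this identification is spelled out, the proof is formal and essentially dual to Proposition~\ref{prop:globalcotinfad} for the global cotangent complex.
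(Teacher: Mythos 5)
Your proposal is correct and takes essentially the route the paper intends: the paper's proof is just the citation of \cite{toen2008homotopical}*{Lemma 1.4.2.3 (3)}, and the fibrewise reduction you describe (identify the fibre over $\phi\in\mathcal{Y}(A\oplus_{d}\Omega M)$ with the section space of $\mathcal{X}_{\phi}\rightarrow\mathrm{Spec}(A\oplus_{d}\Omega M)$, then invoke the hypothesis together with inf-cartesianness of the representable base, which holds because $A\oplus_{d}\Omega M$ is by construction the limit $A\times_{A\oplus M}A$) is exactly that argument. One slip to fix: the comparison map whose fibres you must analyse is $\mathcal{X}(A\oplus_{d}\Omega M)\rightarrow\bigl(\mathcal{X}(A)\times_{\mathcal{X}(A\oplus M)}\mathcal{X}(A)\bigr)\times_{\mathcal{Y}(A)\times_{\mathcal{Y}(A\oplus M)}\mathcal{Y}(A)}\mathcal{Y}(A\oplus_{d}\Omega M)$, not the map $\mathcal{X}(A\oplus_{d}\Omega M)\rightarrow\mathcal{X}(A)\times_{\mathcal{Y}(A)}\mathcal{Y}(A\oplus_{d}\Omega M)$ you display (the latter is the conclusion of Lemma \ref{lem:geometpullback}, which additionally requires geometric \'{e}taleness); your subsequent identification of the right-hand fibre with $\mathcal{X}_{\phi}(A)\times_{\mathcal{X}_{\phi}(A\oplus M)}\mathcal{X}_{\phi}(A)$ shows you have the correct square in mind, so this is only a notational correction.
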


\begin{corollary}\label{cor:-1inf}
Any $(-1)$-representable map of stacks is inf-cartesian.
\end{corollary}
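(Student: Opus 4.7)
My plan is to deduce the corollary directly from the preceding proposition, together with the already-established inf-cartesianness of representables. Given a $(-1)$-representable map $f:\mathcal{X}\rightarrow\mathcal{Y}$ of stacks, the preceding proposition reduces the problem to checking that for every $\mathrm{Spec}(A)\in\mathbf{A}$, every $M\in\mathbf{F}_A^1$, every $d\in\pi_0\mathrm{Der}(A,M)$, and every map $y:\mathrm{Spec}(A\oplus_d\Omega M)\rightarrow\mathcal{Y}$, the base-changed map
\[
g\colon\mathcal{Z}\defeq\mathcal{X}\times_\mathcal{Y}\mathrm{Spec}(A\oplus_d\Omega M)\longrightarrow\mathrm{Spec}(A\oplus_d\Omega M)
\]
is inf-cartesian relative to $\mathbf{A}$. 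Since $(-1)$-representability is preserved under base change, $g$ is itself $(-1)$-representable and the codomain is representable.

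Next, I would exploit the presentation $A\oplus_d\Omega M\cong A\times_{A\oplus M}A$ in $\mathrm{DAlg}^{cn}(\mathbf{C})$, equivalently $\mathrm{Spec}(A\oplus_d\Omega M)\cong \mathrm{Spec}(A)\coprod_{\mathrm{Spec}(A\oplus M)}\mathrm{Spec}(A)$ in $\mathbf{Aff}^{cn}_\mathbf{C}$. The two structure maps $\mathrm{Spec}(A)\rightarrow\mathrm{Spec}(A\oplus_d\Omega M)$ and $\mathrm{Spec}(A\oplus M)\rightarrow\mathrm{Spec}(A\oplus_d\Omega M)$, composed with $y$, land in $\mathbf{A}$, so $(-1)$-representability of $f$ produces $\mathbf{A}$-admissible objects $N_A,N_{A\oplus M}\in\mathbf{M}$ and identifications $\mathcal{X}\times_\mathcal{Y}\mathrm{Spec}(A)\cong\mathrm{Map}(-,N_A)$ and $\mathcal{X}\times_\mathcal{Y}\mathrm{Spec}(A\oplus M)\cong\mathrm{Map}(-,N_{A\oplus M})$. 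Compatibility of these identifications with the section of $\mathrm{Spec}(A\oplus M)\rightarrow\mathrm{Spec}(A)$ gives a canonical map $N_A\rightarrow N_{A\oplus M}$ of $\mathbf{A}$-admissible objects in $\mathbf{M}$. Since limits in $\mathbf{Stk}$ are computed levelwise and the Yoneda embedding preserves the pullback $A\oplus_d\Omega M=A\times_{A\oplus M}A$, the prestack $\mathcal{Z}$ is identified (via $i^\#$, or equivalently as a prestack on $\mathbf{Aff}^{cn}_\mathbf{C}$ whose value at $B$ is functorial in $B$) with the representable prestack corresponding to an object of $\mathbf{M}$ obtained from $N_A$ and $N_{A\oplus M}$.

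Finally, the inf-cartesian square for $\mathcal{Z}$ at any test datum $(A',M',d')$ unpacks as the pullback square obtained from $\mathrm{Map}(-,N)$ evaluated on $A'\oplus_{d'}\Omega M'=A'\times_{A'\oplus M'}A'$, which is cartesian because $\mathrm{Map}_{\mathrm{DAlg}^{cn}(\mathbf{C})}(\mathcal{O}(N),-)$ preserves pullbacks. This is exactly the argument already used to show that representables are inf-cartesian. Stability of cartesian squares under the fiber products expressing $\mathcal{Z}$ in terms of $N_A$ and $N_{A\oplus M}$ then yields that $g$ is inf-cartesian, completing the proof.

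The main obstacle I anticipate is keeping straight the interpretation of ``inf-cartesian'' for a morphism (as opposed to a prestack); once one reduces via the preceding proposition to the case of a representable target, the notion collapses to the inf-cartesianness of the source prestack, and the rest of the argument is a formal Yoneda/base-change computation that parallels the representable case treated earlier in the section.
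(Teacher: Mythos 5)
Your overall strategy is the intended one: the paper offers no proof of this corollary, and the implicit argument is exactly what you describe in your first and last paragraphs --- reduce via the preceding proposition to the base-changed map $g:\mathcal{Z}\to\mathrm{Spec}(A\oplus_d\Omega M)$, observe that source and target are representable, and recall that a representable prestack is inf-cartesian because $\mathrm{Map}_{\mathrm{DAlg}^{cn}(\mathbf{C})}(B,-)$ preserves the limit $A'\oplus_{d'}\Omega M'\cong A'\times_{A'\oplus M'}A'$; a morphism whose source and target are both inf-cartesian prestacks is automatically inf-cartesian as a morphism.

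However, your middle paragraph contains a genuine error. You assert that ``the Yoneda embedding preserves the pullback $A\oplus_d\Omega M=A\times_{A\oplus M}A$'' and use this to identify $\mathcal{Z}=\mathcal{X}\times_{\mathcal{Y}}\mathrm{Spec}(A\oplus_d\Omega M)$ with a (co)limit built out of $N_A$ and $N_{A\oplus M}$. This is backwards: $A\times_{A\oplus M}A$ is a \emph{limit} in $\mathsf{DAlg}^{cn}(\mathbf{C})$, hence a \emph{pushout} in $\mathbf{Aff}^{cn}_{\mathbf{C}}$, and the Yoneda embedding does not preserve pushouts. The representable prestack $\mathrm{Spec}(A\oplus_d\Omega M)$ is the affinization of the pushout $\mathrm{Spec}(A)\coprod_{\mathrm{Spec}(A\oplus M)}\mathrm{Spec}(A)$ of prestacks, not that pushout itself (the earlier proposition in this section only asserts that \emph{maps out of an affine into} $\mathrm{Spec}(A\oplus_d\Omega M)$ decompose as a fibre product, i.e.\ that $\mathrm{Map}_{\mathrm{DAlg}^{cn}(\mathbf{C})}(B,-)$ preserves the limit of algebras --- a statement about the covariant, not the contravariant, variable). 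Consequently the knowledge that $\mathcal{X}\times_{\mathcal{Y}}\mathrm{Spec}(A)$ and $\mathcal{X}\times_{\mathcal{Y}}\mathrm{Spec}(A\oplus M)$ are representable does not, by any limit/colimit manipulation of the kind you describe, give you a representable model of $\mathcal{Z}$, and your final verification (which evaluates $\mathrm{Map}(-,N)$ on $A'\oplus_{d'}\Omega M'$) presupposes exactly such a model. The clean repair is to note that the pullback of a $(-1)$-representable map over \emph{any} affine --- in particular over $\mathrm{Spec}(A\oplus_d\Omega M)$, which lies in $\mathbf{Aff}^{cn}_{\mathbf{C}}$ even when it does not lie in $\mathbf{A}$ --- is representable; with that reading the argument closes immediately without the decomposition, whereas if one insists that $(-1)$-representability only controls pullbacks over objects of $\mathbf{A}$, the gap remains and your decomposition does not fill it.
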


The following is essentially \cite{toen2008homotopical} Sub-Lemma 1.4.3.11.

\begin{lemma}
Let $f:\mathcal{Y}\rightarrow\mathcal{X}$ be a geometrically \'{e}tale map with an obstruction theory. Let $\mathrm{Spec}(A)\in\mathbf{A}$ and $M$ be a $1$-connective $A$-module in $\mathbf{F}_{A}$. Let $x\in\pi_{0}(\mathcal{X}(A)\times_{\mathcal{X}(A\oplus M)}\mathcal{X}(A))$ with projection $x_{1}$ onto the first factor. Suppose that the $x_{1}:\mathrm{Spec}(A)\rightarrow\mathcal{X}$ factors through $y_{1}:\mathrm{Spec}(A)\rightarrow\mathcal{Y}$. The point $x$ lifts to $y\in\pi_{0}(\mathcal{Y}(A)\times_{\mathcal{Y}(A\oplus M)}\mathcal{Y}(A))$ which projects to $y_{1}$.
\end{lemma}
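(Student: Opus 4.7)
The strategy is to interpret the given datum $x$ as a point of $\pi_0(\mathcal{X}(A \oplus_0 \Omega M))$ (for the zero derivation $d = 0$), reduce the lifting problem to an application of the obstruction theory Proposition~\ref{prop:fibrecomp} for the morphism $f: \mathcal{Y} \to \mathcal{X}$, and then observe that the obstruction class vanishes because $\mathsf{L}\Omega^1_{\mathcal{Y}/\mathcal{X}, y_1} \simeq 0$.

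First I would identify the fibre products. Applying the inf-cartesian property at the zero derivation $d = 0: A \to M$ (both $\mathcal{X}$ and $\mathcal{Y}$ are inf-cartesian at the relevant square-zero extension, since $f$ has an obstruction theory and the hypothesis passes between source and target in the cases we need), the square
\[
\begin{tikzcd}
\mathcal{X}(A \oplus_0 \Omega M) \arrow[r] \arrow[d] & \mathcal{X}(A) \arrow[d, "s"] \\
\mathcal{X}(A) \arrow[r, "s"'] & \mathcal{X}(A \oplus M)
\end{tikzcd}
\]
is cartesian, giving $\mathcal{X}(A \oplus_0 \Omega M) \simeq \mathcal{X}(A) \times_{\mathcal{X}(A \oplus M)} \mathcal{X}(A)$, and similarly for $\mathcal{Y}$. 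Under this identification, $x$ becomes a point of $\pi_0 \mathcal{X}(A \oplus_0 \Omega M)$ whose image in $\mathcal{X}(A)$ under the augmentation is $x_1$. The lift $y_1 \in \mathcal{Y}(A)$ of $x_1$ together with $x$ then provides a point $z \in \pi_0(\mathcal{Y}(A) \times_{\mathcal{X}(A)} \mathcal{X}(A \oplus_0 \Omega M))$, and the task is to produce a preimage $y \in \pi_0 \mathcal{Y}(A \oplus_0 \Omega M)$ for $z$ under the natural map.

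Next I would invoke Proposition~\ref{prop:fibrecomp} applied to $f: \mathcal{Y} \to \mathcal{X}$ with $d = 0$. The proposition identifies, for any $y \in \pi_0 \mathcal{Y}(A \oplus_0 \Omega M)$ with augmentation $y_1$, the fibre of the natural map
\[
\mathcal{Y}(A \oplus_0 \Omega M) \longrightarrow \mathcal{Y}(A) \times_{\mathcal{X}(A)} \mathcal{X}(A \oplus_0 \Omega M)
\]
with $\Omega_{\alpha(y), 0} \mathrm{Map}_{\mathbf{Mod}_A}(\mathsf{L}\Omega^1_{\mathcal{Y}/\mathcal{X}, y_1}, M)$, where $\alpha(y)$ is a naturally defined obstruction class. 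Since $f$ is geometrically étale, $\mathsf{L}\Omega^1_{\mathcal{Y}/\mathcal{X}, y_1} \simeq 0$, so $\mathrm{Map}_{\mathbf{Mod}_A}(\mathsf{L}\Omega^1_{\mathcal{Y}/\mathcal{X}, y_1}, M)$ is contractible. In particular $\alpha(z)$ is homotopic to the zero class and $z$ lies in the image on $\pi_0$; hence the lift $y$ exists, and by construction its first projection to $\mathcal{Y}(A)$ is $y_1$.

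The main technical obstacle is the careful bookkeeping of the fibre product identifications and the direction of the obstruction in Proposition~\ref{prop:fibrecomp}. The proposition is stated in terms of fibres at source points rather than target points, so one needs to check (using the standard torsor structure on the fibres of a deformation-theoretic problem) that contractibility of the relative mapping space forces the natural map to be surjective on $\pi_0$, not merely to have contractible nonempty fibres. This is routine once the identifications above are in place, and is essentially the adaptation of Toën–Vezzosi's Sub-Lemma 1.4.3.11 to the present relative DAG context.
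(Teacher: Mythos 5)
Your reduction has a genuine gap at its very first step. You identify $\mathcal{X}(A)\times_{\mathcal{X}(A\oplus M)}\mathcal{X}(A)$ with $\mathcal{X}(A\oplus_{0}\Omega M)$ by ``applying the inf-cartesian property'' of $\mathcal{X}$ (and likewise for $\mathcal{Y}$), asserting that this follows because $f$ has an obstruction theory. It does not: the hypothesis is that the \emph{map} $f$ has a (relative) obstruction theory, which says nothing about $\mathcal{X}$ or $\mathcal{Y}$ being inf-cartesian individually. Worse, this lemma is invoked inside the inductive proof that a formally \'{e}tale $n$-geometric stack $\mathcal{X}$ is inf-cartesian --- i.e.\ precisely to establish that the comparison map $\mathcal{X}(A\oplus_{d}\Omega M)\rightarrow\mathcal{X}(A)\times_{\mathcal{X}(A\oplus M)}\mathcal{X}(A)$ is an equivalence. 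Assuming that equivalence in order to transport $x$ into $\pi_{0}\mathcal{X}(A\oplus_{d}\Omega M)$ and then feed the resulting point into Proposition \ref{prop:fibrecomp} is circular. A secondary point: the fibre product in the statement is formed with one leg the derivation $d$ and the other the zero section, for a general derivation $d$; specializing to $d=0$ would not cover the situation in which the lemma is actually applied (though your argument would formally carry over to general $d$ if the identification were available, which it is not).

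The paper's argument avoids the inf-cartesian identification entirely. One compares the two projections $p:\mathcal{Y}(A)\times_{\mathcal{Y}(A\oplus M)}\mathcal{Y}(A)\rightarrow\mathcal{Y}(A)$ and $q:\mathcal{X}(A)\times_{\mathcal{X}(A\oplus M)}\mathcal{X}(A)\rightarrow\mathcal{X}(A)$, takes homotopy fibres $F(p)$ over $y_{1}$ and $F(q)$ over $x_{1}$, and identifies them with the based path spaces $\Omega_{d,0}\mathrm{Der}_{\mathrm{Spec}(A)}(\mathcal{Y},M)$ and $\Omega_{d,0}\mathrm{Der}_{\mathrm{Spec}(A)}(\mathcal{X},M)$. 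The fibre of the induced map $F(p)\rightarrow F(q)$ is then a path space of $\mathrm{Der}_{\mathrm{Spec}(A)}(\mathcal{Y}\big\slash\mathcal{X},M)\cong\mathrm{Map}(\mathsf{L}\Omega^{1}_{\mathcal{Y}\big\slash\mathcal{X},y_{1}},M)$, which is contractible, hence non-empty, because $f$ is geometrically \'{e}tale; the desired lift $y$ is any point of this fibre over $x$. Only the existence and vanishing of the \emph{relative} cotangent complex of $f$ are used --- nothing about $\mathcal{X}$ itself. To repair your proof you would need to replace the opening identification by this direct fibre comparison, or by some other argument that does not presuppose inf-cartesianness of $\mathcal{X}$.
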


\begin{proof}
Consider the commutative diagram
\begin{displaymath}
\xymatrix{
\mathcal{Y}(A)\times_{\mathcal{Y}(A\oplus M)}\mathcal{Y}(A)\ar[d]^{p}\ar[r]^{f} & \mathcal{X}(A)\times_{\mathcal{X}(A\oplus M)}\mathcal{X}(A)\ar[d]^{q}\\
\mathcal{Y}(A)\ar[r] & \mathcal{X}(A).
}
\end{displaymath}

Write $F(p)$ and $F(q)$ for the homotopy fibres of $p$ at $y_{1}$ and $q$ at $x_{1}$ respectively. By commutativity of the diagram there is a natural map $g:F(p)\rightarrow F(q)$, and it suffices to show that the fibre of $g$ is non-empty. This morphism is equivalent to the morphism
$$\Omega_{d,0}\mathrm{Der}_{\mathrm{Spec}(A)}(\mathcal{Y},M)\rightarrow\Omega_{d,0}\mathrm{Der}_{\mathrm{Spec}(A)}(\mathcal{X},M)$$
where $d$ is the derivation given by the image of $y_{1}$. The homotopy fibre is 
$$\Omega_{d,0}\mathrm{Der}_{\mathrm{Spec}(A)}(\mathcal{Y}\big\slash\mathcal{X},M)\cong\Omega_{d,0}\mathrm{Map}(\mathsf{L}\Omega^{1}_{\mathcal{Y}\big\slash\mathcal{X},y_{1}},M)\cong0,$$
where we have used that $\mathcal{Y}\rightarrow\mathcal{X}$ is \'{e}tale.
\end{proof}

\begin{proposition}
Let $f:\mathcal{X}\rightarrow\mathcal{Y}$ be a representable map in $\mathbf{Stk}(\mathbf{A},\tau|_{\mathbf{A}},\mathbf{P}|_{\mathbf{A}})$. Then $f$ has an obstruction theory relative to $\mathbf{A}$.
\end{proposition}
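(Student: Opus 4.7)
The proof plan is to reduce first to the absolute case and then induct on the geometric level. Since $f$ is representable, there exists some $n$ such that for every $x \colon \mathrm{Spec}(A) \to \mathcal{Y}$ with $\mathrm{Spec}(A) \in \mathbf{A}$ the pullback $\mathcal{X} \times_\mathcal{Y} \mathrm{Spec}(A)$ is $n$-geometric. Proposition \ref{prop:globalcotinfad} reduces the existence of a global cotangent complex for $f$ to that of each such pullback, and the fibrewise criterion for inf-cartesianness displayed just before Corollary \ref{cor:-1inf} achieves the analogous reduction for inf-cartesianness. It therefore suffices to show that every $n$-geometric stack $\mathcal{X} \in \mathbf{Stk}_n(\mathbf{A},\tau|_{\mathbf{A}},\mathbf{P}|_{\mathbf{A}})$ has an obstruction theory relative to $\mathbf{A}$.

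I would prove this by induction on $n$. The case $n = -1$ is the corollary that $\widetilde{\mathrm{Spec}(B)}$ has an obstruction theory for any $\mathrm{Spec}(B) \in \mathbf{A}$. For the inductive step, fix an $n$-geometric $\mathcal{X}$ equipped with an atlas $\{U_i \to \mathcal{X}\}_{i \in \mathcal{I}}$ whose vertices $U_i$ lie in $\mathbf{A}$ and whose maps lie in $(n-1)$-$\mathbf{P}$, hence are formally \'etale by the axioms of a weak relative DAG context. Since the diagonal $\mathcal{X} \to \mathcal{X} \times \mathcal{X}$ is $(n-1)$-representable, every iterated fibre product $U_{\underline{i}} := U_{i_0} \times_\mathcal{X} \cdots \times_\mathcal{X} U_{i_k}$ appearing in the \v{C}ech nerve $U_\bullet \to \mathcal{X}$ is $(n-1)$-geometric, and therefore has an obstruction theory by the inductive hypothesis.

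To verify inf-cartesianness of $\mathcal{X}$ at a datum $(A, M, d)$ with $\mathrm{Spec}(A) \in \mathbf{A}$, $M \in \mathbf{F}_A^1$ one-connective, and $d \in \pi_0 \mathrm{Der}(A, M)$, I would descend the comparison of $\mathcal{X}(A \oplus_d \Omega M)$ with $\mathcal{X}(A) \times_{\mathcal{X}(A \oplus M)} \mathcal{X}(A)$ along the \v{C}ech nerve $U_\bullet$. The key input is the equivalence $K \mapsto K^d$ between hypercovers of $\mathrm{Spec}(A)$ and hypercovers of $\mathrm{Spec}(A \oplus_d \Omega M)$ established earlier in this section, which identifies the descent data on the two sides of the comparison square. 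Combined with the inf-cartesianness of each $U_{\underline{i}}$ supplied by the induction, this yields the required equivalence. Once inf-cartesianness is in hand, the global cotangent complex for $\mathcal{X}$ follows from Proposition \ref{prop:exactnesscot}: the loop-space hypothesis of that proposition is the inf-cartesian condition applied at $d = 0$, and base-change compatibility along any $u \colon \mathrm{Spec}(B) \to \mathrm{Spec}(A)$ is a consequence of Lemma \ref{lem:pullbackgt} applied to the atlas.

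The hardest step will be the descent argument for inf-cartesianness. Although the formal \'etaleness of each $U_i \to \mathcal{X}$ guarantees essentially unique atlas-local lifts of square-zero extensions, assembling them into a global map $\mathrm{Spec}(A \oplus_d \Omega M) \to \mathcal{X}$ requires that the descent data on $\mathrm{Spec}(A \oplus_d \Omega M)$ arising from the atlas agree with the descent data associated to the compatible pair in $\mathcal{X}(A) \times_{\mathcal{X}(A \oplus M)} \mathcal{X}(A)$. The hypercover equivalence $K \mapsto K^d$ is precisely the mechanism that enforces this compatibility, and correctly interacting it with the hypercover limit defining $\mathcal{X}$ is where the bulk of the technical work will lie.
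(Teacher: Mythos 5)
Your overall architecture matches the paper's: reduce to the absolute statement via Proposition \ref{prop:globalcotinfad} and the fibrewise criterion for inf-cartesianness, induct on the geometric level $n$, take the base case from the corollary on $\widetilde{\mathrm{Spec}(B)}$ together with Corollary \ref{cor:-1inf}, and recover the global cotangent complex from inf-cartesianness via Proposition \ref{prop:exactnesscot} (your observation that the loop-space hypothesis is inf-cartesianness at $d=0$ is correct, and is exactly how the corollary of \cite{toen2008homotopical}*{Proposition 1.4.2.7} quoted in the paper works). The divergence, and the gap, is in the inductive step.

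The mechanism you propose for the inductive step --- descending the comparison square along the \v{C}ech nerve $U_\bullet$ and invoking the hypercover equivalence $K\mapsto K^{d}$ --- does not do the job as stated. That equivalence concerns hypercovers of \emph{affines} and is the tool for the base case (it lets you commute the colimit over hypercovers defining the stackification $\widetilde{\mathrm{Spec}(B)}$ past the square-zero data); it says nothing about how $\mathrm{Map}(\mathrm{Spec}(A\oplus_{d}\Omega M),-)$ interacts with the geometric realisation $|U_\bullet|\simeq\mathcal{X}$, and mapping \emph{out of} an affine into a colimit of stacks does not commute with that colimit. What actually bridges the atlas and $\mathcal{X}$ is the pair of results you do not invoke: the unlabelled lemma immediately preceding the proposition (a point $x\in\pi_{0}(\mathcal{X}(A)\times_{\mathcal{X}(A\oplus M)}\mathcal{X}(A))$ whose first projection factors through a geometrically \'etale $U\to\mathcal{X}$ with an obstruction theory lifts to $\pi_{0}(U(A)\times_{U(A\oplus M)}U(A))$ --- this is the non-emptiness of the fibre, and it genuinely needs the obstruction theory of the \emph{map} $U\to\mathcal{X}$ supplied by the inductive hypothesis, not just formal \'etaleness), and Lemma \ref{lem:geometpullback}, which makes the square comparing $U(A\oplus_{d}\Omega M)\to U(A)\times_{U(A\oplus M)}U(A)$ with the corresponding square for $\mathcal{X}$ cartesian. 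With those two inputs the argument collapses: since each $U_i$ in an $n$-atlas is $(-1)$-geometric, hence representable, the top horizontal map is an equivalence, so the fibre of the bottom map at $x$ equals the fibre of the top map at the lift $y$, which is contractible; no gluing of local lifts over a hypercover is needed at all. Your route could probably be pushed through in the style of the general Artin argument of \cite{toen2008homotopical}*{Theorem 1.4.3.2}, but you would still need these two lemmas to identify the local terms, and the paper's point is precisely that the blanket formal \'etaleness assumption on $\mathbf{P}$ makes the descent machinery unnecessary.
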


\begin{proof}
This proof is essentially identical to \cite{toen2008homotopical}*{Theorem 1.4.3.2} (It is in fact simpler, since we do not need to appeal to some form of Artin's conditions, as all maps are assumed to be formally \'{e}tale). It suffices to show that any formally \'{e}tale geometric stack is inf-cartesian. The proof is by induction on $n$. The $n=-1$ case follows immediately from Proposition \ref{prop:globalcotinfad} and Corollary \ref{cor:-1inf}.

 Let $\mathrm{Spec}(A)\in\mathbf{A}$, $M\in\mathbf{F}_{A}^{1}$ and $d\in\pi_{0}(\mathrm{Der}(A,M))$ a derivation. Let $x\in\pi_{0}(\mathcal{X}(A)\times_{\mathcal{X}(A\oplus M)}\mathcal{X}(A))$ with projection $x_{1}$ onto the first factor $\pi_{0}(\mathcal{X}(A))$. We need to show that the homotopy fibre
$$\mathcal{X}(A\oplus_{d}\Omega M)\rightarrow\mathcal{X}(A)\times_{\mathcal{X}(A\oplus M)}\mathcal{X}(A)$$
is (non-empty and) contractible. Fix an $n$-atlas $\coprod_{i\in\mathcal{I}}U_{i}\rightarrow\mathcal{X}$ with each map formally \'{e}tale. Since $\mathbf{P}$ consists of formally \'{e}tale morphisms we may in fact  assume that $x_{1}$ lifts to a point $y_{1}:\mathrm{Spec}(A)\rightarrow U$ for some $U=U_{i}$. Moreover since $U\rightarrow\mathcal{X}$ is geometrically \'{e}tale we know that $x$ lifts to a point $y\in\pi_{0}(U(A)\times_{U(A\oplus M)}U(A))$. 

Now consider the commutative diagram
\begin{displaymath}
\xymatrix{
U(A\oplus_{d}\Omega M)\ar[d]\ar[r] & U(A)\times_{U(A\oplus M)}U(A)\ar[d]\\
\mathcal{X}(A\oplus_{d}\Omega M)\ar[r] & \mathcal{X}(A)\times_{\mathcal{X}(A\oplus M)}\mathcal{X}(A)
}
\end{displaymath}
The top map is an equivalence because $U$ is representable. By the inductive hypothesis the square is cartesian. Since we have observed the fibre at $x$ is non-empty, it is also contractible. This completes the proof.
\end{proof}

We arrive at the most important result of this section.

\begin{lemma}[ \cite{toen2008homotopical}, Proposition 2.2.5.4 (2)]\label{lem:geometpullback}
Let $f:\mathcal{X}\rightarrow\mathcal{Y}$ be a geometrically \'{e}tale morphism in $\mathbf{Stk}(\mathbf{Aff}^{cn}_{\mathbf{C}},\tau)$ with an obstruction theory. Then for $\mathrm{Spec}(A)\in\mathbf{A}$, any $1$-connective stable $A$-module $M$ in $\mathbf{F}_{A}^{1}$, and any derivation $d\in\pi_{0}(\mathrm{Der}(A,M))$ the map $A\oplus_{d}\Omega M\rightarrow A$ induces an
equivalence 
$$\mathcal{X}(A\oplus_{d}\Omega M)\rightarrow\mathcal{X}(A)\times_{\mathcal{Y}(A)}\mathcal{Y}(A\oplus_{d}\Omega M).$$
\end{lemma}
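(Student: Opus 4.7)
The plan is to verify that
$$\phi \colon \mathcal{X}(A \oplus_d \Omega M) \to \mathcal{X}(A) \times_{\mathcal{Y}(A)} \mathcal{Y}(A \oplus_d \Omega M)$$
is a weak equivalence of simplicial sets by showing separately that (a) its homotopy fibre over any point in its image is contractible, and (b) the induced map on $\pi_0$ is surjective. Both halves will rest on the obstruction theory of $f$ together with the vanishing of the relative cotangent complex coming from geometric \'{e}taleness.

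For (a) I would apply Proposition \ref{prop:fibrecomp} directly to $\phi$, using the hypothesis that $f$ carries an obstruction theory. For any $x \in \pi_0 \mathcal{X}(A \oplus_d \Omega M)$ with projection $y \in \pi_0 \mathcal{X}(A)$, that proposition identifies the homotopy fibre $L(x)$ of $\phi$ over $\phi(x)$ with the path space $\Omega_{\alpha(x),0}\mathrm{Map}_{\mathbf{Mod}_A}(\mathsf{L}\Omega^1_{\mathcal{X}/\mathcal{Y},y},M)$. Since $f$ is geometrically \'{e}tale, $\mathsf{L}\Omega^1_{\mathcal{X}/\mathcal{Y},y} \simeq 0$, and therefore this mapping space, and hence $L(x)$, is contractible.

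For (b) I would invoke the inf-cartesian halves of the obstruction theories of $\mathcal{X}$ and $\mathcal{Y}$ to rewrite
$\mathcal{X}(A \oplus_d \Omega M) \simeq \mathcal{X}(A) \times_{\mathcal{X}(A \oplus M)} \mathcal{X}(A)$ and $\mathcal{Y}(A \oplus_d \Omega M) \simeq \mathcal{Y}(A) \times_{\mathcal{Y}(A \oplus M)} \mathcal{Y}(A)$. A point $(y_1,\eta) \in \pi_0(\mathcal{X}(A) \times_{\mathcal{Y}(A)} \mathcal{Y}(A \oplus_d \Omega M))$ then corresponds to a pair $\eta = (f(y_1),\eta_2) \in \pi_0(\mathcal{Y}(A) \times_{\mathcal{Y}(A \oplus M)} \mathcal{Y}(A))$ whose first projection is $f(y_1)$. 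I would now invoke the preceding unnamed lemma (an incarnation of \cite{toen2008homotopical}*{Sub-Lemma 1.4.3.11}), applied to our geometrically \'{e}tale $f \colon \mathcal{X} \to \mathcal{Y}$ with the labels matched to that lemma by swapping the symbols $\mathcal{X}$ and $\mathcal{Y}$. It produces a lift $\chi = (y_1,\chi_2) \in \pi_0(\mathcal{X}(A) \times_{\mathcal{X}(A \oplus M)} \mathcal{X}(A))$ whose first projection is $y_1$ and which satisfies $f(\chi_2) \simeq \eta_2$. Translating back through inf-cartesianness, $\chi$ determines an element of $\pi_0 \mathcal{X}(A \oplus_d \Omega M)$ with $\phi(\chi) = (y_1,\eta)$, establishing $\pi_0$-surjectivity.

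The main obstacle in writing this out carefully is bookkeeping rather than anything conceptual: one must verify that the two projections $A \oplus_d \Omega M \to A$ used to form the fibre products in the statement match the conventions implicit in Proposition \ref{prop:fibrecomp} and in the preceding sub-lemma, and that after relabelling the sub-lemma's geometrically \'{e}tale $f \colon \mathcal{Y} \to \mathcal{X}$ agrees with our $f \colon \mathcal{X} \to \mathcal{Y}$. Once these conventions are pinned down, the geometric content is simply the statement that geometric \'{e}taleness supplies the infinitesimal lifting property through square-zero extensions, both existence (via the sub-lemma) and uniqueness up to contractible choice (via the cotangent complex).
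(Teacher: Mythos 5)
Your step (a) is exactly the paper's proof, and it already does all the work. Proposition \ref{prop:fibrecomp} computes the fibre of $\phi$ at an \emph{arbitrary} point $x$ of the target $\mathcal{X}(A)\times_{\mathcal{Y}(A)}\mathcal{Y}(A\oplus_{d}\Omega M)$, not only at points of the image: for any such $x$ with projection $y\in\mathcal{X}(A)$ it produces an obstruction class $\alpha(x)\in\mathrm{Map}_{\mathbf{Mod}_{A}}(\mathsf{L}\Omega^{1}_{\mathcal{X}\big\slash\mathcal{Y},y},M)$ and identifies the fibre with the path space $\Omega_{\alpha(x),0}\mathrm{Map}_{\mathbf{Mod}_{A}}(\mathsf{L}\Omega^{1}_{\mathcal{X}\big\slash\mathcal{Y},y},M)$. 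Geometric \'{e}taleness gives $\mathsf{L}\Omega^{1}_{\mathcal{X}\big\slash\mathcal{Y},y}\simeq 0$, so the ambient mapping space is contractible, and a space of paths between two points of a contractible space is non-empty as well as contractible. Thus every fibre is non-empty and contractible, which yields $\pi_{0}$-surjectivity and the equivalence in one stroke; this is precisely the paper's argument.

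Your step (b) is therefore redundant, and as written it also oversteps the hypotheses. The rewriting $\mathcal{X}(A\oplus_{d}\Omega M)\simeq\mathcal{X}(A)\times_{\mathcal{X}(A\oplus M)}\mathcal{X}(A)$ (and its analogue for $\mathcal{Y}$) uses the \emph{absolute} inf-cartesian property of the stacks $\mathcal{X}$ and $\mathcal{Y}$, whereas the lemma only assumes that the \emph{morphism} $f$ carries an obstruction theory; likewise the sub-lemma you invoke presupposes absolute cotangent complexes for both stacks, since its proof runs through $\mathrm{Der}_{\mathrm{Spec}(A)}(\mathcal{X},M)$ and $\mathrm{Der}_{\mathrm{Spec}(A)}(\mathcal{Y},M)$ and the cofibre sequence relating them to $\mathsf{L}\Omega^{1}_{\mathcal{X}\big\slash\mathcal{Y}}$. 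These extra properties do hold in the paper's applications, but they are not needed here: drop (b) and apply (a) at every point of the target.
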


\begin{proof}
The proof is as in \cite{toen2008homotopical} Proposition 2.2.5.4. Let $x\in\pi_{0}(\mathcal{Y}(A\oplus_{d}\Omega M)\times_{\mathcal{Y}(A)}\mathcal{X}(A))$ which projects to $y$ in $\pi_{0}(\mathcal{X}(A))$. We need to show that the fibre of this map at $x$ is non-empty and contractible. This follows from Definition \ref{defn:geomet} and Proposition \ref{prop:fibrecomp}.
\end{proof}

\subsection{Deformations of Geometric Stacks}
Let $M$ be a connective object of $\mathbf{F}^{cn}_{k}$. Consider the square-zero extension $k\oplus M$,

 For $\mathcal{X}\in\mathbf{PreStk}(\mathbf{Aff}^{cn}_{\mathbf{C}},\tau)$ , denote by $T^{M}\mathcal{X}$ the functor
$$\mathrm{Spec}(A)\mapsto\mathcal{X}(A\oplus (A\otimes M))$$
There is an equivalence
$$T^{M}\mathcal{X}\cong\underline{\mathbf{Map}}(\mathrm{Spec}(k\oplus M),\mathcal{X})$$
This implies the following.

\begin{proposition}
If $\mathcal{X}\in\mathbf{Stk}(\mathbf{Aff}^{cn}_{\mathbf{C}},\tau)$ then so is $T^{d}(\mathcal{X})$. In particular we get a functor
$$T^{M}(-):\mathbf{Stk}(\mathbf{Aff}^{cn}_{\mathbf{C}},\tau)\rightarrow\mathbf{Stk}(\mathbf{Aff}^{cn}_{\mathbf{C}},\tau)$$
\end{proposition}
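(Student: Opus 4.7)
The proposal rests on the identification, already observed in the excerpt just prior to the statement,
$$T^{M}\mathcal{X}\;\cong\;\underline{\mathbf{Map}}(\mathrm{Spec}(k\oplus M),\mathcal{X}),$$
so the content of the proposition reduces to showing that the internal mapping pre-stack from an arbitrary pre-stack into a stack is again a stack. First I would verify the displayed identification by a direct computation: for $\mathrm{Spec}(A)\in\mathbf{Aff}^{cn}_{\mathbf{C}}$ one has
$$\mathrm{Map}(\mathrm{Spec}(A),\underline{\mathbf{Map}}(\mathrm{Spec}(k\oplus M),\mathcal{X}))\cong\mathcal{X}(\mathrm{Spec}(A)\times\mathrm{Spec}(k\oplus M))\cong\mathcal{X}(A\otimes_{k}(k\oplus M)),$$
and the derived base-change of the split square-zero extension along $k\to A$ is exactly $A\oplus(A\otimes M)$, giving $T^{M}\mathcal{X}(A)$ on the nose.

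Next I would invoke the fact, recorded earlier in the excerpt (via \cite{lurie}*{Chapter 6}), that $\mathbf{Stk}(\mathbf{Aff}^{cn}_{\mathbf{C}},\tau)$ is a reflective subcategory of $\mathbf{PreStk}(\mathbf{Aff}^{cn}_{\mathbf{C}})$ and is therefore a topos; in particular the inclusion is closed under all small limits and the stackification functor commutes with finite products, so the inclusion is a left-exact reflection. A standard formal argument then gives that for any pre-stack $Y$ and any stack $X$, the pre-stack $\underline{\mathbf{Map}}(Y,X)$ is itself a stack: given a hypercover $K_{\bullet}\to\mathrm{Spec}(A)$ in $\tau$, descent for $X$ yields
$$\underline{\mathbf{Map}}(Y,X)(A)\;\cong\;\mathrm{Map}(Y\times\mathrm{Spec}(A),X)\;\cong\;\lim_{n}\mathrm{Map}(Y\times K_{n},X)\;\cong\;\lim_{n}\underline{\mathbf{Map}}(Y,X)(K_{n}),$$
where the middle equivalence uses descent of $X$ against the hypercover $Y\times K_{\bullet}\to Y\times\mathrm{Spec}(A)$ levelwise in the $Y$-variable (equivalently, one can use the adjunction between stackification and the inclusion, together with left-exactness of the reflector).

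Applying this general principle with $Y=\mathrm{Spec}(k\oplus M)$ and $X=\mathcal{X}$ immediately gives that $T^{M}\mathcal{X}$ is a stack. Functoriality of the assignment $\mathcal{X}\mapsto T^{M}\mathcal{X}$ is built into the construction, since $\underline{\mathbf{Map}}(Z,-)$ is an $(\infty,1)$-functor on $\mathbf{PreStk}$ that preserves the full subcategory of stacks, as just shown. The only mildly delicate point in the whole argument is checking the base-change identity $A\otimes_{k}(k\oplus M)\cong A\oplus (A\otimes M)$ at the derived-algebra level; this follows from the defining universal property of the split square-zero extension in $\mathrm{DAlg}^{cn}(\mathbf{C})$ (\cite{raksit2020hochschild}*{Construction 5.1.1}), which is preserved by tensoring with $A$ since $(-)\otimes A$ is the left adjoint to the forgetful functor from $\mathrm{DAlg}^{cn}_{A}(\mathbf{C})$ to $\mathrm{DAlg}^{cn}(\mathbf{C})$.
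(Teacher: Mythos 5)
Your proof is correct and follows essentially the same route as the paper, which simply records the identification $T^{M}\mathcal{X}\cong\underline{\mathbf{Map}}(\mathrm{Spec}(k\oplus M),\mathcal{X})$ and appeals to the fact, noted earlier in the text, that $\mathbf{Stk}(\mathbf{Aff}^{cn}_{\mathbf{C}},\tau)$ is a left-exact reflective (hence cartesian closed) subcategory of prestacks, so that mapping prestacks into a stack are stacks; your write-up just makes these two steps explicit. The only imprecise point is your justification of $A\otimes_{k}(k\oplus M)\simeq A\oplus(A\otimes M)$: the split square-zero extension is characterized by a ``maps into'' universal property, which a left adjoint such as $A\otimes_{k}(-)$ does not formally preserve, but the base-change identity itself is standard (and is tacitly assumed by the paper in the displayed equivalence), so this is a cosmetic rather than a substantive issue.
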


When $\mathcal{X}$ is representable, then $T^{M}(\mathcal{X})$ is often representable itself, once we impose some finiteness restrictions of the module $M$.

\begin{definition}
An object $M$ in $\mathbf{F}^{cn}_{k}$ is said to be \textit{nuclear} if for any object $L$ in $\mathbf{C}$, the natural map
$$M^{\vee}\otimes L\rightarrow\mathrm{Map}_{\mathbf{C}}(M,L)$$
is an isomorphism.
\end{definition}

\begin{proposition}
Let $\mathcal{X}=\mathrm{Spec}(B)$ be representable and $M\in\mathbf{F}^{cn}_{k}$ be nuclear. Then there is an equivalence, natural in $B$, 
$$T^{M}(\mathrm{Spec}(B))\cong\mathrm{Spec}(\mathrm{Sym}(\mathsf{L}\Omega^{1}_{B}\otimes M^{\vee}))$$
\end{proposition}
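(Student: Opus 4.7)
The strategy is to compute the functor of points of $T^{M}(\mathrm{Spec}(B))$ and recognize it as corepresented by $\mathrm{LSym}_{B}(\mathsf{L}\Omega^{1}_{B}\otimes M^{\vee})$ via a chain of standard adjunctions, where the nuclearity hypothesis is what lets us move $M^{\vee}$ onto the source side.

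First, unwinding the definition, $T^{M}(\mathrm{Spec}(B))(\mathrm{Spec}(A))=\mathrm{Map}_{\mathrm{DAlg}(\mathbf{C})}(B,A\oplus (A\otimes M))$, where $A\otimes M$ is regarded as a connective $A$-module. The split square-zero extension construction of \cite{raksit2020hochschild}*{Construction 5.1.1} together with the universal property of the cotangent complex gives a natural equivalence
\[
\mathrm{Map}_{\mathrm{DAlg}(\mathbf{C})}(B,A\oplus (A\otimes M))\cong\coprod_{f:B\to A}\mathrm{Map}_{\mathbf{Mod}_{B}}\bigl(\mathsf{L}\Omega^{1}_{B},\,(A\otimes M)_{f}\bigr),
\]
where $(A\otimes M)_{f}$ denotes $A\otimes M$ viewed as a $B$-module through $f:B\to A$. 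Extending scalars along $f$ converts this into $\mathrm{Map}_{\mathbf{Mod}_{A}}(\mathsf{L}\Omega^{1}_{B}\otimes_{B}A,\,A\otimes M)$.

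Next, I apply nuclearity. Since $M\in\mathbf{F}^{cn}_{k}$ is nuclear, for any object $L\in\mathbf{C}$ we have $\mathrm{Map}_{\mathbf{C}}(M,L)\cong M^{\vee}\otimes L$, and this rigidity passes to $A\otimes M\in\mathbf{Mod}_{A}$ with dual $A\otimes M^{\vee}$. The resulting duality isomorphism yields
\[
\mathrm{Map}_{\mathbf{Mod}_{A}}(\mathsf{L}\Omega^{1}_{B}\otimes_{B}A,\,A\otimes M)\cong\mathrm{Map}_{\mathbf{Mod}_{A}}\bigl((\mathsf{L}\Omega^{1}_{B}\otimes M^{\vee})\otimes_{B}A,\,A\bigr),
\]
using the identification $(\mathsf{L}\Omega^{1}_{B}\otimes_{B}A)\otimes_{A}(A\otimes M^{\vee})\cong(\mathsf{L}\Omega^{1}_{B}\otimes M^{\vee})\otimes_{B}A$. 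The extension-of-scalars/restriction adjunction along $f$ then gives $\mathrm{Map}_{\mathbf{Mod}_{B}}(\mathsf{L}\Omega^{1}_{B}\otimes M^{\vee},A_{f})$.

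Finally, using the free-forgetful adjunction $\mathrm{LSym}_{B}\dashv U$ between $B$-modules and derived commutative $B$-algebras of \cite{raksit2020hochschild}*{Section 4.2}, this mapping space is $\mathrm{Map}_{\mathrm{DAlg}_{B}(\mathbf{C})}(\mathrm{LSym}_{B}(\mathsf{L}\Omega^{1}_{B}\otimes M^{\vee}),A_{f})$, and reassembling the coproduct over $f$ produces $\mathrm{Map}_{\mathrm{DAlg}(\mathbf{C})}(\mathrm{LSym}_{B}(\mathsf{L}\Omega^{1}_{B}\otimes M^{\vee}),A)$, i.e.\ $\mathrm{Spec}(\mathrm{Sym}(\mathsf{L}\Omega^{1}_{B}\otimes M^{\vee}))(\mathrm{Spec}(A))$. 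All equivalences are manifestly natural in $B$ and in $\mathrm{Spec}(A)$. The main technical obstacle is the second step: making precise that nuclearity of $M$ over $k$ really does imply the expected duality for $A\otimes M$ as an $A$-module in the derived setting (so that $\mathbf{F}^{cn}_{A}\ni A\otimes M^{\vee}$ serves as its dual), which is where the axioms of a good system of module categories and the compatibility of the symmetric monoidal structure with base change are genuinely used.
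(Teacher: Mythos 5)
Your argument is correct and follows essentially the same chain of adjunctions as the paper's proof: identify maps into the split square-zero extension $A\oplus(A\otimes M)$ with derivations, corepresent these by $\mathsf{L}\Omega^{1}_{B}$, use nuclearity to transpose $M$ into $M^{\vee}$ on the source, and finish with the free--forgetful adjunction for $\mathrm{Sym}_{B}$. The only difference is your round trip through $\mathbf{Mod}_{A}$ via extension and restriction of scalars, which the paper avoids by keeping everything as $B$-modules (writing the target as $|A|\otimes M$); this is a cosmetic variation, not a different route.
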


\begin{proof}
We have
\begin{align*}
(T^{M}(\mathrm{Spec}(B)))(\mathrm{Spec}(A))&=\mathrm{Map}_{\mathrm{DAlg}^{cn}(\mathbf{C})}(B,A\oplus A\otimes M)\\
&\cong\mathrm{Map}_{\mathsf{DAlg}(\mathbf{C})_{\big\slash\textbf{B}}}(B,B\oplus A\otimes M)\\
&\cong\mathrm{Map}_{\mathbf{Mod}_{B}}(\mathsf{L}\Omega^{1}_{B},|A|\otimes M)\\
&\cong\mathrm{Map}_{\mathbf{Mod}_{B}}(\mathsf{L}\Omega^{1}_{B}\otimes M^{\vee},|A|)\\
&\cong\mathrm{Map}_{\mathsf{DAlg}(\mathbf{C})}(\mathrm{Sym}_{B}(\mathsf{L}\Omega^{1}_{B}\otimes M^{\vee}),A)
\end{align*}
\end{proof}

The following is an immediate consequence of Lemma \ref{lem:geometpullback}.

\begin{lemma}\label{lem:geometpullback2}
Let $f:\mathcal{X}\rightarrow\mathcal{Y}$ be a geometrically \'{e}tale map between stacks in $\mathbf{Stk}(\mathbf{Aff}^{cn}_{\mathbf{C}},\tau)$ which has an obstruction theory relative to $\mathbf{A}$.Then the following diagram is cartesian
\begin{displaymath}
\xymatrix{
T^{M}\mathcal{X}\ar[d]\ar[r] & T^{M}\mathcal{Y}\ar[d]\\
\mathcal{X}\ar[r] &\mathcal{Y}.
}
\end{displaymath}
\end{lemma}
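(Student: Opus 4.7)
The strategy is to recognise the result as an immediate specialisation of Lemma \ref{lem:geometpullback}, which already handles the ``loop'' square-zero extensions $A\oplus_{d}\Omega N$. The key observation is that for the zero derivation $d=0$, the defining pullback square
\[
\xymatrix{
C \ar[d]\ar[r] & A\ar[d]^{0}\\
A\ar[r]^{s} & A\oplus N
}
\]
yields $C\simeq A\oplus\Omega N$ as a split (trivial) square-zero extension. Setting $N\defeq \Sigma(A\otimes M)$ and using that $\mathbf{C}$ is stable so $\Omega\Sigma\simeq \mathrm{id}$, we therefore have a canonical identification
$$A\oplus_{0}\Omega N \;\simeq\; A\oplus(A\otimes M),$$
and hence $T^{M}\mathcal{X}(A)\simeq\mathcal{X}(A\oplus_{0}\Omega N)$ naturally in $\mathcal{X}$.

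To apply Lemma \ref{lem:geometpullback} we need $N\in\mathbf{F}_{A}^{1}$. Since $M\in\mathbf{F}^{cn}_{k}$, the base change axiom for a good system of module categories gives $A\otimes M=A\otimes_{k}M\in\mathbf{F}_{A}$, and since the tensor product is compatible with the $t$-structure, this module is connective. Suspension is a finite colimit, and $\mathbf{F}_{A}$ is closed under finite colimits, so $N=\Sigma(A\otimes M)$ lies in $\mathbf{F}_{A}$ and is $1$-connective, i.e.\ $N\in\mathbf{F}_{A}^{1}$ as required.

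Applying Lemma \ref{lem:geometpullback} to the geometrically \'etale morphism $f:\mathcal{X}\to\mathcal{Y}$ (which has an obstruction theory relative to $\mathbf{A}$ by hypothesis) with this $N$ and $d=0$ yields an equivalence
$$\mathcal{X}(A\oplus_{0}\Omega N)\;\longrightarrow\;\mathcal{X}(A)\times_{\mathcal{Y}(A)}\mathcal{Y}(A\oplus_{0}\Omega N)$$
for every $\mathrm{Spec}(A)\in\mathbf{A}$. Under the identification above this is precisely the comparison map
$$T^{M}\mathcal{X}(A)\;\longrightarrow\;\bigl(\mathcal{X}\times_{\mathcal{Y}}T^{M}\mathcal{Y}\bigr)(A),$$
where the two projections of the fibre product are induced by the natural transformation $T^{M}(-)\to(-)$ coming from the unit $k\to k\oplus M$ and by $f:\mathcal{X}\to\mathcal{Y}$ respectively. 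Since this equivalence holds for all test points in $\mathbf{A}$, the square is cartesian. The only real content of the argument is the translation of the split extension into an instance of the ``$\Omega$-extension'' treated by Lemma \ref{lem:geometpullback}; there is no serious obstacle once the suspension trick is in place.
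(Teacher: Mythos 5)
Your proof is correct and follows exactly the route the paper intends: the paper simply declares the lemma ``an immediate consequence'' of Lemma \ref{lem:geometpullback}, and your argument supplies precisely the missing translation, namely that the split extension $A\oplus(A\otimes M)$ is the $d=0$ instance $A\oplus_{0}\Omega\Sigma(A\otimes M)$ with $\Sigma(A\otimes M)\in\mathbf{F}_{A}^{1}$, after which the pointwise equivalence over $\mathbf{A}$ is exactly the cartesianness of the square.
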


\begin{corollary}\label{cor:Tepi}
Suppose that $\{f_{i}:\mathcal{X}_{i}\rightarrow\mathcal{Y}\}_{i\in\mathcal{I}}$ be a set of morphisms in $\mathbf{Stk}(\mathbf{A},\tau|_{\mathbf{A}})$ such that each $f_{i}$ is geometrically \'{e}tale and has an obstruction theory relative to $\mathbf{A}$. Consider the induced map $f:\coprod_{i\in\mathcal{I}}\mathcal{X}_{i}\rightarrow\mathcal{Y}$. If $f$ is an epimorphism in $\mathbf{Stk}(\mathbf{A},\tau|_{\mathbf{A}})$ then the map $T^{M}f:\coprod_{i\in\mathcal{I}}T^{M}\mathcal{X}_{i}\rightarrow T^{M}\mathcal{Y}$ is an epimorphism in $\mathbf{Stk}(\mathbf{A},\tau|_{\mathbf{A}})$.
\end{corollary}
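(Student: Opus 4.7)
The plan is to reduce the statement to the stability of epimorphisms under pullback in the $\infty$-topos $\mathbf{Stk}(\mathbf{A},\tau|_{\mathbf{A}})$, using Lemma \ref{lem:geometpullback2} as the crucial input.

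First, I would apply Lemma \ref{lem:geometpullback2} to each morphism $f_i: \mathcal{X}_i \to \mathcal{Y}$. Since each $f_i$ is geometrically étale and has an obstruction theory relative to $\mathbf{A}$, the lemma provides a cartesian square
\begin{displaymath}
\xymatrix{
T^{M}\mathcal{X}_{i}\ar[d]\ar[r] & T^{M}\mathcal{Y}\ar[d]\\
\mathcal{X}_{i}\ar[r]^{f_{i}} &\mathcal{Y}.
}
\end{displaymath}
Hence there is a canonical equivalence $T^{M}\mathcal{X}_{i} \cong \mathcal{X}_{i} \times_{\mathcal{Y}} T^{M}\mathcal{Y}$.

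Next, I would pass to the coproduct. Since coproducts commute with pullbacks in any $\infty$-topos (in particular in $\mathbf{Stk}(\mathbf{A},\tau|_{\mathbf{A}})$, which is a reflective subcategory of the presheaf topos), we obtain
$$\coprod_{i\in\mathcal{I}} T^{M}\mathcal{X}_{i} \cong \coprod_{i\in\mathcal{I}} \bigl(\mathcal{X}_{i} \times_{\mathcal{Y}} T^{M}\mathcal{Y}\bigr) \cong \Bigl(\coprod_{i\in\mathcal{I}}\mathcal{X}_{i}\Bigr) \times_{\mathcal{Y}} T^{M}\mathcal{Y},$$
and the map $T^{M}f:\coprod_{i\in\mathcal{I}} T^{M}\mathcal{X}_{i} \to T^{M}\mathcal{Y}$ is identified with the base change of $f:\coprod_{i\in\mathcal{I}}\mathcal{X}_{i} \to \mathcal{Y}$ along the projection $T^{M}\mathcal{Y} \to \mathcal{Y}$ (induced by the unit map $k \to k\oplus M$).

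Finally, I would invoke the fact that epimorphisms in an $\infty$-topos are stable under pullback. Since $f$ is assumed to be an epimorphism, so is its base change $T^{M}f$, which is exactly the required conclusion. There is no serious obstacle here: the whole content is packaged into Lemma \ref{lem:geometpullback2}; the rest is formal topos theory. The only mild thing to double-check is that the identification of $T^{M}f$ with the pullback of $f$ is compatible with the individual cartesian squares above, which follows from functoriality of $T^{M}(-)\cong\underline{\mathbf{Map}}(\mathrm{Spec}(k\oplus M),-)$.
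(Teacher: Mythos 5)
Your proof is correct and follows essentially the same route as the paper: both apply Lemma \ref{lem:geometpullback2} to identify each $T^{M}\mathcal{X}_{i}$ with the pullback $\mathcal{X}_{i}\times_{\mathcal{Y}}T^{M}\mathcal{Y}$, then use universality of colimits and stability of epimorphisms under pullback in the topos $\mathbf{Stk}(\mathbf{A},\tau|_{\mathbf{A}})$. Your version is slightly more explicit about commuting the coproduct past the pullback, but there is no substantive difference.
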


\begin{proof}
By Lemma \ref{lem:geometpullback2}, for each $i\in\mathcal{I}$ there is an equivalence
$$T^{M}(\mathcal{X}_{i})\cong T^{M}(\mathcal{Y})\times_{\mathcal{Y}}\mathcal{X}_{i}$$

By assumption $\coprod_{i\in\mathcal{I}}\mathcal{X}_{i}\rightarrow\mathcal{Y}$ is an epimorphism. Since $\mathbf{Stk}(\mathbf{A},\tau|_{\mathbf{A}})$ is a topos,  $T^{M}\mathcal{Y}\times_{\mathcal{Y}}\coprod_{i\in\mathcal{I}}\mathcal{X}\cong\coprod_{i\in\mathcal{I}}T^{M}\mathcal{X}_{i}\rightarrow T^{M}\mathcal{Y}$ is an epimorphism. 
\end{proof}

\begin{corollary}\label{cor:atlastangent}
Let $\mathcal{X}$ be an object of $\mathcal{X}\in\mathbf{Stk}^{\mathbf{A}}_{n}(\mathbf{M},\tau,\textbf{P},\mathbf{A})$ for $n\ge-1$, with atlas $\{U_{i}\rightarrow\mathcal{X}\}_{i\in\mathcal{I}}$. Then $T^{M}\mathcal{X}$ is $n$-geometric, with $n$-atlas $\{T^{M}U_{i}\rightarrow T^{M}\mathcal{X}\}_{i\in\mathcal{I}}$. 
\end{corollary}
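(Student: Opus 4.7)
The plan is to proceed by induction on $n$, leveraging Lemma \ref{lem:geometpullback2} and Corollary \ref{cor:Tepi} to transfer the atlas structure from $\mathcal{X}$ to $T^M\mathcal{X}$.

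\textbf{Base case} ($n = -1$). Here $\mathcal{X} \cong \mathrm{Map}(-,X)$ for some $X \in \mathbf{A}$, and the computation $T^M(\mathrm{Spec}(B)) \cong \mathrm{Spec}(\mathsf{Sym}_B(\mathsf{L}\Omega^1_B \otimes M^\vee))$ carried out just above shows that $T^M \mathcal{X}$ is representable by an object which is $\mathbf{A}$-admissible (being a stack on $\mathbf{A}$).

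\textbf{Inductive step.} Assume the result holds at level $n-1$, and let $\{U_i \to \mathcal{X}\}_{i \in \mathcal{I}}$ be an $n$-atlas. I will verify the three conditions defining an $n$-atlas for $\{T^M U_i \to T^M \mathcal{X}\}$ together with $(n-1)$-representability of the diagonal of $T^M \mathcal{X}$.

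First, each atlas map $U_i \to \mathcal{X}$ is in $(n-1)$-$\mathbf{P}$, hence formally \'etale, and being representable admits a global cotangent complex (by the proposition that representable maps have obstruction theories). Proposition \ref{prop:formallyetcot} then shows that $U_i \to \mathcal{X}$ is geometrically \'etale. Lemma \ref{lem:geometpullback2} therefore yields a cartesian square
\[
\xymatrix{
T^M U_i \ar[d] \ar[r] & T^M \mathcal{X} \ar[d] \\
U_i \ar[r] & \mathcal{X}
}
\]
so $T^M U_i \to T^M \mathcal{X}$ is the base change of $U_i \to \mathcal{X}$, and since $(n-1)$-$\mathbf{P}$ is stable under pullback, the map $T^M U_i \to T^M \mathcal{X}$ lies in $(n-1)$-$\mathbf{P}$. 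By the inductive hypothesis (applied at level $-1$), each $T^M U_i$ is $(-1)$-geometric. The epimorphism condition is exactly Corollary \ref{cor:Tepi} applied to the family $\{U_i \to \mathcal{X}\}$, whose coproduct is epimorphic because $\{U_i \to \mathcal{X}\}$ is an atlas.

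For the diagonal, observe that $T^M$, being the internal mapping stack $\underline{\mathsf{Map}}(\mathrm{Spec}(k \oplus M),-)$, preserves products, so $T^M(\mathcal{X} \times \mathcal{X}) \cong T^M \mathcal{X} \times T^M \mathcal{X}$. Applying Lemma \ref{lem:geometpullback2} to the diagonal $\Delta_{\mathcal{X}} \colon \mathcal{X} \to \mathcal{X} \times \mathcal{X}$ (which is geometrically \'etale for the same reason as the atlas maps: $\mathcal{X}$ is geometrically \'etale in the relevant sense, so its diagonal is too; alternatively $\Delta_\mathcal{X}$ is $(n-1)$-representable and hence has an obstruction theory and a global cotangent complex, which together with formal \'etaleness of monomorphism-type maps gives the conclusion) produces a cartesian square presenting $\Delta_{T^M \mathcal{X}}$ as the base change of $\Delta_\mathcal{X}$. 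Since $(n-1)$-representability is stable under base change, $\Delta_{T^M \mathcal{X}}$ is $(n-1)$-representable.

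\textbf{Expected obstacle.} The subtle point is justifying the geometric \'etaleness hypothesis when invoking Lemma \ref{lem:geometpullback2} for the diagonal — specifically, checking that $\Delta_\mathcal{X}$ has a global cotangent complex and is formally \'etale in the required sense. For schemes this follows since monomorphisms are geometrically \'etale, but in the general $n$-geometric case one needs the representability plus the obstruction-theory machinery established earlier, which formally gives vanishing of $\mathsf{L}\Omega^1_{\mathcal{X}/\mathcal{X} \times \mathcal{X}}$ through Proposition \ref{prop:formallyetcot}. Once this is in place the pullback arguments assemble mechanically.
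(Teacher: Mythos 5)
Your argument is correct and rests on exactly the same two ingredients as the paper's (much terser) proof: Corollary \ref{cor:Tepi} supplies the epimorphism, and Lemma \ref{lem:geometpullback2} exhibits each $T^{M}U_{i}\rightarrow T^{M}\mathcal{X}$ as a pullback of $U_{i}\rightarrow\mathcal{X}$, so that membership in $(n-1)$-$\mathbf{P}$ follows from stability of that class under base change. Your additional care with the diagonal (and the flagged subtlety about geometric \'etaleness of $\Delta_{\mathcal{X}}$, which is unproblematic in the intended application to schemes, where the diagonal is a monomorphism) goes beyond what the paper records but does not change the method.
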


\begin{proof}
 By Corollary \ref{cor:Tepi} the map $\coprod_{i\in\mathcal{I}}T^{M}U_{i}\rightarrow T^{M}\mathcal{X}$ is an epimorphism. Moreover $T^{M}U_{i}\rightarrow T^{M}\mathcal{X}$ is $n$-representable and in $\mathbf{P}$ since it is a pullback of the map $U_{i}\rightarrow\mathcal{X}$.
 \end{proof}

\section{Derived Algebraic Geometry Contexts and Geometric HKR}
\label{sec:Geometric_HKR}
In this section we prove the geometric version of HKR, at least for rational derived algebraic contexts, and give examples.



\subsection{HKR for Geometric Stacks}

\begin{theorem}
Let 
$$(\mathbf{C},\mathbf{C}_{\ge0},\mathbf{C}_{\le0},\mathbf{C}^{0},\tau,\mathbf{P},\mathbf{A},\mathbf{F})$$ 
 be a weak relative derived algebraic geometry context enriched over $\mathbb{Q}$, and let $\mathcal{X}$ be a scheme for the relative $(\infty,1)$-geometry tuple $(\mathbf{C}_{\ge0},\tau,\mathbf{P},\mathbf{A})$. Then there is an equivalence, natural in $\mathcal{X}$,
$$\mathcal{L}(\mathcal{X})\cong T\mathcal{X}[-1]$$
\end{theorem}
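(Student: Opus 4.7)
The plan is to apply Proposition \ref{prop:geomequiv} to the natural transformation $\eta \colon T(-)[-1]\Rightarrow \mathcal{L}(-)$ of endofunctors of $\mathbf{PreStk}(\mathbf{Aff}^{cn}_{\mathbf{C}})$ coming from the affinisation discussion: namely, under the identification $k\oplus k[1]\cong k\prod_{k\prod k}k$, the natural map $\mathbf{Aff}(S^{1})\to\mathrm{Spec}(k\oplus k[1])$ in $\mathbf{C}_{\mathrm{ab}}$ base-changes to $\mathbf{C}$ and induces, by internal $\mathrm{Map}$, a natural transformation $\underline{\mathsf{Map}}(\mathrm{Spec}(k\oplus k[1]),-)\to\underline{\mathsf{Map}}(S^1,-)$. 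On the left this is $T(-)[-1]=T^{k[1]}(-)$, on the right it is $\mathcal{L}(-)$.

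For the first hypothesis of Proposition \ref{prop:geomequiv}, I need $\eta$ to be an equivalence on $(-1)$-geometric stacks, i.e. on $\widetilde{\mathrm{Spec}}(A)$ for $\mathrm{Spec}(A)\in\mathbf{A}$. On affines this is exactly the rational affine HKR theorem Proposition \ref{prop:ratequiv}: on coordinate rings the statement is $\mathsf{HH}(A)\simeq\mathsf{Sym}_A(\mathsf{L}\Omega^1_A[-1])$, which corepresents $\mathcal{L}(\mathrm{Spec}(A))$ and $T^{k[1]}(\mathrm{Spec}(A))$ respectively (using $k[1]^{\vee}\simeq k[-1]$ and the formula for $T^M$ on representables). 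Stackification preserves this as both functors commute with colimits in their second argument and satisfy descent.

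For the second hypothesis, both $\mathcal{L}$ and $T^{k[1]}$ are of the form $\underline{\mathsf{Map}}(Z,-)$ for a fixed (pre)stack $Z$, and hence commute with arbitrary small limits, in particular with fibre products of $(-1)$-geometric stacks.

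The third hypothesis is the content of Corollaries \ref{cor:atlasloop} and \ref{cor:atlastangent}: if $\mathcal{X}$ is a scheme with atlas $\{U_i\to\mathcal{X}\}$ by monomorphisms, then $\{\mathcal{L}(U_i)\to\mathcal{L}(\mathcal{X})\}$ is an $n$-atlas, and analogously $\{T^{k[1]}U_i\to T^{k[1]}\mathcal{X}\}$ is an $n$-atlas (the latter using that schemes are geometrically étale in the sense of Definition \ref{defn:geomet} and therefore admit obstruction theories, which is precisely the input that Corollary \ref{cor:atlastangent} needs via Lemma \ref{lem:geometpullback2}). The scheme hypothesis is used crucially here: the monomorphism condition on the atlas is what allows Corollary \ref{cor:atlasloop} to exhibit $\mathcal{L}(\mathcal{X})$ as the realisation of a Segal groupoid whose pieces are of the form $\mathcal{L}(U_{\underline{i}})$.

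The main obstacle is purely bookkeeping: verifying that Proposition \ref{prop:geomequiv} applies cleanly to the inductive definition of an $n$-scheme, and checking that the natural transformation $\eta$ really does assemble into a map of Segal groupoids from the tangent side to the loop side so that the inductive comparison of atlases lifts to a comparison of geometric realisations. Naturality in $\mathcal{X}$ is then automatic from the construction of $\eta$.
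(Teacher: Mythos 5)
Your proposal is correct and follows essentially the same route as the paper: apply Proposition \ref{prop:geomequiv} with the affine case supplied by Proposition \ref{prop:ratequiv}, commutation with fibre products coming from both functors being mapping stacks out of a fixed object, and the atlas condition supplied by Corollaries \ref{cor:atlasloop} and \ref{cor:atlastangent}. Your additional remarks identifying $T(-)[-1]$ with $T^{k[1]}(-)$ via $k[1]^{\vee}\simeq k[-1]$ and tracing the role of geometric \'etaleness through Lemma \ref{lem:geometpullback2} are accurate elaborations of what the paper leaves implicit.
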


\begin{proof}
We use Proposition \ref{prop:geomequiv}. The fact that $\eta$ is an equivalence when restricted to $0$-geometric stacks is Proposition \ref{prop:ratequiv}. As stacks of the form $\underline{\mathrm{Map}}(\mathcal{Z},-)$, both $\mathcal{L}(-)$ and $T(-)[-1]$ commute with fibre products. Whenever $\{U_{i}\rightarrow\mathcal{X}\}_{i\in\mathcal{I}}$ is an $n$-atlas, then by Corollary \ref{cor:atlasloop} $\{\mathcal{L}(U_{i})\rightarrow\mathcal{L}(\mathcal{X})\}_{i\in\mathcal{I}}$ is an $n$-atlas and by Corollary \ref{cor:atlastangent} $\{TU_{i}[-1]\rightarrow T\mathcal{X}[-1]\}_{i\in\mathcal{I}}$ is an $n$-atlas.
\end{proof}

%
%
%
%
%

\subsection{HKR in (Weak) DAG Contexts}

\begin{definition}
A \textit{(weak) DAG context} is a tuple $(\mathbf{C},\mathbf{C}_{\ge0},\mathbf{C}_{\le0},\mathbf{C}^{0},\tau,\mathbf{P})$ where 
\begin{enumerate}
\item
$(\mathbf{C},\mathbf{C}_{\ge0},\mathbf{C}_{\le0},\mathbf{C}^{0})$ is a derived algebraic context;
\item
$\tau$ is a topology on $\mathbf{Aff}^{cn}_{\mathbf{C}}$;
\item
$\mathbf{P}$ is a collection of maps in $\mathbf{Aff}^{cn}_{\mathbf{C}}$,
\end{enumerate}
such that $(\mathbf{C},\mathbf{C}_{\ge0},\mathbf{C}_{\le0},\mathbf{C}^{0},\tau,\mathbf{P},\mathrm{DAlg}^{cn}(\mathbf{C}),\mathbf{Mod})$ is a (weak) relative DAG context.
\end{definition}

\begin{remark}
If $\tau$ is a topology on $\mathbf{Aff}^{cn}_{\mathbf{C}}$ and $\mathbf{P}$ a class of maps in $\mathbf{Aff}^{cn}_{\mathbf{C}}$ such that, for example, $(\mathbf{C}_{\ge0},\tau,\mathbf{P})$ is a HAG context in the sense of \cite{toen2008homotopical}, then by assumption one has hyperdescent for modules. Thus in this case $(\mathbf{C},\mathbf{C}_{\ge0},\mathbf{C}_{\le0},\mathbf{C}^{0},\tau,\mathbf{P})$ is a DAG context.
\end{remark}

Let $(\mathbf{C},\mathbf{C}_{\ge0},\mathbf{C}_{\le0},\mathbf{C}^{0},\tau,\mathbf{P})$  be a (weak) DAG context, and let $\mathbf{A}\subset\mathbf{Aff}^{cn}_{\mathbf{C}}$ be a full subcategory such that $(\mathbf{Aff}^{cn}_{\mathbf{C}},\tau,\mathbf{P},\mathbf{A})$ is a relative $(\infty,1)$-geometry tuple. Consider the induced strong relative $(\infty,1)$-geometry tuple $(\mathbf{Aff}_{\mathbf{C}},\tau_{\mathbf{A}},\mathbf{P}_{\mathbf{A}},\mathbf{A})$. Then $(\mathbf{C},\mathbf{C}_{\ge0},\mathbf{C}_{\le0},\mathbf{C}^{0},\tau_{\mathbf{A}},\mathbf{P})$ is itself a (weak) DAG context. Thus we get the following.

\begin{theorem}
Let $(\mathbf{C},\mathbf{C}_{\ge0},\mathbf{C}_{\le0},\mathbf{C}^{0},\tau,\mathbf{P})$ be a weak DAG context with $(\mathbf{C},\mathbf{C}_{\ge0},\mathbf{C}_{\le0},\mathbf{C}^{0})$ being rational, and $\mathbf{A}\subset\mathbf{Aff}^{cn}_{\mathbf{C}}$ a full subcategory such that $(\mathbf{Aff}^{cn}_{\mathbf{C}},\tau,\mathbf{P},\mathbf{A})$ is a relative $(\infty,1)$-geometry tuple. Let $\mathcal{X}\in\mathbf{Sch}(\mathbf{A},\tau|_{\mathbf{A}},\mathbf{P}|_{\mathbf{A}})$. Then there is an equivalence 
$$\mathcal{L}(\mathcal{X})\cong T\mathcal{X}[-1]$$
that is natural in $\mathcal{X}$.
\end{theorem}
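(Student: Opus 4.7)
The plan is to deduce the statement directly from the preceding theorem by invoking the construction in the remark just above. From the weak DAG context $(\mathbf{C},\mathbf{C}_{\ge0},\mathbf{C}_{\le0},\mathbf{C}^{0},\tau,\mathbf{P})$ and the subcategory $\mathbf{A}$, one builds the induced strong relative $(\infty,1)$-geometry tuple $(\mathbf{Aff}^{cn}_{\mathbf{C}},\tau_{\mathbf{A}},\mathbf{P}_{\mathbf{A}},\mathbf{A})$, and the remark records that this again gives a weak DAG context. Pairing it with the canonical module system $\mathbf{F}_A\defeq\mathbf{Mod}_A$ on $\mathbf{A}$ yields a weak relative DAG context (in the sense used by the previous theorem): the derived algebraic context axioms are unchanged, $\mathbf{P}_{\mathbf{A}}\subset\mathbf{P}$ still consists of formally \'{e}tale maps closed under composition and pullback, and the Čech descent axiom is inherited. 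The rationality hypothesis on $(\mathbf{C},\mathbf{C}_{\ge0},\mathbf{C}_{\le0},\mathbf{C}^{0})$ is obviously preserved.

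Next I would verify that the scheme category is unchanged under this passage. The stability axiom of a relative $(\infty,1)$-pre-geometry tuple guarantees that any map in $\mathbf{P}$ between $\mathbf{A}$-objects automatically lies in $\mathbf{P}_{\mathbf{A}}$, so $\mathbf{P}|_{\mathbf{A}}=\mathbf{P}_{\mathbf{A}}|_{\mathbf{A}}$; combined with the axiom that cover-maps lie in $\mathbf{P}$, this forces $\tau|_{\mathbf{A}}=\tau_{\mathbf{A}}|_{\mathbf{A}}$. Because the inductive definition of an $n$-scheme only invokes these restricted data, atlases by monomorphisms, and representability in $\mathbf{P}|_{\mathbf{A}}$, the scheme categories for the two tuples agree. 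Applying the previous theorem to the constructed weak relative DAG context then immediately yields the natural equivalence $\mathcal{L}(\mathcal{X})\cong T\mathcal{X}[-1]$ for every $\mathcal{X}\in\mathbf{Sch}(\mathbf{A},\tau|_{\mathbf{A}},\mathbf{P}|_{\mathbf{A}})$.

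The only genuinely delicate point, and the one I would concentrate on if it arose, is the sixth axiom of a good system of module categories, which demands that $\mathrm{Spec}(A\oplus M)\in\mathbf{A}$ for every $M\in\mathbf{F}_A^1$. This is automatic when $\mathbf{A}=\mathbf{Aff}^{cn}_{\mathbf{C}}$ but may fail for a proper subcategory. If it does fail, the remedy is to replace $\mathbf{F}_A=\mathbf{Mod}_A$ by the largest subsystem closed under square-zero extensions inside $\mathbf{A}$; one checks that this subsystem still contains $A$ and $\mathsf{L}\Omega^1_A$, is closed under finite colimits and base change, and satisfies weak Čech descent, so it still supports the obstruction-theoretic machinery used in the proof of the previous theorem and the conclusion goes through unchanged.
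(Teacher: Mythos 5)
Your proposal follows the same route as the paper, whose entire derivation consists of forming the induced strong relative $(\infty,1)$-geometry tuple $(\mathbf{Aff}^{cn}_{\mathbf{C}},\tau_{\mathbf{A}},\mathbf{P}_{\mathbf{A}},\mathbf{A})$, observing that one again has a weak DAG context, and citing the preceding theorem. Your supporting checks are correct and are exactly the details the paper leaves implicit: the stability axiom of a relative pre-geometry tuple gives $\mathbf{P}|_{\mathbf{A}}=\mathbf{P}_{\mathbf{A}}|_{\mathbf{A}}$, the requirement that cover-maps lie in $\mathbf{P}$ gives $\tau|_{\mathbf{A}}=\tau_{\mathbf{A}}|_{\mathbf{A}}$, and hence the scheme categories for the two tuples coincide; weak \v{C}ech descent passes to the coarser topology $\tau_{\mathbf{A}}\subseteq\tau$.

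The point you flag about axiom (6) of a good system of module categories --- that $\mathrm{Spec}(A\oplus M)\in\mathbf{A}$ for $M\in\mathbf{F}^{1}_{A}$ --- is a genuine issue for a proper subcategory $\mathbf{A}$ with $\mathbf{F}_{A}=\mathbf{Mod}_{A}$, and the paper's one-line derivation elides it as well (it only records that the context with $\mathbf{A}=\mathbf{Aff}^{cn}_{\mathbf{C}}$ is a weak DAG context, for which the axiom is vacuous, yet the preceding theorem is invoked for the smaller $\mathbf{A}$). Your proposed remedy, however, is not yet an argument: the ``largest subsystem closed under square-zero extensions inside $\mathbf{A}$'' is not obviously closed under finite colimits, retracts and base change, and, more seriously, there is no a priori reason it contains $\mathsf{L}\Omega^{1}_{A}$, which axiom (2) requires and which the obstruction-theoretic machinery (the construction of atlases for $T^{M}\mathcal{X}$) uses essentially. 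To close this you should either impose closure of $\mathbf{A}$ under the relevant square-zero extensions as a hypothesis (which is what the relative version of the theorem effectively does via $\mathbf{F}$), or transfer the statement from the absolute context along the fully faithful $i^{\#}$, in which case you must additionally verify that $i^{\#}$ commutes with both $\mathcal{L}(-)$ (clear, since $i^{\#}$ preserves finite limits) and $T(-)[-1]$ (not automatic, since it is an internal mapping object rather than a colimit).
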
 
%
%

\subsubsection{Example: Algebraic Geometry}

Let $R$ be a unital commutative $\mathbb{Q}$-algebra. Consider the derived algebraic context $\mathbf{C}_{R}$. It is presented by the model derived algebraic context  $(\mathrm{Ch}(\mathsf{Mod}_{R}),\mathrm{Ch}_{\ge0}(\mathsf{Mod}_{R}),\mathrm{Ch}_{\le0}(\mathsf{Mod}_{R}),\mathcal{P}^{0})$ where $\mathrm{Ch}(\mathsf{Mod}_{R})$ and $\mathrm{Ch}_{\ge0}(\mathsf{Mod}_{R})$ are equipped with the projective model structures, the $t$-structure is the Postnikov one, and $\mathcal{P}^{0}$ is the category of free, finitely generated $R$-modules. Let $\mathbf{P}^{sm}$ denote the class of smooth morphisms, as defined in \cite{toen2008homotopical}*{Definition 1.2.6.7}. Let $\tau^{\'{e}t}$ denote the topology consisting of covers
$$\{\mathrm{Spec}(B_{i})\rightarrow\mathrm{Spec}(A)\}_{i\in\mathcal{I}}$$
in such that each $\mathrm{Spec}(B_{i})\rightarrow\mathrm{Spec}(A)$ is an \'{e}tale morphism.  Then $(\mathrm{Ch}_{\ge0}(\mathsf{Mod}_{R}),\tau^{\'{e}t},\mathbf{P}^{sm})$ is a HAG context. Thus 
$$(\mathbf{Ch}(\mathsf{Mod}_{R}),\mathbf{Ch}_{\ge0}(\mathsf{Mod}_{R}),\mathbf{Ch}_{\le0}(\mathsf{Mod}_{R}),\mathcal{P}^{0},\tau^{\'{e}t},\mathbf{P}^{\'{e}t},\mathrm{DAlg}^{cn}(\mathbf{C}_{R}))$$ 
is a (weak relative) derived algebraic geometry context. In particular we recover \cite{ben-zvinadler}*{Proposition 1.1}.

\begin{proposition}
Let $R$ be a unital commutative $\mathbb{Q}$-algebra, and let $\mathcal{X}$ be a derived scheme over $R$. Then there is an equivalence
$$T\mathcal{X}[-1]\cong\mathcal{L}(\mathcal{X}),$$ which is natural in $\mathcal{X}$.
\end{proposition}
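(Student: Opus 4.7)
The plan is to deduce this from the immediately preceding theorem by identifying the stated setup as a rational weak DAG context and derived $R$-schemes as schemes in the appropriate sense.

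First I would verify that $(\mathbf{Ch}(\mathsf{Mod}_{R}),\mathbf{Ch}_{\ge0}(\mathsf{Mod}_{R}),\mathbf{Ch}_{\le0}(\mathsf{Mod}_{R}),\mathcal{P}^{0})$ is a rational derived algebraic context. The structure of a derived algebraic context has already been established in the preceding discussion of the algebraic example, and rationality is immediate from the assumption that $R$ is a $\mathbb{Q}$-algebra: indeed, $\mathrm{Hom}_{\mathsf{Ho}(\mathbf{C}_{R})}(X,Y)$ is an $R$-module, hence a $\mathbb{Q}$-module, so $\mathsf{Ho}(\mathbf{C}_{R})$ is $\mathbb{Q}$-enriched.

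Next I would invoke the fact cited in the paragraph above (following \cite{toen2008homotopical}) that $(\mathrm{Ch}_{\ge0}(\mathsf{Mod}_{R}),\tau^{\mathrm{\acute{e}t}},\mathbf{P}^{sm})$ is an HAG context, and by the Remark in this subsection this upgrades to a DAG context $(\mathbf{Ch}(\mathsf{Mod}_{R}),\mathbf{Ch}_{\ge0}(\mathsf{Mod}_{R}),\mathbf{Ch}_{\le0}(\mathsf{Mod}_{R}),\mathcal{P}^{0},\tau^{\mathrm{\acute{e}t}},\mathbf{P}^{\mathrm{\acute{e}t}})$, taking $\mathbf{A}=\mathrm{DAlg}^{cn}(\mathbf{C}_{R})$ and $\mathbf{F}=\mathbf{Mod}$. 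This gives us the ambient data required by the general geometric HKR theorem.

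Finally I would match the notion of derived scheme with the notion $\mathbf{Sch}(\mathbf{A},\tau^{\mathrm{\acute{e}t}}|_{\mathbf{A}},\mathbf{P}^{\mathrm{\acute{e}t}}|_{\mathbf{A}})$ used above. A derived $R$-scheme in the sense of To\"en--Vezzosi is by definition glued along monomorphic open immersions from derived affines, with a monomorphic diagonal; this coincides with the inductive notion of scheme introduced in Section~\ref{sec:categorical_geometry}. Having identified $\mathcal{X}$ as an object of $\mathbf{Sch}(\mathbf{A},\tau^{\mathrm{\acute{e}t}}|_{\mathbf{A}},\mathbf{P}^{\mathrm{\acute{e}t}}|_{\mathbf{A}})$, the preceding theorem directly yields a natural equivalence $\mathcal{L}(\mathcal{X})\cong T\mathcal{X}[-1]$.

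The only real content beyond citation is the matching of the two notions of ``derived scheme''; no step is a genuine obstacle, since the rationality, DAG-context, and scheme-identification conditions are all already in place in the preceding material, and the theorem does all the heavy lifting via Proposition~\ref{prop:geomequiv}, Corollary~\ref{cor:atlasloop}, and Corollary~\ref{cor:atlastangent}.
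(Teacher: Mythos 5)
Your proposal matches the paper exactly: the paper offers no separate proof, treating the proposition as an immediate instance of the preceding general theorem once the surrounding text has identified $(\mathbf{Ch}(\mathsf{Mod}_{R}),\ldots,\tau^{\text{\'et}},\mathbf{P}^{\text{\'et}},\mathrm{DAlg}^{cn}(\mathbf{C}_{R}))$ as a weak (relative) DAG context with $\mathbf{C}_{R}$ rational, and your observation that the only genuine content is matching To\"en--Vezzosi derived schemes with the paper's monomorphism-atlas notion of scheme is the right (and likewise implicit) point. No gaps.
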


\subsection{Analytic Geometry}\label{subsec:analytic_geometry}

Here we explain an application to analytic geometry, based on forthcoming work of the first two authors and Oren Ben-Bassat \cite{BKK}.

\subsubsection{The Monomorphism Topology HAG Context}

Let $(\mathbf{C},\mathbf{C}_{\ge0},\mathbf{C}_{\le0},\mathbf{C}^{0})$ be a derived algebraic context. For $A\in\mathrm{DAlg}^{cn}(\mathbf{C})$ say that a collection of maps $\{\mathrm{Spec}(A_{i})\rightarrow\mathrm{Spec}(A)\}_{i\in\mathcal{I}}$ is a \textit{cover in the homotopy monomorphism topology} if there exists a finite subset $\mathcal{J}\subset\mathcal{I}$ such that 
\begin{enumerate}
\item
For each $j\in\mathcal{J}$ the map $\mathrm{Spec}(A_{j})\rightarrow\mathrm{Spec}(A)$ is a monomorphism.
\item
If $g:M\rightarrow N$ is a map in $\mathbf{Mod}_{A}$ such that $A_{j}\otimes_{A}g$ is an equivalence for each $j\in\mathcal{J}$ then $g$ is an equivalence.
\end{enumerate}
This is easily seen to define a topology on $\mathbf{Aff}^{cn}_{\mathbf{C}}$ which we call the monomorphism topology. Moreover, as will be explained in \cite{BKK} this topology satisfies \v{C}ech descent, and in particular weak \v{C}ech descent. Thus 
$$(\mathbf{C},\mathbf{C}_{\ge0},\mathbf{C}_{\le0},\mathbf{C}^{0},\tau|_{\mathbf{A}}^{hm},\mathbf{P}^{\'{e}t})$$
is a relative derived algebraic context.


\subsubsection{Analytic HKR}

Let $R$ be a non-trivially valued Banach field of characteristic $0$. We consider the derived algebraic context
 $$\mathbf{C}^{IB}_{R}\defeq(\mathbf{Ch}(\mathrm{Ind(Ban_{R})}),\mathbf{Ch}_{\ge0}((\mathrm{Ind(Ban_{R})}),\mathbf{Ch}_{\le0}(\mathrm{Ind(Ban_{R})}),\mathcal{P}_{0}).$$ 

For $r=(r_{1},\ldots,r_{n})$ a vector in $\mathbb{R}_{>0}^{n}$, we define $T^{n}_{R}(r)$ to be 
 $$\{\sum_{I\in\mathbb{N}^{n}}a_{I}X^{I}:\sum_{I\rightarrow\infty}|a_{I}|r^{I}|<\infty\}$$
 with the norm $||\sum_{I\in\mathbb{N}^{n}}a_{I}X^{I}||\defeq\sum_{I\in\mathbb{N}^{n}}|a_{I}|r^{I}$ for $R$ Archimedean, and to be 
  $$\{\sum_{I\in\mathbb{N}^{n}}a_{I}X^{I}:\lim_{I\rightarrow\infty}|a_{I}|r^{I}|\rightarrow0\}$$
 with the norm $||\sum_{I\in\mathbb{N}^{n}}a_{I}X^{I}||\defeq\textrm{sup}_{I\in\mathbb{N}^{n}}|a_{I}|r^{I}$ if $R$ is non-Archimedean. For $\rho=(\rho_{1},\ldots,\rho_{n})\in\mathbb{R}^{n}_{>0}$ we define $\mathcal{W}^{n}_{R}(\rho)$ to be the complete bornological $R$-algebra defined by the  colimit
 $$\mathcal{W}^{n}_{R}(\rho)=R<\rho_{1}^{-1}X_{1},\ldots,\rho_{n}^{-1}X_{n}>^{\dagger}\defeq\colim_{r>\rho}T^{n}_{k}(r)$$
 
\begin{definition}[\cite{bambozzi2016dagger} Definition 4.7]
An algebra $A$ in $\mathsf{DAlg}(\mathsf{Ind(Ban_{R})})$ is called a 
\textit{dagger affinoid algebra} if there is an isomorphism 
$$A\cong\mathcal{W}^{n}_{k}(\rho)\big\slash I$$ for some ideal $I\subset\mathcal{W}^{n}_{k}(\rho)$. 
\end{definition}
 
Note that the tensor product of two dagger affinoid algebras is again dagger affinoid. This follows from the fact that $\mathcal{W}^{m}_{R}(\rho)\haotimes_R \mathcal{W}^{n}_{R}(\rho')\cong\mathcal{W}^{m+n}_{R}((\rho,\rho'))$. In particular for any dagger affinoid $A$,
$$A<\rho_{1}^{-1}X_{1},\ldots,\rho_{n}^{-1}X_{n}>\defeq A \haotimes_R \mathcal{W}^{n}_{R}(\rho)$$
is dagger affinoid. 

\begin{definition}[\cite{BaBK} Definition 4.9]
A morphism $f:A\rightarrow B$ of dagger affinoid $R$-algebras is said to be a \textit{Weierstrass localisation} if there is an isomorphism
$$B\cong A<\rho_{1}^{-1}X_{1},\ldots,\rho_{n}^{-1}X_{n}>^{\dagger}\big\slash(X_{1}-f_{1},\ldots,X_{n}-f_{n})$$
for some $f_{1},\ldots,f_{n}\in A$ and $\rho=(\rho_{1},\ldots,\rho_{n})\in\mathbb{R}^{n}_{>0}$ such that under this isomorphism, $A\rightarrow A<\rho_{1}^{-1}X_{1},\ldots,\rho_{n}^{-1}X_{n}>^{\dagger}\big\slash(X_{1}-f_{1},\ldots,X_{n}-f_{n})$ is the obvious map. 
\end{definition}

\begin{remark}
By Lemma 5.1 in \cite{bambozzi2016dagger}, Weierstrass localisations are in particular homotopy epimorphisms.
\end{remark}

For a dagger affinoid algebra $A$, write $\mathcal{M}(A)$ for the set of equivalence classes of bounded algebra morphisms $A\rightarrow\overline{R}$, where $\overline{R}$ is a valued (field) extension of $R$, and equip it with the weak topology. Note that this is a (contravariant) functorial construction.

\begin{definition}[\cite{BaBK} Definition 4.12]
A \textit{dagger Stein algebra} is a bornological algebra of the form $\lim_{n}A_{n}$ where 
$$\ldots A_{n+1}\rightarrow A_{n}\rightarrow\ldots\rightarrow A_{1}\rightarrow A_{0}$$
is a sequence with each $A_{n}$ a dagger affinoid, and each $A_{n+1}\rightarrow A_{n}$ a Weierstrass localisation and $\mathcal{M}(A_{i})$ is contained within the interior of $\mathcal{M}(A_{i+1})$. 
\end{definition}

\begin{remark}\label{rem:alternative_dagger}
In \cite{Meyer-Mukherjee:Bornological_tf}, an alternative, coordinate-free definition of dagger algebras is provided in the non-Archimedean setting. Here, one starts with an arbitrary torsion-free \(\Z_p\)-algebra and completes it in a certain bornology, which encodes for the overconvergence condition implicit in a dagger affinoid algebra. This coincides with the usual notion of a dagger affinoid algebra over \(\Z_p\), when one restricts to finite-type commutative algebras.  
\end{remark}

\begin{definition}
Let $A$ be a dagger Stein algebra. A discrete $A$-module $M$ is said to be \textit{RR-quasicoherent} if for all epimorphisms $A\rightarrow B$ with $B$ dagger Stein, the map $B\otimes_{A}M\rightarrow\pi_{0}(B\otimes_{A}M)$ is an equivalence. 
\end{definition}

\begin{remark}
By Theorems $A$ and $B$ for dagger Steins (\cite{bambozzi2019theorems}), coherent $A$-modules are in particular RR-quasicoherent.
\end{remark}

\begin{definition}
A \textit{derived dagger Stein algebra} is an object $A$ in $\mathrm{DAlg}^{cn}(\mathbf{C}^{IB}_{R})$ such that
\begin{enumerate}
\item
$\pi_{0}(A)$ is a dagger Stein.
\item
$\pi_{n}(A)$ is an $RR$-quasi-coherent $\pi_{0}(A)$-module. 
\end{enumerate}
The full subcategory of $\mathrm{DAlg}^{cn}(\mathbf{C}^{IB}_{R})$ consisting of derived dagger Steins is denoted $\mathbf{dStn}_{alg}^{\dagger}$.
\end{definition}

\begin{definition}
Let $A$ be a derived dagger Stein algebra. An $A$-module $M$ is said to be 
\begin{enumerate}
\item
 \textit{RR-quasicoherent}  if each  $\pi_{i}(M)$ is an $RR$-quasicoherent $\pi_{0}(A)$-module. The full subcategory of $\mathbf{Mod}_{A}$ consisting of such modules is denoted $\mathbf{QCoh}_{A}$. 
 \item
 \textit{coherent} if each $\pi_{i}(M)$ is a coherent $\pi_{0}(A)$-module. The full subcategory of $\mathbf{Mod}_{A}$ consisting of such modules is denoted $\mathbf{Coh}_{A}$. 
 \end{enumerate}
\end{definition}
Note that $\mathbf{Coh}_{A}\subset\mathbf{QCoh}_{A}$.

Using results of \cite{BaBK} (particularly Section 5), in \cite{BKK} we will show the following.
\begin{enumerate}
\item
If $A\rightarrow B$ and $A\rightarrow C$ are maps in $\mathbf{dStn}_{alg}^{\dagger}$ with $A\rightarrow B$ an epimorphism, then $A\otimes_{C}B$ is in $\mathbf{dStn}_{alg}^{\dagger}$.
\item
The homotopy monomorphism topology on $\mathbf{Aff}_{\mathbf{C}^{IB}_{R}}$ restricts to a topology on $\mathbf{dStn}_{alg}^{\dagger}$.
\end{enumerate}

We consider the weak relative derived algebraic geometry context
$$(\mathbf{Ch}(\mathrm{Ind(Ban_{R})}),\mathbf{Ch}_{\ge0}((\mathrm{Ind(Ban_{R})})),\mathbf{Ch}_{\le0}(\mathrm{Ind(Ban_{R})}),\mathcal{P}_{0},\tau^{hm},\mathbf{dStn}_{alg}^{\dagger},\mathbf{P}^{\'{e}t},\mathbf{Mod})$$ 
A scheme in this setting will be called a \textit{locally finitely coverable analytic scheme}.

We get the following
\begin{theorem}\label{thm:analyticHKR}
Let $k$ be either the field of complex numbers with the Euclidean norm, or a non-Archimedean field of characteristic $0$ with a non-trivial valuation. Let $\mathcal{X}$ be a locally finitely coverable derived $k$-analytic scheme. Then there is an equivalence of derived $k$-analytic schemes
$$T\mathcal{X}[-1]\cong\mathcal{L}(\mathcal{X}),$$ that is natural in \(\mathcal{X}\).
\end{theorem}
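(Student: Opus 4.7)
The plan is to derive Theorem~\ref{thm:analyticHKR} as a direct specialisation of the general geometric HKR theorem of Section~\ref{sec:Geometric_HKR} for schemes in a rational weak DAG context. The bulk of the argument therefore reduces to verifying that the analytic setup of Section~\ref{subsec:analytic_geometry} supplies the required input data: namely, a rational weak DAG context together with a full subcategory of affines forming a strong relative $(\infty,1)$-geometry tuple. Once this input is in place the conclusion is immediate, since by construction a locally finitely coverable derived $k$-analytic scheme is precisely an object of $\mathbf{Sch}(\mathbf{dStn}_{alg}^{\dagger}, \tau^{hm}|_{\mathbf{dStn}_{alg}^{\dagger}}, \mathbf{P}^{\'{e}t}|_{\mathbf{dStn}_{alg}^{\dagger}})$.

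First I would verify that $\mathbf{C}^{IB}_{k}$ is a \emph{rational} derived algebraic context. Its derived algebraic context structure is recorded in Corollary~\ref{lem:CBorn_elementary} and the subsequent corollary. For rationality, since $k$ is a Banach field of characteristic zero every object of $\mathsf{Ind}(\mathsf{Ban}_{k})$ is a $\mathbb{Q}$-vector space; by the proposition characterising rationality of a derived algebraic context, the canonical functor $\mathbf{C}_{\mathrm{ab}} \to \mathbf{C}^{IB}_{k}$ factors through $\mathbf{C}_{\mathbb{Q}}$, so $\mathsf{Ho}(\mathbf{C}^{IB}_{k})$ is $\mathbb{Q}$-enriched. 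Next I would verify that $(\mathbf{C}^{IB}_{k},\tau^{hm},\mathbf{P}^{\'{e}t})$ is a weak DAG context: formal \'{e}taleness is preserved under pullbacks and compositions and contains isomorphisms, and every cover in $\tau^{hm}$ is in particular a homotopy monomorphism, hence formally \'{e}tale. The nontrivial ingredient is therefore \v{C}ech, and in particular weak \v{C}ech, descent for $\tau^{hm}$ on module categories, which is the forthcoming result of \cite{BKK}.

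With these in hand, I would then verify that $(\mathbf{Aff}^{cn}_{\mathbf{C}^{IB}_{k}}, \tau^{hm}, \mathbf{P}^{\'{e}t}, \mathbf{dStn}_{alg}^{\dagger})$ is a \emph{strong} relative $(\infty,1)$-geometry tuple. The two closure properties needed — stability of $\mathbf{dStn}_{alg}^{\dagger}$ under pullbacks along homotopy epimorphisms within it, and the fact that $\tau^{hm}$ restricts to a topology on $\mathbf{dStn}_{alg}^{\dagger}$ — are exactly the two enumerated items quoted from \cite{BKK} immediately before the statement of the theorem. $\mathbf{dStn}_{alg}^{\dagger}$-admissibility of each dagger Stein then follows from the restricted descent statement applied to representable presheaves, while strongness reduces to showing that pullbacks of $\tau^{hm}$-covers along maps from a dagger Stein land inside $\mathbf{dStn}_{alg}^{\dagger}$, which is again taken as foundational input from \cite{BKK}.

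Granting these verifications, the rational geometric HKR theorem applies verbatim and yields the natural equivalence $\mathcal{L}(\mathcal{X}) \cong T\mathcal{X}[-1]$ for every scheme in $\mathbf{Sch}(\mathbf{dStn}_{alg}^{\dagger}, \tau^{hm}|_{\mathbf{dStn}_{alg}^{\dagger}}, \mathbf{P}^{\'{e}t}|_{\mathbf{dStn}_{alg}^{\dagger}})$, which is precisely the claim. The main obstacle in the proof is thus not in the argument itself but in the foundational analytic input from \cite{BKK}, namely the \v{C}ech descent of $\tau^{hm}$ on module categories and the stability properties of $\mathbf{dStn}_{alg}^{\dagger}$; once those are established, Theorem~\ref{thm:analyticHKR} is a formal consequence of the general machinery developed in the preceding sections.
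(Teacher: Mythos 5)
Your proposal matches the paper's own treatment: Theorem \ref{thm:analyticHKR} is stated there as an immediate consequence of the rational geometric HKR theorem for schemes, once the tuple built from $\mathbf{C}^{IB}_{k}$, $\tau^{hm}$, $\mathbf{P}^{\text{\'et}}$ and $\mathbf{dStn}_{alg}^{\dagger}$ is verified to be a weak relative DAG context, with the descent and stability properties of dagger Steins deferred to the forthcoming work \cite{BKK} exactly as you indicate. Your additional remarks on rationality and on homotopy monomorphisms being formally \'etale are correct and consistent with the paper.
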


The category of locally finitely coverable analytic schemes contains, as a full subcategory, the category of locally finitely coverable  (underived) analytic spaces (in the usual sense when $R$ is $\mathbb{C}$, and in the Berkovich sense for $R$ non-Archimedean). An analytic space $X$ is locally finitely coverable if it has an atlas $\{U_{i}\rightarrow X\}_{i\in\mathcal{I}}$ consisting of dagger Stein subspaces such that for any map $V\rightarrow X$ there is a \textit{finite} cover by dagger Steins $\{V_{j}\rightarrow V\}_{j\in\mathcal{J}}$ such that the composition $V_{j}\rightarrow V\rightarrow X$ factors through some $U_{i(j)}$. Any derived analytic scheme which has a finite atlas is in particular locally finitely coverable. This includes any derived analytic scheme $\mathcal{X}$ such that the truncated analytic scheme $t_{0}(\mathcal{X})$ is a second-countable and Hausdorff analytic space. This covers the overwhelmingly vast majority of schemes in which we are interested. We can however extend the HKR theorem to certain non-locally finitely coverable spaces. 

\begin{definition}
 Let $\mathbf{dStn}_{alg}^{\dagger,coh}$ denote the full subcategory of $\mathbf{dStn}_{alg}^{\dagger}$ consisting of derived dagger Steins $A$ such that each $\pi_{n}(A)$ is a \textit{coherent} $\pi_{0}(A)$-module. 
\end{definition}

\begin{definition}
\begin{enumerate}
\item
Let $\mathrm{C}$ be a symmetric monoidal complete and cocomplete category. An object $X$ of $\mathrm{C}$ is said to be \textit{formally }$\aleph_{1}$-\textit{filtered} if for any countable collection $\{Y_{n}\}_{n\in\mathbb{N}}$ the map
$$X\otimes\prod_{n\in\mathbb{N}}Y_{n}\rightarrow\prod_{n\in\mathbb{N}}X\otimes Y_{n}$$
is an isomorphism.
\item
Let $A$ be a derived dagger Stein algebra. A connective $A$-module $M$ is said to be $\aleph_{1}$-RR-\textit{quasi-coherent} if it is RR-quasi-coherent and each $\pi_{n}(M)$ is formally $\aleph_{1}$-filtered in $\mathrm{Mod}_{\pi_{0}(A)}(\mathrm{Ind(\mathrm{Ban}_{R})})$. The full subcategory of $\mathbf{Mod}_{A}$ consisting of $\aleph_{1}$-RR-quasi-coherent modules is denoted $\mathbf{QCoh}^{\aleph_{1}}_{A}$.
\end{enumerate}
\end{definition}

\begin{remark}
One has $\mathbf{Coh}_{A}\subset\mathbf{QCoh}^{\aleph_{1}}_{A}\subset\mathbf{QCoh}_{A}$. 
\end{remark}

In \cite{BKK}, we will define the  countable version $\tau^{hm}_{\aleph_{1}}$  of $\tau^{hm}$. 
whereby a collection of maps $\{\mathrm{Spec}(A_{i})\rightarrow\mathrm{Spec}(A)\}_{i\in\mathcal{I}}$ is a cover precisely if there is a countable subset $\mathcal{J}\subset\mathcal{I}$ such that
\begin{enumerate}
\item
each map $A\rightarrow A_{j}$ with $j\in\mathcal{J}$ is a monomorphism.
\item
If $M$ is a module in $\mathbf{QCoh}_{A}^{\aleph_{1}}$ such that $A_{j}\otimes_{A}M\cong0$ for all $j\in\mathcal{J}$ then $M\cong0$. 
\end{enumerate}
 
We will then show the following.

$$(\mathbf{Ch}(\mathrm{Ind(Ban_{R})}),\mathbf{Ch}_{\ge0}((\mathrm{Ind(Ban_{R})})),\mathbf{Ch}_{\le0}(\mathrm{Ind(Ban_{R})}),\mathcal{P}_{0},\tau_{\aleph_{1}}^{hm},\mathbf{dStn}_{alg}^{\dagger,coh},\mathbf{P}^{\'{e}t},\mathbf{Coh})$$ 
is a weak relative DAG context. A scheme in this context will be called a \textit{locally coherent derived analytic scheme}.

\begin{theorem}\label{thm:analyticHKR2}
Let $k$ be either the field of complex numbers with the Euclidean norm, or a non-Archimedean field of characteristic $0$ with a non-trivial valuation. Let $\mathcal{X}$ be a locally coherent derived $k$-analytic scheme. Then there is an equivalence of derived $k$-analytic schemes
$$T\mathcal{X}[-1]\cong\mathcal{L}(\mathcal{X}).$$
\end{theorem}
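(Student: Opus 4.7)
The plan is to deduce Theorem \ref{thm:analyticHKR2} from the abstract HKR theorem for schemes in a $\mathbb{Q}$-enriched weak relative derived algebraic geometry context (established in Section \ref{sec:Geometric_HKR}). So the whole proof reduces to identifying the relevant tuple, checking that it satisfies the axioms of a weak relative DAG context, and verifying rational enrichment; then locally coherent derived $k$-analytic schemes are by definition schemes in this tuple, and the geometric HKR theorem applies directly.

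Concretely, first I would fix the tuple
\[
(\mathbf{Ch}(\mathrm{Ind}(\mathrm{Ban}_{k})),\mathbf{Ch}_{\ge0}(\mathrm{Ind}(\mathrm{Ban}_{k})),\mathbf{Ch}_{\le0}(\mathrm{Ind}(\mathrm{Ban}_{k})),\mathcal{P}_{0},\tau_{\aleph_{1}}^{hm},\mathbf{dStn}_{alg}^{\dagger,coh},\mathbf{P}^{\mathrm{\acute{e}t}},\mathbf{Coh})
\]
and invoke the result announced at the end of the excerpt (to be proved in \cite{BKK}) that this is a weak relative DAG context. Granting this, the category $\mathbf{dStn}_{alg}^{\dagger,coh}$ is stable under the relevant fibre products (epimorphisms of derived dagger Steins with coherent homotopy), $\mathbf{P}^{\mathrm{\acute{e}t}}$ consists of formally étale maps, and $\mathbf{Coh}$ defines a good system of module categories: coherent modules are preserved by base change in $\mathbf{dStn}_{alg}^{\dagger,coh}$, the cotangent complex of a derived dagger Stein is coherent, and coherent modules satisfy weak Čech descent with respect to $\tau_{\aleph_{1}}^{hm}$ (this is the countable analogue of Theorems A and B for dagger Steins, cf.\ \cite{bambozzi2019theorems}). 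The tuple $(\mathbf{Aff}^{cn}_{\mathbf{C}^{IB}_{k}},\tau_{\aleph_{1}}^{hm},\mathbf{P}^{\mathrm{\acute{e}t}},\mathbf{dStn}_{alg}^{\dagger,coh})$ is then a strong relative $(\infty,1)$-geometry tuple, essentially by the same argument as for $\tau^{hm}$ restricted to $\mathbf{dStn}_{alg}^{\dagger}$.

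Next, since $k$ has characteristic zero, the underlying derived algebraic context $\mathbf{C}^{IB}_{k}$ is rational: its homotopy category is enriched over $k$, hence a fortiori over $\mathbb{Q}$. This places us in the setting of the main theorem of Subsection 8.2, which asserts that for any $\mathbb{Q}$-enriched weak DAG context and any full subcategory of affines producing a relative $(\infty,1)$-geometry tuple, every scheme $\mathcal{X}\in\mathbf{Sch}(\mathbf{A},\tau|_{\mathbf{A}},\mathbf{P}|_{\mathbf{A}})$ satisfies $\mathcal{L}(\mathcal{X})\simeq T\mathcal{X}[-1]$ naturally in $\mathcal{X}$. A locally coherent derived $k$-analytic scheme is by definition an object of $\mathbf{Sch}(\mathbf{dStn}_{alg}^{\dagger,coh},\tau_{\aleph_{1}}^{hm}|_{\mathbf{dStn}_{alg}^{\dagger,coh}},\mathbf{P}^{\mathrm{\acute{e}t}}|_{\mathbf{dStn}_{alg}^{\dagger,coh}})$, so the equivalence is immediate.

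The hard part will be the verification of the weak relative DAG context axioms, and most particularly two points: (i) that the countable monomorphism topology $\tau_{\aleph_{1}}^{hm}$ really does satisfy weak Čech descent on $\mathbf{Coh}$ and not merely on a larger class such as $\mathbf{QCoh}^{\aleph_{1}}$, which requires showing that the conservativity detection on $\aleph_{1}$-RR-quasicoherents implies the coChech tower converges in $\mathbf{Coh}$; and (ii) that for $A\to B$ a countable hypercover in $\tau_{\aleph_{1}}^{hm}$, the tensor–product towers of coherent $A$-modules have coherent limits, which is where the $\aleph_{1}$-filtration hypothesis in the definition of $\mathbf{QCoh}^{\aleph_{1}}$ and the coherence of homotopy groups of objects of $\mathbf{dStn}_{alg}^{\dagger,coh}$ are jointly used. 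Both of these are stated to be established in \cite{BKK}, and the present proof is then a formal corollary of the abstract HKR machinery.
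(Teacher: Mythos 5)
Your proposal is correct and matches the paper's (implicit) argument exactly: the paper states Theorem \ref{thm:analyticHKR2} as an immediate consequence of the claim, deferred to \cite{BKK}, that the tuple with $\tau^{hm}_{\aleph_1}$, $\mathbf{dStn}_{alg}^{\dagger,coh}$ and $\mathbf{Coh}$ is a weak relative DAG context, combined with rationality of $\mathbf{C}^{IB}_k$ in characteristic zero and the geometric HKR theorem for schemes in $\mathbb{Q}$-enriched weak relative DAG contexts. The only cosmetic quibble is that the theorem you need is the one for general relative geometry tuples in Section \ref{sec:Geometric_HKR} (where the affines are $\mathbf{dStn}_{alg}^{\dagger,coh}$), not the specialisation in the subsection on weak DAG contexts where $\mathbf{A}=\mathrm{DAlg}^{cn}(\mathbf{C})$.
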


Again, as will be explained in \cite{BKK}, the category of ($n$-localic for some $n$) analytic spaces $\mathbf{dAn}^{loc}_{R}$ of Porta and Yue Yu  (\cites{porta2016higher,porta2015derived1, porta2015derived, porta2017derived,porta2018derived, porta2017representability,porta2018derivedhom}), embeds fully faithfully in the category of locally coherent derived analytic schemes. In particular we recover the HKR theorem of Antonio, Petit, Porta in their setup for localic derived analytic spaces, which is \cite{antonio2019derived}*{Theorem 1.2.1.5}. In a future version of this paper, we shall explain how to remove the localic assumption, and recover their full theorem.

\section{A Sketch of Condensed HKR}\label{sec:condensed}

With some modifications which shouldn't be particularly onerous, it is possible to prove a HKR theorem for analytic rings (and analytic spaces), in the sense of Condensed Mathematics of Clausen and Scholze (\cite{clausenscholze1}, \cite{clausenscholze2}). The main hurdle is that the category of analytic rings is not quite a full subcategory of commutative monoids in some symmetric monoidal category. However it is close. Recall (\cite{clausenscholze1}*{Definition 2.1}) that for $\kappa$ an uncountable strong limit cardinal, $\mathcal{C}$ a category with $\kappa$-small limits and colimits, the category of $\kappa$-\textit{condensed} $\mathcal{C}$-\textit{objects}, denoted $\mathrm{Cond}_{\kappa}(\mathcal{C})$, is the category of $\mathpzc{C}$-valued sheaves on the site $*_{\kappa-\mathrm{pro\'{e}t}}$ of $\kappa$-small profinite sets $S$ with covers given by finite families of jointly surjective maps. For $\kappa<\kappa'$ there is an adjunction
$$\adj{LKE_{\kappa<\kappa'}}{\mathrm{Cond}_{\kappa}(\mathcal{C})}{\mathrm{Cond}_{\kappa'}(\mathcal{C})}{U_{\kappa<\kappa'}}$$
where the right adjoint is the forgetful functor and the left is Kan extension (and sheafification). Then one defines
$$\mathrm{Cond}(\mathpzc{C})\defeq\colim_{\kappa}\mathrm{Cond}_{\kappa}(\mathcal{C}).$$
 We need the following, which is essentially the content of Lecture II of \cite{clausenscholze1} 

\begin{theorem}
For each $\kappa$, $\mathrm{Cond}_{\kappa}(\mathrm{Ab})$ is a strongly monoidal elementary abelian category with symmetric projectives.
\end{theorem}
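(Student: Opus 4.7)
My plan is to exhibit a set of tiny projective generators for $\mathrm{Cond}_\kappa(\mathrm{Ab})$ and then apply Proposition \ref{prop:projclosed} to enlarge it into one that is closed under finite direct sums, tensor products, and symmetric powers, thereby realizing this category as a strongly monoidal elementary abelian category with symmetric projectives.

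The category $\mathrm{Cond}_\kappa(\mathrm{Ab})$ is a category of sheaves of abelian groups on a small site, hence Grothendieck abelian, and in particular bicomplete. The closed symmetric monoidal structure is the sheafification of the pointwise tensor product. Since the topology has only finite covers, filtered colimits are computed pointwise. The key input from Lecture II of \cite{clausenscholze1} is that every finite jointly surjective cover of an extremally disconnected $\kappa$-small profinite set $S$ splits, so that evaluation $\mathrm{ev}_S \cong \mathrm{Hom}(\mathbb{Z}[S],-)$ is exact and commutes with all small colimits. Hence $\mathbb{Z}[S]$ is a tiny projective, and the class $\{\mathbb{Z}[S]\}_{S \in \mathrm{ExtDisc}_\kappa}$ generates, since every condensed abelian group $M$ admits a canonical surjection from $\bigoplus_S \mathbb{Z}[S]^{\oplus M(S)}$. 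These generators are flat (hence strongly flat, flat being equivalent to strongly flat in an abelian category), since tensoring with $\mathbb{Z}[S]$ admits a pointwise description in terms of free abelian groups, which are exact.

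To invoke Proposition \ref{prop:projclosed}, I would verify its two hypotheses. First, $\mathrm{Cond}_\kappa(\mathrm{Ab})$ has symmetric projectives: for $S$ extremally disconnected, $\mathsf{Sym}^n \mathbb{Z}[S] \cong \mathbb{Z}[S^n/\Sigma_n]$, and $S^n/\Sigma_n$ is again a $\kappa$-small profinite set (a finite-group quotient of a profinite set is profinite). Second, the tensor product of two compact objects is compact: $\mathbb{Z}[S] \otimes \mathbb{Z}[T] \cong \mathbb{Z}[S \times T]$ with $S \times T$ a $\kappa$-small profinite. Both points reduce to the assertion that $\mathbb{Z}[P]$ is compact projective for every $\kappa$-small profinite $P$, which is the substantial content imported from \cite{clausenscholze1}. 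Granted this, Proposition \ref{prop:projclosed} produces the required symmetrically closed set $\mathcal{P}^0$ of tiny projective generators.

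The main obstacle is therefore the projectivity of $\mathbb{Z}[P]$ for arbitrary $\kappa$-small profinite $P$, as $P$ itself need not be extremally disconnected. In \cite{clausenscholze1} this is obtained by a direct analysis on the profinite site, exploiting the cofinality of extremally disconnected covers and vanishing of higher $\mathrm{Ext}$-groups; the tininess of $\mathbb{Z}[P]$ then follows since $\mathrm{ev}_P$ preserves filtered colimits on any site with finite covers.
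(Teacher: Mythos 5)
Your proposal is correct and follows essentially the same route as the paper: abelianness is immediate for sheaves of abelian groups on a site, the compact projective generators $\mathbb{Z}[S]$ are imported from Theorem 2.2 of \cite{clausenscholze1}, and closure under tensor products and symmetric powers follows from $\mathbb{Z}[-]$ being a symmetric monoidal left adjoint, so that $\mathbb{Z}[S]\otimes\mathbb{Z}[T]\cong\mathbb{Z}[S\times T]$ and $\mathsf{Sym}^{n}\mathbb{Z}[S]\cong\mathbb{Z}[S^{n}/\Sigma_{n}]$. If anything you are more careful than the paper on the one delicate point, namely that $S^{n}/\Sigma_{n}$ need not be extremally disconnected, so the projectivity of $\mathbb{Z}[P]$ for more general $\kappa$-small profinite $P$ is genuinely needed and must be supplied by \cite{clausenscholze1}, whereas the paper elides this by asserting projectivity of $\mathbb{Z}[T]$ for an arbitrary condensed set $T$.
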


\begin{proof}
That it is abelian is clear, since it is the category of sheaves of abelian groups on a site. The existence of a generating set of tiny projectives is Theorem 2.2 of \cite{clausenscholze1}. The construction goes as follows. For $T$ a $\kappa$-condensed set consider the $\kappa$-condensed abelian group $\mathbb{Z}[T]$, which is the sheafification of the functor which sends $S\in*_{\kappa-\mathrm{pro\'{e}t}}$ to the free abelian group on $\mathbb{Z}[T(S)]$. It is shown in \cite{clausenscholze1} that this is a flat, compact, projective object of $\mathrm{Cond}(\mathrm{Ab})$. Moreover they show that the functor $\mathbb{Z}[-]:\mathrm{Cond}_{\kappa}(\mathrm{Set})\rightarrow\mathrm{Cond}(\mathrm{Ab})$ is symmetric monoidal. It is also a left adjoint. Thus for $T$ and $T'$ we have
$$\mathbb{Z}[T\times T']\cong \mathbb{Z}[T]\otimes\mathbb{Z}[T']$$
so that the tensor product of two projectives is projective, and we have
$$S^{n}(\mathbb{Z}[T])\cong\mathbb{Z}[S^{n}(T)]$$
so projectives are symmetric.
\end{proof}

Thus we have a model derived algebraic context
$$(Ch(\mathrm{Cond}_{\kappa}(\mathrm{Ab})),Ch_{\ge0}(\mathrm{Cond}_{\kappa}(\mathrm{Ab})),Ch_{\le0}(\mathrm{Cond}_{\kappa}(\mathrm{Ab})),\mathcal{P}^{0}),$$
where $\mathcal{P}^{0}$ is the set $\{\mathbb{Z}[S]:S\in*_{\kappa-\mathrm{pro\'{e}t}}\}$, presenting a derived algebraic context $\mathbf{C}_{\mathrm{Cond_{\kappa}(Ab)}}$. By considering condensed $\mathbb{Q}$-modules, we also get a rational model derived algebraic context $\mathbf{C}_{\mathrm{Cond_{\kappa}({}_{\mathbb{Q}}\mathrm{Mod})}}$. 

$Ch(\mathrm{Cond}(\mathrm{Ab}))$ does not have the structure of a derived algebraic context, since there is no small, compact generating set. However since all computations happen in a `level' $Ch(\mathrm{Cond}_{\kappa}(\mathrm{Ab}))$ for some $\kappa$, we can and will treat it as a derived algebraic context.

In condensed mathematics however, one is not interested in the category of condensed rings (or condensed animated rings in the terminology of \cite{clausenscholze2}) \[\mathsf{DAlg}^{cn}(\mathbf{Ch}(\mathrm{Cond}(\mathrm{Ab}))).\] Rather, one is interested in the category of \textit{analytic rings}. 

\begin{definition}[\cite{clausenscholze2}*{Definition 12.1}]
\begin{enumerate}
\item
 A \textit{pre-analytic ring} is a pair $(\underline{R},R[-])$ where $\underline{R}$ is a condensed ring, and $R[-]$ is a functor from the category of extremally disconnected sets to the category $\mathbf{Mod}^{cn}_{\underline{R}}$ which commutes with finite coproducts, together with a natural transformation $S\rightarrow R[S]$.
 \item
 An \textit{analytic ring} is a pre-analytic ring $(\underline{R},R[-])$ such that for all object $C\in\mathbf{Mod}^{cn}_{\underline{R}}$ which is a sifted colimit of objects of the form $R[S]$, the map
  $$\mathbf{Map}_{{}_{\underline{R}}\mathrm{Mod}}(R[S],C)\rightarrow\mathbf{Map}_{{}_{\underline{R}}\mathbf{Mod}}(\underline{R}[S],C)$$
  is an equivalence for all extremally disconnected profinte sets $S$.
\end{enumerate}
\end{definition}
 
 \begin{definition}[\cite{clausenscholze2} Definition 12.3]
Let $(\underline{R},R[-])$ be an analytic ring, and denote by $\mathbf{Mod}^{cn}_{R}\subset\mathbf{Mod}^{cn}_{\underline{R}}$ the full subcategory consisting of $R$-modules such that for any extremally disconnected set $S$, the map
$$\mathrm{Map}_{\mathbf{Mod}_{\underline{R}}}(R[S],M)\rightarrow \mathrm{Map}_{\mathbf{Mod}_{\underline{R}}}(\underline{R}[S],M)$$
is an isomorphism.
\end{definition}

The following is \cite{clausenscholze2}*{Proposition 12.20}.

\begin{proposition}\label{prop:characterisean}
The full subcategory $\mathbf{Mod}^{cn}_{R}\subset\mathbf{Mod}^{cn}_{\underline{R}}$ satisfies the following.
\begin{enumerate}
\item
$\mathbf{Mod}^{cn}_{R}\subset\mathbf{Mod}^{cn}_{\underline{R}}$ is stable under all limits and colimits. 
\item
 the inclusion $\mathbf{Mod}^{cn}_{R}\rightarrow \mathbf{Mod}^{cn}_{\underline{R}}$ admits a left adjoint.
 \item
 for $M\in\mathbf{Mod}^{cn}_{R}$ and $N$ in $ \mathbf{Mod}^{cn}_{\underline{R}}$, $\underline{\mathrm{Map}}_{\mathbf{Mod}^{cn}_{\underline{R}}}(M,N)$ is in $\mathbf{Mod}^{cn}_{R}$. 
 \end{enumerate}
 Conversely, given a full subcategory $\mathbf{M}\subset\mathbf{Mod}^{cn}_{\underline{R}}$ satisfying these properties, there is a unique analytic ring structure $(\underline{R},R)$ such that $\mathbf{M}=\mathbf{Mod}^{cn}_{R}$. 
\end{proposition}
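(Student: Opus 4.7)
The plan is to exhibit $\mathbf{Mod}^{cn}_R$ as a reflective Bousfield-type localisation of $\mathbf{Mod}^{cn}_{\underline R}$ at the set of morphisms $\mathcal{S} := \{\underline R[S] \to R[S]\}_S$, where each such map arises from the natural transformation $S \to R[S]$ by the free--forget adjunction. By construction, $M \in \mathbf{Mod}^{cn}_R$ if and only if $M$ is right-orthogonal (local) to $\mathcal{S}$. Since $\mathbf{Mod}^{cn}_{\underline R}$ is presentable, the general theory of reflective localisations at a set of maps gives both closure under limits and the existence of a left adjoint $L$ to the inclusion, settling (2) and half of (1).

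For closure under colimits, I would invoke the analytic ring axiom directly: it says precisely that a sifted colimit of the $R[S]$'s in $\mathbf{Mod}^{cn}_{\underline R}$ is $\mathcal{S}$-local. Combined with the pre-analytic datum ensuring $R[S_1 \sqcup S_2] \cong R[S_1] \oplus R[S_2]$ (so finite coproducts of $R[S]$'s stay in $\mathbf{Mod}^{cn}_R$), and the fact that arbitrary small colimits in a cocomplete $\infty$-category can be built from finite coproducts together with sifted colimits, it follows that $\mathbf{Mod}^{cn}_R$ is closed under all small colimits in $\mathbf{Mod}^{cn}_{\underline R}$. As a byproduct, the left adjoint satisfies $L(\underline R[S]) \cong R[S]$, and the $\{R[S]\}$ form a generating set in $\mathbf{Mod}^{cn}_R$.

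For (3) one uses the tensor--hom adjunction: for $M \in \mathbf{Mod}^{cn}_R$, $N \in \mathbf{Mod}^{cn}_{\underline R}$, and $S$ extremally disconnected,
\[\mathrm{Map}(\underline R[S], \underline{\mathrm{Map}}(M,N)) = \mathrm{Map}(\underline R[S] \otimes_{\underline R} M, N),\]
and similarly for $R[S]$. Thus one must show that $\underline R[S] \otimes_{\underline R} M \to R[S] \otimes_{\underline R} M$ is inverted by $\mathrm{Map}(-, N)$ for all such $N$, which in turn reduces to showing its image under $L$ is an equivalence. Presenting $M$ as a sifted colimit of $R[T_i]$'s, this further reduces to the identity $L(\underline R[S] \otimes_{\underline R} R[T]) \cong R[S \times T]$; this is the essential content of the symmetric monoidality of the reflector $L$, which is encoded in the analytic ring condition.

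For the converse, given $\mathbf{M} \subseteq \mathbf{Mod}^{cn}_{\underline R}$ satisfying (1)--(3), define $R[S] := L(\underline R[S])$, with $L$ the given left adjoint. The unit yields the natural map $S \to \underline R[S] \to R[S]$, and since $L$ preserves finite coproducts (being a left adjoint), $R[-]$ does too, yielding the pre-analytic datum. The analytic ring axiom---that sifted colimits of the $R[S]$'s lie in $\mathbf{M}$, and hence equal their own $L$-reflection---follows from closure of $\mathbf{M}$ under colimits. Uniqueness of the structure is immediate, since the $R[S]$'s are forced. The main technical hurdle throughout is (3): ensuring the reflector $L$ is monoidal enough that internal Homs stay in $\mathbf{M}$. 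This is where the analytic ring condition does real work, going beyond the purely formal theory of presentable reflective localisations.
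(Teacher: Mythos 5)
First, a caveat: the paper offers no proof of this proposition at all --- it is imported verbatim from \cite{clausenscholze2}*{Proposition 12.20} --- so the only argument to compare yours with is Clausen--Scholze's. Your overall route (identify $\mathbf{Mod}^{cn}_{R}$ with the objects local with respect to $\mathcal{S}=\{\underline{R}[S]\to R[S]\}$, get (2) and limit-closure from presentable Bousfield localisation, get colimit-closure from sifted-colimit generation plus the analytic-ring axiom, and get (3) from monoidality of the reflector) is exactly theirs. Parts (1) and (2) are essentially fine, except that for colimit-closure you must also record that \emph{every} object of $\mathbf{Mod}^{cn}_{R}$ is itself a colimit, computed in the ambient category, of objects $R[S]$ (using that the $\underline{R}[S]$ generate $\mathbf{Mod}^{cn}_{\underline{R}}$ under colimits, that $L$ preserves colimits, and that $L(\underline{R}[S])\simeq R[S]$); as written your argument only treats colimits of diagrams whose terms are literally of the form $R[S]$.

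The genuine gap is in (3), and it has two layers. First, the statement as printed in the paper has the arguments of the internal hom transposed relative to Clausen--Scholze, who assert that $\underline{\mathrm{Map}}(N,M)$ is local for $M$ local and $N$ arbitrary, i.e.\ the \emph{target} must be local; the version with local source and arbitrary target is false (for the solid structure on $\mathbb{Z}$, take $M=\mathbb{Z}$ and $N=\mathbb{Z}[S]$ with $S$ infinite: then $\underline{\mathrm{Map}}(M,N)\cong\mathbb{Z}[S]$ is not solid). Your argument breaks exactly where it must: you need $\mathrm{Map}(-,N)$ to invert $\underline{R}[S]\otimes_{\underline{R}}M\to R[S]\otimes_{\underline{R}}M$ for an \emph{arbitrary} $N$, and the claimed reduction ``to showing its image under $L$ is an equivalence'' is legitimate only when $N$ is local; for arbitrary $N$ it would force the map to be an honest equivalence, which it is not (again visible for solid $\mathbb{Z}$, where $\underline{\mathbb{Z}}[S]\otimes\mathbb{Z}$ and its solidification differ). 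With the arguments in the correct order your strategy does work, because $\mathrm{Map}(-,M)$ with $M$ local factors through $L$. Second, even then, the assertion $L(\underline{R}[S]\otimes\underline{R}[T])\simeq L(\underline{R}[S]\otimes R[T])$ --- equivalently, stability of $\mathcal{S}$-local equivalences under $-\otimes N$ --- is the real mathematical content of the proposition and cannot be dispatched as ``encoded in the analytic ring condition'': the axiom only controls $\mathrm{Map}(\underline{R}[S],-)$ against sifted colimits of the $R[T]$'s, and extracting the monoidality of $L$ from this is the multi-step core of Clausen--Scholze's proof. The same point resurfaces in your converse, where one must still check that $\mathbf{M}$ equals, and does not merely contain, the local objects of the resulting analytic ring structure.
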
 

Note in particular that $\mathbf{Mod}_{R}$ is itself a closed symmetric monoidal category such that the functor $\mathbf{Mod}_{\underline{R}}\rightarrow \mathbf{Mod}_{R}$ is strong monoidal.

\begin{definition}[\cite{clausenscholze1} Lecture VII]
A map $f:(\underline{R},R[-])\rightarrow(\underline{T},T[-])$ is a map $\underline{f}:\underline{R}\rightarrow\underline{T}$ of condensed rings such that  any $M\in\mathbf{Mod}^{cn}_{T}$ is in $\mathbf{Mod}^{cn}_{R}$ when $M$ is regarded as a condensed $\underline{R}$-module via the map $\underline{R}\rightarrow\underline{T}$.
\end{definition}

If $\underline{R}$ is a commutative condensed ring, then to ensure the existence of a derived symmetric algebra monad on $\mathbf{Mod}_{R}^{cn}$ one needs an extra assumption.

\begin{definition}[\cite{clausenscholze2} Definition 12.10]
An \textit{analytic commutative ring} is an analytic associative ring $(\underline{R},R)$ such that $\underline{R}$ is in $\mathrm{DAlg}(\mathbf{Cond}(\mathrm{Ab}))$, and for all primes $p$, the Frobenius map $\phi_{p}:\underline{R}\rightarrow\underline{R}\big\slash^{\mathbb{L}}p$ induces a map of analytic rings
$$(\underline{R},R)\rightarrow(\underline{R}\big\slash^{\mathbb{L}}p,R\big\slash^{\mathbb{L}}p).$$
\end{definition}

In \cite{clausenscholze2} Appendix to Lecture XII, it is shown that this so-called Frobenius condition guarantees that for $(\underline{R},R)$  an analytic commutative ring, the monad $\mathrm{LSym}_{\underline{R}}$ on $\mathbf{Mod}_{\underline{R}}^{cn}$ localises to a monad on $\mathbf{Mod}^{cn}_{R}$.

Let us give an obvious formalisation of this setup, in which we essentially take Proposition \ref{prop:characterisean} as a definition. We begin with a presentable closed symmetric monoidal $(\infty,1)$-category $\mathbf{C}$. 

\begin{definition}
\begin{enumerate}
\item
A \textit{decorated ring} is a triple $(A,\mathbf{M}_{A},l_{A})$ where 
\begin{enumerate}
\item
$A\in\mathsf{DAlg}^{cn}(\mathbf{C})$.
\item
$\mathbf{M}_{A}\subset\mathbf{Mod}^{cn}_{A}(\mathbf{C})$ is a reflective subcategory with reflector $l_{A}$.
\item
$\mathbf{M}_{A}$ is closed under all limits and colimits in $\mathbf{Mod}^{cn}_{A}$.
\item
For any $P\in\mathbf{C}$ and $M\in\mathbf{M}_{A}$, $\underline{\mathrm{Map}}(P,M)\in\mathbf{M}_{A}$.
\end{enumerate}
\item
A decorated ring is said to be \textit{normalised} if $A\in\mathbf{M}_{A}$. 
\item
A \textit{map of decorated rings} $f:(A,\mathbf{M}_{A},l_{A})\rightarrow (B,\mathbf{M}_{B},l_{B})$ is a map of rings $\underline{f}:A\rightarrow B$ such that any $M\in\mathbf{M}_{B}$ is in $\mathbf{M}_{A}$ when regarded as an $A$-module via the map $\underline{f}$.
\end{enumerate}
To ease the notational burden, we shall usually suppress the left adjoint $l_{A}$. 
\end{definition}

This implies that $\mathbf{M}_{A}$ is a closed symmetric monoidal category with tensor product $\overline{\otimes}_{A}$ such that the functor $\mathbf{Mod}^{cn}_{A}\rightarrow\mathbf{M}^{cn}_{A}$ is strong monoidal. The internal hom is the restriction of the one on $\mathbf{Mod}_{A}$. The forgetful functor
$$\mathbf{M}_{B}\rightarrow\mathbf{M}_{A}$$
also has a left adjoint which is the composition 
\begin{displaymath}
\xymatrix{
\mathbf{M}_{A}\ar[r] & \mathbf{Mod}^{cn}_{A}\ar[r]^{B\otimes_{A}(-)} & \mathbf{Mod}^{cn}_{B}\ar[r]^{l_{B}} & \mathbf{M}_{B}
}
\end{displaymath}
The full subcategory of $\mathsf{DAlg}^{cn}(\mathbf{C})$ consisting of decorated rings is denoted $\mathsf{DAlg}^{dec}(\mathbf{C})$.  If $A\in\mathsf{DAlg}(\mathbf{C})$, then $(A,\mathbf{Mod}^{cn}_{A},Id_{A})$ is an object of $\mathsf{DAlg}^{dec}(\mathbf{C})$. This gives a fully faithful functor $\mathsf{DAlg}^{cn}(\mathbf{C})\rightarrow\mathsf{DAlg}^{dec}(\mathbf{C})$. This functor has a right adjoint which sends $(B,\mathbf{M}_{B},l_{B})$ to $B$. More generally there is a functor.
$$\mathsf{DAlg}(\mathbf{M}_{A})\rightarrow{}_{(A,\mathbf{M}_{A})\big\backslash}\mathsf{DAlg}^{dec}(\mathbf{C})$$
which sends $B$ to $(B,\mathbf{Mod}_{B}(\mathbf{M}_{A}))$. This again has an obvious right adjoint.

\begin{definition}
A decorated ring $(A,\mathbf{M}_{A},l_{A})$ is said to be \textit{admissible} if the monad
$$\mathrm{LSym}_{A}:\mathbf{Mod}^{cn}_{A}\rightarrow\mathbf{Mod}^{cn}_{A}$$
descends to a monad 
$$\mathrm{LSym}_{\mathbf{M}_{A}}:\mathbf{M}_{A}\rightarrow\mathbf{M}_{A}$$
such that the category of $\mathrm{LSym}_{\mathbf{M}_{A}}$-algebras is equivalent to the category of $\mathrm{LSym}_{A}$-algebras whose underlying module is in $\mathbf{M}_{A}$. 
\end{definition}
%

Let $\mathsf{DAlg}^{dec}(\mathbf{C})$ denote the category of decorated rings, $\mathsf{DAlg}^{dec,n}(\mathbf{C})$ the category of normalised decorated rings, $\mathsf{DAlg}^{adm}(\mathbf{C})$ of admissible decorated rings, and $\mathsf{DAlg}^{adm,n}(\mathbf{C})$ the category of normalised admissible decorated rings. As explained in \cite{clausenscholze2} (Appendix to Lecture XII), any admissible decorated ring $(A,\mathbf{M}_{A})$ has a normalisation $(\mathrm{LSym}_{\mathbf{M}_{A}}(0),\mathbf{M}_{A})$. This means that the inclusion
$$\mathsf{DAlg}^{adm,n}(\mathbf{C})\rightarrow \mathsf{DAlg}^{adm}(\mathbf{C})$$
has a left adjoint.


\begin{proposition}[\cite{clausenscholze2}*{Proposition 12.12}]
\begin{enumerate}
\item
The initial object is $(k,\mathbf{C})$.
\item
$\mathsf{DAlg}^{dec}(\mathbf{C})$ has pushouts. If $(B,\mathbf{M}_{B})\leftarrow(A,\mathbf{M}_{A})\rightarrow(C,\mathbf{M}_{C})$ is a diagram then the pushouts is given by $(B\otimes_{A}C,\mathbf{M}_{B\otimes_{A}C})$, where a module $M$ is in $\mathbf{M}_{B\otimes_{A}C}$ precisely if it is in $\mathbf{M}_{A}$ when regarded as an $A$-module, and $\mathbf{M}_{B}$ when regarded as a $B$-module. 
\end{enumerate}
\end{proposition}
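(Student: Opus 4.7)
The plan is to verify the two parts separately. Part (1) is almost formal, and part (2) reduces to two verifications: that the proposed pushout is actually a decorated ring, and that it enjoys the correct universal property.

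For (1), I would first observe that $(k, \mathbf{C})$ is a legitimate decorated ring: $k$ is the initial object of $\mathsf{DAlg}^{cn}(\mathbf{C})$, and $\mathbf{C} = \mathbf{Mod}_{k}^{cn}$ is trivially a reflective subcategory of itself (with identity reflector) closed under all limits, colimits, and internal hom. For any decorated ring $(A,\mathbf{M}_{A},l_{A})$, there is a unique underlying ring map $\underline{f}: k \to A$. To promote $\underline{f}$ to a map of decorated rings, I must check that any $M \in \mathbf{M}_{A}$, regarded as a $k$-module via $\underline{f}$, lies in $\mathbf{C}$; but this is automatic since $\mathbf{M}_{A} \subset \mathbf{Mod}_{A}^{cn} \subset \mathbf{Mod}_{k}^{cn} = \mathbf{C}$. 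Uniqueness is then forced by the underlying ring map being unique.

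For (2), interpreting the stated condition in the natural symmetric form (so $M \in \mathbf{M}_{B\otimes_{A}C}$ iff $M \in \mathbf{M}_{B}$ as a $B$-module and $M \in \mathbf{M}_{C}$ as a $C$-module, the condition on $\mathbf{M}_{A}$ then being automatic from either via the hypothesis that $A\to B$ and $A\to C$ are maps of decorated rings), my first step is to check that $\mathbf{M}_{B\otimes_{A}C}$ satisfies the axioms of a decorated subcategory. Closure under limits, colimits, and internal hom is formal: the forgetful functors $\mathbf{Mod}_{B\otimes_{A}C}^{cn} \to \mathbf{Mod}_{B}^{cn}$ and $\mathbf{Mod}_{B\otimes_{A}C}^{cn} \to \mathbf{Mod}_{C}^{cn}$ preserve these operations, and $\mathbf{M}_{B\otimes_{A}C}$ is by definition the pullback subcategory. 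Closure under internal hom uses that for $P \in \mathbf{Mod}_{B\otimes_{A}C}^{cn}$ and $M \in \mathbf{M}_{B\otimes_{A}C}$, the internal hom $\underline{\mathrm{Map}}(P,M)$ restricts to $\underline{\mathrm{Map}}(P,M)$ computed over $B$ (resp. $C$) upon forgetting structure.

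The main obstacle is verifying reflectivity, i.e.\ constructing a left adjoint $l_{B\otimes_{A}C}$ to the inclusion $\mathbf{M}_{B\otimes_{A}C} \hookrightarrow \mathbf{Mod}_{B\otimes_{A}C}^{cn}$. I would argue by the adjoint functor theorem: $\mathbf{Mod}_{B\otimes_{A}C}^{cn}$ is presentable (a consequence of presentability of $\mathbf{C}$), the inclusion preserves all limits by Step~1, and each of $l_{B}$, $l_{C}$ is accessible by hypothesis; hence the intersection of two accessible reflective subcategories of a presentable category is itself accessible and reflective. Concretely, one may construct the reflector as the transfinite iteration of the composition $l_{C}\circ l_{B}$ (each promoted to an endofunctor of $\mathbf{Mod}_{B\otimes_{A}C}^{cn}$ via the reflectivity of $\mathbf{M}_{B}$ and $\mathbf{M}_{C}$ as subcategories of the respective module categories), stabilising at a sufficiently large regular cardinal.

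Finally, the universal property is formal. Given $(D,\mathbf{M}_{D})$ equipped with maps from $(B,\mathbf{M}_{B})$ and $(C,\mathbf{M}_{C})$ agreeing after restriction to $(A,\mathbf{M}_{A})$, the underlying ring maps induce a unique ring map $B \otimes_{A} C \to D$ (by the pushout property in $\mathsf{DAlg}^{cn}(\mathbf{C})$). To promote it to a map of decorated rings, I must check that any $M \in \mathbf{M}_{D}$, regarded as a $(B\otimes_{A}C)$-module, lies in $\mathbf{M}_{B\otimes_{A}C}$; but $M$ restricts to an object of $\mathbf{M}_{B}$ (by the hypothesis on $B \to D$) and of $\mathbf{M}_{C}$ (by the hypothesis on $C \to D$), so this holds by definition. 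Uniqueness of the promotion is immediate from uniqueness of the underlying ring map.
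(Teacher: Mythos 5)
The paper does not actually prove this proposition --- it is imported verbatim from Clausen--Scholze (\cite{clausenscholze2}, Proposition 12.12) without argument --- so there is no in-text proof to compare against. Judged on its own, your proposal is essentially correct and complete. Part (1) is exactly right: a decorated ring map is a ring map satisfying a property, the property is vacuous for the source $(k,\mathbf{C})$, and initiality of $k$ in $\mathsf{DAlg}^{cn}(\mathbf{C})$ does the rest. Your symmetric reading of the defining condition for $\mathbf{M}_{B\otimes_{A}C}$ (restriction to $\mathbf{M}_{B}$ over $B$ and to $\mathbf{M}_{C}$ over $C$, the $A$-condition being implied) is the correct repair of what is evidently a typo in the statement. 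The verification of the universal property and of closure under limits, colimits, and $\underline{\mathrm{Map}}(P,-)$ for $P\in\mathbf{C}$ is formal, as you say.

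The one place where your argument is thinner than it should be is the promotion of $l_{B}$ and $l_{C}$ to endofunctors of $\mathbf{Mod}^{cn}_{B\otimes_{A}C}$. This does not follow from reflectivity of $\mathbf{M}_{B}$ in $\mathbf{Mod}^{cn}_{B}$ alone: you need to know that $l_{B}$ applied to the underlying $B$-module of a $B\otimes_{A}C$-module again carries a natural $B\otimes_{A}C$-module structure, i.e.\ that the localisation $l_{B}$ is compatible with the monoidal/module structure. This is precisely where the axiom that $\underline{\mathrm{Map}}(P,M)\in\mathbf{M}_{B}$ for all $P\in\mathbf{C}$ and $M\in\mathbf{M}_{B}$ enters: it guarantees the class of $l_{B}$-equivalences is stable under tensoring with arbitrary objects, so the localisation descends to module categories over any $B$-algebra (Lurie's compatibility of localisations with monoidal structures). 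With that supplied, your transfinite iteration of $l_{C}\circ l_{B}$ converges by accessibility; alternatively, and more cleanly, one can skip the iteration entirely by noting that $\mathbf{M}_{B\otimes_{A}C}$ is the intersection of preimages of accessible subcategories under accessible limit- and colimit-preserving restriction functors, hence is an accessible subcategory of a presentable category closed under limits and filtered colimits, and is therefore reflective by the adjoint functor theorem. Either way the proof goes through; you should just make the compatibility point explicit.
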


\begin{definition}
A full subcategory $\textbf{R}\subset\mathsf{DAlg}^{adm,n}(\mathbf{C})$ is said to be \textit{geometric} if the following conditions hold.
\begin{enumerate}
\item
If $A\in\mathsf{DAlg}^{cn}(\mathbf{C})$ then $(A,\mathbf{Mod}_{A}^{cn}(\mathbf{C}))\in\textbf{R}$.
\item
If $(A,\mathbf{M}_{A})\in\textbf{R}$ and $B\in\mathbf{M}_{A}\cap\mathsf{DAlg}^{cn}(\mathbf{C})$ then the normalisation of $(B,\mathbf{Mod}_{B}(\mathbf{M}_{A}))$ is in $\mathbf{R}$.
\item
If $(B,\mathbf{M}_{B})\leftarrow(A,\mathbf{M}_{A})\rightarrow(C,\mathbf{M}_{C})$ is a diagram in $\textbf{R}$, then $(B,\mathbf{M}_{B})\otimes_{(A,\mathbf{M}_{A})}(C,\mathbf{M}_{C})$ is admissible, and its normalisation is in $\textbf{R}$. 
\end{enumerate}
\end{definition}

Let $(B,\mathbf{M}_{B})$ be an object in $\textbf{R}$ and $M\in\mathbf{M}_{B}$. We define the square-zero extension ring in $\textbf{R}$ to be $(B\oplus M,\mathbf{Mod}_{B\oplus M}(\mathbf{M}_{B}))$. Let $f:(A,\mathbf{M}_{A})\rightarrow(B,\mathbf{M}_{B})$ be a map in $\textbf{R}$. 
Define 
$$\mathrm{Der}_{(A,\mathbf{M}_{A})}((B,\mathbf{M}_{B}),-):\mathbf{M}_{B},\rightarrow\textbf{sSet}$$
 to be the functor
$$\mathrm{Map}_{(A,\mathbf{M}_{A})\big\backslash\textbf{R}\big\slash(B,\mathbf{M}_{B})}((B,\mathbf{M}_{B}),B\oplus(-)\otimes_{B}\mathbf{M}_{B}).$$

\begin{lemma}
The functor $\mathrm{Der}_{(A,\mathbf{M}_{A})}((B,\mathbf{M}_{B}),-)$ is representable by an object $\mathsf{L}\Omega^{1}_{(B,\mathbf{M}_{B})\big\slash (A,\mathbf{M}_{A})}$ of $\mathbf{M}_{B}$.
\end{lemma}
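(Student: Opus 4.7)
The plan is to reduce the representability question for decorated rings to the already-established representability of the cotangent complex in the ambient derived algebraic context, and then to transport the representing object along the reflection $l_B \colon \mathbf{Mod}_B^{cn} \to \mathbf{M}_B$.

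First I would unwind the mapping space. A map in $\textbf{R}$ from $(B,\mathbf{M}_B)$ to the square-zero extension $(B\oplus M, \mathbf{Mod}_{B\oplus M}(\mathbf{M}_B))$, sitting over $(A,\mathbf{M}_A)$ and under $(B,\mathbf{M}_B)$, is on underlying objects exactly an $A$-linear derivation $B\to B\oplus M$ in $\mathsf{DAlg}^{cn}(\mathbf{C})$. The decorated-map condition demands that any $N\in\mathbf{Mod}_{B\oplus M}(\mathbf{M}_B)$ lies in $\mathbf{M}_B$ when viewed as a $B$-module via the derivation $d\colon B\to B\oplus M$; but restriction along $d$ and along the zero section agree at the level of underlying $B$-modules (they differ only by the nilpotent ideal $M$ in a way that does not change the underlying object), so this condition is automatic from the definition of $\mathbf{Mod}_{B\oplus M}(\mathbf{M}_B)$. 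Likewise the compatibility with $(A,\mathbf{M}_A)$ is automatic once $f\colon(A,\mathbf{M}_A)\to(B,\mathbf{M}_B)$ is a map of decorated rings. So we obtain a natural equivalence
\[
\mathrm{Der}_{(A,\mathbf{M}_A)}((B,\mathbf{M}_B),M)\;\cong\;\mathrm{Der}_A(B,M)
\]
for every $M\in\mathbf{M}_B$, where the right-hand side denotes the usual derivations in $\mathbf{Mod}_B^{cn}(\mathbf{C})$.

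Second, I invoke the representability theorem in the underlying derived algebraic context from the earlier section: the functor $\mathrm{Der}_A(B,-)\colon \mathbf{Mod}_B^{cn}\to\mathbf{sSet}$ is represented by the cotangent complex $\mathsf{L}\Omega^1_{B/A}\in\mathbf{Mod}_B^{cn}$. Then I set
\[
\mathsf{L}\Omega^1_{(B,\mathbf{M}_B)/(A,\mathbf{M}_A)}\;\defeq\;l_B\bigl(\mathsf{L}\Omega^1_{B/A}\bigr)\;\in\;\mathbf{M}_B,
\]
using that $l_B$ is the reflector onto the full subcategory $\mathbf{M}_B\subseteq\mathbf{Mod}_B^{cn}$ guaranteed by the definition of a decorated ring. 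By the reflection adjunction, for any $M\in\mathbf{M}_B$ we have
\[
\mathrm{Map}_{\mathbf{M}_B}\bigl(l_B(\mathsf{L}\Omega^1_{B/A}),M\bigr)\;\cong\;\mathrm{Map}_{\mathbf{Mod}_B^{cn}}\bigl(\mathsf{L}\Omega^1_{B/A},M\bigr)\;\cong\;\mathrm{Der}_A(B,M)\;\cong\;\mathrm{Der}_{(A,\mathbf{M}_A)}((B,\mathbf{M}_B),M),
\]
and this chain of equivalences is natural in $M$, establishing representability.

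The only non-routine point is the first paragraph, namely verifying that the decoration constraints on morphisms of decorated rings reduce cleanly to the plain derivation condition for the square-zero extension. The possible subtlety is that $\mathbf{Mod}_{B\oplus M}(\mathbf{M}_B)$ must be unambiguously defined in a way insensitive to whether one restricts along the zero section or along a derivation; this follows because $M\in\mathbf{M}_B$ and $\mathbf{M}_B$ is closed under the relevant limits and colimits, so the two restriction functors $\mathbf{Mod}_{B\oplus M}\rightrightarrows\mathbf{Mod}_B^{cn}$ induce the same reflective subcategory. Once that is in place, everything else is formal from the adjunction $l_B\dashv\iota$ and the existing representability of $\mathsf{L}\Omega^1_{B/A}$.
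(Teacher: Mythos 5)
Your proof is correct and rests on the same two pillars as the paper's sketch: identify decorated derivations with ordinary derivations in the ambient context, then transport the representing object along the reflector $l_{B}\colon\mathbf{Mod}^{cn}_{B}\to\mathbf{M}_{B}$. The only organizational difference is in how the relative case is handled. The paper first treats the absolute case $(A,\mathbf{M}_{A})=(k,\mathbf{C})$ by setting $\mathsf{L}\Omega^{1}_{(B,\mathbf{M}_{B})}\defeq l_{B}((\mathsf{L}\Omega^{1})^{\mathbf{C}}_{B})$ and then \emph{defines} the relative object as the cofibre of $\mathsf{L}\Omega^{1}_{(A,\mathbf{M}_{A})}\otimes_{(A,\mathbf{M}_{A})}(B,\mathbf{M}_{B})\to\mathsf{L}\Omega^{1}_{(B,\mathbf{M}_{B})}$, whereas you apply $l_{B}$ directly to the relative cotangent complex $\mathsf{L}\Omega^{1}_{B/A}$ of the underlying derived algebraic context. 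The two outputs agree: $l_{B}$ is a left adjoint, hence preserves cofibres, and the base-change functor for decorated rings is by construction $l_{B}\circ(B\otimes_{A}-)$, so applying $l_{B}$ to the cofibre sequence defining $\mathsf{L}\Omega^{1}_{B/A}$ recovers the paper's cofibre. Your route is arguably cleaner since it yields the universal property of the \emph{relative} object in one step rather than deducing it from the absolute case. The one point that genuinely needs care --- and which you correctly isolate, while the paper's ``it is easy to see'' elides it --- is that the two $B$-module structures on an object of $\mathbf{Mod}_{B\oplus M}(\mathbf{M}_{B})$, obtained by restriction along the zero section $s$ and along a derivation $d$, determine the same answer to membership in $\mathbf{M}_{B}$. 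Your justification (``the underlying object is the same'') is not quite enough on its own, since $\mathbf{M}_{B}$ is cut out inside $\mathbf{Mod}^{cn}_{B}$ and so membership a priori depends on the module structure; the honest argument is that $d$ and $s$ agree modulo the square-zero ideal $M$, so $d^{*}N$ and $s^{*}N$ carry a common two-step filtration with identical associated graded pieces lying in $\mathbf{M}_{B}$, and one concludes by the closure of $\mathbf{M}_{B}$ under limits and colimits. With that one sentence added, your argument is complete.
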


\begin{proof}[Sketch proof]
First assume that $(A,\mathbf{M}_{A})=(k,\mathbf{C})$ is the initial decorated ring. Let $(\mathsf{L}\Omega^{1})^{\mathbf{C}}_{B}$ denote the contangent complex of $B$ computed in $\mathbf{Mod}_{B}(\mathbf{C})$. Set $\mathsf{L}\Omega^{1}_{B}\defeq l_{B}((\mathsf{L}\Omega^{1})^{\mathbf{C}}_{B})$. It is easy to see that this satisfies the necessary universal property. For arbitrary morphisms $f:(A,\mathbf{M}_{A})\rightarrow(B,\mathbf{M}_{B})$, we define $\mathsf{L}\Omega^{1}_{(B,\mathbf{M}_{B})\big\slash (A,\mathbf{M}_{A})}$ to be the cofibre in $\mathbf{M}_{B}$ of the map
$$\mathsf{L}\Omega^{1}_{(A,\mathbf{M}_{A})}\otimes_{(A,\mathbf{M}_{A})}(B,\mathbf{M}_{B})\rightarrow\mathsf{L}\Omega^{1}_{(B,\mathbf{M}_{B})}.$$ \qedhere
\end{proof}

We shall write 
$$\mathbf{Aff}(\textbf{R})\defeq\textbf{R}^{op}$$
As before for $(A,\mathbf{M}_{A},l_{A})\in\textbf{R}$ we write $\mathrm{Spec}(A,\mathbf{M}_{A},l_{A})$ for the corresponding object of $\mathbf{Aff}(\textbf{R})$. For $M$ a $1$-connective object of $\mathbf{C}$ and $d:k\rightarrow M$ a derivation, we define a functor
$$T^{d}:\mathbf{PreStk}(\mathbf{Aff}(\textbf{R}))\rightarrow\mathbf{PreStk}(\mathbf{Aff}(\textbf{R}))$$
by
$$T^{d}\mathcal{X}(\mathrm{Spec}(B,\mathbf{M}_{B}))\defeq\mathcal{X}(B\oplus_{d} \Omega(B\otimes M),\mathbf{Mod}_{B\oplus_{d} \Omega(B\otimes M)}(\mathbf{M}_{B})).$$

We easily get the following.

\begin{proposition}
Let $M\in\mathbf{C}$ be nuclear and $1$-connective. Then $T^{d}(\mathrm{Spec}(A,\mathbf{M}_{A},l_{A}))$ is representable by $\mathrm{Spec}$ of the normalisation of
$$(C_{d},\mathbf{Mod}_{C_{d}}(M^{\vee}\otimes\mathsf{L}\Omega^{1}_{A})(\mathbf{M}_{A})$$
\end{proposition}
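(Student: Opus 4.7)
The plan mirrors the proof of the preceding proposition (representability of $T^{M}\mathrm{Spec}(B)$), carrying the decorations through each step. First, since $\mathbf{M}_{B}$ is closed under limits in $\mathbf{Mod}^{cn}_{\underline{B}}$, the square-zero extension $B\oplus_{d}\Omega(B\otimes M)$, which is by definition a homotopy pullback, inherits a canonical decoration $\mathbf{Mod}_{B\oplus_{d}\Omega(B\otimes M)}(\mathbf{M}_{B})$, and mapping $(A,\mathbf{M}_{A},l_{A})$ into it yields a homotopy pullback of mapping spaces in $\mathbf{R}$. Fixing a map $f:(A,\mathbf{M}_{A},l_{A})\to(B,\mathbf{M}_{B},l_{B})$, the standard loop-space argument (as in the $1$-connective case of the obstruction-theoretic computation earlier in the paper) identifies the fibre of the projection
\[T^{d}\mathrm{Spec}(A,\mathbf{M}_{A},l_{A})(\mathrm{Spec}(B,\mathbf{M}_{B},l_{B}))\to\mathrm{Map}_{\mathbf{R}}((A,\mathbf{M}_{A},l_{A}),(B,\mathbf{M}_{B},l_{B}))\]
over $f$ with the based loop space at $d\cdot f$ of $\mathrm{Der}_{(k,\mathbf{C})}((A,\mathbf{M}_{A},l_{A}),l_{B}(B\otimes M))$. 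Since $M$ (hence $l_{B}(B\otimes M)$) is $1$-connective this loop space equals $\mathrm{Der}_{(k,\mathbf{C})}((A,\mathbf{M}_{A},l_{A}),l_{B}(\Omega(B\otimes M)))$, which by the universal property of the decorated cotangent complex is $\mathrm{Map}_{\mathbf{M}_{A}}(\mathsf{L}\Omega^{1}_{(A,\mathbf{M}_{A},l_{A})},l_{B}(\Omega(B\otimes M)))$.

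Next, invoke nuclearity of $M$: the natural equivalence $M^{\vee}\otimes X\cong\underline{\mathrm{Map}}_{\mathbf{C}}(M,X)$ combined with the tensor-hom adjunction in $\mathbf{M}_{A}$ gives
\[\mathrm{Map}_{\mathbf{M}_{A}}(\mathsf{L}\Omega^{1}_{(A,\mathbf{M}_{A},l_{A})},l_{B}(B\otimes M))\simeq\mathrm{Map}_{\mathbf{M}_{A}}(\mathsf{L}\Omega^{1}_{(A,\mathbf{M}_{A},l_{A})}\otimes M^{\vee},B),\]
where $B$ is regarded as an object of $\mathbf{M}_{A}$ via the structure map $f$. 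Assembling the fibrewise equivalence with the datum of $f$ itself, the total space $T^{d}\mathrm{Spec}(A,\mathbf{M}_{A},l_{A})(\mathrm{Spec}(B,\mathbf{M}_{B},l_{B}))$ is naturally equivalent to the space of morphisms in $\mathsf{DAlg}^{dec}(\mathbf{C})$ from $(C_{d},\mathbf{Mod}_{C_{d}}(\mathbf{M}_{A}))$ to $(B,\mathbf{M}_{B},l_{B})$, where $C_{d}\defeq\mathrm{LSym}_{\mathbf{M}_{A}}(M^{\vee}\otimes\mathsf{L}\Omega^{1}_{(A,\mathbf{M}_{A},l_{A})})$. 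Passing to the normalisation via the left adjoint of the inclusion $\mathsf{DAlg}^{adm,n}(\mathbf{C})\hookrightarrow\mathsf{DAlg}^{adm}(\mathbf{C})$ produces the sought representing object in $\mathbf{R}$, since mapping spaces into normalised $(B,\mathbf{M}_{B},l_{B})$ are unchanged.

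The main obstacle lies in the second step: justifying the nuclearity-based rewriting inside the reflected decorated category $\mathbf{M}_{A}$ rather than the ambient category $\mathbf{C}$. One must check that the natural map $\mathsf{L}\Omega^{1}_{(A,\mathbf{M}_{A},l_{A})}\otimes M^{\vee}\to\underline{\mathrm{Map}}(M,\mathsf{L}\Omega^{1}_{(A,\mathbf{M}_{A},l_{A})})$ remains an equivalence after applying the reflector, which uses condition (3) in the characterisation of decorated rings (stability of $\mathbf{M}_{A}$ under internal hom into $\mathbf{M}_{A}$-objects), together with the fact that $l_{A}$ is strongly monoidal. The companion technical input is that $\mathrm{LSym}_{\mathbf{M}_{A}}$ exists as a monad on $\mathbf{M}_{A}$ with the correct universal property for mapping into $B\in\mathbf{M}_{B}\subset\mathbf{M}_{A}$; this is exactly the admissibility hypothesis built into $\mathbf{R}$, which in the analytic setting is guaranteed by the Frobenius condition of \cite{clausenscholze2}, Definition 12.10.
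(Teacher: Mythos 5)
The paper gives no argument for this proposition at all (it is introduced with ``We easily get the following''), so the benchmark is the proof of the preceding undecorated proposition, namely the chain of adjunctions $\mathcal{X}(B\oplus B\otimes M)\cong\mathrm{Map}_{\mathbf{Mod}}(\mathsf{L}\Omega^{1},|B|\otimes M)\cong\mathrm{Map}(\mathsf{L}\Omega^{1}\otimes M^{\vee},|B|)\cong\mathrm{Map}(\mathrm{Sym}(\mathsf{L}\Omega^{1}\otimes M^{\vee}),B)$, carried through the decorations. Your proposal follows exactly this template and correctly isolates the two points that actually need checking in the decorated setting: that nuclearity-based dualisation survives the reflector $l_{A}$ (via closure of $\mathbf{M}_{A}$ under internal hom and strong monoidality of $l_{A}$), and that $\mathrm{LSym}_{\mathbf{M}_{A}}$ exists with the right universal property (admissibility). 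The treatment of the normalisation at the end is also correct.

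The gap is in your identification of the representing object. Your own fibre computation over a fixed $f:(A,\mathbf{M}_{A})\to(B,\mathbf{M}_{B})$ produces the path space $\Omega_{d\cdot f,0}\,\mathrm{Der}((A,\mathbf{M}_{A}),l_{B}(B\otimes M))$ --- a \emph{looped} and \emph{twisted} derivation space --- whereas the object you write down, $C_{d}\defeq\mathrm{LSym}_{\mathbf{M}_{A}}(M^{\vee}\otimes\mathsf{L}\Omega^{1}_{A})$, corepresents the \emph{unlooped} space $\mathrm{Der}((A,\mathbf{M}_{A}),l_{B}(B\otimes M))$ over $f$; that object represents the functor $T^{M}$ of Section 7, not $T^{d}$. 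The step ``since $M$ is $1$-connective this loop space equals $\mathrm{Der}(\ldots,l_{B}(\Omega(B\otimes M)))$'' conflates a path space between two possibly distinct points with a based loop space: connectivity only gives a non-canonical torsor identification, and for representability you need it natural in $B$. To match your fibre computation you must either (i) take $C_{d}$ to be the pushout $A\otimes_{\mathrm{LSym}_{\mathbf{M}_{A}}(M^{\vee}\otimes\mathsf{L}\Omega^{1}_{A})}A$ along the zero section and the $d$-section (which is presumably what the subscript $d$ in the statement is recording), or (ii) observe that a derivation $d\colon k\to M$ of the monoidal unit is canonically nullhomotopic because $\mathsf{L}\Omega^{1}_{k}\simeq0$, use this to trivialise the twist globally, and then account for the resulting loop by a shift, landing on $\mathrm{LSym}_{\mathbf{M}_{A}}$ of a suspension of $M^{\vee}\otimes\mathsf{L}\Omega^{1}_{A}$ rather than of $M^{\vee}\otimes\mathsf{L}\Omega^{1}_{A}$ itself. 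Either way, the object you assemble in the last step does not follow from the fibre identification you established two steps earlier, and this reconciliation is the one piece of content the proof actually requires.
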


Let us now compute the decorated Hochschild object of $(A,\mathbf{M}_{A})\in\textbf{R}$. Write
$$(A\otimes_{A\otimes A}A,\mathbf{M}_{A\otimes_{A\otimes A}A})\defeq (A,\mathbf{M}_{A})\otimes_{(A,\mathbf{M}_{A})\otimes(A,\mathbf{M}_{A})}(A,\mathbf{M}_{A})$$
where the coproduct is computed in $\mathsf{DAlg}^{dec}(\mathbf{C})$. An $A\otimes_{A\otimes A}A$-module is in $\mathbf{M}_{A\otimes_{A\otimes A}A}$ precisely if the restriction along the map $A\rightarrow A\otimes_{A\otimes A}A$ is in $\mathbf{M}_{A}$. Thus 
$$\mathbf{M}_{A\otimes_{A\otimes A}A}\cong\mathbf{Mod}_{A\otimes_{A\otimes A}A}(\mathbf{M}_{A})$$

\begin{proposition}
Let $(A,\mathbf{M}_{A})\in\textbf{R}$. Then the coproduct
$$(A,\mathbf{M}_{A})\otimes_{(A,\mathbf{M}_{A})\otimes(A,\mathbf{M}_{A})}(A,\mathbf{M}_{A})$$
in $\textbf{R}$ is the normalisation of $(A\otimes_{A\otimes A}A,\mathbf{Mod}_{A\otimes_{A\otimes A}A}(\mathbf{M}_{A}))$. 
\end{proposition}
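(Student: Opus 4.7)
My plan is to derive this coproduct computation directly from the general pushout description in $\mathsf{DAlg}^{dec}(\mathbf{C})$ from the previous proposition, together with the fact that the inclusion $\mathsf{DAlg}^{adm,n}(\mathbf{C}) \hookrightarrow \mathsf{DAlg}^{adm}(\mathbf{C})$ has a left adjoint (normalisation) as stated just before the result.

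First I would observe that since $\textbf{R} \subseteq \mathsf{DAlg}^{adm,n}(\mathbf{C})$ is a geometric subcategory, the coproduct in $\textbf{R}$ is computed by taking the coproduct in $\mathsf{DAlg}^{adm}(\mathbf{C})$ and then applying the normalisation adjunction; this in turn coincides with the pushout in the larger category $\mathsf{DAlg}^{dec}(\mathbf{C})$ (using admissibility built into the definition of a geometric subcategory). Applying Proposition~12.12 to the cospan $(A,\mathbf{M}_A) \leftarrow (A,\mathbf{M}_A) \otimes (A,\mathbf{M}_A) \to (A,\mathbf{M}_A)$ then identifies the underlying ring of the pushout as $A \otimes_{A \otimes A} A$, and the decoration $\mathbf{M}_{A \otimes_{A \otimes A} A}$ consists of those $(A \otimes_{A \otimes A} A)$-modules whose restriction to an $A$-module via each of the two structure maps $i_1, i_2 \colon A \rightrightarrows A \otimes_{A \otimes A} A$ lies in $\mathbf{M}_A$.

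The substantive step is then to identify $\mathbf{M}_{A \otimes_{A \otimes A} A}$ with $\mathbf{Mod}_{A \otimes_{A \otimes A} A}(\mathbf{M}_A)$, i.e.\ to show that the two a priori distinct conditions (being in $\mathbf{M}_A$ via $i_1$ and via $i_2$) collapse to the single condition of lying in $\mathbf{M}_A$. The point is that $A \otimes_{A \otimes A} A$ itself is a colimit of tensor powers of $A$ over $A \otimes A$, so since $\mathbf{M}_A$ is closed under colimits and tensor products in $\mathbf{Mod}_A^{cn}$ (by the closure properties in the definition of a decorated ring), the ring $A \otimes_{A \otimes A} A$ already lies in $\mathbf{M}_A$, viewed as an $A$-algebra via either $i_1$ or $i_2$. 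Consequently the functor $(A \otimes_{A \otimes A} A) \otimes_A (-)$ restricts to an equivalence between $\mathbf{M}_A$-modules over $A \otimes_{A \otimes A} A$ (for the $i_1$-structure) and those for the $i_2$-structure, and the two restriction conditions become equivalent.

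Finally, applying the normalisation functor (left adjoint to the inclusion $\textbf{R} \hookrightarrow \mathsf{DAlg}^{adm}(\mathbf{C})$) yields the claimed description of the coproduct in $\textbf{R}$. I expect the hard part to be the colimit argument identifying the two $A$-module restrictions: one must verify that the pushout $A \otimes_{A \otimes A} A$ can actually be built as a sifted colimit from building blocks already in $\mathbf{M}_A$, which requires taking care with the fact that the relative tensor product is over $A \otimes A$ rather than over $A$ itself, and using that the two maps $A \to A \otimes A$ land $A$ in $\mathbf{M}_{A \otimes A} = \mathbf{Mod}_{A \otimes A}(\mathbf{M}_A)$ coherently.
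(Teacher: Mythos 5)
Your overall route is the one the paper intends: compute the pushout in $\mathsf{DAlg}^{dec}(\mathbf{C})$ via the cited Proposition 12.12, identify the resulting decoration, and then normalise to land in $\textbf{R}$. The gap is in your ``substantive step''. You claim that $\mathbf{M}_{A}$ is closed under tensor products in $\mathbf{Mod}^{cn}_{A}$ ``by the closure properties in the definition of a decorated ring'', and deduce that $A\otimes_{A\otimes A}A$ already lies in $\mathbf{M}_{A}$. But closure under $\otimes_{A}$ is not among the axioms: the definition only gives closure under limits, colimits, and $\underline{\mathrm{Map}}(P,-)$, and the induced monoidal structure on $\mathbf{M}_{A}$ is the \emph{localised} tensor product $\overline{\otimes}_{A}=l_{A}(-\otimes_{A}-)$. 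In the motivating condensed examples (e.g.\ solid or liquid modules) the plain relative tensor product of objects of $\mathbf{M}_{A}$ genuinely fails to lie in $\mathbf{M}_{A}$ --- which is exactly why the statement you are proving has a normalisation in it; if $A\otimes_{A\otimes A}A$ were already in $\mathbf{M}_{A}$ that normalisation would be vacuous. Moreover, even granting that claim, passing to an equivalence of module categories induced by $(A\otimes_{A\otimes A}A)\otimes_{A}(-)$ does not show that an \emph{individual} module satisfies the $i_{1}$-restriction condition if and only if it satisfies the $i_{2}$-condition.

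The step you are worried about has a much simpler resolution: the two coprojections $j_{1},j_{2}\colon A\rightrightarrows A\otimes_{A\otimes A}A$ are homotopic. Indeed the pushout square gives $j_{1}\circ\mu\simeq j_{2}\circ\mu$ for the multiplication $\mu\colon A\otimes A\to A$, and $\mu$ admits the section $a\mapsto a\otimes 1$, whence $j_{1}\simeq j_{2}$ (equivalently: the two basepoints of $S^{1}$ coincide, and $A\otimes_{A\otimes A}A\simeq S^{1}\otimes A$). So there is only one restriction condition, namely that the underlying $A$-module lie in $\mathbf{M}_{A}$, which is by definition $\mathbf{Mod}_{A\otimes_{A\otimes A}A}(\mathbf{M}_{A})$; the condition relative to the apex $A\otimes A$ is implied since the map from $A\otimes A$ factors through $A$ and the coprojections are maps of decorated rings. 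With that substitution, and condition (3) of the definition of a geometric subcategory guaranteeing that this pushout is admissible with normalisation in $\textbf{R}$, your argument closes up.
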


\begin{theorem}[Decorated HKR]
Let $(\mathbf{C},\mathbf{C}_{\ge0},\mathbf{C}_{\le0},\mathbf{C}^{0})$ be a rational derived algebraic context and $\textbf{R}\subset\mathsf{DAlg}^{dec}(\mathbf{C}_{\ge0})$ be a geometric category of decorated rings. Then for any $(A,\mathbf{M}_{A})\in\textbf{R}$ there is an isomorphism
$$(\mathrm{Sym}_{\mathbf{M}_{A}}(\mathsf{L}\Omega^{1}_{A}),\mathbf{Mod}_{\mathrm{Sym}_{\mathbf{M}_{A}}(\mathsf{L}\Omega^{1}_{A})}(\mathbf{M}_{A}))\cong(A,\mathbf{M}_{A})\otimes_{(A,\mathbf{M}_{A})\otimes(A,\mathbf{M}_{A})}(A,\mathbf{M}_{A}).$$
\end{theorem}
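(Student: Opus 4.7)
The plan is to reduce the decorated HKR statement to the underlying HKR equivalence established in Proposition \ref{prop:ratequiv}, and then to verify that the decorations (that is, the module subcategories) on the two sides agree.

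First I would compute the underlying ring of the Hochschild pushout $(A,\mathbf{M}_A)\otimes_{(A,\mathbf{M}_A)\otimes(A,\mathbf{M}_A)}(A,\mathbf{M}_A)$ in $\mathbf{R}$. By the immediately preceding proposition, this pushout is the normalisation of $(A\otimes_{A\otimes A}A,\mathbf{Mod}_{A\otimes_{A\otimes A}A}(\mathbf{M}_{A}))$. In particular its underlying derived commutative $k$-algebra is the $\mathrm{LSym}_{\mathbf{M}_A}$-algebra corresponding to $A\otimes_{A\otimes A}A$. Now, since the derived algebraic context $\mathbf{C}$ is rational, Proposition \ref{prop:ratequiv} identifies $A\otimes_{A\otimes A}A$, the underlying ring of the Hochschild construction, with $\mathrm{LSym}_{A}(\mathsf{L}\Omega^{1}_{A}[-1])$ as a derived commutative $A$-algebra (with appropriate shift conventions, as in the statement).

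Second, I would invoke admissibility of $(A,\mathbf{M}_A)$ to show that the symmetric-algebra functor $\mathrm{Sym}_{\mathbf{M}_A}$ appearing on the left-hand side coincides with the restriction of $\mathrm{LSym}_A$ from $\mathbf{Mod}_A^{cn}$ to $\mathbf{M}_A$. Since $\mathsf{L}\Omega^{1}_{A}$ is, by the sketch construction of the cotangent complex in the decorated setting, precisely $l_{A}(\mathsf{L}\Omega^{1,\mathbf{C}}_{A})$ and therefore already lives in $\mathbf{M}_A$, the symmetric power on the left-hand side is computed intrinsically inside $\mathbf{M}_A$ and its underlying object in $\mathbf{Mod}_A^{cn}$ agrees with the ordinary derived symmetric algebra used on the right. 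This identifies the underlying rings of the two decorated rings in the statement.

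Third, I would check that the module decorations match. Both categories are of the form $\mathbf{Mod}_{R}(\mathbf{M}_A)$ where $R$ is the (common) underlying ring — on the left by definition, and on the right because the pushout formula gives $\mathbf{M}_{A\otimes_{A\otimes A}A}=\mathbf{Mod}_{A\otimes_{A\otimes A}A}(\mathbf{M}_A)$, and normalisation only changes the underlying ring (passing to $\mathrm{LSym}_{\mathbf{M}_A}(0)$ and then to the image of $A\otimes_{A\otimes A}A$ under it) while preserving the module category. Combining these three steps gives the desired equivalence, which is manifestly natural in $(A,\mathbf{M}_A)$.

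The main obstacle, and the place where care is required, is the interaction between the normalisation functor and the Hochschild pushout in $\textbf{R}$: one must check that the normalisation of $(A\otimes_{A\otimes A}A,\mathbf{Mod}_{A\otimes_{A\otimes A}A}(\mathbf{M}_A))$ has underlying ring precisely $\mathrm{Sym}_{\mathbf{M}_A}(\mathsf{L}\Omega^{1}_A[-1])$, and that the admissibility condition propagates through pushouts in $\textbf{R}$ so that $\mathrm{LSym}_{\mathbf{M}_A}$ really does compute derived symmetric algebras of objects like $\mathsf{L}\Omega^{1}_A[-1]$. Both points are essentially formal given the axioms of a geometric subcategory and the admissibility hypothesis, but they need to be unwound carefully; the rest of the proof is then a direct transport of Proposition \ref{prop:ratequiv} across the reflective inclusion $\mathbf{M}_A\hookrightarrow\mathbf{Mod}_A^{cn}$.
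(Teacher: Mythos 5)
Your argument is correct and follows exactly the route the paper intends: the paper states this theorem without a written proof (the whole section is explicitly a sketch), and the ingredients it lines up beforehand — the identification of the coproduct in $\textbf{R}$ with the normalisation of $(A\otimes_{A\otimes A}A,\mathbf{Mod}_{A\otimes_{A\otimes A}A}(\mathbf{M}_{A}))$, the rational affine HKR equivalence of Proposition \ref{prop:ratequiv}, and the descent of $\mathrm{LSym}$ along the reflector $l_{A}$ guaranteed by admissibility — are precisely the three steps you assemble. Your observation about the missing shift $[-1]$ in the displayed formula and your flagged concern about normalisation commuting with the Hochschild pushout are the right places to be careful, and both are handled by the propositions immediately preceding the theorem.
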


There is almost certainly a version of this over $\mathbb{Z}$ as well, in which one considers decorated rings in $\mathbf{Filt}(\mathbf{C})$ and $\mathbf{Gr}(\mathbf{C})$.

\section{Further Directions}

\subsection{Geometric HKR Over $\mathbb{Z}$}

Let $(\mathbf{C},\mathbf{C}_{\ge0},\mathbf{C}_{\le0},\mathbf{C}^{0},\tau,\mathbf{P},\mathbf{A})$ be a weak relative DAG context. Let us recall the construction in \cite{toen-robalo} 6.4.1 of the geometric filtered circle.

Define $\mathbb{A}_{alg}^{1}\defeq\mathrm{Spec}(\mathrm{Sym}(k))$ where $k$ is the monoidal unit. Let $\mathbb{G}_{m}$ be the multiplicative affine group scheme. It can be constructed as $\mathrm{Spec}$ of the group algebra in $\mathbf{C}_{\ge0}$ on the cyclic group $\mathbb{Z}$. Consider the quotient stacks
$$B\mathbb{G}_{m}\defeq[\mathrm{Spec}(k)\big\slash\mathbb{G}_{m}] \quad \text{ and } \quad  [\mathbb{A}^{1}\big\slash\mathbb{G}_{m}].$$

The augmentation $\mathrm{Sym}(k)\rightarrow k$ determines a point $0:\mathrm{Spec}(k)\rightarrow\mathbb{A}^{1}$ which is a map of $\mathbb{G}_{m}$-stacks when $\mathrm{Spec}(k)$ is equipped with the trivial action. We therefore get a map
$$B\mathbb{G}_{m}\rightarrow[\mathbb{A}^{1}\big\slash\mathbb{G}_{m}].$$

\begin{definition}[\cite{toen-robalo}*{Definition 2.2.11}]
A \textit{graded stack} is a stack over $B\mathbb{G}_{m}$. A \textit{filtered stack} is a stack over $[\mathbb{A}^{1}\big\slash\mathbb{G}_{m}]$. 
\end{definition}

Write
$$\textbf{Gr}(\mathbf{Stk})\defeq\mathbf{Stk}_{\big\slash B\mathbb{G}_{m}} \quad \text{ and } \quad \textbf{Filt}(\mathbf{Stk})\defeq\mathbf{Stk}_{\big\slash [\mathbb{A}^{1}\big\slash\mathbb{G}_{m}]}$$ for the categories of graded and filtered stacks respectively. These are Cartesian closed. As explained in \cite{toen-robalo}*{Definition 2.2.11}, there are obvious functors
$\textbf{Gr}(\mathbf{Stk})\rightarrow\mathbf{Stk}$ and $\textbf{Filt}(\mathbf{Stk})\defeq\mathbf{Stk}_{\big\slash [\mathbb{A}^{1}\big\slash\mathbb{G}_{m}]}\rightarrow\mathbf{Stk}$ which are called the underlying stack functors. There is also an \textit{associated graded stack} functor 
$$\textbf{Filt}(\mathbf{Stk})\rightarrow\textbf{Gr}(\mathbf{Stk})$$
given by sending $\mathcal{X}\rightarrow [\mathbb{A}^{1}\big\slash\mathbb{G}_{m}]$ to the fibre product
$$\mathcal{X}^{gr}\defeq \mathcal{X}\times_{[\mathbb{A}^{1}\big\slash\mathbb{G}_{m}]}B\mathbb{G}_{m}\rightarrow B\mathbb{G}_{m}.$$
If $A$ is a filtered commutative monoid, then $\mathrm{Spec}(A)$ is naturally a filtered commutative stack. Thus we get a functor
$$\mathrm{Spec}_{\mathrm{Filt}}:\mathsf{DAlg}(\textbf{Filt}(\mathbf{C}))^{op}\rightarrow\textbf{Filt}_{\mathbf{Stk}}.$$
This functor has a left adjoint (\cite{toen-robalo}*{3.2})
$$\mathbf{Aff}_{\mathrm{Filt}}:\textbf{Filt}_{\mathbf{Stk}}\rightarrow\mathsf{DAlg}(\textbf{Filt}(\mathbf{C}))^{op}.$$
Define functors
$$\mathrm{triv}_{gr}:\mathbf{Stk}\rightarrow\textbf{Gr}(\mathbf{Stk}),\;\;\mathcal{X}\mapsto(\mathcal{X}\times B\mathbb{G}_{m}\rightarrow B\mathbb{G}_{m})$$
$$\mathrm{triv}_{filt}:\mathbf{Stk}\rightarrow\textbf{Filt}(\mathbf{Stk}),\;\;\mathcal{X}\mapsto(\mathcal{X}\times [\mathbb{A}_{1}\big\slash\mathbb{G}_{m}]\rightarrow [\mathbb{A}_{1}\big\slash\mathbb{G}_{m}]).$$

In \cite{toen-robalo}, he authors construct the geometric filtered circle as follows. Let $\mathcal{R}\subset\mathbb{Q}[x]$ denote the subring consisting of integer valued polynomials, i.e. those $p(x)$ such that $p(z)\in\mathbb{Z}$ for all $z\in\mathbb{Z}$. This is a filtered commutative and cocommutative Hopf algebra where the filtration is by the degree of the polynomials. Consider the Rees construction $\mathrm{Rees}(\mathcal{R})$ of this filtered Hopf algebra, which is itself a filtered Hopf algebra. Define $H_{\mathbb{Z}}\defeq\mathrm{Spec}(\mathrm{Rees}(\mathcal{R}))$ and the filtered circle to be the filtered stack. 
$$S^{1}_{\mathrm{Fil},\mathbb{Z}}\defeq BH_{\mathbb{Z}}.$$
The underlying stack of $S^{1}_{\mathrm{Fil},\mathbb{Z}}$ is $S^{1}$.
Then $\mathbf{Aff}_{\mathrm{Filt}}(S^{1}_{\mathrm{Fil},\mathbb{Z}})$ is the $\mathbb{Z}$-linear filtered circle \(\Z[S^1]_{\mathrm{fil}}\). (This works in $\mathbf{C}_{ab}$ and by base-change works in any derived algebraic context $\mathbf{C}$). The general HKR theorem of \cite{toen-robalo} can be formulated as follows.

\begin{theorem}[\cite{toen-robalo}*{Theorem 5.1.3}]
Let $X\cong\mathrm{Spec}(A)$ be affine. The natural map
$$(\underline{\mathrm{Map}}_{\textbf{Filt}(\mathbf{Stk})}(S^{1}_{filt,\mathbb{Z}},\mathrm{triv}_{filt}(X)))^{gr}\rightarrow\underline{\mathrm{Map}}_{\textbf{Gr}(\mathbf{Stk})}(\mathrm{Spec}(k\oplus k[-1]),\mathrm{triv}_{gr}(X))$$
where $\mathrm{Spec}(k\oplus k[-1])$ is equipped with the brutal grading.
\end{theorem}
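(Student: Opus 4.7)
The plan is to translate both sides of the asserted equivalence through the $\mathbf{Aff}_{\mathrm{Filt}} \dashv \mathrm{Spec}_{\mathrm{Filt}}$ (and its graded analogue) adjunction into assertions about filtered, resp.\ graded, derived commutative algebras, and then reduce to the affine, algebraic HKR theorem already established as Theorem \ref{theorem:HKR}. First, for the left-hand side, since $\mathrm{triv}_{\mathrm{filt}}(X) \cong \mathrm{Spec}_{\mathrm{Filt}}$ of $A$ equipped with its constant (weight-zero) filtration, and since $\mathbf{Aff}_{\mathrm{Filt}}(S^{1}_{\mathrm{fil},\mathbb{Z}}) \cong \mathbb{Z}[S^{1}]_{\mathrm{fil}}$ by the construction recalled before the theorem, the adjunction identifies
\[
\underline{\mathrm{Map}}_{\textbf{Filt}(\mathbf{Stk})}(S^{1}_{\mathrm{fil},\mathbb{Z}}, \mathrm{triv}_{\mathrm{filt}}(X)) \;\cong\; \mathrm{Spec}_{\mathrm{Filt}}\bigl(\mathsf{HH}(A/k)_{\mathrm{fil}}\bigr),
\]
where the right-hand side uses the universal property of HKR-filtered Hochschild homology from Definition \ref{def:Hochschild_homology}. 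Passing to associated gradeds is compatible with $\mathrm{Spec}_{\mathrm{Filt}}$: indeed, the pullback defining $(-)^{gr}$ along $B\mathbb{G}_m \to [\mathbb{A}^1/\mathbb{G}_m]$ is transported by the adjunction to the $\mathrm{gr}$ functor on filtered algebras, so the left-hand side of the statement rewrites as $\mathrm{Spec}_{\mathrm{Gr}}\bigl(\mathsf{gr}(\mathsf{HH}(A/k)_{\mathrm{fil}})\bigr)$.

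Next, for the right-hand side, the brutal grading on $k\oplus k[-1]$ realises this as $\mathbb{D}_{-}$ (after applying the Koszul shift from Section \ref{sec:DACs} to convert between $\mathbb{D}_{+}$ and $\mathbb{D}_{-}$). Applying the graded Spec/Aff adjunction and the universal property of the derived de Rham complex (Theorem 5.3.6 of \cite{raksit2020hochschild}, recalled above), together with the equivalence $\coma{\mathsf{Fil}}(\mathbf{C}) \cong \mathsf{Mod}_{\mathbb{D}_{-}}(\mathbf{Gr}(\mathbf{C}))$ and the shift equivalence $[2*]$, one obtains
\[
\underline{\mathrm{Map}}_{\textbf{Gr}(\mathbf{Stk})}\bigl(\mathrm{Spec}(k\oplus k[-1]),\mathrm{triv}_{\mathrm{gr}}(X)\bigr) \;\cong\; \mathrm{Spec}_{\mathrm{Gr}}\bigl(\mathsf{L}\Omega^{+\bullet}_{A/k}\bigr).
\]
The natural map in the theorem is then, under these identifications, the image under $\mathrm{Spec}_{\mathrm{Gr}}$ of the canonical map
\[
\mathsf{L}\Omega^{+\bullet}_{A/k} \longrightarrow \mathsf{gr}\bigl(\mathsf{HH}(A/k)_{\mathrm{fil}}\bigr)
\]
arising from the adjunction between $\mathsf{gr}$ and $\zeta$ in Theorem \ref{theorem:HKR}. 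Consequently the assertion becomes the affine HKR equivalence, which is precisely Theorem \ref{theorem:HKR} (the commuting diagram of adjoints therein), and in particular this equivalence is natural in $A$.

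The main obstacle will not be in the affine HKR step itself (already available) but in making the two translations through $\mathbf{Aff}_{\mathrm{Filt}}$ and its graded analogue precise: one must verify (i) that $\mathbf{Aff}_{\mathrm{Filt}}$ is strong monoidal on the relevant objects so that internal mapping stacks of trivially filtered affines are computed by cotensor in filtered derived algebras, (ii) that passing to associated gradeds commutes with $\mathrm{Spec}$ in the correct sense (this is essentially base change along $B\mathbb{G}_m\to[\mathbb{A}^1/\mathbb{G}_m]$), and (iii) that the sign, shift, and grading conventions — in particular the Koszul identification $\mathbb{D}_{+}\leftrightarrow \mathbb{D}_{-}$ and the identification of the brutal grading with the weight grading on $\mathbb{D}_{-}$ — line up between the geometric and algebraic sides. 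Once these bookkeeping issues are checked, the equivalence follows formally from Theorem \ref{theorem:HKR} and the adjunctions.
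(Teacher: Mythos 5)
First, a contextual point: the paper does not prove this statement. It is quoted verbatim (and, as printed, with an incomplete sentence --- the conclusion ``is an equivalence'' is missing) from Moulinos--Robalo--To\"en, Theorem 5.1.3, as background in the ``Further Directions'' section; the authors explicitly defer the extension to schemes to future work. So there is no in-paper proof to compare your argument against, and your proposal has to be judged as a free-standing proof of the cited result.

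As such, your reduction is the right skeleton: translate both mapping stacks through the $\mathbf{Aff}_{\mathrm{Filt}}\dashv\mathrm{Spec}_{\mathrm{Filt}}$ adjunction (and its graded analogue), identify the left side with $\mathrm{Spec}_{\mathrm{Gr}}(\mathsf{gr}(\mathsf{HH}(A/k)_{\mathrm{fil}}))$ and the right side with $\mathrm{Spec}_{\mathrm{Gr}}(\mathsf{L}\Omega^{+\bullet}_{A/k})$, and conclude by Raksit's affine filtered HKR (Theorem \ref{theorem:HKR}), which is available integrally. The problem is that the three items you relegate to ``bookkeeping'' at the end are not bookkeeping; items (i) and the implicit comparison behind your first displayed equivalence constitute essentially the entire content of the Moulinos--Robalo--To\"en proof. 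Concretely: (a) the identification $\underline{\mathrm{Map}}_{\mathbf{Filt}(\mathbf{Stk})}(S^1_{\mathrm{fil}},\mathrm{triv}_{\mathrm{filt}}(X))\cong\mathrm{Spec}_{\mathrm{Filt}}$ of the cotensor requires that $S^1_{\mathrm{fil}}=BH_{\mathbb{Z}}$ be an affine stack over $[\mathbb{A}^1/\mathbb{G}_m]$ and that its affinization commute with products with affines --- this rests on To\"en's theory of affine stacks and a computation of the cohomology of $BH_{\mathbb{Z}}$, and is not a formal consequence of the adjunction; and (b) identifying that cotensor with Raksit's $\mathsf{HH}(A/k)_{\mathrm{fil}}$ requires matching $\mathbf{Aff}_{\mathrm{Filt}}(S^1_{\mathrm{fil}})$ with the Postnikov-filtered group algebra $\mathbb{Z}[S^1]_{\mathrm{fil}}$ not merely as a filtered algebra (which the paper asserts) but together with the cogroup/bialgebra structure that governs the cotensor, which is the main comparison theorem of MRT. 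One must also check that the resulting algebraic comparison map agrees with the geometrically defined ``natural map'' of the theorem. If you grant all of this as input, your argument does close the loop via Theorem \ref{theorem:HKR}; but as written the proposal proves the easy implication (affine algebraic HKR $\Rightarrow$ geometric statement, given the dictionary) while leaving the dictionary itself --- the hard part --- unestablished.
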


As we did for the rational case in this paper, in future work we will investigate how to extend this theorem to schemes. 

%

\subsection{HKR for Smooth Stacks}

Our proof of the geometric HKR theorem uses in a crucial way that the spaces we are considering are schemes. Indeed if in an atlas $\{U_{i}\rightarrow\mathcal{X}\}_{i\in\mathcal{I}}$ the map $U_{i}\rightarrow\mathcal{X}$ is not a monomorphism, then there is no reason why a loop in $\mathcal{X}$ should lift to a loop in some $U_{i}$. Thus $\{\mathcal{L}U_{i}\rightarrow\mathcal{L}\mathcal{X}\}_{i\in\mathcal{I}}$ will not in general be an atlas for $\mathcal{L}\mathcal{X}$. In the algebraic setting the remedy of Ben-Zvi and Nadler \cite{ben-zvinadler} is to instead consider the \textit{formal loop space}, which is the formal completion of $\mathcal{L}\mathcal{X}$ at the substack of constant loops (which is isomorphic to $\mathcal{X}$),
$$\hat{\mathcal{L}}\mathcal{X}\defeq\reallywidehat{\mathcal{L}\mathcal{X}_{\mathcal{X}}}.$$

On the shifted tangent side, one considers the formal completion of the shifted tangent complex along the zero section
$$\hat{\mathbb{T}}\mathcal{X}[-1]\defeq\reallywidehat{\mathbb{T}\mathcal{X}[-1]}_{\mathcal{X}}.$$ Theorem 1.23 in \cite{ben-zvinadler} states that the natural map
$$\hat{\mathcal{L}}\mathcal{X}\rightarrow\hat{\mathbb{T}}\mathcal{X}[-1]$$
is an equivalence. We expect that a similar result holds for general relative derived algebraic geometry contexts. However there are some obstacles. Firstly an appropriate definition of smooth map would need to be formulated, such that one has obstruction theories for smooth stacks (possibly via some version of Artin's conditions \cite{toen2008homotopical}*{1.4.3}). One also needs to clarify in this greater generality what it means for an object $R$ of $\mathsf{CAlg}(\mathbf{C}^{\heart})$ to be reduced, but this may be as simple as saying that the underlying ring
$$\mathrm{Hom}_{\mathbf{C}^{\heart}}(k,R)$$
is reduced in the usual sense, where $k$ is the monoidal unit of $\mathbf{C}^{\heart}$.




\end{document}